\newtheorem{definition}{Definition}[section]
\newtheorem{example}[definition]{Example}
\newtheorem{definition_theorem}[definition]{Definition-Theorem}
\theoremstyle{definition}
\newtheorem{conjecture}[definition]{Conjecture}
\newtheorem{remark}[definition]{Remark}
\theoremstyle{plain}
\newtheorem{theorem}[definition]{Theorem}
\newtheorem{lemma}[definition]{Lemma}
\newtheorem{proposition}[definition]{Proposition}
\newtheorem{corollary}[definition]{Corollary}
\newcommand*\bigcdot{ {\mathpalette\bigcdot@{.5}} }
\newcommand*\bigcdot@[2]{\mathbin{\vcenter{\hbox{\scalebox{#2}{$\m@th#1\bullet$}}}}}
\newcommand\NN{ \mathbb{N} }
\newcommand\RR{ \mathbb{R} }
\newcommand\ZZ{ \mathbb{Z} }
\newcommand\cL{ \mathcal{L} }
\newcommand\cS{ \mathcal{S} }
\newcommand\cU{ \mathcal{U} }
\newcommand\cV{ \mathcal{V} }
\newcommand\sA{ \mathscr{A} }
\newcommand\sB{ \mathscr{B} }
\newcommand\sC{ \mathscr{C} }
\newcommand\pre{ {\operatorname{pre}} }
\newcommand\Rp{ { \mathbb{R}_{>0} } }
\newcommand\Fun{ {\operatorname{Fun}} }
\newcommand\id{\operatorname{id}}
\newcommand\Ind{ {\operatorname{Ind} } }
\newcommand\Hom{\operatorname{Hom}}
\newcommand\PrLcs{  {  \operatorname{Pr}^{\operatorname{L} }_{\omega,st} } }
\newcommand\PrLst{  {  \operatorname{Pr}^{\operatorname{L} }_{st} } }
\newcommand\PrRst{  {  \operatorname{Pr}^{\operatorname{R} }_{st} } }
\newcommand\clmi[1]{  \underset{ {#1} }{\operatorname{colim}} }
\newcommand\lmi[1]{  \underset{ {#1} }{\lim} }
\newcommand\Sp{ {\operatorname{Sp} } }
\newcommand\st{   {\operatorname{st}} }
\newcommand\Loc{ {\operatorname{Loc}} }
\newcommand\mhom{\operatorname{\mu hom}}
\newcommand\msh{\operatorname{\mu sh}}
\newcommand\ms{\operatorname{SS}}
\newcommand\msif{\operatorname{SS}^{\infty}}
\newcommand\msnz{ \dot{\operatorname{SS} }}
\newcommand\Op{\operatorname{Op}}
\newcommand\Sh{\operatorname{Sh}}
\newcommand\sHom{\mathscr{H}om}
\newcommand\supp{ {\operatorname{supp}} }
\newcommand\wsh{\operatorname{\mathfrak{w} sh}}
\newcommand\wrap{\mathfrak{W}}
\newcommand\Coh{\operatorname{Coh}}
\newcommand\PrLV[1][\cV]{  {  \operatorname{Pr}^{\operatorname{L} }_{\cV,st} } }
\newcommand\VD[1]{ {\operatorname{D}_{#1}  } }
\newcommand\ND[1]{ {\operatorname{D}_{#1}^\prime  } }
\newcommand\dT{ {\dot{T} } }
\newcommand{\ol}{\overline}
\newcommand{\bR}{\mathbb{R}}
\begin{document}

\title[Spherical adjunction and Serre functor from microlocalization]{\textbf{Spherical Adjunction and Serre Functor from Microlocalization}\\\vspace{3mm} {\textbf{\footnotesize{-- An approach by contact isotopies --}}}} % to be changed later?
%\sayCK{Name suggestion: An approach to microlocalization by contact isotopies.} 
%\sayWL{Or maybe: Duality and spherical adjunction from microlocalization via contact isotopy/via wrapping?}
%\sayWL{I'm a bit reluctant to say spherical adjunctions for microsheaves (or topological Fukaya cat) because we only dealt with cotangent bundles. If in the future we (or other people) are generalizing it to arbitrary Lagrangian skeleta, these titles might seem more appropriate?}
%\sayCK{Alternatively, we might be able to call the paper as Duality and spherical adjunction for topological Fukaya categories like \cite{Shende-Takeda}. Not sure if this will make the paper more popular or less popular though.}
\date{}
\author{Christopher Kuo}
\address{Department of Mathematics, University of Southern California}
\email{chrislpkuo@berkeley.edu} % to be updated? % It should be fine for now. USC use Outlook which I don't really like. I take care of this one. % We can keep it. I don't like that Outlook to be honest
\author{Wenyuan Li}
\address{Department of Mathematics, Northwestern University.}
\email{wenyuanli2023@u.northwestern.edu}
\maketitle

\begin{abstract}
    For a subanalytic Legendrian $\Lambda \subseteq S^{*}M$, we prove that when $\Lambda$ is either swappable or a full Legendrian stop, the microlocalization at infinity $m_\Lambda: \Sh_\Lambda(M) \rightarrow \msh_\Lambda(\Lambda)$ is a spherical functor, and the spherical cotwist is the Serr{e} functor on the subcategory $\Sh_\Lambda^b(M)_0$ of  compactly supported sheaves with perfect stalks. This is a sheaf theory counterpart (with weaker assumptions) of the results on the cap functor and cup functor between Fukaya categories. When proving spherical adjunction, we deduce the Sato-Sabloff fiber sequence and construct the Guillermou doubling functor for microsheaves on any open subsets of the Legendrian and with respect to any Reeb flow.  
\end{abstract}

\setcounter{tocdepth}{2}
\tableofcontents

\section{Introduction}

\subsection{Context and background}
    Our goal in the series of paper \cite{Kuo-Li-duality,Kuo-Li-Calabi-Yau} is to investigate non-commutative geometry structure of the category of sheaves arising from the symplectic geometry structure on the Lagrangian skeleton of the Weinstein pair $(T^*M, \Lambda)$, where $\Lambda \subseteq S^*M$ is a subanalytic Legendrian subset in the ideal contact boundary $S^*M$ of the exact symplectic manifold $T^*M$.

    Following Kashiwara-Schapira \cite{KS}, given a real analytic manifold $M$, one can define a stable $\infty$-category $\Sh_\Lambda(M)$ of constructible sheaves on $M$ with subanalytic Legendrian singular support $\Lambda \subseteq S^*M$, which is invariant under Hamiltonian isotopies \cite{Guillermou-Kashiwara-Schapira}. On the other hand, one can also define another stable $\infty$-category $\msh_\Lambda(\Lambda)$ of microlocal sheaves on the Legendrian $\Lambda \subseteq S^*M$ \cite{Gui,Nadler-pants}, and there is a microlocalization functor
    $$m_\Lambda: \Sh_\Lambda(M) \rightarrow \msh_\Lambda(\Lambda).$$

    From the perspective of non-commutative geometry, the categories of (microlocal) sheaves, as sheaves on the Lagrangian skeleton of the Weinstein pair $(T^*M, \Lambda)$, should be understood as a non-commutative manifold with boundary, abiding Poincar\'{e}-Lefschetz duality and fiber sequence with the presence of a relative Calabi-Yau structure \cite{relativeCY}, whose existence is proved by Shende-Takeda using arborealization \cite{ShenTake}. %\sayCK{You probably didn't mean it but the way you phrase it sounds like they only do that case. They actually did the general case it's just the explicit construction is done in the arboreal case and the general ones are pushed down from those.} %\sayWL{Yeah actually I'm always not sure about things related to arborealizations. I guess either you need prove existence of arboreal skeleton using their recent work or you need to put the skeleton in a unit cotangent bundle and apply Nadler's original result (I mean in the cotangent bundle case it is always true). Is that correct?}
    %\sayCK{I probably know less about arborealization. All I konw is that they cite the result from Nadler's paper,  Arboreal Singularities, staying that for any $\Lambda \subseteq S^*M$, there is a arboreal space $(X,W)$ and a proper stratified map $\pi: X \rightarrow \Lambda$ so that $\pi_* W = \msh_\Lambda^c$.}
    In this paper, we will study some non-commutative geometry structures, which are related but different from the duality fiber sequence in Calabi-Yau structures.

    The category of (micro)sheaves is closely related to a number of central topics in symplectic geometry and mathematical physics \cite{NadZas,Nad}. %\sayCK{I rephrase it to avoid the whole Nadler-Zaslow controversy.}
    %Nadler-Zaslow \cite{NadZas,Nad} showed that the infinitesimal Fukaya category of $T^*M$ is equivalent to the category of constructible sheaves on $M$ with perfect stalks, i.e.
    %$$\mathrm{Perf}\,\mathcal{F}_\epsilon(T^*M) \simeq \Sh^b_\text{con}(M).$$
    %More recently
    Recently Ganatra-Pardon-Shende \cite{Ganatra-Pardon-Shende3} showed that the partially wrapped Fukaya category is equivalent to the category of compact objects in the unbounded dg category of sheaves. In particular, for contangent bundles with Legendrian stops,
    $$\mathrm{Perf}\,\mathcal{W}(T^*M, \Lambda)^\text{op} \simeq \Sh^c_\Lambda(M).$$
    The homological mirror symmetry conjecture \cite{KonHMS,AurouxAnti} predicts an equivalence of the Fukaya categories and categories of coherent sheaves on the mirror complex variety. 
    %On the other hand, the Betti geometric Langlands program proposed by Ben--Zvi and Nadler \cite{Ben-Zvi-Nadler-Betti} conjectures an equivalence of constructible sheaves on $\mathrm{Bun}_G(X)$ and quasi-coherent sheaves on $\mathrm{Loc}_{G^\vee}(X)$.

    We will now explain the non-commutative geometry structures that will be investigated, namely Serre duality and spherical adjunctions, which arise from various predictions from symplectic geometry, mirror symmetry and mathematical physics.

\subsubsection{Context of spherical adjunction}
%\sayWL{To put our main theorems in the first few pages, shall we organize the sections as (i)~Sec~2.1.1 followed by 2.2, and then 2.1.2 followed by 2.3? Or (ii)~Sec~2.2 followed by 2.1.1, and then 2.3 followed by 2.1.2? Or (iii)~Sec~2.2 and 2.3 first, and then 2.1.1 \& 2.1.2 as relations with other works? Or (iv)~just the current structure?}
    Spherical adjunctions are introduced by Anno-Logvinenko \cite{Spherical} in the dg setting and then generalized \cite{SphericalInfty} in the stable $\infty$ setting, as a generalization of the notion of spherical objects \cite{SeidelThom}. Like spherical objects, spherical adjunctions provide interesting fiber sequences and autoequivalences of the categories called spherical twists and cotwists.

    In algebraic geometry, when we have a smooth variety $X$ with a divisor $i: D \hookrightarrow X$, the push forward functor and pull back functor
    $$i_*: \Coh(D) \leftrightharpoons \Coh(X) : i^*$$
    form a spherical adjunction between the dg categories of coherent sheaves, where the spherical twist is $-\otimes \mathcal{O}_X(D)$.

    In symplectic geometry, as is suggested by Kontsevich-Katzarkov-Pantev \cite{KonKarPanHodge} and Seidel \cite{SeidelSH=HH}, we have another interesting class of spherical adjunctions inspired by long exact sequences in Floer theory \cite{SeidelLES,SeidelFukI,Sabduality,EESduality}.
    For a symplectic Lefschetz fibration $\pi: X \rightarrow \mathbb{C}$ with regular fiber $F = \pi^{-1}(\infty)$, let $\mathcal{FS}(X, \pi)$ be the Fukaya-Seidel category associated to Lagrangian thimbles in $X$ and $\mathcal{F}(F)$ the Fukaya category of closed exact Lagrangians in $F$ \cite{Seidelbook}. The cap functor
    $$\cap_F: \mathcal{FS}(X, \pi) \rightarrow \mathcal{F}(F)$$
    defined by intersection of the Lagrangians with $F$ admits a left adjoint $\cup$ called the (left) cup functor \cite{AbGan} (see also \cite{AbSmithKhov}*{Appendix A}). In an unpublished work, Abouzaid-Ganatra proved that $\cap$ and $\cup$ form a spherical adjunction for general symplectic Landau-Ginzburg models \cite{AbGan}. %\sayWL{Shall we ask Abouzaid or Ganatra what their statement is?}. \sayCK{I will meet Sheel tomorrow. Let me see if I can ask him.}
    On the other hand, using the formalism of partially wrapped Fukaya categories \cite{Sylvan,Ganatra-Pardon-Shende1}, Sylvan considered the Orlov cup functor
    $$\cup_F: \mathcal{W}(F) \rightarrow \mathcal{W}(X, F)$$
    associated to any Weinstein pair $(X, F)$ and showed that the $\cup$ is a spherical functor\footnote{The data of a spherical functor is equivalent to the data of a spherical adjunction, as will be explained in Section \ref{sec:spherical}. Here we use spherical functors because the adjoint functor is not explicitly constructed Sylvan's work.} as long as the Weinstein stop $F \subset \partial_\infty X$ is a so called swappable stop \cite{SylvanOrlov}. In this case, the spherical twists/cotwists are the monodromy functors defined by wrapping around the contact boundary.  %\sayCK{Speaking of which, have we decided if we know a description for $T_\Lambda^\pm$? If so, we should try to mention it even if we don't need it in proving sphericality.}
    %\sayWL{We don't have a description, except that we know they define equivalences on $\msh$ I guess. Yeah I agree we should definitely mention that.}
    
    In microlocal sheaf theory, Nadler has also shown that functors between the pair of microsheaf categories over the symplectic Landau-Ginzburg model $(\mathbb{C}^n, \pi = z_1 \dots z_n)$, aftering (heuristically speaking) adding additional fiberwise stops, form a spherical adjunction. Then, by removing the fiberwise stops, the spherical adjunction for the original pair is also obtained \cite{Nadspherical}, but it is unclear how general this argument is in sheaf theory.

    The structure of spherical adjunctions has appeared in a number of previous works and leads to interesting applications in homological mirror symmetry \cite{AbAurouxHMS,Nadspherical,Gammagespherical,Jeffs}. However, there are still important problems that remain. Namely, the cap functor $\cap$ is only defined in the setting of a symplectic Landau-Ginzburg model instead of a general Weinstein pair, since it is completely not clear whether there are enough Lagrangian submanifolds asymptotic to a general Legendrian stop \cite{Ganatra-Pardon-Shende3}. This means that even though the cup functor $\cup$ is proved to be spherical as long as the stop is swappable \cite{SylvanOrlov}, it is difficult to characterize the adjoint functors geometrically.

\subsubsection{Context of Serre functor}
    On the other hand, in algebraic geometry, the dualizing sheaf induces Serre duality. When $X$ is smooth, the Serre functor is given by the canonical bundle $- \otimes \mathcal{O}_X(K_X)$.
    
    From the perspective of Fukaya categories, following a proposal of Kontsevich, Seidel has conjectured \cite{SeidelSH=HH} that for a symplectic Lefschetz fibration, the spherical cotwist is the inverse Serr{e} functor
    $$\sigma: \mathcal{FS}(X, \pi) \rightarrow \mathcal{FS}(X, \pi),$$
    and proved partial results \cite{SeidelFukI,SeidelFukII,SeidelFukIV1/2}, while from the perspective of Legendrian contact homology, Ekholm-Etnyre-Sabloff have proved Sabloff duality \cite{Sabduality,EESduality} between linearized homology and cohomology. These results predict an inverse Serr{e} functor, which should be the Poincar\'{e}-Lefschetz duality on the category of constructible sheaves with perfect stalks
    $$S_\Lambda^+: \Sh_\Lambda^b(M) \rightarrow \Sh_\Lambda^b(M).$$
    Little is known for either the sheaf categories or Fukaya categories of general Weinstein pairs.
.

\subsection{Main results and corollaries}
    We state our main result which provides a general criterion for the microlocalization functor $m_\Lambda: \Sh_\Lambda(M) \rightarrow \msh_\Lambda(\Lambda)$ to be spherical. Under the equivalence of Ganatra-Pardon-Shende \cite{Ganatra-Pardon-Shende3}, the left adjoint of microlocalization $m_\Lambda^l$ is equivalent to the Orlov cup functor on wrapped Fukaya categories, while we expect that the microlocalization $m_\Lambda$ is the cap functor on Fukaya-Seidel categories (see Remark \ref{rem:comicro-cup} and \ref{rem:micro-cap}).

\subsubsection{Spherical adjunction and Serre functor}
    Let $M$ be a real analytic manifold. Consider a fixed Reeb flow $T_t: S^{*}M \rightarrow S^{*}M$. Recall that a (time-dependent) contact isotopy $\varphi_t: S^{*}M \times \mathbb{R} \rightarrow S^{*}M$ is called a positive isotopy if $\alpha(\partial_t \varphi_t) \geq 0$.
    In the definition, we use the word stop for any compact subanalytic Legendrians (following \cite{Sylvan,Ganatra-Pardon-Shende1}), meaning that Hamiltonian flows are stopped by the Legendrian.

    The geometric notion of a swappable subanalytic Legendrian originates from positive Legendrian loops that avoid the Legendrian at the base point \cite{BS-LegIsotopy}, and is explicitly introduced by Sylvan \cite{SylvanOrlov}. Here our definition is slightly different from \cite{SylvanOrlov}.

\begin{definition}
    A compact subanalytic Legendrian $\Lambda \subseteq S^{*}M$ is called a swappable stop if there exists a compactly supported positive Hamiltonian on $S^{*}M \backslash \Lambda$ such that the flow sends $T_\epsilon(\Lambda)$ to an arbitrary small neighbourhood of $T_{-\epsilon}(\Lambda)$, and the backward flow sends $T_{-\epsilon}(\Lambda)$ to an arbitrary small neighbourhood of $T_\epsilon(\Lambda)$.
\end{definition}

    We also introduce the notion of geometric and algebraic full stops, both called full stops for simplicity. We will see in Proposition \ref{prop:full-geo=>alg} that a geometric full stop is always an algebraic full stop.

\begin{definition}
    Let $M$ be compact. A compact subanalytic Legendrian $\Lambda \subseteq S^{*}M$ is called an algebraic full stop if $\Sh_\Lambda(M)$ is proper. $\Lambda \subseteq S^{*}M$ is called a geometric full stop if for a collection of generalized linking spheres at infinity $\Sigma \subseteq S^{*}M$ of $\Lambda$, there exists a compactly supported positive Hamiltonian on $S^{*}M \backslash \Lambda$ such that the flow sends $\Sigma$ to an arbitrary small neighbourhood of $T_{-\epsilon}(\Lambda)$.
\end{definition}

\begin{example}
    There is a large class of examples of swappable stops and full stops in Section \ref{sec:sphere-crit}. Here are two cimple classes of examples. (1)~For a subanalytic triangulation $\mathcal{S} = \{X_\alpha\}_{\alpha \in I}$, the union of unit conormal bundles $\bigcup_{\alpha \in I}N^{*}_\infty X_\alpha$ is a algebraic full stop (we suspect that it is also a geometric full stop and a swappable stop, but we cannot prove that). %\sayCK{Should we do something with his assertion? If my memories are correct, we don't actaully know this the last time I talked to you?}\sayWL{We could delete it as well.} \sayCK{We could also write it as a conjecture and after mentioning it's algebraically full stop?}
    (2)~For an exact symplectic Landau-Ginzburg model $\pi: T^*M \rightarrow \mathbb{C}$, the Lagrangian skeleton $\mathfrak{c}_F$ of a regular fiber at infinity $F = \pi^{-1}(\infty)$ is a swappable stop and when $\pi$ is a Lefschetz fibration it is a geometric full stop.
\end{example}

    We are able to state our main result, which provides a general criterion for the microlocalization functor $m_\Lambda$ to be spherical.

\begin{theorem}[Theorem \ref{thm:main-fun}]\label{thm:main}
    Let $\Lambda \subseteq S^{*}M$ be a compact subanalytic Legendrian. Suppose $\Lambda$ is a full stop or a swappable stop. Then the microlocalization functor along $\Lambda$ and its left adjoint
    $$m_\Lambda: \Sh_\Lambda(M) \leftrightharpoons \msh_\Lambda(\Lambda) : m_\Lambda^l$$
    form a spherical adjunction.
\end{theorem}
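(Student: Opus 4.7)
The plan is to verify one of the equivalent characterizations of a spherical adjunction: for $m_\Lambda^l \dashv m_\Lambda$, it suffices to produce a geometric description of the cotwist $S = \operatorname{fib}(m_\Lambda^l m_\Lambda \to \id_{\Sh_\Lambda(M)})$ and the twist $T = \operatorname{cofib}(\id_{\msh_\Lambda(\Lambda)} \to m_\Lambda m_\Lambda^l)$, and then show that each is an autoequivalence together with the standard compatibility identifying the four spherical functors. The hypothesis on $\Lambda$ should enter precisely at the point of proving invertibility.

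The heart of the argument is the construction of a Sato--Sabloff type fiber sequence. Given a positive Reeb flow $T_t$ on $S^{*}M$, I would extend Guillermou's doubling construction -- originally formulated for the geodesic Reeb flow on a unit cotangent bundle -- to an arbitrary Reeb flow. Concretely, a positive contact isotopy $\varphi_t$ generated by a Hamiltonian vanishing (or vanishing to first order) along $\Lambda$ produces a sheaf kernel on $M \times M$, and interpolating between the identity kernel at $t=0$ and the wrapped kernel at small $t>0$ gives a natural transformation $F \to W_\epsilon(F)$ for $F \in \Sh_\Lambda(M)$. Functoriality and naturality of the doubling construction should then let one identify the fiber with $m_\Lambda^l m_\Lambda(F)$, producing the fiber sequence
$$m_\Lambda^l m_\Lambda(F) \longrightarrow F \longrightarrow W_\epsilon(F).$$
This realizes the cotwist as (a shift of) the wrapping endofunctor $W_\epsilon$, and a parallel argument on the microsheaf side gives an analogous description of $T$ as a wrapping endofunctor on $\msh_\Lambda(\Lambda)$.

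With this geometric input, the two sphericality criteria reduce to the two cases in the hypothesis. For a swappable stop, the swap Hamiltonian exhibits $W_\epsilon$ and its backward analog as quasi-inverses: the positive swap sends $T_\epsilon(\Lambda)$ into an arbitrarily small neighborhood of $T_{-\epsilon}(\Lambda)$ and vice versa, which, combined with invariance of $\Sh_\Lambda(M)$ under Hamiltonian isotopy, upgrades to a natural equivalence of the corresponding kernels and hence of endofunctors. For a full stop, I would instead argue by dualizability: using results established earlier in the paper on the dualizability of $\Sh_\Lambda(M)$ and on properness of $\Sh^c_\Lambda(M)$, one verifies that the Hamiltonian carrying generalized linking spheres into a small neighborhood of $T_{-\epsilon}(\Lambda)$ forces $W_\epsilon$ to act invertibly on compact objects, and then conclude on all of $\Sh_\Lambda(M)$ by Ind-completion. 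The matching statement for the twist on the microsheaf side is then handled by a parallel argument, or by appealing to the criterion that invertibility of the cotwist (together with identification of the right adjoint) is sufficient.

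The main obstacle will be constructing the Sato--Sabloff fiber sequence in the required generality. Guillermou's original doubling argument leans on the homogeneous structure of the geodesic Reeb flow, so real work lies in extending the construction to an arbitrary positive contact isotopy preserving $\Lambda$ and verifying that the resulting endofunctor is canonically compatible with the microlocalization unit and counit. In particular, identifying the fiber of $F \to W_\epsilon(F)$ with $m_\Lambda^l m_\Lambda(F)$ -- rather than merely with some abstract ``non-propagated'' part of $F$ -- is the subtle step that makes the Sato--Sabloff sequence align with the adjunction and allows the swap or full-stop hypothesis to convert into sphericality.
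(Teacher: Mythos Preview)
Your proposal identifies the right main ingredients—the Sato--Sabloff fiber sequence, the doubling construction for an arbitrary Reeb flow, and the identification of the cotwist on $\Sh_\Lambda(M)$ with a wrap-once functor $S_\Lambda^+ = \wrap_\Lambda^+ T_\epsilon$. This matches the paper's setup in Section~4.

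The significant divergence is in how you handle the twist. Your primary plan is to produce ``an analogous description of $T$ as a wrapping endofunctor on $\msh_\Lambda(\Lambda)$'' and show it is invertible directly. This is a genuine gap: there is no Reeb flow on $\Lambda$ itself, and the paper's description of the twist (Corollary~\ref{cor:twist}) as a monodromy under the doubling embedding is obtained only \emph{after} sphericality is established, via the semi-orthogonal decomposition of $\Sh_{T_{-\epsilon}(\Lambda)\cup T_\epsilon(\Lambda)}(M)$. I do not see how to run your ``parallel argument on the microsheaf side'' without this.

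Your fallback—``invertibility of the cotwist together with identification of the right adjoint''—is exactly what the paper does: it verifies Conditions~(2) and~(4) of Definition~\ref{def:spherical} for $m_\Lambda$. The step you do not describe, and which carries real content, is Proposition~\ref{prop:natural-trans}: the algebraic map $m_\Lambda^r \to S_\Lambda^- \circ m_\Lambda^l[1]$ is an isomorphism iff the geometric continuation map $\Hom(T_\epsilon F, w_\Lambda G) \to \Hom(T_\epsilon F, \wrap_\Lambda^+ w_\Lambda G)$ is. This reduction, routed through the doubling functor and its identification with both adjoints (Theorems~\ref{thm:doubling_leftad} and~\ref{thm:doubling_rightad}), is what lets the swappable or full-stop hypothesis finish the argument via Lemma~\ref{lem:nearby_cycle}.

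Finally, for full stops the paper does not argue via the geometric linking-sphere hypothesis you sketch. It works with the algebraic notion (properness of $\Sh_\Lambda^c(M)$) and uses the Sabloff--Serre duality (Proposition~\ref{prop:sab-serre}) to show $S_\Lambda^-$ is fully faithful on $\Sh_\Lambda^b(M)$; see Propositions~\ref{prop:full-condition2} and~\ref{prop:full-condition4}. Your dualizability outline is pointing in the right direction but would need this Serre-type input to close.
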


    Restricting attention to the pair of sheaf categories of compact objects, and the corresponding pair of sheaf categories of proper objects when the manifold is compact, which are the sheaf theoretic models of suitable versions of Fukaya categories, we can show the following corollary.

\begin{corollary}\label{cor:main}
    Let $\Lambda \subseteq S^{*}M$ be a closed subanalytic Legendrian. Suppose $\Lambda$ is either a swappable stop or a geometric full stop. Then the microlocalization functor along $\Lambda$ on the sheaf category of objects with perfect stalks
    $$m_\Lambda: \Sh_\Lambda^b(M) \rightarrow \msh^b_\Lambda(\Lambda)$$
    is a spherical functor. Respectively, the left adjoint of the microlocalization functor on the sheaf category of compact objects
    $$m_\Lambda^l: \msh^c_\Lambda(\Lambda) \rightarrow \Sh^c_\Lambda(M)$$
    is also a spherical functor.
\end{corollary}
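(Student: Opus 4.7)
Both assertions should follow from Theorem~\ref{thm:main} by restricting the spherical adjunction on presentable $\infty$-categories to the appropriate subcategories. My plan is first to address the compact-objects part by a direct restriction argument, and then to derive the perfect-stalks part using the Verdier duality for $\Sh_\Lambda(M)$ developed elsewhere in the paper.

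For the compact-objects assertion I would argue as follows. Theorem~\ref{thm:main} provides a spherical adjunction $m_\Lambda^l \dashv m_\Lambda$, and since $m_\Lambda$ is spherical it admits a further right adjoint $m_\Lambda^r$, assembling into a continuous chain $m_\Lambda^l \dashv m_\Lambda \dashv m_\Lambda^r$. In particular $m_\Lambda$ preserves all colimits, so $m_\Lambda^l$ preserves compact objects; likewise $m_\Lambda$ preserves compact objects because $m_\Lambda^r$ is continuous. The twist $T_\Lambda$ on $\msh_\Lambda(\Lambda)$ and cotwist $S_\Lambda$ on $\Sh_\Lambda(M)$ are cofibers of continuous natural transformations between these adjoints, hence themselves preserve colimits; their inverses, expressed via the corresponding twists and cotwists of the adjoint spherical functors, are continuous as well, so both twist and cotwist restrict to autoequivalences of the compact subcategories. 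The defining fiber sequences of a spherical functor then restrict, yielding the spherical functor $m_\Lambda^l : \msh_\Lambda^c(\Lambda) \to \Sh_\Lambda^c(M)$.

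For the perfect-stalks assertion, the plan is to invoke the Verdier duality equivalence $\Sh_\Lambda^c(M)^{op} \simeq \Sh_{-\Lambda}^c(M)$ coming from the dualizability of $\Sh_\Lambda(M)$ advertised in the abstract, which extends the classical Verdier duality on $\Sh_\Lambda^b(M)$. First I would check that both the swappable stop and the geometric full stop conditions are preserved under the antipodal involution of $S^*M$ together with a time-reversal, so that $-\Lambda$ inherits the relevant hypothesis from $\Lambda$; the compact-objects case applied to $-\Lambda$ then shows that $m_{-\Lambda}^l$ is spherical on compacts. Because microlocalization is realized by a sheaf kernel convolution (cf.\ the classification of colimit-preserving functors advertised in the abstract), it intertwines with the duality pairing, and sphericality transports across Verdier duality to produce the desired spherical functor $m_\Lambda : \Sh_\Lambda^b(M) \to \msh_\Lambda^b(\Lambda)$.

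The main obstacle I anticipate is the compatibility of microlocalization with Verdier duality under the swap $\Lambda \leftrightarrow -\Lambda$; once this is in hand, via the kernel description of $m_\Lambda$ together with the self-dual structure of the duality pairing, restriction of the twist and cotwist is automatic from continuity, and both parts of Corollary~\ref{cor:main} fall out.
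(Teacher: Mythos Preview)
Your compact-objects argument is essentially the paper's (Section~\ref{sec:prop-cpt}): sphericality produces an infinite chain of adjoints, so in particular $m_\Lambda^r$ is continuous, whence $m_\Lambda$ preserves compacts; together with the fact that $m_\Lambda^l$ always preserves compacts, the spherical adjunction restricts.

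For the perfect-stalks assertion, your route through Verdier duality and the antipodal swap $\Lambda \leftrightarrow -\Lambda$ is more circuitous than necessary and has a genuine gap precisely at the step you flag as the main obstacle. The kernel classification (Theorem~\ref{fmt}) applies to functors between categories of the form $\Sh_\Sigma(N)$; microlocalization lands in $\msh_\Lambda(\Lambda)$, which is not of this form, so the claim that ``microlocalization is realized by a sheaf kernel convolution'' is not available as stated. One could try to repair this via the doubling embedding $w_\Lambda: \msh_\Lambda(\Lambda) \hookrightarrow \Sh_{T_{-\epsilon}(\Lambda) \cup T_\epsilon(\Lambda)}(M)$, but then you must track how the duality interacts with the embedding and with the wrapping functors, and you also need a separate duality statement for $\msh_\Lambda(\Lambda)$ itself. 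None of this is set up in the paper.

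The paper's argument (Lemma~\ref{lem:sphere-proper}) sidesteps all of this. It uses Theorem~\ref{thm:perfcompact}, the identification of perfect-stalk objects with proper modules:
\[
\Sh_\Lambda^b(M) \simeq \Fun^{ex}(\Sh_\Lambda^c(M)^{op}, \cV_0), \qquad \msh_\Lambda^b(\Lambda) \simeq \Fun^{ex}(\msh_\Lambda^c(\Lambda)^{op}, \cV_0).
\]
Once you know from the compact-objects case that $m_\Lambda$ preserves compacts, precomposition with $m_\Lambda$ on these functor categories is exactly $m_\Lambda^r$, and it manifestly preserves proper modules. Since $m_\Lambda$ already preserves perfect stalks, the adjunction $m_\Lambda \dashv m_\Lambda^r$ restricts to the $b$-subcategories, and the full spherical structure then follows from Remark~\ref{rem:sphere-ad}. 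No antipodal map, no Verdier duality, no kernel calculus.
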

\begin{remark}\label{rem:comicro-cup}
    According to \cite{Ganatra-Pardon-Shende3}*{Proposition 7.24} there is a commutative diagram between microlocal sheaf categories and wrapped Fukaya categories
    \[\xymatrix{
    \mathcal{W}(F) \ar[r]^{\sim\hspace{10pt}} \ar[d]_{\cup_F} & \msh^c_{\mathfrak{c}_F}(\mathfrak{c}_F) \ar[d]^{m_{\mathfrak{c}_F}^*}\\
    \mathcal{W}(T^*M, F) \ar[r]^{\sim} & \Sh^c_{\mathfrak{c}_F}(M).
    }\]
    Therefore the second part of our theorem recovers the result by Sylvan \cite{SylvanOrlov} that
    $$\cup_F: \mathcal{W}(F) \rightarrow \mathcal{W}(X, F)$$
    is spherical in the case $X = T^*M$. However, different from \cite{SylvanOrlov}, we are able to explicitly construct the left and right adjoint functors in the proof.
\end{remark}
\begin{remark}\label{rem:micro-cap}
    For a Lefschetz fibration $\pi: T^*M \rightarrow \mathbb{C}$ %\sayCK{$\pi$ is $w$ here? Also, I kind feel that there's already a lot of $w$'s. Do you think we can find another notation for it? (Btw, why do people call it $w$ in the first place?)}\sayWL{I feel like mirror symmetry people use $w$? I don't really know why.}\sayCK{I see. That's unfortunate...}
    with regular fiber at infinity $F = \pi^{-1}(\infty)$, $\mathcal{W}(T^*M, F)$ is generated by Lagrangian thimbles \cite{Ganatra-Pardon-Shende2}*{Corollary 1.14} and is a proper category \cite{Ganatra-Pardon-Shende3}*{Proposition 6.7} (when $M = T^n$ and $F$ is the Weinstein thickening of the FLTZ skeleton \cite{FLTZCCC,RSTZSkel}, this can also be proved using mirror symmetry \cite{KuwaCCC}), and hence
    $$\Sh^b_{\mathfrak{c}_F}(M) \simeq \Sh^c_{\mathfrak{c}_F}(M) \simeq \mathcal{W}(T^*M, F).$$
    Since it is expected that $\mathcal{W}(T^*M, F) \simeq \mathcal{FS}(T^*M, \pi)$\footnote{As pointed out in \cite{Ganatra-Pardon-Shende3}*{Footnote~2}, if one takes $\mathcal{W}(T^*M, F)$ as the definition of the Fukaya-Seidel category then this is tautological. However a comparison result between $\mathcal{W}(T^*M, F)$ and the Fukaya-Seidel category defined in \cite{Seidelbook}*{Part~3} is not yet in the literature.}, there should be an equivalence $\Sh^b_{\mathfrak{c}_F}(M) \simeq \mathcal{FS}(T^*M, \pi)$ (when $M = T^n$, Zhou has sketched a proof in his thesis \cite{ZhouThesis}). Therefore, our theorem should be viewed as a sheaf theory version of the result \cite{AbGan} that
    $$\cap_F: \mathcal{FS}(T^*M, \pi) \rightarrow \mathcal{F}(F)$$
    is spherical. However, since we do not know a commutative diagram, that result \cite{AbGan} does not directly follow from ours.
\end{remark}

    We can write down the spherical twists and cotwists as follows. 
    Previous work of the first author \cite{Kuo-wrapped-sheaves} has defined the positive wrapping functor $\mathfrak{W}_\Lambda^+$ (resp.~negative wrapping functor $\mathfrak{W}_\Lambda^-$) sending an arbitrary sheaf in $\Sh(M)$ to $\Sh_\Lambda(M)$ by a colimit (resp. a limit) of positive (resp.~negative) wrappings into $\Lambda$. The spherical cotwist (resp.~the dual cotwist) $\Sh_\Lambda(M) \rightarrow \Sh_\Lambda(M)$ for $m_\Lambda$ is the functor defined by wrapping positively (resp.~negatively) around $S^{*}M$ once along the Reeb flow.

\begin{proposition}
    Let $\Lambda \subseteq S^{*}M$ and $T_t: S^{*}M \rightarrow S^{*}M$ be a Reeb flow. Then the spherical cotwist and dual cotwist are the negative and positive wrap-once functor (where $\epsilon > 0$ is sufficiently small)
    $${S}_\Lambda^- = \mathfrak{W}_\Lambda^- \circ T_{-\epsilon}, \,\,\, {S}_\Lambda^+ = \mathfrak{W}_\Lambda^+ \circ T_{\epsilon}.$$
\end{proposition}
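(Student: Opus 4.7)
My plan is to identify each of the two cotwists as a cofiber/fiber of the (co)unit of the spherical adjunction, and then to invoke the Sato-Sabloff fiber sequence established earlier in the paper to rewrite them as wrap-once functors along the Reeb flow.

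First, I would recall from the general formalism of spherical adjunctions that, given the chain of adjoints $m_\Lambda^l \dashv m_\Lambda \dashv m_\Lambda^r$ provided by Theorem \ref{thm:main}, the spherical cotwist $S_\Lambda^-$ on $\Sh_\Lambda(M)$ is by definition the cofiber of the counit
\[
m_\Lambda^l \circ m_\Lambda \longrightarrow \mathrm{id}_{\Sh_\Lambda(M)},
\]
while the dual cotwist $S_\Lambda^+$ is the fiber of the unit
\[
\mathrm{id}_{\Sh_\Lambda(M)} \longrightarrow m_\Lambda^r \circ m_\Lambda.
\]
Both are autoequivalences precisely because $m_\Lambda$ is spherical, so the content of the proposition is a concrete geometric identification of these autoequivalences.

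The next step uses the Sato-Sabloff fiber sequence, which is constructed in the paper for an arbitrary Reeb flow $T_t$ via the Guillermou doubling functor. For each $F \in \Sh_\Lambda(M)$ it takes the form
\[
m_\Lambda^l m_\Lambda(F) \longrightarrow F \longrightarrow \mathfrak{W}_\Lambda^- \circ T_{-\epsilon}(F),
\]
in which the first map is the counit of $m_\Lambda^l \dashv m_\Lambda$ and the second map is the natural transformation obtained by applying a small negative Reeb push $T_{-\epsilon}$ to $F$ and then negatively wrapping back into $\Lambda$ via $\mathfrak{W}_\Lambda^-$ (as in \cite{Kuo-wrapped-sheaves}). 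Taking cofibers of this sequence reads off $S_\Lambda^- \simeq \mathfrak{W}_\Lambda^- \circ T_{-\epsilon}$. Dually, the fiber sequence
\[
\mathfrak{W}_\Lambda^+ \circ T_{\epsilon}(F) \longrightarrow F \longrightarrow m_\Lambda^r m_\Lambda(F),
\]
with the second map the unit of $m_\Lambda \dashv m_\Lambda^r$, identifies $S_\Lambda^+ \simeq \mathfrak{W}_\Lambda^+ \circ T_\epsilon$.

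The step I expect to be the main obstacle is verifying that the outer terms in these sequences truly represent the wrap-once functors as \emph{functors} on $\Sh_\Lambda(M)$, and not merely object by object, and that the connecting arrows coincide with the (co)unit of the relevant adjunction up to natural equivalence. This requires careful bookkeeping of the functoriality of the doubling construction together with the colimit/limit presentations of $\mathfrak{W}_\Lambda^\pm$; fortunately, exactly this functoriality is what is arranged during the construction of the Sato-Sabloff sequence and the proof of sphericality in the earlier sections, so once those tools are in place the proposition follows by unwinding definitions.
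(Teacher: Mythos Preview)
Your overall strategy is the paper's: the identification follows directly from the Sato--Sabloff fiber sequence together with the definitions of cotwist and dual cotwist. However, you have swapped the two cotwists in your definitions, and correspondingly written the Sato--Sabloff sequences with the wrong wrap-once functors in the outer slots; these two errors cancel to yield the correct final statement, but the intermediate reasoning is wrong.

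In the paper's conventions (Definition~\ref{def:spherical} applied to $F = m_\Lambda$), the cotwist $S_\Lambda^-$ is the \emph{fiber} of the unit $\id \to m_\Lambda^r m_\Lambda$, while the dual cotwist $S_\Lambda^+$ is the \emph{cofiber} of the counit $m_\Lambda^l m_\Lambda \to \id$; you have these reversed. More substantively, the actual fiber sequences (see the discussion after Corollary~\ref{cor:sato-sab} and the beginning of Section~\ref{sec:ad-micro}) read
\[
m_\Lambda^l m_\Lambda \longrightarrow \id \longrightarrow \wrap_\Lambda^+ \circ T_\epsilon,
\qquad
\wrap_\Lambda^- \circ T_{-\epsilon} \longrightarrow \id \longrightarrow m_\Lambda^r m_\Lambda,
\]
not the versions you wrote. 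The point is that $\wrap_\Lambda^+$ is a colimit over positive wrappings (the left adjoint to the inclusion), so the canonical map goes $F \to \wrap_\Lambda^+ T_\epsilon(F)$ via the continuation map $F \to T_\epsilon(F)$ followed by the unit; dually $\wrap_\Lambda^-$ is a limit (the right adjoint), and the canonical map goes $\wrap_\Lambda^- T_{-\epsilon}(F) \to T_{-\epsilon}(F) \to F$. There is no natural map $F \to \wrap_\Lambda^- T_{-\epsilon}(F)$ as you describe. Once these directions are corrected, the argument is exactly the one in the paper, and your worry about functoriality is resolved there as well: the sequences are obtained from the functorial Sato--Sabloff triangle (Corollary~\ref{cor:sato-sab}) by adjunction, so they are natural in $F$ by construction.
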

\begin{remark}
    By \cite[Proposition 7.24]{Ganatra-Pardon-Shende3}, we know that the cup functor $\cap_F$ on partially wrapped Fukaya categories is isomorphic to the left adjoint of microlocalization $m_\Lambda^l$ on sheaf categories. Therefore, we have a commutative diagram
    \[\xymatrix{
    \mathcal{W}(T^*M, F) \ar[r]^{\sim} \ar[d]_{\mathcal{S}_F^\pm} & \Sh^c_{\mathfrak{c}_F}(M) \ar[d]^{S_{\mathfrak{c}_F}^\pm}\\
    \mathcal{W}(T^*M, F) \ar[r]^{\sim} & \Sh^c_{\mathfrak{c}_F}(M),
    }\]
    such that our wrap-once functor $S_{\mathfrak{c}_F}^\pm$ is isomorphic to the wrap-once functor of Sylvan \cite{SylvanOrlov}.
\end{remark}

    We will also write down a formula for spherical twists and dual twists in Section \ref{sec:semi-ortho} Corollary \ref{cor:twist}, which can be interpreted as the monodromy functors.

    Finally, the following statement shows that the negative wrap-once functor $S_\Lambda^-$ is the Serre functor on the subcategory of proper objects up to a twist by the dualizing sheaf $\omega_M$.

\begin{proposition}[Proposition \ref{prop:serre}]
    Let $\Lambda \subseteq S^{*}M$ be a full or swappable subanalytic compact Legendrian stop. Then $S_\Lambda^- \otimes \omega_M$ is the Serr{e} functor on $\Sh^b_\Lambda(M)_0$ of sheaves microsupported on $\Lambda$ with perfect stalks and compact supports. In particular, when $M$ is orientable, $S_\Lambda^-[-n]$ is the Serr{e} functor on $\Sh^b_\Lambda(M)_0$.
\end{proposition}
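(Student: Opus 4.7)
The plan is to compute $\Hom_{\Sh_\Lambda(M)}(F,G)^\vee$ for $F, G \in \Sh^b_\Lambda(M)_0$ directly from the standard duality pairing and identify the result with $\Hom(G, S_\Lambda^-(F) \otimes \omega_M)$. The duality data for $\Sh_\Lambda(M)^\vee = \Sh_{-\Lambda}(M)$ identifies the counit as $\epsilon = p_! \Delta^*$, so the Hom-pairing on compact objects reads $\Hom_{\Sh_\Lambda(M)}(F, G) \simeq p_!\bigl(\SD{\Lambda}^{-1}(F) \otimes G\bigr)$. The compact support and perfect stalks hypotheses guarantee that this is a perfect complex, so I may apply linear duality. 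Verdier duality on the base $M$ provides the identification $(p_! H)^\vee \simeq p_* \VD{M}(H)$, and combining this with the tensor-hom identity $\VD{M}(A \otimes B) \simeq \sHom(B, \VD{M}(A))$ yields
\[
\Hom(F, G)^\vee \;\simeq\; \Hom_{\Sh_\Lambda(M)}\bigl(G,\, \VD{M}(\SD{\Lambda}^{-1}(F))\bigr).
\]

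The second step rewrites $\VD{M} \circ \SD{\Lambda}^{-1}$ using Proposition~\ref{identifying_Verdier_with_standard}, which identifies $\SD{\Lambda} \simeq S_\Lambda^+ \VD{M} \otimes \omega_M^{-1}$ on $\Sh^b_{-\Lambda}(M)^{op}$. Since $\omega_M$ is an invertible locally constant sheaf, tensoring with it commutes with the microlocal operations building $S_\Lambda^+$ (the Reeb translation $T_\epsilon$ and the wrapping functor $\mathfrak{W}_\Lambda^+$), so inverting the identity gives $\SD{\Lambda}^{-1}(F) \simeq \VD{M}\bigl((S_\Lambda^+)^{-1}(F)\bigr) \otimes \omega_M^{-1}$. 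Applying $\VD{M}$ once more and using biduality $\VD{M}^2 \simeq \id$ on perfect-stalk sheaves together with the invertible-twist identity $\VD{M}(H \otimes L) \simeq \VD{M}(H) \otimes L^{-1}$, I obtain
\[
\VD{M}\bigl(\SD{\Lambda}^{-1}(F)\bigr) \;\simeq\; (S_\Lambda^+)^{-1}(F) \otimes \omega_M \;\simeq\; S_\Lambda^-(F) \otimes \omega_M,
\]
where the last identification uses that the positive and negative spherical cotwists arising from the spherical adjunction of Theorem~\ref{thm:main} are mutually inverse autoequivalences (they implement the Reeb monodromy in opposite directions). Combining both steps produces the defining isomorphism $\Hom(F,G)^\vee \simeq \Hom(G, S_\Lambda^-(F) \otimes \omega_M)$, exhibiting $S_\Lambda^- \otimes \omega_M$ as the Serre functor on $\Sh^b_\Lambda(M)_0$. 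The orientable consequence follows by specializing $\omega_M$ to the appropriate shift of the constant sheaf.

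The principal obstacle will be verifying that $S_\Lambda^- \otimes \omega_M$ actually preserves the subcategory $\Sh^b_\Lambda(M)_0$. Compact support is preserved because $S_\Lambda^- = \mathfrak{W}_\Lambda^- \circ T_{-\epsilon}$ is built from a Reeb translation that moves singular support but not support, and a wrapping that is the identity away from a neighbourhood of $\Lambda$; tensoring with the invertible locally constant $\omega_M$ preserves support as well. The perfect-stalk condition is preserved by any equivalence arising from a constructible sheaf kernel. A secondary but important subtlety is rigorously justifying that $\otimes \omega_M$ commutes with $S_\Lambda^+$ and with $\VD{M}$ in the sense used above; this reduces, via the projection formula, to the observation that $\omega_M$ has microsupport contained in the zero section of $T^*M$, so its tensor product is compatible with the Guillermou--Kashiwara--Schapira sheaf-kernel implementation of contact isotopies and of the wrapping functors.
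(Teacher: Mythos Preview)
Your route through the categorical duality $\SD{\Lambda}$ and Proposition~\ref{identifying_Verdier_with_standard} is a legitimate repackaging of the paper's argument, which proceeds more directly via the Sabloff--Serre duality $\Hom(F, T_{-\epsilon}(G)\otimes\omega_M) \simeq \Hom(G,F)^\vee$ of Proposition~\ref{prop:sab-serre} followed by the wrapping adjunction of Theorem~\ref{w=ad}. The underlying computations coincide (compare the proofs of Propositions~\ref{prop:sab-serre} and~\ref{identifying_Verdier_with_standard}), but the paper's version avoids the standing hypothesis ``$M$ compact'' under which Proposition~\ref{identifying_Verdier_with_standard} is stated, and so applies without modification to swappable stops on noncompact bases.

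There is, however, a genuine gap in your justification that $S_\Lambda^-$ preserves compact support. You claim $\wrap_\Lambda^-$ is ``the identity away from a neighbourhood of~$\Lambda$''; this is backwards. By construction $\wrap_\Lambda^-$ is a limit over negative isotopies supported on $S^*M \setminus \Lambda$: they are nontrivial precisely \emph{away} from~$\Lambda$, and when $M$ is noncompact there is no reason for them to be compactly supported on~$S^*M$. Neither a single such isotopy nor a limit over an unbounded family of them need preserve compactness of supports. The paper closes this gap by invoking the swappable hypothesis directly: the definition furnishes \emph{compactly supported} Hamiltonians on $S^*M \setminus \Lambda$ sending $T_{-\epsilon}(\Lambda)$ toward $T_{\epsilon}(\Lambda)$, which yield a final family of wrappings confined to a fixed compact region; one then checks by hand that sections of the limit vanish on any unbounded open set whose cosphere bundle misses~$\Lambda$. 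This step genuinely uses the geometric content of swappability and cannot be replaced by the formal reasoning you propose.
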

\begin{remark}
    Since our wrap-once functor is isomorphic to the wrap-once functor of Sylvan \cite{SylvanOrlov}, we can prove that the negative wrap-once functor of Sylvan
    $$\mathcal{S}_\Lambda^-: \mathrm{Prop}\,\mathcal{W}(T^*M, F) \rightarrow \mathrm{Prop}\,\mathcal{W}(T^*M, F)$$
    is the Serr{e} functor when $M$ is orientable. In particular, when $F = \pi^{-1}(\infty)$ is the fiber of a symplectic Lefschetz fibration, then $\mathcal{S}_\Lambda^-$ is the Serr\'{e} functor on $\mathcal{W}(T^*M, F)$.
\end{remark}
\begin{remark}
    Spherical adjunctions together with a compatible Serr{e} functor, in the smooth and proper setting, implies existence of a weak relative right Calabi-Yau structure \cite{KPSsphericalCY}, but we do not expect the relative Calabi-Yau structures for general Weinstein pairs to be proved this way. See the discussion in Remark \ref{rem:relativeCY}.
\end{remark}

    Note that the Serre functor result has appeared in earlier literature in the setting of constructible sheaves on certain smooth algebraic varieties. For instance, Beilinson-Bezruvkavnikov-Mirkovic \cite{TiltingExercise} found the Serre functor on constructible sheaves on flag varieties with Schubert stratifications (which is a proper category). We provide another description of the Serre functor in that setting.

\subsubsection{Spherical pairs and variation of skeleta}
    We know that the data of a spherical adjunction
    $$F: \sA \leftrightharpoons \sB: F^l$$
    is equivalent to a perverse sheaf of categories on a disk with one singularity \cite{KapraSchober}, where the stalk at $0 \in \mathbb{D}^2$ is the nearby category $\sA$ while the stalks at $(0, 1]$ are the vanishing category $\sB$. By considering a double cut of the real interval $[-1, 1] \subset \mathbb{D}^2$, we can also define a spherical pair \cite{KapraSchober}
    $$\sB_- \xrightleftharpoons{F_-} \sC \xleftrightharpoons{F_+} \sB_+$$
    where the two vanishing categories $\sB_\pm$ are related by a pair of equivalences, and the nearby category $\sC$ admits semi-orthogonal decompositions by $\sA$ and $\sB_\pm$.

    Given our formalism of spherical adjunctions, we will prove as a corollary spherical pairs which give rise to non-trivial equivalences of microlocal sheaf categories over different Lagrangian skeleta that do not a priori require non-characteristic deformations (while in known examples of such equivalences, \cite{DonovanKuwa,ZhouVarI} the Lagrangian skelata are related by non-characteristic deformations).

\begin{definition}
    Let $\Lambda_\pm \subseteq S^{*}M$ be two disjoint closed subanalytic Legendrian stops. Suppose there exist both a positive and a negative compactly supported Hamiltonian flow that sends $\Lambda_+$ to an arbitrary small neighbourhood of $\Lambda_-$, whose backward flows send $\Lambda_-$ to an arbitrary small neighbourhood of $\Lambda_+$. Then $(\Lambda_-, \Lambda_+)$ is called a swappable pair.
\end{definition}
\begin{remark}
    When $\Lambda_\pm \subseteq S^{*}M$ are Lagrangian skeleta of Weinstein hypersurfaces $F_\pm \subseteq S^{*}M$, we do not know whether $(X, F_\pm)$ are in fact Weinstein homotopic, though in some examples we will mention we suspect that they are. Moreover, it is in general a hard question when a singular Lagrangian will arise as the skeleton of a Weinstein manifold \cite{WeinRevisit}*{Problem 1.1~\&~Remark 1.2}.
\end{remark}

    We will show that a swappable pair of Legendrian stops produces a spherical pair.

\begin{theorem}[Theorem \ref{thm:sphere-pair-var}]
    Let $\Lambda_\pm \subseteq S^{*}M$ be a swappable pair of closed Legendrian stops. Then $\Sh_{\Lambda_-}(M) \simeq \Sh_{\Lambda_+}(M), \msh_{\Lambda_-}(\Lambda_-) \simeq \msh_{\Lambda_+}(\Lambda_+)$, and there is a spherical pair
    $$\Sh_{\Lambda_-}(M) \rightleftharpoons \Sh_{\Lambda_+ \cup \Lambda_-}(M) \leftrightharpoons \Sh_{\Lambda_+}(M).$$
\end{theorem}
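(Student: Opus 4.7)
The plan is to upgrade Theorem \ref{thm:main}, applied to $\Lambda_\pm$ and to their union $\Lambda_+ \cup \Lambda_-$, into a spherical pair using the swap equivalences provided by the pair structure together with the recollement of $\Sh_{\Lambda_+ \cup \Lambda_-}(M)$ coming from the disjoint decomposition.

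First I would establish the equivalences $\Sh_{\Lambda_-}(M) \simeq \Sh_{\Lambda_+}(M)$ and $\msh_{\Lambda_-}(\Lambda_-) \simeq \msh_{\Lambda_+}(\Lambda_+)$ using Guillermou--Kashiwara--Schapira invariance applied to the swap Hamiltonian $\varphi^+$, which sends $\Lambda_+$ into an arbitrarily small neighbourhood of $\Lambda_-$. Composing the GKS equivalence $\Sh_{\Lambda_+}(M) \simeq \Sh_{\varphi^+_1(\Lambda_+)}(M)$ with the positive wrapping functor $\wrap^+_{\Lambda_-}$ and passing to a colimit over approximating neighbourhoods yields the equivalence; the analogous construction from $\varphi^-$ produces a second equivalence differing by what will become the spherical monodromy.

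Next I would verify that both $\Lambda_\pm$ individually, and $\Lambda_+ \cup \Lambda_-$, are swappable stops. Individual swappability of $\Lambda_+$ follows from the positive loop $\gamma = \varphi^-_{-1} \circ \varphi^+_1$: this is a compactly supported positive Hamiltonian on $S^*M \setminus (\Lambda_+ \cup \Lambda_-) \subseteq S^*M \setminus \Lambda_+$ whose trajectory traverses the monodromy $\Lambda_+ \to \Lambda_- \to \Lambda_+$; together with the positive-loop swap criterion of \cite{BS-LegIsotopy, SylvanOrlov}, this carries $T_\epsilon(\Lambda_+)$ into a neighbourhood of $T_{-\epsilon}(\Lambda_+)$, and symmetrically for $\Lambda_-$. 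For the union one combines the two swap Hamiltonians with a localised Reeb rotation. Theorem \ref{thm:main} then yields three spherical adjunctions $m_{\Lambda_\pm}$ and $m_{\Lambda_+ \cup \Lambda_-}$, and disjointness gives the splitting $\msh_{\Lambda_+ \cup \Lambda_-}(\Lambda_+ \cup \Lambda_-) \simeq \msh_{\Lambda_+}(\Lambda_+) \oplus \msh_{\Lambda_-}(\Lambda_-)$. The kernel of the component microlocalization $m_{\Lambda_\pm}: \Sh_{\Lambda_+ \cup \Lambda_-}(M) \to \msh_{\Lambda_\pm}(\Lambda_\pm)$ is $\Sh_{\Lambda_\mp}(M)$, since sheaves with singular support in the union and trivial microstalks on $\Lambda_\pm$ have singular support in $\Lambda_\mp$.

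This produces the recollement required for a spherical pair: both inclusions $\iota_\pm: \Sh_{\Lambda_\pm}(M) \hookrightarrow \Sh_{\Lambda_+ \cup \Lambda_-}(M)$ are left and right admissible (with adjoints computed by the wrapping functors), and the two localization sequences
$$\Sh_{\Lambda_\mp}(M) \hookrightarrow \Sh_{\Lambda_+ \cup \Lambda_-}(M) \twoheadrightarrow \msh_{\Lambda_\pm}(\Lambda_\pm)$$
assemble $\Sh_{\Lambda_+ \cup \Lambda_-}(M)$ into two compatible semi-orthogonal decompositions. The associated spherical functors on the admissible complements are precisely the $m_{\Lambda_\pm}$, which are spherical by the previous paragraph, and the composite adjoint $\iota_\mp^l \circ \iota_\pm: \Sh_{\Lambda_\pm}(M) \to \Sh_{\Lambda_\mp}(M)$ recovers the GKS equivalence of Step 1. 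The main obstacle will be matching the twist/cotwist of each $m_{\Lambda_\pm}$ with both the ambient (co)twist of $m_{\Lambda_+ \cup \Lambda_-}$ -- the wrap-once functor around $S^*M$ -- and with the monodromy obtained by composing the two distinct swap equivalences built from $\varphi^+$ and $\varphi^-$. Carrying this tracking through the semi-orthogonal decompositions closes the spherical pair diagram of \cite{KapraSchober} and yields the claimed equivalences of vanishing categories.
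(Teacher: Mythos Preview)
Your overall architecture is close to the paper's, but there are two genuine gaps that prevent the argument from closing.

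\textbf{The equivalence $\Sh_{\Lambda_-}(M)\simeq\Sh_{\Lambda_+}(M)$.} Your Step~1 proposes to build this by composing the GKS equivalence $\Sh_{\Lambda_+}(M)\simeq\Sh_{\varphi^+_1(\Lambda_+)}(M)$ with the stop removal $\wrap_{\Lambda_-}^+$ and ``passing to a colimit over approximating neighbourhoods.'' But $\wrap_{\Lambda_-}^+$ restricted to $\Sh_{\varphi^+_1(\Lambda_+)}(M)$ is a localization, and there is no mechanism here that forces it to be an equivalence no matter how close $\varphi^+_1(\Lambda_+)$ is to $\Lambda_-$; the colimit you allude to is not set up by anything in the paper. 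The composite you later write, $\iota_\mp^l\circ\iota_\pm$, is exactly the swapping functor $R^+_{\Lambda_\pm,\Lambda_\mp}=\wrap_{\Lambda_\mp}^+|_{\Sh_{\Lambda_\pm}(M)}$, and this is what the paper uses---but the paper proves it is an equivalence differently: by the identification of the cotwist in Corollary~\ref{cor:twist} one has $R^+_{\Lambda_+,\Lambda_-}\circ R^+_{\Lambda_-,\Lambda_+}=S_{\Lambda_-}^+$ and symmetrically, and since each $\Lambda_\pm$ is individually swappable (your argument here is fine), $S_{\Lambda_\pm}^+$ is an equivalence, whence each $R^+$ is. You never invoke this composition-equals-cotwist step, so your equivalence claim is unsupported.

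\textbf{The spherical pair structure on $\Sh_{\Lambda_+\cup\Lambda_-}(M)$.} You assert that $\Lambda_+\cup\Lambda_-$ is swappable by ``combining the two swap Hamiltonians with a localised Reeb rotation''; this is not justified and the paper does not establish it. Even if it held, sphericality of $m_{\Lambda_+\cup\Lambda_-}$ would produce a $4$-periodic semi-orthogonal decomposition on the doubled category $\Sh_{T_{-\epsilon}(\Lambda_+\cup\Lambda_-)\cup T_\epsilon(\Lambda_+\cup\Lambda_-)}(M)$, not on $\Sh_{\Lambda_+\cup\Lambda_-}(M)$ itself, so it does not directly yield the fully faithful left and right adjoints of $F_\pm$ required by the spherical pair definition. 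The paper avoids this entirely: it observes that $\big(\Lambda_-\cup T_\epsilon(\Lambda_-),\,\Lambda_-\cup\Lambda_+\big)$ is itself a swappable pair (wrap $T_\epsilon(\Lambda_-)$ into $\Lambda_+$ while fixing $\Lambda_-$), applies the already-proven first part to get $\Sh_{\Lambda_-\cup T_\epsilon(\Lambda_-)}(M)\simeq\Sh_{\Lambda_-\cup\Lambda_+}(M)$, and then transports the known semi-orthogonal decompositions of the doubled category (Proposition~\ref{prop:semi-ortho} and Corollary~\ref{cor:4-periodic-shv}) across this equivalence. A final commuting diagram identifies the transported projection functors with the stop removal functors $\wrap_{\Lambda_\pm}^+$, completing the spherical pair.
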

\begin{remark}
    As in the previous result, we can show that the spherical pair can be restricted to the subcategories of compact objects of sheaf categories, which therefore leads to a result on partially wrapped Fukaya categories.
\end{remark}
\begin{remark}
    For symplectic topologists, this result may seem boring since one may suspect that the corresponding Weinstein pairs turn out to be Weinstein homotopic. However, when considering Fukaya-Seidel categories given a Landau-Ginzburg potential, the Weinstein hypersurfaces $F_\pm$ can be fibers of different potential functions. Therefore, studying the behaviour of their Lagrangian skeleta provides a way to compare the categories directly.
\end{remark}

\subsection{Sheaf theoretic wrapping, small and large}
%\sayWL{I see that really the paper is supposed to (philosophically) provide a general approach of studying microlocalization and everything in terms of wrapping. I think maybe we can explain the proof strategy here. Do you have a better title?}
%\sayCK{How about this?}
%\sayCK{I guess this is one thing to point out. I will try to write something about it.}
We mention one technical point behind the various arguments of this paper.
The Guillermou-Kashiwara-Schapira sheaf quantization \cite{Guillermou-Kashiwara-Schapira} allows us to define the notion of isotopies of sheaves, or simply wrappings, which we will recall in Section \ref{sec:isotopies_of_sheaves}.
For a sheaf $F \in \Sh(M)$ and a positive contact isotopy $T_t$ on $S^* M$, there is a family of sheaves $F_t$ and morphism $F_s \rightarrow F_t$, $s \leq t$, 
such that $F_0 = F$ and $\msif(F_t) = T_t(\msif(F))$.
Such families of sheaves allow us to do two things.

First, one can displace microsupport at infinite with small wrappings, which is a sheaf-theoretical version of demanding generic intersection of Lagrangians in the Floer setting.    
For two sheaves $F, G \in \Sh(M)$, although the object $\Hom(F,G)$ is always defined, it is usually easier to understand it when $\msif(F) \cap \msif(G) = \varnothing$.
Furthermore, when $\msif(F)$ and $\msif(G)$ are both Legendrians, and assume $T_t (\msif(G))$ intersects $\msif(F)$ only at $t = 0$, 
the jump of $\Hom(F,G_t)$ from $t < 0$ to $t > 0$ can be measured, in a way tautologically by $\msh_\Lambda(\Lambda)$ where $\Lambda \coloneqq \msif(F) \cap \msif(G)$,
through microlocalization.

Secondly, since the isotopy $T_t$ pushes away microsupport, one can in fact cut off the microsupport by consider large wrappings.
That is, the left (resp.~right) adjoint of the inclusion $\Sh_X(M) \subseteq \Sh(M)$, whose existence is gauranteed by the existence of microlocal cut-off lemma locally \cite[Section 5.2]{KS}, 
can be described by the global geometry through the functor $\wrap_\Lambda^+$ (resp.~$\wrap_\Lambda^-$) defined by taking the colimit (resp.~limit) over larger and larger positive (resp. negative) wrappings. (See Theorem \ref{w=ad} for details.)

Combining the two facts in Section \ref{sec:doubling}, we can give a geometric description to the left (resp.~right) adjoint of the microlocalization functor 
$m_\Lambda: \Sh_\Lambda(M) \rightarrow \msh_\Lambda(\Lambda)$ by decomposing it to a inclusion called doubling functor $w_\Lambda$ followed by a quotient by the positive (resp.~negative) wrapping functor $\wrap_\Lambda^+$ (resp.~$\wrap_\Lambda^-$)
$$ \msh_\Lambda(\Lambda) \hookrightarrow \Sh_{T_\epsilon(\Lambda) \cup T_{-\epsilon}(\Lambda)}(M) \twoheadrightarrow \Sh_\Lambda(M).$$
The doubling functor will induce a fiber sequence of functors $T_{-\epsilon} \rightarrow T_\epsilon \rightarrow w_\Lambda \circ m_\Lambda$, from where the spherical dual cotwist by wrapping around positively once $S_\Lambda^+$ (resp.~the spherical cotwist by wrapping around negatively once $S_\Lambda^-$) will naturally arise as we apply the positive (resp.~negative) wrapping functors. This turns out to be the key ingredient in our proof.

\begin{remark}
    Though the doubling functor $w_\Lambda$ in our current setting has appeared already in works of Nadler--Shende \cite[Section 7]{NadShen}, we do not know how to prove further adjunction properties. Our main result crucially relies on the fact that the doubling functor will induce both the left adjoint and the right adjoint of the microlocalization functor, which does not seem obvious at the first place. For this reason, we will provide two different approaches generalizing the work of Guillermou \cite[Section 13--15]{Gui}, and in particular providing an explicit local formula of the doubling functor as opposed to \cite{NadShen}.
\end{remark}
%\sayCK{I kind of leave this unfinished because I realize that the discussion here is duplicated. Maybe we can think how we should rearrange our materials since I feel what we are trying to convey here is the main advances in terms of tehcnology.}

\subsection*{Acknowledgement}
We would like to thank Mohammed Abouzaid, Pramod Achar, Shaoyun Bai, Roger Casals, Laurent C\^ot\'e, Yuichi Ike, Emmy Murphy, Nick Rozenblyum, Germ\'an Stefanich, Vivek Shende, Pyongwon Suh, Alex Takeda, Dima Tamarkin and Eric Zaslow  
for helpful discussions. CK was partially supported by NSF CAREER DMS-1654545 and VILLUM FONDEN grant 37814.

\section{Preliminary} 
We give a quick review of the microlocal sheaf theory developed by Kashiwara and Schapira in \cite{KS} within the modern categorical setting \cite{Volpe-six-operations, JinTreu} as well as results from more recent work such as \cite{Nadler-pants, Ganatra-Pardon-Shende3}.
The purpose of this section is to fix notations and collect previous results which we will use in the main body of the text. 

\subsection{Microlocal sheaf theory}
Through out this paper, we fix once and for all a rigid stable symmetric monoidal category $\cV_0$ in the sense of Hoyois, Scherotzke, and Sibilla \cite{Hoyois-Scherotzke-Sibilla}, and we announce that our sheaves will take coefficient in $\cV \coloneqq \Ind(\cV_0)$, the Ind-completion of $\cV_0$. As discussed in \cite{Hoyois-Scherotzke-Sibilla}, stable categories over $\cV$ enjoy many formal properties enjoyed by ordinary stable categories studied in \cite{Lurie-HA} and one can develop sheaf theory in this setting. Special cases of $\cV$ include dg categories over a field $\Bbbk$, those over the integer $\ZZ$, and the category of spectra $\Sp$.

Now a sheaf $F$ on a topological space $X$ is a presheaf $F: \Op_X^{op} \rightarrow \cV$ satisfying the standard local-to-global condition:
For an open set $U \subseteq X$ and an open covering $\cU \coloneqq \{U_i\}$ of $U$, the canonical map
$$ F(U) \rightarrow \lmi{U_I \in C_\cU} F(U_I)$$
is an isomorphism where $C_\cU$ is the \v{C}ech nerve formed by finite intersections of open sets in $\cU$.
We use $\Sh(X)$ to denote the category of sheaves on $X$.
When we restrict to locally compact Hausdorff topological spaces, the functor $\Sh$ admits the celebrated Grothendieck six-functor formalism. 
That is, there exists a tensor product $\otimes$ on $\Sh(M)$ and the corresponding internal Hom $\sHom$, and
for a continuous map $f: X \rightarrow Y$, there exists two adjunction pairs $(f^*,f_*)$  and $(f_!,f^!)$ and various compatibility relations,
base change in particular, hold.

For a sheaf $F$ on $X$, one can associate a closed set $\supp(X) = \overline{ \{x | F_x \neq 0 \} }$,
the support of $F$, indicating the place where $F$ is non-trivial. 
When we further restrict to the case of finite dimensional manifolds, one has access to a more powerful invariant
$\ms(F) \subseteq T^*M$, the microsupport which is defined in \cite[Proposition 5.1.1]{KS}.
This is a conic closed subset in the cotangent bundle $T^* M$, roughtly indicating the codirections where the sheaf changes,
and a deep theorem \cite[Theorem 6.5.4]{KS} asserts that it is always coisotropic.

    Let $0_M$ be the zero section in $T^*M$. We note that since the singular support is always a conical subset, we can consider the singular support at infinity $\ms^\infty(F) = (\ms(F) \backslash 0_M) / \RR_{>0}$, which is always a coisotropic subset in the contact manifold $S^*M = (T^* M \backslash 0_M)/\RR_{>0}$.

Let $T_Z^*M$ the conormal bundle of a closed submanifold $Z \subseteq M$, and $N^*_{in}(U)$ (resp.~$N^*_{out}(U)$) the union of the inward (resp.~outward) conormal bundle and the zero section $0_U$ of a locally closed submanifold $U \subseteq M$ with piecewise $C^1$-boundary (the definition can be generalized to any locally closed submanifolds; see \cite[Section 5.3]{KS}). For $f: M \rightarrow N$, let $df^*$ and $f_\pi$ be the bundle maps%\sayWL{I tend to use $df$ or $f^*$, but $df^*$ is also fine.}
%\sayCK{The problem with $df$ is the it looks like tangent instead of cotangent. I'm fine with $f^*$ it's just I worry we've had enough of those already.}
$$T^*M \xleftarrow{df^*} M \times_N T^*N \xrightarrow{f_\pi} T^*N.$$
Finally, for $X \subset T^*M$, let $-X \subseteq T^*M$ be its image under the antipodal map.

We list here a few ways to estimate microsupports of sheaves and some deformation result which we will use in this paper:

%%% Definition of noncharacteristic 
\begin{definition}[{\cite[Definition 5.4.12]{KS}}]
Let $A$ be a closed conic subset of $T^* N$.
We say $f$ is \textit{noncharacteristic} for $A$ if
$$ f_\pi^{-1}(A) \cap T_M^* N \subseteq M \times_N 0_N.$$
For a sheaf $F \in \Sh(M)$, we say $f$ is \textit{noncharacteristic} for $F$ if it is the case for $\ms(F)$.
\end{definition}

%%% List of microsupport estimation formula: Nonchar case
\begin{proposition}\label{prop:mses}
We have the following results:
\begin{enumerate}
\item (\cite[Proposition 5.1.3]{KS}) If $F \rightarrow G \rightarrow H$ is a fiber sequence in $\Sh(M)$, then
$$ \left( \ms(F) \setminus \ms(H) \right) \cup \left( \ms(H) \setminus \ms(F) \right)
\subseteq \ms(G) \subseteq \ms(F) \cup \ms(H).$$
%This is usually referred as the microlocal triangular inequalities.

\item (\cite[Proposition 5.4.1]{KS}) 
For $F \in \Sh(M)$, $G \in \Sh(N)$, $\ms(F \boxtimes G) \subseteq \ms(F) \times \ms(G)$.

\item (\cite[Proposition 5.4.2]{KS})
For $F \in \Sh(M)$, $G \in \Sh(N)$, $$\ms\left(\sHom(\pi_1^*F, \pi_2^*G) \right) \subseteq -\ms(F) \times \ms(G)$$

\item (\cite[Proposition 5.4.4]{KS}) 
For $f: M \rightarrow N$ and $F \in \Sh(M)$, if $f$ is proper on $\supp(F)$, then $$\ms(f_* F) 
\subseteq f_\pi \left( (d f^*)^{-1} \ms(F) \right).$$

\item (\cite[Proposition 5.4.5 and Proposition 5.4.13]{KS}) 
For $f: M \rightarrow N$ and $F \in \Sh(N)$,  if $f$ is noncharacteristic for $F$, then
$$\ms(f^* F) \subseteq df^* ( f_\pi^{-1}(\ms(F)) )$$
and the natural map $f^* F \otimes f^! 1_Y \rightarrow f^! F$ is an isomorphism.
If $f$ is furthermore a submersion\footnote{Kashiwara-Schapira use the word smooth morphism for a submersion between manifolds, which comes from the notion in algebraic geometry.}, the estimation is an equality.
%\sayWL{I feel like smooth in Kashiwara-Schapira really means submersion or smooth in the algebraic geometric sense, right?}
%\sayCK{That's what they mean. Free feel to change it to submersion if you like.}

\item  (\cite[Proposition 5.4.8]{KS}) 
Let $Z \subseteq M$ be closed. 
If $\ms(F) \cap N^*_{out} (Z) \subseteq 0_M$, then 
$$\ms(F_Z) \subseteq N^*_{in}(Z) + \ms(F).$$
Similarly, let $U \subseteq M$ be open. 
If $\ms(F) \cap N^*_{in} (U) \subseteq 0_M$, then 
$$\ms(F_U) \subseteq N^*_{out}(U) + \ms(F).$$

\item (\cite[Proposition 5.4.14]{KS}) 
For $F\,$and $G \in \Sh(M)$, if $\ms(F) \cap -\ms(G) \subseteq 0_M$, then
$$ \ms(F \otimes G) \subseteq \ms(F) + \ms(G).$$

\item  (\cite[Proposition 5.4.14 and Exercise V.13]{KS}) 
For $F\,$and $G \in \Sh(M)$, if $\ms(F) \cap \ms(G) \subseteq 0_M$, then
$$ \ms( \sHom(F,G) ) \subseteq \ms(G) - \ms(F).$$
Write $\ND{M}(F) := \sHom(F,1_M)$. If moreover $F$ is cohomological constructible, then
the natural map $$\ND{M}(F) \otimes G \rightarrow \sHom(F,G)$$ is an isomorphism.
If furthermore $F = \omega_M$, i.e., when $\sHom(G,F) \eqqcolon D_M (G)$ is the Verdier dual,
then $\ms(\VD{M} (G) ) = - \ms(G)$.

%\item (\cite[Proposition 3.4]{Guillermou-Viterbo})
%Let $I$ be a (small) set and $\{F_i\}_{i \in I}$ be a family of sheaves on $M$ index by $I$. 
%Then there are microsupport estimations,
%$$\ms\big( \bigoplus\nolimits_{i\in I} F_i\big) \subseteq \overline{ \bigcup\nolimits_{i\in I} \ms(F_i)}, \
%\ms\big(\prod\nolimits_{i\in I} F_i\big) \subseteq \overline{ \bigcup\nolimits_{i\in I} \ms(F_i) }.$$

\end{enumerate}

\end{proposition}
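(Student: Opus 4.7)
The plan is essentially a verification exercise: each clause of the proposition is quoted directly from Kashiwara--Schapira \cite{KS}, so the content reduces to confirming that each of their proofs survives the passage from sheaves of modules over a commutative ring to sheaves valued in the rigid stable presentable symmetric monoidal category $\cV = \Ind(\cV_0)$ fixed at the start of the section. The enabling fact is that Hoyois--Scherotzke--Sibilla \cite{Hoyois-Scherotzke-Sibilla} produce a complete six-functor formalism for $\cV$-valued sheaves, together with all base-change, projection-formula, and K\"unneth-type identities that Kashiwara--Schapira use in their arguments.

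First I would dispatch (1), the microlocal triangular inequality, directly from the definition of $\ms$ via the vanishing of certain test functor values on small balls, combined with the long fiber sequence attached to $F \to G \to H$. The remaining items then come in two groups. Items (2), (3), (7), and (8) concern the microsupport of sheaves built by tensor-like and internal-hom operations; under the transversality hypothesis stated in each, these reduce to a combination of (4) and (5) via K\"unneth-type identities. Items (4) and (5) are the proper pushforward and non-characteristic pullback estimates, whose proofs in \cite{KS} proceed by local microlocal cut-off near each covector, and these arguments are formal consequences of the six-functor calculus. Item (6) is the excision estimate, following from the fiber sequences relating $F_Z$ and $F_U$ to $F$ combined with (1) and (7). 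For the internal-hom clauses in (3), (7), and (8), I would use that rigidity of $\cV$ produces the identification $\sHom(F,G) \simeq \VD{M}(F) \otimes G$ for cohomologically constructible $F$, so the estimates on $\sHom$ reduce to those on $\otimes$; rigidity is part of our standing hypothesis on $\cV_0$, so no extra input is needed.

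Finally, item (9) requires the most care because infinite coproducts and products need not commute with the formation of microsupport, and one must take closures in $T^*M$. The classical exercise in \cite{KS} handles the ring-coefficient case; for $\cV$-valued sheaves, I would simply invoke Jin--Treumann \cite{JinTreu}*{Section 2.7}, who treat arbitrary direct sums and products in the $\infty$-categorical setting explicitly. The only genuine obstacle in the entire program is this last item, where one must track how microsupport behaves under filtered colimits and arbitrary products at the level of $\Sh(M;\cV)$; but the needed argument is already in the literature and can be cited wholesale, so the proposition as stated requires no new ideas beyond assembling the citations and confirming that the generalisation to $\cV$ is purely formal.
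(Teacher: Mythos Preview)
Your proposal is correct and in fact goes beyond what the paper does: the paper provides no proof at all for this proposition, treating it purely as a collection of standard results from \cite{KS} (and \cite{JinTreu} for item (9)), with each clause labeled by its source citation. Your discussion of why the arguments transport to the $\cV$-valued setting via \cite{Hoyois-Scherotzke-Sibilla} is more than the paper offers, though it matches the spirit of the paper's standing conventions announced at the start of the preliminaries.
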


\begin{remark}\label{rem:DFotimesG}
    When ${F}$ is cohomologically constructible, by Proposition \ref{prop:mses}~(8) we know that $$\Delta^*\mathscr{H}om(\pi_1^*{F}, \pi_2^*{G}) \simeq \Delta^*(\ND{M \times M}(\pi_1^*{F}) \otimes \pi_2^*{G}) \simeq \ND{M}({F}) \otimes {G},$$
    where we define $\ND{M}(F) := \sHom(F, 1_M)$. This special case will be of use in the following sections.
\end{remark}

    Here are some more delicate microsupport estimates. Let $A, B \subset T^*M$. Define a subset $A \,\widehat +\, B \subseteq T^*M$ such that $(x, \xi) \in A \,\widehat +\, B$ if there exists $(a_n, \alpha_n) \in A, (b_n, \beta_n) \in B$ such that
    $$a_n, b_n \rightarrow x, \, \alpha_n + \beta_n \rightarrow \xi, \, |a_n - b_n||\alpha_n| \rightarrow 0.$$
    Let $i: M \hookrightarrow N$ be a closed embedding. Then for $A \subset T^*N$, define $i^{\#}(A) \subseteq T^*M$ such that $(x, \xi) \in i^{\#}(A)$ if there exists $(y_n, \eta_n, x_n, \xi_n) \in A \times T^*M$ such that
    $$x_n, y_n \rightarrow x, \, \eta_n - \xi_n \rightarrow \xi, \, |x_n - y_n||\eta_n| \rightarrow 0.$$

\begin{proposition}\label{prop:noncharacteristic-ms-es}
We have the following results:

\begin{enumerate}
\item (\cite{KS}*{Theorem 6.3.1}) Let $j: U \hookrightarrow M$ be an open embedding, ${F} \in \Sh(U)$. Then
    $$\ms(j_*{F}) \subseteq \ms({F}) \,\widehat +\, N_\text{in}^*(U), \;\; \ms(j_!{F}) \subseteq \ms({F}) \,\widehat +\, N_{out}^*(U).$$
\item (\cite{KS}*{Corollary 6.4.4})
    Let $i: N \hookrightarrow M$ be a closed embedding, ${F} \in \Sh(M)$. Then
    $$\ms(i^*{F}) \subseteq i^{\#}\ms({F}).$$
\end{enumerate}

\end{proposition}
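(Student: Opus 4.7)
Both statements are direct citations from \cite{KS}, so my plan is simply to invoke \cite{KS}*{Theorem 6.3.1} for part (1) and \cite{KS}*{Corollary 6.4.4} for part (2). Nonetheless, it is worth sketching the underlying rationale, since the operations $\,\widehat +\,$ and $i^{\#}$ are nonstandard and will be reused later.

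For part (1), the plan is to reduce to the local half-space model $U = \{x_n > 0\} \subseteq \RR^n$. One tests whether $(x_0, \xi_0) \notin \ms(j_* F)$ using the Kashiwara--Schapira microlocal criterion, which compares sections of $j_* F$ over small half-spaces cut by hyperplanes whose outward conormal is close to $\xi_0$. When $x_0 \in U$ this reduces to $\ms(F)$; when $x_0 \in \partial U$, the behaviour of $j_* F$ near the boundary combines interior codirections of $F$ with inward conormals to $U$, and the $\,\widehat +\,$ operation is designed precisely to record this: sequences $(a_n,\alpha_n) \in \ms(F)$ in $U$ and $(b_n,\beta_n) \in N_\text{in}^*(U)$ on the boundary are allowed to approach the same point with $\alpha_n + \beta_n \to \xi_0$, while the decay condition $|a_n - b_n||\alpha_n| \to 0$ controls how interior microcovectors can accumulate onto the boundary. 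For $j_!$, the role of $N_\text{in}^*$ is exchanged with $N_\text{out}^*$, reflecting the Verdier compatibility $\VD{M} \circ j_! = j_* \circ \VD{U}$ together with $\ms \circ \VD{} = -\ms$.

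For part (2), the analogous reduction is to $i: \{x_n = 0\} \hookrightarrow \RR^n$. One analyzes the microstalks of $i^* F$ through sections of $F$ over small opens transverse to $N$. The $i^{\#}$ operation then captures microcovectors $(y_n,\eta_n) \in \ms(F)$ in $M$ accumulating towards $(x_0, \xi_0)$, with auxiliary points $x_n \in N$ playing the role of tangential references and $|x_n - y_n||\eta_n| \to 0$ bounding how the normal component of $\eta_n$ may drift before being read off as a cotangent vector on $N$.

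The main obstacle in both parts is the delicate double-limit analysis built into the definitions of $\,\widehat +\,$ and $i^{\#}$; these require careful bookkeeping of how microcovectors accumulate onto a lower-dimensional stratum and how one passes between local charts and global estimates. This analytic work is handled once and for all in \cite{KS}*{Chapter 6}. In the remainder of the present paper these estimates will mainly be invoked in situations where cleaner non-characteristic or proper-support hypotheses already hold, so that the hat operations serve principally as flexible fallback tools when those hypotheses fail along a stratum.
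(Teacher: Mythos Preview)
Your proposal is correct and matches the paper's approach: the paper gives no proof at all for this proposition, simply citing \cite{KS}*{Theorem 6.3.1} and \cite{KS}*{Corollary 6.4.4} in the statement itself. Your additional sketch of the reasoning behind $\,\widehat +\,$ and $i^{\#}$ goes beyond what the paper provides, but is accurate and harmless.
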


%\sayCK{Actually, duality was not known among the experts before; at least Vivek didn't know that... In any case, how about make them a small section before we introduce wrappings? Because we definitely need to mention convolution before it.}
%\sayCK{One thing about why duality is new is that the product result, $\Sh_\Lambda(M) \otimes \Sh_{\Sigma}(N) =\Sh_{\Lambda \times \Sigma}(M \times N)$ is (semi-)new. The corresponding statement for wrapped Fukaya categories is in Ganatra-Pardon-Shende(2?) but no one has writen down the sheaf version. Maybe I will move the corresponding content here.}
%%% Definition of convolution
By combining the six-functors, one can produce functors between sheaves on two topological spaces from sheaves on their product. Let $X_i$, $i = 1, 2, 3$, be locally compact Hausdorff topological spaces, and write $X_{ij} = X_i \times X_j$, for $i < j$,  $X_{123} = X_1 \times X_2 \rightarrow X_3 $, and $\pi_{ij}: X_{123} \rightarrow X_{ij}$ for the corresponding projections. For $F \in \Sh(X_{12})$, $G \in \Sh(X_{23})$, the \textit{convolution} is defined to be 
$$G \circ_{X_2} F \coloneqq {\pi_{13}}_! (\pi_{23}^* G \otimes \pi_{12}^* F ) \in \Sh(X_{13}).$$
When there is no confusion what $X_2$ is, we will usually surpass the notation and simply write it as $G \circ F$. This is usually the case when $X_1 = \{*\}$, $X_2 = X$, and $X_3 = Y$ and we think of $X$ as the source and $Y$ as the target, $G \in \Sh(X \times Y)$ as a functor sending $F \in \Sh(X)$ to $G \circ F \in \Sh(Y)$. Note that from its expression, this functor is colimit-preserving. 
 
 %%% The right adjoint of convolution (with a fixed sheaf)
\begin{lemma}[{\cite[Proposition 3.6.2]{KS}}]\label{convrad}
For a fixed $G \in \Sh(X_{23})$, the functor $G \circ (-) : \Sh(X_{12}) \rightarrow \Sh(X_{13})$ induced by convoluting with $G$ has a right adjoint, which we denote by $\sHom^\circ(G,-): \Sh(X_{13}) \rightarrow \Sh(X_{12})$, that is given by
\begin{equation}\label{cvr}
H \mapsto  {\pi_{12}}_* \sHom(\pi_{23}^* G  ,\pi_{13}^! H).
\end{equation}
\end{lemma}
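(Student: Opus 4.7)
The plan is to verify the adjunction $(G\circ -, \sHom^\circ(G,-))$ by a direct chase through the six-functor adjunctions built into the formalism. Since both functors are between presentable stable categories (sheaves of $\cV$-coefficients), it suffices to exhibit a natural equivalence on mapping spaces
\[
\Hom_{\Sh(X_{13})}\bigl(G \circ F,\, H\bigr) \;\simeq\; \Hom_{\Sh(X_{12})}\bigl(F,\, \pi_{12*}\sHom(\pi_{23}^* G, \pi_{13}^! H)\bigr),
\]
natural in $F \in \Sh(X_{12})$ and $H \in \Sh(X_{13})$.

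First I would unfold the definition of $G \circ F$ as ${\pi_{13}}_!(\pi_{23}^* G \otimes \pi_{12}^* F)$ and then apply, in order, the three standard adjunctions: the $(\pi_{13!},\pi_{13}^!)$ adjunction for the proper pushforward, the internal tensor-Hom adjunction $(- \otimes \pi_{23}^* G, \sHom(\pi_{23}^* G, -))$ on $\Sh(X_{123})$, and finally the $(\pi_{12}^*, \pi_{12*})$ adjunction for the projection $\pi_{12}$. Concatenating these yields
\[
\Hom(G\circ F, H) \simeq \Hom(\pi_{23}^*G \otimes \pi_{12}^* F,\, \pi_{13}^! H) \simeq \Hom(\pi_{12}^* F,\, \sHom(\pi_{23}^* G, \pi_{13}^! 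H)),
\]
which upon adjunction across $\pi_{12}$ produces the claimed right adjoint. Each of these adjunctions is part of the Grothendieck six-functor package invoked at the beginning of the preliminary section, so no additional structure is needed.

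I do not anticipate any real obstacles: this is a formal computation and the only thing to check carefully is naturality, which follows because each adjunction is natural and the composition of natural equivalences is natural. One minor point of vigilance is to ensure that we are working in the $\cV$-enriched setting of Hoyois--Scherotzke--Sibilla, where all six functors and their adjunctions are available for locally compact Hausdorff spaces; since $X_{12}, X_{13}, X_{123}$ are products of such spaces this is automatic. Optionally, one can also describe the unit and counit: the unit is obtained by composing the units of $(\pi_{12}^*,\pi_{12*})$ and $(-\otimes \pi_{23}^* G,\sHom(\pi_{23}^* G,-))$ and the counit of $({\pi_{13}}_!,\pi_{13}^!)$, then projecting through the three adjunction isomorphisms, and a dual recipe gives the counit; but for the purposes of this lemma, displaying the mapping-space equivalence above is sufficient by the standard uniqueness of adjoints.
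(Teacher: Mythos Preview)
Your proposal is correct and is precisely the standard argument: the paper does not give its own proof but simply cites \cite[Proposition 3.6.2]{KS}, and the proof there is exactly this chain of adjunctions $(\pi_{13!},\pi_{13}^!)$, $(\otimes,\sHom)$, $(\pi_{12}^*,\pi_{12*})$.
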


%%% Convolution recovers $!$-pushforward and $*$-pullback
\begin{example}
We note that convolution recovers $*$-pullback and $!$-pushforward. For example, let $f: X \rightarrow Y$ be a continuous map and denote by $i: \Gamma_f \subseteq X \times Y$ its graph. Take $X_1 = \{*\}$, $X_2 = X$, and $X_3 = Y$, then for $F \in \Sh(X)$,
$$1_{\Gamma_f} \circ F = {\pi_Y}_! ( 1_{\Gamma_f} \otimes \pi_X^* F) = {\pi_Y}_! i_! i^* \pi_X^* F = f_! F.$$
\end{example}

%%% Associativity
We note that the base change formula implies that convolution satisfies associativity.
\begin{proposition}\label{conv:asso}
Let $F_i \in \Sh(X_{i, i+1})$ for $i = 1, 2, 3$. Then
$$ F_3 \circ_{X_3} (F_2 \circ_{X_2} F_1) = (F_3 \circ_{X_3} F_2) \circ_{X_2} F_1.$$
In particular, if $G_1$, $G_2 \in \Sh(X \times X)$, then there is an identification of functors
$$ G_2 \circ (G_1 \circ (-) ) = (G_2 \circ_X G_1) \circ (-).$$
\end{proposition}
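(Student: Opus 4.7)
The plan is to show that both sides of the associativity identity can be rewritten as a single symmetric expression on the quadruple product $X_1 \times X_2 \times X_3 \times X_4$, obtained by pulling back the three factors along the three projections $\pi_{12}$, $\pi_{23}$, $\pi_{34}$, tensoring, and then applying $\pi_{14,!}$. Concretely, I would introduce the notation $p_{ij}^{S}: \prod_{k \in S} X_k \to X_i \times X_j$ for the projection from the product indexed by $S$ onto the $(i,j)$-factor, and aim to prove that, for $F_i \in \Sh(X_{i,i+1})$,
\[
F_3 \circ_{X_3}(F_2 \circ_{X_2} F_1) \;=\; p_{14,!}^{1234}\bigl(p_{34}^{1234,*}F_3 \otimes p_{23}^{1234,*}F_2 \otimes p_{12}^{1234,*}F_1\bigr) \;=\; (F_3 \circ_{X_3} F_2)\circ_{X_2} F_1.
\]

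For the left-hand side, I would first unfold the inner convolution on $X_1 \times X_2 \times X_3$ and then the outer one on $X_1 \times X_3 \times X_4$. The key geometric input is the Cartesian square
\[
\begin{tikzcd}
X_1 \times X_2 \times X_3 \times X_4 \ar[r, "p_{134}^{1234}"] \ar[d, "p_{123}^{1234}"'] & X_1 \times X_3 \times X_4 \ar[d, "p_{13}^{134}"] \\
X_1 \times X_2 \times X_3 \ar[r, "p_{13}^{123}"'] & X_1 \times X_3
\end{tikzcd}
\]
Proper base change turns $p_{13}^{134,*}\, p_{13,!}^{123}$ into $p_{134,!}^{1234}\, p_{123}^{1234,*}$, and then the projection formula absorbs the outer factor $p_{34}^{134,*}F_3$ inside $p_{134,!}^{1234}$, while compatibility of $*$-pullback with tensor products distributes $p_{123}^{1234,*}$ over the tensor product of the two inner factors. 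Composing $p_{14,!}^{134} \circ p_{134,!}^{1234} = p_{14,!}^{1234}$ and collecting the pullbacks $p_{12}^{1234,*} = p_{12}^{123,*} \circ p_{123}^{1234}$, $p_{23}^{1234,*} = p_{23}^{123,*} \circ p_{123}^{1234}$, and $p_{34}^{1234,*} = p_{34}^{134,*} \circ p_{134}^{1234}$ yields the symmetric expression.

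For the right-hand side I would run the symmetric argument using the Cartesian square
\[
\begin{tikzcd}
X_1 \times X_2 \times X_3 \times X_4 \ar[r, "p_{234}^{1234}"] \ar[d, "p_{124}^{1234}"'] & X_2 \times X_3 \times X_4 \ar[d, "p_{24}^{234}"] \\
X_1 \times X_2 \times X_4 \ar[r, "p_{24}^{124}"'] & X_2 \times X_4
\end{tikzcd}
\]
together with the projection formula and the identification $p_{14,!}^{124} \circ p_{124,!}^{1234} = p_{14,!}^{1234}$. Both chains terminate at the same object of $\Sh(X_1 \times X_4)$, giving associativity. The second statement in the proposition is then the special case $X_1 = \{*\}$.

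The proof is essentially bookkeeping: there is no real obstacle beyond carefully tracking the projections and invoking base change and projection formula at each step. The one point to flag is that the base change here is the usual $f^* g_! \simeq g'_!\, f'^*$, which requires the $(-)_!$ side, and hence the hypothesis of a locally compact Hausdorff setting as recalled earlier; all the projections involved are continuous maps between such spaces, so the application is standard.
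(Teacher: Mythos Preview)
Your proposal is correct and is exactly the standard argument the paper has in mind: the paper does not spell out a proof but simply remarks that ``the base change formula implies that convolution satisfies associativity,'' and later (in the proof of Theorem \ref{converse-statement-Verdier}) refers back to this computation as organizing projections via base change and the projection formula. Your symmetric rewriting on $X_1\times X_2\times X_3\times X_4$ is precisely that argument made explicit.
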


%%% Parametrized convolution
We will use a relative version of convolution. Let $B$ be a locally compact Hausdorff space viewed as a parameter space.  Regard $F \in \Sh(X_{12} \times B)$, $G \in \Sh(X_{23} \times B)$ as $B$-family sheaves, one can similarly define the relative convolution $G \circ|_B F \in \Sh(X_{13} \times B)$ by replacing $\pi_{ij}$ with 
$$\pi_{{ij},B}: X_{123} \times B \rightarrow X_{ij} \times B.$$

%%% Convolution for manifolds
In the case of manifolds, convolution satisfies certain compatibility with microsupport. For $A \subseteq T^* M_{12}$ and $B \subseteq T^* M_{23}$, we set
$$B \circ A = \{ (x,\xi,z,\zeta) \in T^* M_{13} \mid \exists (y,\eta), (x,\xi,y,\eta) \in A, (y,-\eta, z, \zeta) \in B \}.$$
Note if $A$ and $B$ are Lagrangian correspondences satisfying appropriate transversality condition, the set $B \circ A$ is the composite Lagrangian correspondence twisted by a minus sign on the second component. Write $q_{ij}: T^* M_{123} \rightarrow T^* M_{ij}$ to be the projection on the level of cotangent bundles and $q_{2^a3}$ the composition of $q_{23}$ with the antipodal map on $T^* M_2$. Then $B \circ A = q_{13} (q_{2^a3}^{-1}  B \cap q_{12}^{-1} A)$ and (4), (6) and (3) of Proposition \ref{prop:mses} imply the following corollary.
 
%%% Microsupport respects convolution
\begin{corollary}[{\cite[(1.12)]{Guillermou-Kashiwara-Schapira}}]\label{msconv}

Assume the following two conditions
\begin{enumerate}
\item $\pi_{13}$ is proper on $M_1 \times \supp(G) \cap \supp(F) \times M_3$;
\item $q_{2^a 3}^{-1} \ms(G)  \cap q_{12}^{-1} \ms(F)  \cap 0_{M_1} \times T^* M_2 \times 0_{M_3}
\subseteq 0_{M_{123}}.$ 
\end{enumerate} Then
$$\ms( G \circ F ) \subseteq \ms(G) \circ \ms(F).$$
\end{corollary}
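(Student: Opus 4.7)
The plan is to unwind the convolution formula
$$G \circ F = \pi_{13!}\bigl(\pi_{12}^* F \otimes \pi_{23}^* G\bigr)$$
into its three constituent operations --- a pair of smooth pullbacks, a tensor product, and a proper pushforward --- and then bound the microsupport by applying the appropriate estimate from Proposition \ref{prop:mses} to each, checking at each step that the hypotheses (1) and (2) of the statement are precisely what allow the relevant estimate to be invoked.

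First I would apply Proposition \ref{prop:mses}~(5) to the two smooth projections $\pi_{12}, \pi_{23}: M_{123} \to M_{ij}$. Since both are submersions, the estimates are equalities and yield explicit descriptions
$$\ms(\pi_{12}^* F) = q_{12}^{-1}\ms(F) \cap \{\xi_3 = 0\}, \quad \ms(\pi_{23}^* G) = q_{23}^{-1}\ms(G) \cap \{\xi_1 = 0\},$$
sitting inside $T^*M_{123}$ with vanishing covector in the ``missing" factor. Next, to apply the tensor product estimate Proposition~\ref{prop:mses}~(7), I need to check that
$$\ms(\pi_{12}^* F) \cap -\ms(\pi_{23}^* G) \subseteq 0_{M_{123}}.$$
The intersection of these two conic sets is forced to have $\xi_1 = 0 = \xi_3$, and applying the antipodal on the $M_2$-factor turns $-\ms(\pi_{23}^*G)$ into $q_{2^a 3}^{-1}\ms(G) \cap \{\xi_1 = 0\}$ (after a sign bookkeeping on the $M_3$-direction that is already zero). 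This is precisely hypothesis~(2), so (7) applies and gives $\ms(\pi_{12}^*F \otimes \pi_{23}^*G) \subseteq \ms(\pi_{12}^*F) + \ms(\pi_{23}^*G)$. Finally, hypothesis~(1) says $\pi_{13}$ is proper on the support of the tensor product, so $\pi_{13!} = \pi_{13*}$ and Proposition~\ref{prop:mses}~(4) applies to give
$$\ms(G \circ F) \subseteq \pi_{13,\pi}\bigl((d\pi_{13})^{-1}(\ms(\pi_{12}^*F) + \ms(\pi_{23}^*G))\bigr).$$

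The last step is purely set-theoretic: identify the right hand side with $\ms(G) \circ \ms(F) = q_{13}\bigl(q_{2^a 3}^{-1}\ms(G) \cap q_{12}^{-1}\ms(F)\bigr)$. Unwinding, a point $(x_1, x_3, \xi_1, \xi_3)$ lies in the pushforward if there exist $x_2 \in M_2$ and covectors with $(x_1,x_2,\xi_1,\xi_2) \in \ms(F)$ and $(x_2,x_3,-\xi_2,\xi_3) \in \ms(G)$ --- which is exactly the definition of $\ms(G) \circ \ms(F)$ unraveled through the antipodal on $M_2$. I expect this matching to be the main obstacle: one must track signs carefully through the three projections, remember that the tensor estimate adds (not unions) covectors so that the $\xi_2$-components on the $F$-side and $G$-side get identified with opposite signs as required by $q_{2^a 3}$, and confirm that the image under $\pi_{13,\pi}$ drops exactly the $x_2$ and $\xi_2 = 0$ data after cancellation. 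Once this bookkeeping is in hand, the containment follows directly from the chain of estimates.
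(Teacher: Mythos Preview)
Your proposal is correct and follows essentially the same strategy the paper indicates: the sentence immediately preceding the corollary says that items (4), (6) and (3) of Proposition~\ref{prop:mses} imply the result, i.e.\ one decomposes the convolution into its constituent six-functor operations and applies the appropriate microsupport estimate to each. Your choice of items (5), (7), (4) differs cosmetically from the paper's (3), (6), (4) --- you handle $\pi_{12}^*F \otimes \pi_{23}^*G$ via two submersive pullbacks followed by a tensor product, whereas the alternative is to form the exterior product on $M_1 \times M_2 \times M_2 \times M_3$ and restrict along the $M_2$-diagonal --- but both routes produce the same intermediate estimate and the set-theoretic identification at the end is identical.
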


%%% Microsupport estimation for  $B$-family case
A similar microsupport estimation holds for the $B$-family case. One noticeable difference for the microsupport estimation is that instead of $T^* M_{ij}$ and $T^* M_{123}$ one has to consider $T^* M_{ij} \times T^* B$ and $T^* M_{123} \times (T^* B \times_B T^* B)$ instead. Here $\times_B$ is taken over the diagonal $B \hookrightarrow B \times B$. Also the projection $r_{ij}: T^* M_{123} \times (T^* B \times_B T^* B) \rightarrow T^* M_{ij} \times T^* B$ for the $B$-component is now given by the first projection (with a minus sign) for $ij =12$, the addition for $ij = 13$, and the second projection $ij = 23$. Other than that the microsupport estimation is similar to the ordinary case.

The following results show that microsupport estimates detect propagation of sections in a continuous family of deformation of open subsets.

\begin{lemma}[non-characteristic deformation lemma, {\cite[Proposition 2.7.2]{KS}}]
    Let ${F} \in \Sh(M)$ and $\{U_t\}_{t \in \mathbb{R}}$ be a family of open subsets and $Z_t = \bigcap_{t > s}\overline{U_t \backslash U_s}$. Suppose that
    \begin{enumerate}
      \item $U_t = \bigcup_{s < t} U_s$, for $-\infty < t < +\infty$;
      \item $\overline{U_t \backslash U_s} \cap \mathrm{supp}({F})$ is compact, for $-\infty < s < t < +\infty$;
      \item $\Gamma_{M \backslash U_t}({F})_x = 0$, for $x \in Z_s \backslash U_t, \, -\infty < s \leq t < +\infty$.
    \end{enumerate}
    Then for any $t \in \mathbb{R}$ we have
    $$\Gamma\bigg(\bigcup_{s \in \mathbb{R}} U_s, {F}\bigg) \xrightarrow{\sim} \Gamma(U_t, {F}).$$
\end{lemma}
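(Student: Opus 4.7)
The plan is to reduce the assertion to showing that for every pair $s < t$ the restriction $\Gamma(U_t, F) \to \Gamma(U_s, F)$ is an equivalence; condition~(1), together with the sheaf property applied to a countable cofinal chain in $\{U_s\}_{s < t}$, then yields
$$\Gamma\Big(\bigcup_{s} U_s, F\Big) \xrightarrow{\sim} \lim_{s < t} \Gamma(U_s, F) \xleftarrow{\sim} \Gamma(U_t, F).$$
Fixing $s_0 < t_0$, the fiber of $\Gamma(U_{t_0}, F) \to \Gamma(U_{s_0}, F)$ is the sections-with-support object $\Gamma_{U_{t_0} \setminus U_{s_0}}(U_{t_0}, F)$, so the goal is to prove that it vanishes.

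I would establish this by a connectedness argument on $t \in [s_0, t_0]$. Let $I = \{ t : \Gamma_{U_t \setminus U_{s_0}}(U_t, F) = 0 \}$; trivially $s_0 \in I$. For closedness of $I$, if $t_n \nearrow t$ with $t_n \in I$, condition~(1) gives $U_t = \bigcup_n U_{t_n}$, and the sheaf property upgrades the fiber sequences
$$\Gamma_{U_{t_n} \setminus U_{s_0}}(U_{t_n}, F) \to \Gamma(U_{t_n}, F) \to \Gamma(U_{s_0}, F)$$
to identify $\Gamma_{U_t \setminus U_{s_0}}(U_t, F)$ with $\lim_n \Gamma_{U_{t_n} \setminus U_{s_0}}(U_{t_n}, F) = 0$, so that $t \in I$. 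Condition~(2) ensures that the relevant supports are well-behaved so that one can pass to a countable cofinal sequence without issue.

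The main obstacle is showing that $I$ is open to the right: given $t \in I$ with $t < t_0$, produce $\epsilon > 0$ with $t + \epsilon \in I$. By excision, $\Gamma_{U_t \setminus U_{s_0}}(U_{t+\epsilon}, F) = \Gamma_{U_t \setminus U_{s_0}}(U_t, F) = 0$, so it reduces to showing $\Gamma_{U_{t+\epsilon} \setminus U_t}(U_{t+\epsilon}, F) = 0$. Here condition~(3) is indispensable: for every $x \in Z_t \setminus U_t$, the vanishing $\Gamma_{M \setminus U_t}(F)_x = 0$ supplies an open neighbourhood $V_x$ in which every section of $F$ supported outside $U_t$ vanishes. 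Condition~(2) makes the sets $\overline{U_{t+\epsilon} \setminus U_t} \cap \supp(F)$ compact, and as $\epsilon \searrow 0$ these decrease into $Z_t \cap \supp(F)$; extracting a finite subcover from the $V_x$'s produces, for sufficiently small $\epsilon$, an open $V \supseteq \overline{U_{t+\epsilon} \setminus U_t} \cap \supp(F)$ on which every section supported in $U_{t+\epsilon} \setminus U_t$ vanishes, yielding the required vanishing. I expect the delicate point to lie precisely in promoting the stalkwise information on the compact frontier $Z_t$ to the global vanishing of sections-with-support, which is exactly where the three hypotheses interact most tightly.
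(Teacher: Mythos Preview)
The paper does not prove this lemma; it is stated in the preliminaries with a bare citation to Kashiwara--Schapira \cite[Proposition 2.7.2]{KS} and no argument is given. So there is nothing in the paper to compare your proposal against. Your outline is essentially the standard connectedness strategy from that reference.

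One point in your openness step deserves care. You assert that the compacta $\overline{U_{t+\epsilon}\setminus U_t}\cap\supp(F)$ decrease into $Z_t\cap\supp(F)$ as $\epsilon\searrow 0$, and then cover by neighbourhoods coming from condition~(3) at the single parameter $t$. But $Z_t=\bigcap_{s<t}\overline{U_t\setminus U_s}$ is the \emph{inner} frontier of $U_t$, while $\bigcap_{\epsilon>0}\overline{U_{t+\epsilon}\setminus U_t}$ is the \emph{outer} one, and these need not agree. For instance, take $U_t=(-\infty,t)$ for $t\le 1$ and $U_t=(-\infty,t)\cup(2,1+t)$ for $t>1$: then $Z_1=\{1\}$ while $\bigcap_{\epsilon}\overline{U_{1+\epsilon}\setminus U_1}=\{1,2\}$, and condition~(3) for the pair $(s,t)$ with $t=1$ gives stalk vanishing of $\Gamma_{M\setminus U_1}(F)$ only at $x=1$, not at $x=2$. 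The information at $x=2$ is available from condition~(3), but only via pairs $(s,t')$ with $t'>1$, i.e.\ for the sheaves $\Gamma_{M\setminus U_{t'}}(F)$ rather than $\Gamma_{M\setminus U_t}(F)$. The actual argument in \cite{KS} therefore does not simply cover a fixed compact by stalk-neighbourhoods at a single time; it uses condition~(3) across a range of parameters and the compactness from~(2) to extract a uniform $\epsilon$. Your instinct that this is the delicate step is correct, but the repair needs this extra layer.
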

%\sayWL{I added some extra lemmas that I need in the proof.}
%\sayCK{Please feel free to do so without notice.}

\begin{lemma}[microlocal Morse lemma, {\cite[Corollary 5.4.19]{KS}}]\label{prop:morselemma}
Let $F \in \Sh(M)$ and let $\phi: M \rightarrow \RR^1$ be a $C^1$-function such that $\phi: \supp(F) \rightarrow \RR$ is proper.
Let $a < b \in \RR$.
\begin{enumerate}
\item Assume $d \phi(x) \not \in \ms(F)$ for all $x \in M$ such that $a \leq \phi(x) < b$. Then the natural maps:
\begin{align*}
\Gamma(\phi^{-1}(-\infty,b); F) 
&\rightarrow \Gamma(\phi^{-1}(-\infty,a];F) \\
&\rightarrow \Gamma(\phi^{-1}(-\infty,a);F)
\end{align*}
are isomorphisms.
\item Assume $-d \phi(x) \not \in \ms(F)$ for all $x \in M$ such that $a < \phi(x) \leq b$ (resp. $a \leq \phi(x) < b$). 
Then the natural map:
$$ \Gamma_{\phi^{-1}(-\infty,a]}(X; F)  \rightarrow \Gamma_{\phi^{-1}(-\infty,b]}(X; F)$$
(resp. $\Gamma_c(\phi^{-1}(-\infty,a);F) \rightarrow \Gamma_c(\phi^{-1}(-\infty,b);F)$)
is an isomorphism.
\end{enumerate}
\end{lemma}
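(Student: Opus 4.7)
The plan is to deduce both parts directly from the non-characteristic deformation lemma stated above, using the defining property of the microsupport to supply its pointwise vanishing hypothesis.

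For part (1), I would apply the non-characteristic deformation lemma to the family of open sublevel sets $U_t = \phi^{-1}(-\infty, t)$. Conditions (1) and (2) of that lemma follow immediately from continuity of $\phi$ and from the assumed properness of $\phi|_{\supp F}$. For condition (3), one computes $Z_s = \bigcap_{t>s} \overline{U_t \setminus U_s} = \phi^{-1}(s)$, so $Z_s \setminus U_t$ is empty when $s < t$ and equals $\phi^{-1}(s)$ when $s = t$. At any $x$ with $\phi(x) = s \in [a, b)$, the hypothesis $d\phi(x) \notin \ms(F)$ combined with the very definition of $\ms(F)$ (as the codirections where local cohomology of a closed half-space fails to vanish) gives $\Gamma_{M \setminus U_s}(F)_x = \Gamma_{\{\phi \geq s\}}(F)_x = 0$, which is exactly condition (3). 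The deformation lemma then yields $\Gamma(\phi^{-1}(-\infty, b); F) \xrightarrow{\sim} \Gamma(\phi^{-1}(-\infty, a); F)$. To factor this through $\phi^{-1}(-\infty, a]$, I would write $\Gamma(\phi^{-1}(-\infty, a]; F) = \lim_{\epsilon > 0} \Gamma(\phi^{-1}(-\infty, a + \epsilon); F)$; each term with $a + \epsilon < b$ is already isomorphic to $\Gamma(\phi^{-1}(-\infty, a); F)$ by what was just shown, and both maps in the claimed chain follow.

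For part (2), I would apply part (1) to $-\phi$: the hypothesis $-d\phi(x) \notin \ms(F)$ on $\{a < \phi \leq b\}$ reads as $d(-\phi)(x) \notin \ms(F)$ on $\{-b \leq -\phi < -a\}$, and part (1) produces $\Gamma(\phi^{-1}(a, \infty); F) \xrightarrow{\sim} \Gamma(\phi^{-1}(b, \infty); F)$. Combining with the fiber sequence
\[
\Gamma_{\phi^{-1}(-\infty, c]}(M; F) \to \Gamma(M; F) \to \Gamma(\phi^{-1}(c, \infty); F)
\]
for $c = a$ and $c = b$ yields the stated isomorphism on sections with support in $\phi^{-1}(-\infty, c]$. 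The compactly supported variant is analogous, using the fiber sequence $\Gamma_c(\phi^{-1}(-\infty, c); F) \to \Gamma(M; F) \to \Gamma(\phi^{-1}[c, \infty); F)$; the exchange of open and closed endpoints, induced by $j_! j^* F$ in place of $i_* i^* F$, is precisely what forces the change of hypothesis to $a \leq \phi(x) < b$. The only substantive step in the whole argument is the translation of $d\phi \notin \ms(F)$ into the pointwise vanishing of local cohomology along $\{\phi \geq s\}$, which is a direct rereading of the definition of microsupport at the smooth test function $\phi - s$; I expect this is the only point requiring care, since everything else is a formal consequence of the non-characteristic deformation lemma.
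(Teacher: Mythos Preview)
The paper does not prove this lemma; it is simply cited from \cite[Corollary 5.4.19]{KS}. So there is no ``paper's own proof'' to compare against. Your approach via the non-characteristic deformation lemma is the standard one and works cleanly for part~(1) and for the $\Gamma_{\phi^{-1}(-\infty,c]}$ half of part~(2).

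There is, however, a genuine gap in your treatment of the compactly supported variant. The sequence you write,
\[
\Gamma_c(\phi^{-1}(-\infty,c);F)\to \Gamma(M;F)\to \Gamma(\phi^{-1}[c,\infty);F),
\]
is \emph{not} a fiber sequence in general: applying $\Gamma(M;-)=p_*$ to $j_!j^*F\to F\to i_*i^*F$ yields $p_*j_!j^*F$ in the first slot, and $p_*j_!$ agrees with $p_!j_!=\Gamma_c(U;-)$ only when $j_!j^*F$ has compact support. The hypothesis is merely that $\phi|_{\supp F}$ is proper, so $\supp(F)\cap\phi^{-1}(-\infty,c]$ need not be compact, and your identification fails. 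The cleanest repair is to apply $p_!$ rather than $p_*$ to the triangle, obtaining
\[
\Gamma_c(\phi^{-1}(-\infty,c);F)\to \Gamma_c(M;F)\to \Gamma_c(\phi^{-1}[c,\infty);F),
\]
and then argue that $\Gamma_c(\phi^{-1}[c,\infty);F)$ is constant in $c\in[a,b)$. This is where the open/closed endpoint swap and the hypothesis on $[a,b)$ genuinely enter; one can, for instance, run the deformation argument directly on $\phi_!F\in\Sh(\RR)$ (whose microsupport is controlled by Proposition~\ref{prop:mses}(4)), which is how Kashiwara--Schapira organize the proof and what makes both the $\Gamma_Z$ and $\Gamma_c$ statements come out symmetrically.
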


\subsection{Microsheaves}\label{sec:micro}
%\sayWL{I'm adding a bit more details in the microsheaf part because I may need some more lemmas in Kashiwara-Schapira.}

Fix a smooth manifold $M$. We recall the construction of microsheaves, a sheaf of categories $\msh$ on the cotangent bundle.
The concept of microsheaves  (also known as the Kashiwara-Schapira stack/sheaf) goes back to \cite{KS}*{Section 6} where they studied the $\mhom$ functor. 
More recently Guillermou introduced the Kashiwara-Schapira stack in the setting of bounded derived categories \cite{Gui}. 
Here, working with (compactly generated) stable categories, we follow the definition in \cite{NadShen}.
%\sayCK{I realize you change the notation of $\mu sh$ to $\msh$. I personally wouldn't mind it but I have to mention to you that the notation which Vivek, David, and their collaborators, if I recall correctly, is $\mu sh$.}

We first define a presheaf, i.e., a functor 
\begin{align*}
\msh^\pre: (\Op_{T^* M}^{\Rp})^{op} &\longrightarrow \st \\
\Omega &\longmapsto \Sh(M)/ \Sh_{\Omega^c}(M)
\end{align*}
where we restrict our attention to conic open sets $\Op_{T^* M}^{\Rp}$,
and the target $\st$ is the (large) category of stable categories with morphisms being exact functors.
We denote by $\msh$ its sheafification and refer it as the sheaf of microsheaves.
Note that since there is a canonical identification 
\begin{align*}
\msh^\pre(T^* U) &\xrightarrow{\sim} \Sh(U) \\
F &\mapsto F |_U
\end{align*}
and sheaves on the base $U \mapsto \Sh(U)$ forms a sheaf in $\st$,
compatibility of sheafification and pullback implies that $\msh |_{0_M} = \Sh$.

The first non-trivial statement is that the Hom's of $\msh$ can be computed by $\mhom$.
\begin{definition}[{\cite[Definition 4.1.1]{KS}}]
Let $F$, $G \in \Sh(M)$, we set
$$\mhom(F,G) \coloneqq \mu_{\Delta_M}   \sHom(p_2^* F, p_1^! G)$$
where $\mu_{\Delta_M}$ is the microlocalization along the diagonal \cite[Section 4.3]{KS}.
\end{definition}

\begin{definition_theorem}[{ \cite[Theorem 6.1.2]{KS}, \cite[Corollary 5.5.]{Gui} }] \label{mu-hom_as_hom} 
Let $F\,$and $G \in \Sh(M)$ represent objects with the same name in $\msh(\Omega)$.
Then there is an canonical isomorphism
$$ \mhom(F,G) \rightarrow \sHom_{\msh(\Omega)}(F,G)$$
between sheaves on $\Omega$. Thus, we abuse the notation and simply use $\mhom$ to denote the Hom of $\msh$ valued in sheaves on conic open sets of $T^* M$.
\end{definition_theorem}

We note also that the microsupport triangular inequality from (1) of Proposition \ref{prop:mses} implies that, 
for an object $F \in \msh^\pre(\Omega)$ represented by a sheaf with the same name,
the closed subset in $\Omega$, $\ms_\Omega(F) \coloneqq \ms(F) \cap \Omega$ is independent of representative in $\Sh(M)$.
This induces the notion of microsupport for objects in $\msh(\Omega)$.
That is, we define for $F \in \msh(\Omega)$, a point $(x,\xi) \in T^* M$ is in $\ms(F)$ if $F \neq 0 \in \msh_{(x,\xi)}$.
Note that this is notion of microsupport is well-defined and coincide with the originally one since
\begin{equation} \label{microsheaves_at_stalks}
 \msh_{(x,\xi)} = \msh^\pre_{(x,\xi)} = \clmi{\Omega \ni (x, \xi) } \msh^\pre(\Omega) = \Sh(M) / \Sh_{T^* M \setminus \Rp (x,\xi)}(M).
\end{equation}

\begin{remark}
We remark our choice of the coefficient category $\st$ for people who is familiar with the higher categorical derived setting.
First note that $\msh^\pre$ can be regarded also as presheaf in $\PrLst$ or $\PrRst$,
the (large) categories of presentable categories with morphisms being left or right adjoints.
These two are the more natural setting for the purpose of obtaining adjunctions as we shall see later when we restrict
to sheaves with a fixed microsupport condition.
However, stalks behaves badly in this setting and one can, in fact, compute that the inclusion $\Loc \rightarrow \Sh$
of local systems into sheaves is isomorphic on stalks.
On the contrary, the virtue of $\st$ is that isomorphisms can be checked on stalks \cite{Rozenblyum-filtered}.
In fact, the Definition-Theorem \ref{mu-hom_as_hom} in \cite[Theorem 6.1.2]{KS} is a statement on the stalk. 
The form in which it's written here is a simple corollary of the cited Theorem and this fact about $\st$.
\end{remark}

We note that since $\msh$ is conic, $\msh |_{\dT^* M}$ descents naturally to a sheaf on $S^* M$, 
and we abuse the notation, denoting it by $\msh$ as well.
Similar statements for $\mhom$ and $\ms^\infty$ hold in this case.

\begin{definition}
Fix a subanalytic isotropic subset $\Lambda \subseteq S^*M$. 
Let $\msh_\Lambda$ denote the subsheaf of $\msh$ which consists of objects microsupported in $\Lambda$ or $\widehat{\Lambda} = \Lambda \times \RR_{>0}$.
%\sayCK{I for now use the notation $\hat{\Lambda}$ to mean the cone over $\Lambda$. 
%Free feel to change it if you can come up with a better notation.}
%\sayCK{$\widehat{\Lambda} = 0_M \cup \Lambda \times \RR_{>0}$?}
%\sayWL{Well, I think we localize along the zero section. My intension is that this is essentially the same as $\msh_\Lambda$. Anyway it probably doesn't matter as we do not use it a lot.}
\end{definition}

We note that because of Equation (\ref{microsheaves_at_stalks}) this sheaf coincides with the sheafification of the following 
subpresheaf $\msh^\pre_\Lambda$ of $\msh^\pre$ (where $\ms_\Omega(F) := \ms(F) \cap \Omega$):

\begin{align*}
\msh^\pre_\Lambda: (\Op_{T^* M}^{\Rp})^{op} &\longrightarrow \st \\
\Omega &\longmapsto \{F \in \msh^\pre(\Omega) | \ms_\Omega(F) \subseteq \Lambda \}
\end{align*}

We note that $\msh^\pre_\Lambda$ in fact takes value in $\PrLcs$, the category of compactly generated stable categories
whose morphisms are given by functor which admits both the left and the right adjoints,
and its sheafification in $\st$ and $\PrLcs$ coincide.
In other word, restriction maps $\msh_\Lambda(\Omega) \rightarrow \msh_\Lambda(\Omega^\prime)$ admit both left and right adjoints. 
%One observes that, for any open subset $U \subseteq M$ and $\overline{U} \subseteq T^*M$ such that $U = \overline{U} \cap 0_M$, 
%$$\msh_\Lambda^\pre(\overline{U}) = \msh_\Lambda^\pre(T^*U) = \Sh_\Lambda(U).$$
%\sayWL{This might rely on the existence of left and right adjoints of inclusion $\Sh_\Lambda(U) \hookrightarrow \Sh_\Lambda(M)$.}
%\sayWL{Alternatively, using Nadler's definition, it may be unnecessary to show $\Sh_{\Lambda \cap T^*U}(M)/\Sh_{T^*U^c}(M) \simeq \Sh_\Lambda(U)$ any more.}
%Since $U \mapsto \Sh_\Lambda(U)$ is a sheaf, the sheaf $\msh_\Lambda$ restricts to ordinary sheaves on the zero section
%$\msh_\Lambda |_{0_M} = \Sh_\Lambda$.
In particular, we will refer to the restriction map associated to $\dT^* M \subseteq T^* M$ as
the microlocalization functor along $\Lambda$
$$m_\Lambda: \Sh_\Lambda(M) \rightarrow \msh_\Lambda(\Lambda)$$
and denote its left and right adjoint by $m_\Lambda^l$ and $m_\Lambda^r$.
Note that, by the definition, $\msh_\Lambda$ is a constructible sheaf supported on $\Lambda$ or $\widehat{\Lambda}$,
and we will use the same notation $\msh_\Lambda$ to denote the corresponding sheaf on $\Lambda$ or $\widehat{\Lambda}$.

    One can estimate the singular support of the sheaf $\mhom({F, G})$ in $T^*M$. Recall that for $A, B \subset X$, we define the normal cone $C(A, B)$ such that $(x, \xi) \in TX$ iff there exists $a_n \in A, b_n \in B, c_n \in \mathbb{R}$ such that
    $$a_n, b_n \rightarrow x, \;\; c_n(a_n - b_n) \rightarrow \xi, \;\; n \rightarrow \infty.$$

\begin{proposition}[\cite{KS}*{Corollary 5.4.10 \& Corollary 6.4.3}]\label{prop:ss-muhom}
    Let ${F, G} \in \Sh(M)$.
    %\sayWL{To stay in compactly generated categories, we can assume that both are constructible.}
    %\sayCK{I think it's fine to state them in the general form. We will define the full $\msh$ anyway. We can just point it out when we want results to stay in the compactly generated categories.}
    Then
    $$\ms(\mhom({F, G})) \subseteq C(\ms({F}), \ms({G})).$$
    In particular, $\mathrm{supp}(\mhom({F, G})) \subseteq \ms({F}) \cap \ms({G})$.
\end{proposition}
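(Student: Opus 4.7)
The plan is to unfold the definition $\mhom(F,G) = \mu_{\Delta_M}\sHom(p_2^*F, p_1^!G)$ and apply two standard microsupport estimates from Kashiwara-Schapira---one for the internal Hom (Corollary 5.4.10) and one for microlocalization along a submanifold (Corollary 6.4.3)---in sequence.

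The first step is to bound the microsupport of the auxiliary sheaf $H := \sHom(p_2^*F, p_1^!G)$ on $M \times M$. Since $p_1$ and $p_2$ are submersions, item (5) of Proposition \ref{prop:mses} yields $\ms(p_1^!G) \subseteq \ms(G) \times 0_M$ and $\ms(p_2^*F) \subseteq 0_M \times \ms(F)$, and these closed conic subsets meet only along the zero section. Item (8) of the same proposition then applies to give
$$\ms(H) \;\subseteq\; \ms(p_1^!G) - \ms(p_2^*F) \;\subseteq\; \ms(G) \times (-\ms(F)).$$

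The second step is to pass through the microlocalization functor $\mu_{\Delta_M}$. Corollary 6.4.3 of Kashiwara-Schapira controls $\ms(\mu_Z H)$ in terms of a normal-cone expression built from $\ms(H)$ relative to the submanifold $Z$. Specializing to $Z = \Delta_M$, using the canonical identification $T^*_{\Delta_M}(M \times M) \cong T^*M$ via $(x,x;\xi,-\xi) \mapsto (x,\xi)$, and combining with the symplectic identification $T(T^*M) \cong T^*(T^*M)$, the a priori asymmetric bound $\ms(G) \times (-\ms(F))$ on $M \times M$ translates into exactly the symmetric normal cone $C(\ms(F),\ms(G))$ in $T(T^*M)$. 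The support statement then follows at once: for any sheaf on $T^*M$, the base projection of its singular support equals its support, while $C(A,B)$ projects down to $\overline{A} \cap \overline{B}$; since $\ms(F)$ and $\ms(G)$ are closed, this gives $\mathrm{supp}(\mhom(F,G)) \subseteq \ms(F) \cap \ms(G)$.

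The main obstacle, as I see it, is the bookkeeping in Step two: one has to verify that the Fourier-Sato transform underlying $\mu_{\Delta_M}$ (which exchanges tangent and cotangent directions along $\Delta_M$), together with the sign twist in the identification $T^*_{\Delta_M}(M \times M) \cong T^*M$, really converts the asymmetric estimate $\ms(G) \times (-\ms(F))$ into the symmetric normal cone $C(\ms(F),\ms(G))$ rather than some sign-twisted or asymmetric variant. Once this compatibility is pinned down, the two cited corollaries assemble cleanly into the proposition.
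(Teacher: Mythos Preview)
Your proposal is correct and follows precisely the route signposted by the citation \cite{KS}*{Corollary 5.4.10 \& Corollary 6.4.3}: bound $\ms\bigl(\sHom(p_2^*F,p_1^!G)\bigr)$ first, then push through $\mu_{\Delta_M}$. The paper gives no proof of its own here---it simply records the result---so there is nothing further to compare; your sketch is exactly the argument one would supply. One minor remark: you can reach the Step~1 bound more directly via item~(3) of Proposition~\ref{prop:mses} (equivalently \cite{KS}*{Corollary 5.4.10}) rather than combining items~(5) and~(8), which avoids even the easy transversality check.
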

\begin{remark}\label{rem:ss-muhom}
    By Proposition \ref{prop:mses}~(4) \cite[Proposition 5.4.4]{KS}, we can show that \cite[Corollary 6.4.4 \& 6.4.5]{KS} for $\pi: T^*M \rightarrow M$ and $\dot{\pi}: \dot T^*M \rightarrow M$ we have
    \begin{align*}
        \ms(\pi_* \mhom({F, G})) &\subset \pi_\pi(d\pi^*)^{-1}C(\ms(F), \ms(G)) = -\ms(F) \,\widehat+\, \ms(G), \\
        \ms(\dot\pi_* \mhom({F, G})) &\subset \dot\pi_\pi(d\dot\pi^*)^{-1}C(\ms(F), \ms(G)) = -\ms(F) \,\widehat+_\infty\, \ms(G).
    \end{align*}
\end{remark}

In general, sheafifying a category-coefficient sheaf is complicated.
However, we notice that our on assumption on $\Lambda$ implies that $\msh_\Lambda^\pre$ stabilizes after restricting to small open. 
In fact $\msh_\Lambda$ is constructible and on small open sets it admits a simple description as mentioned in \cite[3.4]{Nadler-pants}: 
For any $(x,\xi) \in \Lambda$, we may choose a small open ball $\Omega \subseteq S^* M$ containing $\Lambda$
such that $\msh_\Lambda(\Omega)$ fits in a fiber sequence,
$$ K(B,\Omega) \hookrightarrow \Sh_\Lambda(B,\Omega) \rightarrow \msh_\Lambda(\Omega),$$
where $B = \pi_\infty (\Omega)$, $\Sh_\Lambda(B,\Omega)$ consists of sheaves $F$ on $B$ such that 
$\msif(F) \cap \Omega \subseteq \Lambda$, and $K(B,\Omega)$ is its subcategory so that 
$\msif(F) \cap \Omega = \varnothing$.
%\sayWL{Same as above, we should decide whether these consist only of constructible sheaves or of all sheaves.}
%\sayCK{It's actually less a problem here because we can shrink $B$ to avoid the problem of having too less objects. (This is the reason I wrote the wrong definition in the first place.) But sure for the sake of consistence we probably should do that.}
%\sayWL{If I remembered correctly, this is basically Nadler's definition for microsheaves.}
%\sayCK{Technically, Nadler didn't define a sheaf but just say there's a sheaf who admits local description like this. But sure, some words with the restriction maps can probably decide the whole sheaf.}

%\sayCK{I Commented out the statement measuring the microsupport of $\mhom$; I introduced the whole $\msh$ anyway and I don't see other reason why we need it.}
%\sayWL{This is used in Lemma 5.23 (sorry for the incorrect labelling). We need an estimation when $\ms(F) \neq \ms(G)$.}
%\sayCK{I must've messed up the labeling that I cannot find Lemma 4.14?}

Consequently, one can characterize the stalks $\msh_\Lambda$ as follows. This is a consequence of (quantized) contact transformation \cite[Corollary 7.2.2]{KS}, which asserts that, on small neighborhoods on $S^* M$, $\msh^\pre$ looks the same everywhere.%\sayWL{We may also need to add a result on invariance of $\msh$ following from contact transformation in \cite[7.2]{KS} or \cite{NadShen}. Oh I think we may need it because in many cases, we need to assume that $\pi:\Lambda \rightarrow M$ is finite, e.g. Lem \ref{lem:refine-cutoff}, which requires a generic perturbation. Or in Def \ref{def:goodcover}.}\sayCK{Sure I will add it. But for what purpose again? The one I can think of is for microstalks?}\sayCK{I mention contact transform here. From my impression, this how "detailed" Vivek and David like to mention it.}

\begin{theorem}[\cite{Gui}*{Proposition 6.6 \& Lemma 6.7}, \cite{JinTreu}*{Section 3.8 \& 3.9}, \cite{NadShen}*{Corollary 5.4}]
    Let $p = (x, \xi)$ be a smooth point in the subanalytic isotropic subset $\Lambda \subseteq S^{*}M$. Then the stalk of microsheaves is $\msh_{\Lambda,p} \simeq \cV$.
\end{theorem}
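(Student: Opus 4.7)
The plan is to invoke the quantized contact transformation to reduce to a standard local model, and then verify the equivalence by an explicit microstalk computation in that model.

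First, since $p = (x, \xi) \in \Lambda$ is a smooth Legendrian point of the subanalytic isotropic $\Lambda$, the quantized contact transformation \cite[Corollary 7.2.2]{KS} produces a contactomorphism on a small conic neighborhood $\Omega$ of $p$ sending the pair $(\Omega, \Lambda \cap \Omega)$ to $(\Omega', N^*_\infty\{x_n = 0\}_+ \cap \Omega')$ for some conic neighborhood $\Omega' \subset S^*\RR^n$ of $(0, dx_n)$. This contactomorphism lifts to an equivalence of presheaves $\msh^\pre_\Lambda(\Omega) \simeq \msh^\pre_{N^*_\infty\{x_n = 0\}_+}(\Omega')$, and therefore to an equivalence of their sheafifications at the respective points, reducing the problem to computing the stalk at $(0, dx_n)$ in the local model.

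Next, I would compute the stalk in the local model using the presheaf description recalled in Section \ref{sec:micro} (following \cite[3.4]{Nadler-pants}): for a sufficiently small conic open $\Omega' \ni (0, dx_n)$ with $B = \pi_\infty(\Omega')$ a small contractible ball around $0$, there is a fiber sequence
$$ K(B, \Omega') \hookrightarrow \Sh_\Lambda(B, \Omega') \twoheadrightarrow \msh_\Lambda(\Omega').$$
In the model, the category $\Sh_\Lambda(B, \Omega')$ is generated by the constant sheaf $1_B$ and the step sheaf $1_{\{x_n \geq 0\} \cap B}$, while $K(B, \Omega')$ consists of sheaves whose microsupport avoids $\Omega'$ and is generated by $1_B$. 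The microstalk functor $\mu_p$ \cite[Proposition 7.5.3]{KS}, which in this model is (up to shift) $F \mapsto \cone(F_{x_n < 0, 0} \to F_0)$, kills $1_B$ and sends $1_{\{x_n \geq 0\} \cap B}$ to the tensor unit $1_\cV$. Passing to the colimit over $\Omega' \ni (0, dx_n)$ and Ind-completing, $\mu_p$ descends to a colimit-preserving functor $\msh_{\Lambda, p} \to \cV$ with a fully faithful left adjoint $V \mapsto V \otimes 1_{\{x_n \geq 0\}}$, exhibiting the equivalence $\msh_{\Lambda, p} \simeq \cV$.

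The main technical step is the identification of generators of $\Sh_\Lambda(B, \Omega')$ modulo the locally constant contributions in the local model, which follows from the microlocal Morse lemma (Lemma \ref{prop:morselemma}) applied to linear functions transverse to $\Lambda$ at $(0, dx_n)$: the propagation estimates force every object of $\Sh_\Lambda(B, \Omega')$ to be built from the two listed generators, and the two-term description of the quotient then matches the claimed formula for the microstalk.
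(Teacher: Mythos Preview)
Your proposal is correct and follows the same approach the paper indicates: the paper does not give a detailed proof of this theorem but simply cites the references and remarks that it ``is a consequence of (quantized) contact transformation \cite[Corollary 7.2.2]{KS}, which asserts that, on small neighborhoods on $S^* M$, $\msh^\pre$ looks the same everywhere.'' Your reduction to the local model $N^*_\infty\{x_n = 0\}_+$ via contact transformation and explicit computation of the quotient there is exactly the argument those references carry out, so you have supplied the details the paper omits.
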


%\begin{theorem}[Guillermou \cite{Gui}*{Theorem 11.5}]\label{thm:Gui}
%    Let $\Lambda \subset S^{*}M$ be a smooth Legendrian submanifold and $\cV = \Bbbk$-$\Perf$ for a ring $\Bbbk$. Suppose the Maslov class $\mu(\Lambda) = 0$ and $\Lambda$ is relative spin, then as sheaves of categories
%    $$\msh_\Lambda \xrightarrow{\sim} \Loc_\Lambda.$$
%\end{theorem}
%\sayWL{We may discuss if it is necessary to mention the case for ring spectra.}
%\sayCK{Actually, when we do need this theorem again? In any case, the ring spectra case should follow either from \cite{Nadler-Shende} or some Xin's paper.
%Since we are using the easy statement which assumes the vanishing of obstructions.}

    We know that the microlocalization induces morphisms
    $$\mhom(F, G) \rightarrow \mhom(F, G)|_{S^{*}M}, \;\; \sHom(F, G) \rightarrow \dot\pi_*(\mhom({F, G})|_{S^{*}M}).$$
    By \cite[Equation (4.3.1)]{KS}, we immediately know that the second morphism fits into the following Sato's fiber sequence. This will be an important ingredient for the Sato-Sabloff fiber sequence in Section \ref{sec:sato-sab}. %\sayCK{I actually don't see why this depends on the discussion about assuming you're happy with $\mhom(F,G) \coloneqq \mu_{\Delta_M}   \sHom(\pi_2^* G, \pi_1^! F)$. In that case, Sato's fiber sequence is by \cite[Proposition 3.7.5]{KS} and \cite[Proposition 4.4.2]{KS}.}\sayWL{Sorry this is not written in a proper way. When I wrote the discussion above, I was really refering to the place where $m_\Lambda$ is defined. This section has been revised formulated in a different way. That's why it seems strange.}

\begin{proposition}[Sato's fiber sequence \cite{Gui}*{Equation (2.17)}, \cite{Guisurvey}*{Equation (1.3.5)}]\label{thm:sato}
    Let ${F}\,$and ${G} \in \Sh(M)$.
% \sayWL{To stay in compactly generated categories, we can assume that both are constructible.}. 
    Then there is a fiber sequence
    $$\Delta^*\mathscr{H}om(\pi_1^*{F}, \pi_2^*{G}) \rightarrow \mathscr{H}om({F, G}) \rightarrow \dot\pi_*(\mu hom({F, G})|_{S^{*}M})$$
    where $\Delta: M \hookrightarrow M \times M$ is the diagonal embedding and $\pi_i: M \times M \rightarrow M$ are the projections.
\end{proposition}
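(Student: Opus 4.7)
The plan is to derive the fiber sequence as the pushforward to $M$ of an open-closed decomposition triangle for the microsheaf $\mu hom(F,G)$ on $T^*M$. Concretely, start from the canonical triangle on $T^* M$,
$$ i_{0,*} i_0^! \mu hom(F,G) \longrightarrow \mu hom(F,G) \longrightarrow j_* j^* \mu hom(F,G), $$
where $i_0 : 0_M \hookrightarrow T^* M$ is the zero section and $j : \dot T^* M \hookrightarrow T^* M$ its open complement. Applying $R\pi_*$ along the bundle projection $\pi: T^* M \to M$, and using $\pi \circ i_0 = \id_M$ together with the conicity of $\mu hom(F,G)$ (which lets the right-hand side descend to the cosphere bundle $S^* M$), we obtain a fiber sequence on $M$ of the shape
$$ i_0^! \mu hom(F,G) \longrightarrow R\pi_* \mu hom(F,G) \longrightarrow R\dot\pi_*\bigl(\mu hom(F,G)|_{S^* M}\bigr), $$
which is exactly the shape of the target fiber sequence.

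The remaining task is to identify the first two terms above with $\Delta^* \sHom(\pi_1^*F, \pi_2^*G)$ and $\sHom(F, G)$ respectively. For the middle term, the plan is to invoke the classical identification $R\pi_* \mu hom(F,G) \simeq \sHom(F,G)$ from \cite{KS}*{Proposition 4.4.2}, which follows from the defining formula $\mu hom(F,G) = \mu_\Delta \sHom(\pi_2^* F, \pi_1^! G)$ together with the fact that, for a conic sheaf on a vector bundle, the sections along the whole fiber compute the $!$-pullback to the base. For the leftmost term, the plan is to use the symmetric form $\mu hom(F,G) \simeq \mu_\Delta \sHom(\pi_1^* F, \pi_2^! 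G)$, peel off the invertible twist $\pi_2^! 1_M$ to rewrite this as $\mu_\Delta\bigl(\sHom(\pi_1^*F, \pi_2^*G) \otimes \pi_2^! 1_M\bigr)$, and then apply the KS formula $i_0^! \mu_\Delta K \simeq \Delta^! K$. The $!$-to-$*$ discrepancy coming from $\Delta^!$ is precisely absorbed by the remaining twist $\Delta^*\pi_2^! 1_M$, which yields the untwisted expression $\Delta^* \sHom(\pi_1^* F, \pi_2^* G)$.

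The main obstacle is verifying that the orientation and dimension twists cancel cleanly in the identification of $i_0^! \mu hom(F,G)$ with $\Delta^*\sHom(\pi_1^*F, \pi_2^*G)$, with no residual shift or orientation bundle appearing in the final statement. This is a local computation that can be carried out in cotangent coordinates using the Fourier--Sato description of $\mu_\Delta$ as the Fourier--Sato transform of the specialization $\nu_\Delta$, and it is essentially the content of \cite{KS}*{Proposition 4.4.2} and the surrounding discussion. Since the final sequence is already recorded in the cited literature (\cite{Gui}*{Equation (2.17)} and \cite{Guisurvey}*{Equation (1.3.5)}), the cleanest route is to import the identification from there rather than rederiving it. Once the two identifications are in place, assembling them into the triangle above produces the Sato fiber sequence, with the second arrow being the microlocalization map and the first being the natural pullback $\Delta^*\sHom(\pi_1^*F, \pi_2^*G) \to \sHom(\Delta^*\pi_1^*F, \Delta^*\pi_2^*G) = \sHom(F,G)$.
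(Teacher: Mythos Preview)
The paper does not give a proof of this proposition: it is stated with citations to Guillermou and, in the preceding sentence, justified by a pointer to \cite[Equation (4.3.1)]{KS}. Your sketch is exactly the standard argument behind those references---apply $R\pi_*$ to the closed-open recollement triangle $i_{0,*}i_0^!\to\id\to Rj_*j^*$ for $\mu hom(F,G)$ on $T^*M$, then identify the middle term via \cite[Proposition~4.4.2]{KS} and the left term by unwinding the definition $\mu hom=\mu_\Delta\sHom(\pi_2^*F,\pi_1^!G)$ together with the Fourier--Sato formulas relating $i_0^!$ of $\mu_\Delta$ to pullbacks along $\Delta$---so there is nothing to compare. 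Your acknowledgement that the twist-cancellation in the $i_0^!$ term is the delicate point, and that it is cleanest to import it from the cited sources, is accurate and matches how the paper handles it.
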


%\sayCK{Actually, I realized that we might need to introduce $\msh$ since we will be using $\mhom$. Let me think if there's a way to get around it. Still, sorry I had to rewrite the part for constructible sheaves because I figured out that I have to say some more words for products and duality.}
%\sayWL{Well maybe we can get around this. Technically we only consider $\mhom(F, G)$ where both $F$ and $G$ are constructible, so by taking the union $\ms(F) \cup \ms(G)$ we can still stay in compactly generated categories. However, one thing I do need for now is a more precise singular support estimate for $\mhom$ when $\ms(F) \neq \ms(G)$. And no worries! There's no problem at all.}
%\sayCK{I've included the whole $\msh$. Too lazy to decide if it's needed or not...}

\subsection{Various sheaf categories}\label{variouscat}
    We have defined the sheaf of stable categories $\Sh_\Lambda$ and $\msh_\Lambda$ consisting of sheaves and respectively microsheaves. However, in general we may want to work with the subcategories of compact objects or proper objects. We explain how to restrict to these categories. Most of the discussions can be found in \cite{Nadler-pants}*{Section 3.6 \& 3.8} and \cite{Ganatra-Pardon-Shende3}*{Section 4.5}.

    Throughout the discussion, we will be considering the microlocal sheaf category $\msh_\Lambda$ on a subanalytic Legendrian (or conical Lagrangian) subset.

\begin{definition}
    For ${F} \in \msh_\Lambda(\Omega)$, we call it a compact object if $\Hom_{\msh_\Lambda(\Omega)}({F}, -)$
    %\sayCK{Is this true? The ordinary definition is that $\Hom(F,-)$ is colimit-preserving.}\sayWL{Thanks for pointing out. I'm being sloppy here.}
    commutes with filtered colimits. Let $\msh_\Lambda^c(\Omega) \subset \msh_\Lambda(\Omega)$ be the full subcategory of compact objects.
\end{definition}

    In particular, when we consider the category of compact objects
    $$\Sh^c_\Lambda(M) = \msh^c_{M \cup \widehat{\Lambda}}(T^*M),$$
    we can prove that under the compactness assumption on $M$ it is a smooth category in the sense of \cite[Definition 8.1.2]{Kontsevich-Soibelman-Ainfty} (see also \cite[Definition 4.6.4.13]{Lurie-HTT}), namely that (for the small category $A = \Sh^c_\Lambda(M)$ under consideration) the diagonal bimodule
    $$A_\Delta(X, Y) = \Hom_{A}(X, Y)$$
    is a perfect $A^{op} \times A$ bimodule.
%\sayCK{As you cited in the following proposition. This is not always correct. Consider, e.g., $\Loc(\amalg_{\ZZ} \{*\}$.}

\begin{proposition}[{\cite[Corollary 4.25]{Ganatra-Pardon-Shende3}}]\label{prop:smooth}
    Let $M$ be compact and $\Lambda \subseteq S^{*}M$ be a subanalytic isotropic subset. Then $\Sh^c_\Lambda(M)$ is a smooth category.
\end{proposition}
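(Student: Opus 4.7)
The plan is to reduce the smoothness condition to a concrete sheaf-theoretic compactness claim on $M \times M$. By definition, $\Sh^c_\Lambda(M)$ is smooth precisely when its diagonal bimodule $(X, Y) \mapsto \Hom_{\Sh^c_\Lambda(M)}(Y, X)$ is a compact (perfect) object of the category of $\Sh^c_\Lambda(M)^{op} \otimes \Sh^c_\Lambda(M)$-bimodules. Passing to Ind-completions, this bimodule category becomes $\Sh_{-\Lambda}(M) \otimes \Sh_\Lambda(M) \simeq \Fun^L(\Sh_\Lambda(M), \Sh_\Lambda(M))$, under which the diagonal bimodule corresponds to the identity endofunctor. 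So smoothness reduces to showing that the identity is a compact object of this functor category.

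Next, I would invoke Theorem \ref{fmt} to translate this into a question about a sheaf on $M \times M$. The equivalence $\Fun^L(\Sh_\Lambda(M), \Sh_\Lambda(M)) \simeq \Sh_{-\Lambda \times \Lambda}(M \times M)$ sends the identity functor to the kernel obtained from the duality unit, which (up to swap of factors) is $K_{\id} = \iota_{-\Lambda \times \Lambda}^* \Delta_* 1_M$, where $\Delta: M \hookrightarrow M \times M$ is the diagonal and $\iota_{-\Lambda \times \Lambda}^*$ is the left adjoint of the inclusion $\Sh_{-\Lambda \times \Lambda}(M \times M) \hookrightarrow \Sh(M \times M)$. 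It therefore suffices to show that this particular $K_{\id}$ is compact in $\Sh_{-\Lambda \times \Lambda}(M \times M)$.

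To verify compactness, observe that since $M$ is compact, $\Delta_* 1_M$ has compact support (namely the diagonal) and stalks equal to either $1_\cV$ or $0$, hence perfect. Applying Lemma \ref{cbs} to the subanalytic singular isotropic $(-\Lambda \times \Lambda) \cup T^*_\Delta(M \times M) \subseteq T^*(M \times M)$, I obtain a $C^1$ Whitney triangulation $\cS$ of $M \times M$ whose conormal $N^*_\infty \cS$ contains this isotropic and in which the diagonal is a union of strata. Then $\Delta_* 1_M \in \Sh_{N^*_\infty \cS}(M \times M)$, and by Proposition \ref{mc=cc} it is a compact object there. Finally, Proposition \ref{prop:stopremoval} applied to the inclusion $\Sh_{-\Lambda \times \Lambda}(M \times M) \hookrightarrow \Sh_{N^*_\infty \cS}(M \times M)$ guarantees that the left adjoint $\iota_{-\Lambda \times \Lambda}^*$ preserves compactness, so $K_{\id}$ is compact.

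The main technical point, and the step that is not completely immediate from the earlier excerpt, is verifying that the Fourier-Mukai type equivalence in Theorem \ref{fmt} really carries the algebraic diagonal bimodule of $\Sh^c_\Lambda(M)$ to the geometric kernel $K_{\id}$ on the sheaf side. This is essentially the content of the Definition-Theorem combined with the abstract identification $\sC^\vee \otimes \sD \simeq \Fun^L(\sC, \sD)$ from \cite[Proposition 4.10]{Hoyois-Scherotzke-Sibilla} for compactly generated categories. Once this identification is in place, the rest of the argument is bookkeeping about singular support, constructibility, and left adjoints preserving compact objects.
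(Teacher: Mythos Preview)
The paper does not supply its own proof of this proposition; it simply cites \cite[Corollary 4.25]{Ganatra-Pardon-Shende3}. Your argument is correct and is essentially the proof one finds there: identify the bimodule category with $\Sh_{-\Lambda \times \Lambda}(M \times M)$ via the product and duality formalism, recognize the diagonal bimodule as $\iota_{-\Lambda \times \Lambda}^*\,1_\Delta$, and then use compactness of $M$ plus the triangulation description of compact objects (Proposition \ref{mc=cc}) together with Proposition \ref{prop:stopremoval} to conclude.

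Two minor comments. First, the identification of the kernel representing the identity functor is precisely the last assertion of Lemma \ref{lad&ker}; that is a more direct citation than Theorem \ref{fmt} alone. Second, you are invoking material from Section \ref{dualsec}, which appears well after the proposition in the paper. This is not circular---nothing in Section \ref{dualsec} uses Proposition \ref{prop:smooth}---but it does mean your proof could not be spliced in at the point where the proposition is stated without forward references. Since the paper treats the result as imported from \cite{Ganatra-Pardon-Shende3} anyway, this is harmless.
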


    We know that $\msh_\Lambda$ is both a sheaf and a cosheaf of categories, and in addition, for $\Omega \subseteq \Omega'$, the restriction functor
    $$r_{\Omega\Omega'}^*: \, \msh_\Lambda(\Omega') \rightarrow \msh_\Lambda(\Omega)$$
    preserves limits and colimits and thus admits left and right adjoints \cite[Lemma 4.12]{Ganatra-Pardon-Shende3}. Since $r_{\Omega\Omega'}^*$ preserves colimits, its left adjoint, which is called the corestriction functor
    $$r_{\Omega\Omega'!}: \, \msh_\Lambda(\Omega) \rightarrow \msh_\Lambda(\Omega'),$$
    preserves compact objects. Hence the corestriction functor restricts to the subsheaf of category of compact objects
    $$r_{\Omega\Omega'!}: \, \msh^c_\Lambda(\Omega) \rightarrow \msh^c_\Lambda(\Omega').$$
    %Note that $\msh_{\Lambda \cap \Omega}(\Omega) = \msh_\Lambda(\Omega)$, so this is indeed a functor on global sections of categories $\msh^c_{\Lambda \cap \Omega'}(\Omega') \rightarrow \msh^c_{\Lambda \cap \Omega}(\Omega).$

\begin{remark}\label{rem:cores-cpt}
    For closed subanalytic isotropic subsets $\Lambda \subseteq S^{*}M$, the microlocalization and its left adjoint in Section \ref{sec:micro}
    $$m_\Lambda: \Sh_\Lambda(M) \rightarrow \msh_\Lambda(\Lambda), \;\; m_\Lambda^l: \msh_\Lambda(\Lambda) \rightarrow \Sh_\Lambda(M)$$
    are special cases of restriction functors and corestriction functors. In particular, the left adjoint of microlocalization $m_\Lambda^l$ preserves compact objects
    $m_\Lambda^l: \msh^c_\Lambda(\Lambda) \rightarrow \Sh_\Lambda^c(M).$
\end{remark}

    Given sheaves of categories $\msh_{\Lambda}$ and $\msh_{\Lambda'}$, where $\Lambda \subseteq \Lambda'$ is a closed subset of an isotropic, there is an inclusion functor of categories
    $$\iota_{\Lambda\Lambda'*}: \, \msh_\Lambda(\Lambda) \rightarrow \msh_{\Lambda'}(\Lambda')$$
    which also preserves limits and colimits. Since it preserves limits and is accessible, there is a left adjoint called the pullback functor
    $$\iota^*_{\Lambda\Lambda'}: \, \msh_{\Lambda'}(\Lambda') \rightarrow \msh_\Lambda(\Lambda).$$
    Since $\iota_{\Lambda\Lambda' *}$ preserves colimits, $\iota^*_{\Lambda\Lambda'}$ preserves compact objects. Hence the corestriction functor preserves the sub-cosheaf of categories of compact objects.
    
\begin{remark}\label{rem:stoprem-cpt}
    For closed subanalytic isotropic subsets $\Lambda \subseteq \Lambda' \subseteq S^{*}M$, the inclusion functor and its left adjoint 
    $$\iota_{\Lambda \Lambda' *}: \Sh_\Lambda(M) \hookrightarrow  \Sh_{\Lambda'}(M), \;\; \iota_{\Lambda \Lambda'}^*: \Sh_{\Lambda'}(M) \rightarrow \Sh_\Lambda(M)$$
    are special cases of the inclusion and pullback functors above. In particular, the pullback functor preserves compact objects
    $\iota_{\Lambda \Lambda'}^*: \Sh_{\Lambda'}^c(M) \rightarrow \Sh_\Lambda^c(M).$
    This is also called the stop removal functor \cite{Ganatra-Pardon-Shende3}*{Corollary 4.22} (one can compare it to the stop removal functors in partially wrapped Fukaya categories \cite{Ganatra-Pardon-Shende2}*{Theorem 1.16}).
\end{remark}

    On the other hand, we can consider the subcategory with perfect stalks, which turns out to be the subcategory of proper modules (equivalently, pseudoperfect modules) in the category of (micro)sheaves.

\begin{definition}
    Let $\msh^b_\Lambda(\Omega) \subseteq \msh_\Lambda(\Omega)$ be the full subcategory of objects with perfect stalks, and $\msh^{pp}_\Lambda(\Omega) = \mathrm{Fun}^\text{ex}(\msh^c_\Lambda(\Omega)^{op}, \cV_0)$ be the category of proper modules in $\msh^c_\Lambda(\Omega)$, where $\mathrm{Fun}^\text{ex}(-, -)$ is the stable category of exact functors.
\end{definition}

    Since restriction functors in $\msh_\Lambda$ preserves (micro)stalks, the sheaf of categories $\msh_\Lambda$ can be restricted to a subsheaf of categories $\msh^b_\Lambda$. Meanwhile, since $\msh^c_\Lambda$ forms a cosheaf of categories under corestriction functors, we know that the full subcategories of proper submodules $\msh^{pp}_\Lambda$ also forms a sheaf of categories under restriction functors.
    
    The following theorem shows that $\msh^b_\Lambda(\Omega)$ is the equivalent to the subcategories of proper modules $\msh^{pp}_\Lambda(\Omega)$ in $\msh^c_\Lambda(\Omega)$.

\begin{theorem}[Nadler \cite{Nadler-pants}*{Theorem 3.21}, \cite{Ganatra-Pardon-Shende3}*{Corollary 4.23}]\label{thm:perfcompact}
    Let $\Lambda \subseteq S^{*}M$ (resp.~$\Lambda \subseteq T^*M$) be a compact subanalytic isotropic subset (resp.~a conic subanalytic isotropic subset). Then the natural pairing $\Hom_{\msh_\Lambda(\Omega)}(-, -)$ defines an equivalence
    $$\msh^b_\Lambda(\Omega) \simeq \msh^{pp}_\Lambda(\Omega) = \mathrm{Fun}^\text{ex}(\msh^c_\Lambda(\Omega)^{op}, \cV_0).$$
    In particular, when $M$ is compact, $\Sh^b_\Lambda(M) \simeq \Sh^{pp}_\Lambda(M) = \mathrm{Fun}^\text{ex}(\Sh^c_\Lambda(M)^{op}, \cV_0)$.
\end{theorem}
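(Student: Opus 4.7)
The plan is to factor the desired equivalence through the universal property of Ind-completion and then reduce everything to microstalk computations at smooth points.

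Step 1. By Proposition~\ref{prop:stopremoval} the category $\msh_\Lambda(U)$ is compactly generated, so standard Ind-completion theory gives a canonical identification
\[
\msh_\Lambda(U) \;=\; \mathrm{Ind}(\msh_\Lambda^c(U)) \;=\; \mathrm{Fun}^{\mathrm{ex}}(\msh_\Lambda^c(U)^{op},\cV).
\]
Under this equivalence and by Definition-Theorem~\ref{mu-hom_as_hom}, an object $F \in \msh_\Lambda(U)$ corresponds to the functor $G \mapsto \Gamma(U,\mhom(G,F))$ on compact objects $G$. The tautological subcategory $\msh_\Lambda^{pp}(U) = \mathrm{Fun}^{\mathrm{ex}}(\msh_\Lambda^c(U)^{op},\cV_0)$ thus consists exactly of those $F$ for which $\mhom(G,F)\in\cV_0$ whenever $G$ is compact.

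Step 2. By Proposition~\ref{wdstopr}, $\msh_\Lambda^c(U)$ is generated under finite colimits and retracts by the corepresentatives $\mu_{(x,\xi)}^l(1_\cV)$ of microstalk functors at smooth Legendrian points $(x,\xi)\in\Lambda\cap U$. Because $\cV_0\subseteq\cV$ is closed under such operations, the condition of Step~1 on all compact $G$ reduces to the condition that every microstalk
\[
\mu_{(x,\xi)}(F) \;=\; \mathrm{Hom}\bigl(\mu_{(x,\xi)}^l(1_\cV),F\bigr)
\]
at a smooth point of $\Lambda\cap U$ lies in $\cV_0$, i.e.\ is perfect.

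Step 3. It remains to compare ``perfect microstalks at smooth points'' with the defining condition of $\msh_\Lambda^b(U)$, namely perfect stalks of $F$ as a section of the constructible sheaf of categories $\msh_\Lambda$. By Theorem~\ref{extri} and Lemma~\ref{cbs} one has a $C^1$ Whitney stratification $\cS$ of $\Lambda$ by smooth subanalytic Legendrians with respect to which $\msh_\Lambda$ is constructible. On each stratum the stalk of $F$ is locally constant, so perfection is the same at every point of a given stratum and matches the microstalk at a chosen smooth representative. At a point $p$ lying in the closure of higher-dimensional strata, one expresses the stalk at $p$ as a finite iterated (co)limit of stalks on nearby smooth strata by choosing a defining function for the stratum of $p$ and applying the microlocal Morse lemma (Lemma~\ref{prop:morselemma}) to sub-level sets transverse to $\Lambda$. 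Since $\cV_0$ is closed under finite (co)limits, perfection propagates from the smooth strata to all of $\Lambda\cap U$. The special case $\Sh_\Lambda^b(M)\simeq\Sh_\Lambda^{pp}(M)$ then follows by taking $U=T^*M$ and noting that $0_M\subset T^*M$ is itself a smooth Legendrian stratum, so stalks at points of $M$ fall into the same framework.

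The main obstacle is Step~3: controlling stalks at singular points of $\Lambda$ in terms of microstalks at smooth points. The delicate ingredient is that the iterated (co)limit presentation really is finite, which is ensured by the local finiteness of the Whitney stratification and the fact that $\cS$ can be refined to a triangulation (Lemma~\ref{cbs}). Once this is in place, Steps~1 and~2 are essentially formal from the Ind-completion formalism and Proposition~\ref{wdstopr}.
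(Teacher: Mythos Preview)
The paper does not prove this theorem; it is quoted from Nadler and Ganatra--Pardon--Shende. Your Step~1 is correct and is exactly the formal part of the argument in those references.

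The gap is in Step~2. Proposition~\ref{wdstopr} does not say that $\msh_\Lambda^c(U)$ is generated by microstalk corepresentatives; it says only that the \emph{fiber} of a stop-removal functor $\Sh_{\Lambda'}^c(M)\to\Sh_\Lambda^c(M)$ is generated by microstalk corepresentatives at smooth points of $\Lambda'\setminus\Lambda$. For $\Sh_\Lambda(M)$ this distinction matters: take $\Lambda=\varnothing$, so $\Sh_\varnothing(M)=\Loc(M)$ has no microstalk corepresentatives at all, yet $\Loc^c(M)\neq 0$. You try to patch this in Step~3 by declaring $0_M$ a smooth stratum, but the actual argument you give there---expressing stalks at singular points as finite iterated (co)limits via the microlocal Morse lemma---is vague and not what is needed. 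The definition of $\msh_\Lambda^b$ only asks for perfect (micro)stalks, so once the correct generators are identified no such propagation argument is required.

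The proof in the cited references bypasses microstalks entirely. Choose a Whitney triangulation $\cS$ with $\Lambda\subseteq N^*_\infty\cS$ (Lemma~\ref{cbs}). By Proposition~\ref{mc=cc}, $\Sh_\cS^c(M)$ is generated by the objects $1_{\str(s)}$, and for $F\in\Sh_\cS(M)$ one has $\Hom(1_{\str(s)},F)=\Gamma(\str(s),F)=F_{x_s}$ for any $x_s\in X_s$. The left adjoint $\iota^*_\Lambda:\Sh_\cS(M)\to\Sh_\Lambda(M)$ carries compact generators to compact generators (Proposition~\ref{prop:stopremoval}), so $\Sh_\Lambda^c(M)$ is generated by the $\iota^*_\Lambda 1_{\str(s)}$, and by adjunction $\Hom(\iota^*_\Lambda 1_{\str(s)},F)=F_{x_s}$ for $F\in\Sh_\Lambda(M)$. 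Hence $F$ is a proper module if and only if every stalk is perfect. This is shorter and uses only the stratification data already set up in Section~\ref{ims}.
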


    Using the above theorem, for a subanalytic Legendrian $\Lambda \subseteq S^{*}M$ the category of proper modules
    $$\Sh^{pp}_\Lambda(M) = \msh^{pp}_{M \cup \widehat{\Lambda}}(T^*M),$$
    is a proper category (see \cite[Definition 8.2.1]{Kontsevich-Soibelman-Ainfty} or \cite[Definition 4.6.4.2]{Lurie-HTT}), namely that (for the small category $A = \Sh^{pp}_\Lambda(M) = \msh^{pp}_{M \cup \widehat{\Lambda}}(T^*M)$ under consideration) the diagonal bimodule $A_\Delta$ is a proper module, i.e.~for any $X, Y \in A$,
    $$\Hom_{A}(X, Y) \in \cV_0.$$

\begin{proposition}[{\cite[Corollary 4.25]{Ganatra-Pardon-Shende3}}]\label{prop:proper}
    Let $M$ be compact and $\Lambda \subseteq S^{*}M$ be a subanalytic isotropic subset. Then $\Sh^{pp}_\Lambda(M)$ is a proper category.
\end{proposition}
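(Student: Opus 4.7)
The plan is to reduce the statement to a finite combinatorial problem via triangulation. By Theorem \ref{thm:perfcompact}, we have $\Sh^{pp}_\Lambda(M) \simeq \Sh^b_\Lambda(M)$, so properness amounts to showing that for any $F, G \in \Sh^b_\Lambda(M)$, the mapping object $\Hom(F, G) \in \cV$ actually lies in $\cV_0$.

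First, I would invoke Theorem \ref{extri} to obtain a $C^1$ Whitney triangulation $\cS$ of $M$ such that $\Lambda \subseteq N^*_\infty \cS$ and both $F, G$ are $\cS$-constructible. Since $M$ is compact and $\cS$ is locally finite, $\cS$ is a \emph{finite} poset. The inclusion $\Sh_\Lambda(M) \hookrightarrow \Sh_{N^*_\infty \cS}(M)$ is fully faithful (as a subcategory cut out by a smaller microsupport condition), so it suffices to compute the Hom on the larger ambient category.

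By Proposition \ref{mc=cc}, $\Sh_{N^*_\infty \cS}(M) = \cS\dMod = \Fun(\cS^{op}, \cV)$. Under this equivalence, the representables $1_s$ -- whose values $1_s(t) \in \{0, 1_\cV\}$ depending on whether $t \leq s$ -- form a family of compact generators. Since $M$ is compact, one has $\Sh^b_\Lambda(M) \subseteq \Sh^b_{N^*_\infty \cS}(M) = \Sh^c_{N^*_\infty \cS}(M)$ (the second identification using that compact support is automatic), so $F$ and $G$ correspond to compact objects of $\cS\dMod$. Every compact object of $\cS\dMod$ is a retract of a finite colimit of representables $1_s$, and hence takes values pointwise in $\cV_0$. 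The Hom in $\cS\dMod$ is computed by the end
$$\Hom_{\cS\dMod}(F, G) = \int_{s \in \cS} \sHom_{\cV}(F(s), G(s)),$$
a finite limit of internal Homs between compact objects of $\cV$. Because $\cV_0$ is rigid -- so the internal Hom of two compact objects remains compact -- and closed under finite limits as a stable subcategory, this end lies in $\cV_0$. It follows that $\Hom(F, G) \in \cV_0$, as required.

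The main technical point to verify carefully is that compact objects of $\cS\dMod$ really do take values in $\cV_0$ pointwise, which reduces to the observation about the generating representables. The compactness of $M$ is essential in two places: it makes the triangulation $\cS$ finite, and it guarantees that elements of $\Sh^b_\Lambda(M)$ are compact (not merely proper) in $\Sh_{N^*_\infty \cS}(M)$, which is what allows us to use the finite-colimit description of their corresponding modules.
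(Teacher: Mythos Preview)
Your proof is correct and follows the natural line of argument. The paper itself does not give a proof of this proposition; it simply cites \cite[Corollary 4.25]{Ganatra-Pardon-Shende3}, so there is nothing substantive to compare against here.

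One minor imprecision: the claim that every compact object of $\cS\dMod$ is a retract of a finite colimit of the representables $1_s$ is not quite right over a general coefficient category $\cV$ --- one should allow $1_s \otimes V$ for $V \in \cV_0$. But this does not affect your argument, since you could bypass it entirely: Proposition \ref{mc=cc} already tells you that compact objects of $\Sh_{N^*_\infty \cS}(M)$ are exactly the sheaves with compact support and perfect stalks, so once $M$ is compact you know directly that $F, G$ correspond to functors $\cS^{op} \to \cV_0$, and the end computation goes through as written.
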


    Since $\Sh^c_\Lambda(M)$ is a smooth category, we know by \cite[Lemma A.8]{Ganatra-Pardon-Shende3} that $\Sh^{pp}_\Lambda(M) \subseteq \Sh^c_\Lambda(M)$. Therefore we have the following corollary.
    
\begin{corollary}[{\cite[Lemma A.8]{Ganatra-Pardon-Shende3}}]\label{cor:prop-in-perf}
    Let $M$ be compact and $\Lambda \subseteq S^{*}M$ be a subanalytic isotropic subset. Then $\Sh^{b}_\Lambda(M) \subseteq \Sh^c_\Lambda(M)$.
\end{corollary}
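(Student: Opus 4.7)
The plan is to deduce this corollary as a direct consequence of the three results already collected in this subsection, namely Theorem \ref{thm:perfcompact}, Proposition \ref{prop:smooth}, and the general fact \cite[Lemma A.8]{Ganatra-Pardon-Shende3} that proper modules over a smooth category are perfect. Since the excerpt already states each ingredient, the corollary reduces to chasing them in the right order.

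First, by Theorem \ref{thm:perfcompact}, the $\mhom$-pairing induces an equivalence
\[
\Sh^b_\Lambda(M) \xrightarrow{\sim} \Sh^{pp}_\Lambda(M) = \Fun^{\mathrm{ex}}\bigl(\Sh^c_\Lambda(M)^{op}, \cV_0\bigr),
\]
so it suffices to establish the inclusion $\Sh^{pp}_\Lambda(M) \subseteq \Sh^c_\Lambda(M)$ inside $\Sh_\Lambda(M)$. Next, invoking the compactness of $M$ together with Proposition \ref{prop:smooth}, the small stable category $\Sh^c_\Lambda(M)$ is smooth, i.e.\ the diagonal bimodule is perfect as an $\Sh^c_\Lambda(M)^{op} \otimes \Sh^c_\Lambda(M)$-module.

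The last step is the purely categorical input: for any small stable category $\sA$ whose diagonal bimodule $\sA_\Delta$ is perfect, every proper (pseudoperfect) $\sA$-module is itself a perfect module, hence a compact object in $\Ind(\sA)$. This is exactly \cite[Lemma A.8]{Ganatra-Pardon-Shende3}. Applying it with $\sA = \Sh^c_\Lambda(M)$ yields
\[
\Sh^{pp}_\Lambda(M) \subseteq \Ind\bigl(\Sh^c_\Lambda(M)\bigr)^c = \Sh^c_\Lambda(M),
\]
and combining this with the identification from Theorem \ref{thm:perfcompact} gives the desired containment $\Sh^b_\Lambda(M) \subseteq \Sh^c_\Lambda(M)$.

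Since all three cited facts are imported as black boxes from earlier in the text, there is no serious obstacle in this proof; the only point of care is making sure the compactness hypothesis on $M$ is being used where it needs to be, namely through Proposition \ref{prop:smooth} to guarantee smoothness of $\Sh^c_\Lambda(M)$. Without this smoothness one could not pass from pseudoperfection (i.e.\ having perfect stalks after the Nadler identification) to compactness in $\Sh_\Lambda(M)$.
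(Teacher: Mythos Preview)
Your proof is correct and follows exactly the same approach as the paper: identify $\Sh^b_\Lambda(M)$ with $\Sh^{pp}_\Lambda(M)$ via Theorem~\ref{thm:perfcompact}, invoke smoothness of $\Sh^c_\Lambda(M)$ from Proposition~\ref{prop:smooth} (using compactness of $M$), and then apply \cite[Lemma~A.8]{Ganatra-Pardon-Shende3} to conclude that proper modules are compact. The paper in fact states this argument in the sentence immediately preceding the corollary rather than in a separate proof environment.
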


\begin{remark}\label{rem:prop}
    From the discussion above, we can show that 
    for closed subanalytic isotropic subsets $\Lambda \subseteq S^{*}M$, the microlocalization in Section \ref{sec:micro} preserves proper objects
    $m_\Lambda: \Sh_\Lambda^b(M) \rightarrow \msh_\Lambda^b(\Lambda),$
    and so does the inclusion functor
    $\iota_{\Lambda\Lambda' *}: \Sh_\Lambda^b(M) \hookrightarrow  \Sh_{\Lambda'}^b(M).$
\end{remark}

\section{Isotopy of sheaves}\label{sec:isotopies_of_sheaves}
%\sayWL{This section can probably be made shorter.}

%\sayCK{I am writing a second version of my wrapped sheaves paper and I expected to finish it by the end of September (hopefully) so I will be using the indexing there.}

%%% Intro for isotopies
Let $V$ be a contact manifold and $(B,b_0)$ be a pointed finite dimensional manifold.
We say a map $\Phi: V \times B \rightarrow V$ is a $B$-family contact isotopy 
if $\varphi_b \coloneqq \Phi(-,b)$ is a contactomorphism for all $b \in B$ and $\varphi_{b_0} = \id_V$.
To simplify the convention, when $B$ is some open interval containing $b_0 = 0$, 
we will surpass the notation and simply refer to $\Phi$ as a contact isotopy. 
We use $t$ as the parameter for this case.
When $V$ is co-oriented by $\alpha$, i.e., the contact structure is given by $\ker \alpha$, 
we say that an isotopy is positive if $\alpha(\partial_t \varphi_t) \geq 0$.
One important feature of such isotopies, in the Weinstein manifold setting, 
is that they induce continuation maps on Floer homology and is a key ingredient to define the wrapped Fukaya categories
\cite[Section 3.3]{Ganatra-Pardon-Shende1}.
We will be working in the sheaf-theoretical setting and focusing on the case when $V = S^*M$, the cosphere bundle of a manifold.
The foundational construction is performed in \cite{Guillermou-Kashiwara-Schapira} where they show that an isotopy on $S^* M$
produces a family of end-functors on $\Sh(M)$.
When the isotopy is positive, this family of end-functors comes with a family of continuation maps in the form of natural transforms.
We will recall this construction and some relevant results following the setting of \cite[Section 3]{Kuo-wrapped-sheaves}. 
%\sayCK{Feel free to cite your previous papers if you have something relevant.}

%% Subsection_3-1: Continuation maps
\subsection{Continuation maps}\label{continuation maps}

Denote by $(t,\tau)$ the coordinate of $T^* I$ and consider $F \in \Sh(M \times I)$ such that $\ms(F) \subseteq \{ \tau \leq 0\}$.
By (9) of Proposition \ref{prop:mses} and the adjoint functor theorem, the inclusion of the subcategory formed by such sheaves admits both a left and a right adjoint.
For this special case, there is an explicit description of the two adjoints by a sheaf kernel:

%%% Prototype proposition of "Wrapping = adjoints" theorem.
\begin{proposition}[{\cite[Proposition 4.8]{Guillermou-Kashiwara-Schapira}}, \cite{Kuo-wrapped-sheaves}]
Let $\iota_*: \Sh_{T^* M \times T^*_\leq I}(M \times I) \hookrightarrow \Sh(M \times I)$
denote the tautological inclusion. 
Then the left adjoint $\iota^* \dashv \iota_* \dashv \iota^!$ is given by convolution and its right adjoint is
$$ \iota^* F = 1_{ \{ t^\prime > t \} }[1] \circ F,\ \iota^! F = \sHom^\circ ( 1_{ \{ t^\prime > t \} }[1], F).$$
Here we denote by $(t,t^\prime)$ the coordinate of $I^2$ and $\sHom^\circ$ is defined in Lemma \ref{convrad}.
\end{proposition}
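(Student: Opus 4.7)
The plan is to identify convolution with the kernel $K := 1_{\{t' > t\}}[1]$ as the reflection $\iota_* \iota^*$ onto the localizing subcategory $\Sh_{T^* M \times T^*_\leq I}$. Since $\iota_*$ is fully faithful, it suffices to verify that $K \circ (-)$ takes values in this subcategory and to exhibit a natural transformation $\eta: \mathrm{id} \to K \circ (-)$ realizing the unit of the reflection.

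First, using the estimate for the singular support of the $!$-extension of a constant sheaf on an open set (Proposition \ref{prop:mses}), one finds $\ms(1_{\{t' > t\}})$ at the boundary diagonal $\{t' = t\}$ lies along the outward conormal $\lambda(dt - dt')$, $\lambda \geq 0$. The convolution microsupport estimate (Corollary \ref{msconv}) then forces any covector of $\ms(K \circ F)$ in the $I$-factor to have nonpositive $\tau$-component, giving $\ms(K \circ F) \subseteq T^*M \times T^*_\leq I$ for every $F \in \Sh(M \times I)$.

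Next, the canonical short exact sequence
$$0 \to 1_{\{t' > t\}} \to 1_{\{t' \geq t\}} \to 1_\Delta \to 0$$
on $I \times I$ rotates to a fiber sequence $1_{\{t' \geq t\}} \to 1_\Delta \to K$ in $\Sh(I \times I)$, and convolution with $F$ produces
$$1_{\{t' \geq t\}} \circ F \to F \to K \circ F,$$
hence the natural transformation $\eta: \mathrm{id} \to K \circ (-)$. The heart of the argument is to show $\eta_F$ is an equivalence whenever $F \in \Sh_{T^* M \times T^*_\leq I}$, equivalently $1_{\{t' \geq t\}} \circ F \simeq 0$ for such $F$. A second application of Corollary \ref{msconv}, using that $\ms(1_{\{t' \geq t\}})$ at the diagonal lies along $\lambda(dt' - dt)$ with $\lambda \geq 0$, now forces $\ms(1_{\{t' \geq t\}} \circ F) \subseteq T^*M \times 0_I$, so $1_{\{t' \geq t\}} \circ F$ is locally constant along the $I$-direction. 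Computing the stalk at $(x_0, t_0')$ via proper base change identifies it with $R\Gamma_c\big((-\infty, t_0'] \cap I, F|_{\{x_0\} \times I}\big)$; one then verifies vanishing on a generating family of $\Sh_{T^*_\leq I}(I)$ (e.g.\ $1_{(-\infty, a]}$ and $1_{(a, \infty)}$, where $R\Gamma_c$ of a closed half-infinite or half-open interval vanishes by direct calculation) and invokes colimit-preservation of $R\Gamma_c$ to extend the vanishing to the whole subcategory. This stalk-vanishing argument, combined with the microsupport-based local constancy, is the main technical obstacle.

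Granted this, $K \circ (-) = \iota_* \iota^*$ is the reflection, identifying $\iota^* F = K \circ F$. The right-adjoint formula $\iota^! F = \sHom^\circ(K, F)$ then follows: Lemma \ref{convrad} supplies $\sHom^\circ(K, -)$ as the right adjoint of $K \circ (-) = \iota_* \iota^*$, and uniqueness of adjoints identifies this right adjoint with $\iota_* \iota^!$, giving the formula.
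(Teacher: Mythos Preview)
The paper does not supply its own proof of this proposition (it is cited from \cite{Guillermou-Kashiwara-Schapira} and \cite{Kuo-wrapped-sheaves}), so the comparison is against the standard argument. Your overall strategy---show $K \circ (-)$ lands in the subcategory, produce the unit $\eta$ from the fiber sequence $1_{\{t' \geq t\}} \to 1_\Delta \to K$, and verify $\eta_F$ is an isomorphism for $F$ already in the subcategory---is the correct one, and the microsupport estimates you quote are the right ingredients.

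There is, however, a genuine gap in the vanishing step for $1_{\{t' \geq t\}} \circ F$. You compute the stalk at $(x_0, t_0')$ as $R\Gamma_c\big((a, t_0'], i_{x_0}^* F\big)$ and then propose to check this vanishes by verifying it on generators of $\Sh_{T^*_\leq I}(I)$. The problem is that $i_{x_0}^* F$ need \emph{not} lie in $\Sh_{T^*_\leq I}(I)$: the inclusion $i_{x_0}: \{x_0\} \times I \hookrightarrow M \times I$ is typically not non-characteristic for $F$ (the conormal to $\{x_0\} \times I$ is $T^*_{x_0}M \times 0_I$, which can meet $\ms(F)$ nontrivially), so restriction can produce arbitrary microsupport in the $I$-direction. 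Nor is it clear that $\Sh_{T^*M \times T^*_\leq I}(M \times I)$ is generated by box products $G \boxtimes H$ with $H \in \Sh_{T^*_\leq I}(I)$; the K\"unneth result in the paper (Proposition~\ref{pd:g}) requires isotropic conditions on both factors.

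A clean fix: having shown $G \coloneqq 1_{\{t' \geq t\}} \circ F$ satisfies $\ms(G) \subseteq T^*M \times 0_{I'}$, so $G = \pi_{M}^* G_0$ for some $G_0 \in \Sh(M)$, push $G$ forward along $M \times I' \to M$. On one hand this gives $G_0 \otimes R\Gamma_c(I', 1_\cV) = G_0[-1]$. On the other hand, commuting the $!$-pushforwards and using base change plus the projection formula, it equals $R\pi_{M,!}\big(F \otimes \pi_I^* Rq_!(1_{\{t' \geq t\}})\big)$ where $q: I \times I' \to I$ projects out $I'$; but $Rq_!(1_{\{t' \geq t\}})$ has stalk $R\Gamma_c([t, b), 1_\cV) = 0$ at each $t$, so the whole expression vanishes. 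Hence $G_0 = 0$ and $G = 0$. (A minor additional point: Corollary~\ref{msconv} requires properness, which can fail here; invoke Proposition~\ref{conv-ms} or Lemma~\ref{mscomp} instead for the microsupport estimates.)
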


A consequence of this expression is that $F = 1_{ \{ t^\prime > t \} }[1] \circ F$ for $F \in \Sh_{T^* M \times T^*_\leq I}(M \times I)$.
The virtual of this is explicit description is that the sheaf kernel $1_{ \{ t^\prime > t \} }$ admits maps between its slices,
$$1_{(-\infty,s)} \rightarrow 1_{(-\infty,t)}, \ \text{for} \ s \leq t.$$
Denote by $i_t: M \hookrightarrow M \times I$ the inclusion of the $t$-slice and write $F_t \coloneqq i_t^* F$.
Since convolutions are compatible with $*$-pullback, namely $i_t^* ( 1_{ \{ t^\prime > t \} }[1] \circ F) = 1_{(-\infty,t)}[1] \circ F$,
we term for $F \in Sh_{T^* M \times T^*_\leq I}(M \times I)$ the canonical morphisms 
$$c(s,t,F): F_s \rightarrow F_t, \ \text{for} \ s \leq t$$
induced from $1_{(-\infty,s)} \rightarrow 1_{(-\infty,t)}$ the continuation map of $F$.
Some properties of the continuation maps are listed as follow:

%%% Main properties of the continuation map
\begin{proposition}\label{prop:properties_of_continuation_maps} 
Let $F \in \Sh_{T^* M \times T^*_\leq I}(M \times I)$. Then

\begin{enumerate}
\item For $r \leq s \leq t$, there is an equality $c(r,s,F) \circ c(s,t,F) = c(r,t,F)$.
\item If $F$ is a constant on the $I$-direction on $[s,t]$, then $c(s,t,F) = \id$.
\item Continuation maps respect colimits forward, the canonical map $\operatorname{colim}_{r < t}  F_r \rightarrow F_t$ is an isomorphism.
\item Assume further that $\ms(F)$ is $I$-noncharacteristic, then
continuation maps respect limits backward, i.e., the canonical map $F_t \rightarrow \lim_{s > t}  F_s$ is an isomorphism.
\end{enumerate}
\end{proposition}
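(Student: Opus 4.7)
The plan is to verify each of the four properties using the explicit description of $c(s,t,F)$ as the convolution of the canonical morphism $1_{(-\infty,s)} \to 1_{(-\infty,t)}$ in $\Sh(I)$ with $F$, up to the fixed shift by $[1]$ in the kernel.

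Property (1) is immediate from the functoriality of convolution applied to the composable kernel morphisms $1_{(-\infty,r)} \to 1_{(-\infty,s)} \to 1_{(-\infty,t)}$ in $\Sh(I)$, whose composition agrees with the direct morphism $1_{(-\infty,r)} \to 1_{(-\infty,t)}$. For (2), when $F|_{M \times [s,t]} \simeq \pi_M^* G$, the convolution formula canonically identifies each $F_r$ (for $r \in [s,t]$) with $G$ via a canonical generator of the compactly supported cohomology of an open interval in $\RR$; the continuation map then becomes the identity on $G$ since the same generator induces it at both $r = s$ and $r = t$. For (3), the convolution $(-) \circ F$ is a left adjoint by Lemma~\ref{convrad} and therefore preserves colimits; combined with the identification $\colim_{r < t} 1_{(-\infty,r)} \simeq 1_{(-\infty,t)}$ in $\Sh(I)$ as a filtered colimit of indicator sheaves of increasing open intervals, this yields $\colim_{r<t} F_r \simeq F_t$.

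The delicate point is (4), since convolution does not automatically commute with arbitrary limits. The plan is to apply the fiber sequence in $\Sh(I)$
$$1_{(-\infty,t)} \to 1_{(-\infty,s)} \to 1_{[t,s)},$$
convolve with $F[1]$ to obtain a fiber sequence $F_t \to F_s \to 1_{[t,s)}[1] \circ F$, and then take $\lim_{s > t}$, reducing the statement to showing the vanishing $\lim_{s > t}(1_{[t,s)}[1] \circ F) = 0$. Under the $I$-noncharacteristic hypothesis, $\ms(F)$ has no pure $\tau$-direction in a neighborhood of $M \times \{t\}$, so applying the microlocal Morse lemma (Lemma~\ref{prop:morselemma}) to the projection $(m,t') \mapsto t'$ on this neighborhood forces the compactly supported sections of $F$ over arbitrarily thin windows $M \times [t,s)$---which is essentially what $1_{[t,s)}[1] \circ F$ computes---to vanish as $s \to t^+$.

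The main obstacle lies in step (4): rigorously interchanging the limit with convolution and executing the microlocal vanishing computation both require a careful use of non-characteristic deformation near the slice $t$, whereas the first three properties reduce more or less formally to manipulations on the kernel side.
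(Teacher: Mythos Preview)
The paper does not prove this proposition in the text; it is recorded as a summary of \cite[Section~3]{Kuo-wrapped-sheaves}. So there is no in-paper argument to compare against, and I will address only the correctness of your outline.

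Your treatment of (1)--(3) is sound. One small correction for (3): Lemma~\ref{convrad} exhibits a right adjoint to $G\circ(-)$, not to $(-)\circ F$, so it is not literally the reason $(-)\circ F$ preserves colimits; but colimit preservation is immediate from the explicit formula $K\circ F=\pi_{M!}(\pi_I^{*}K\otimes F)$, each ingredient of which is a left adjoint.

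There is a genuine gap in (4). Your reduction to $\lim_{s>t}\bigl(1_{[t,s)}[1]\circ F\bigr)=0$ is correct, but the microlocal Morse lemma applied to $\phi(m,u)=u$ only yields $\Gamma_c\bigl(M\times[t,s),F\bigr)=0$. This is the \emph{global} compactly supported sections of the sheaf $C_s\coloneqq 1_{[t,s)}[1]\circ F=\pi_{M!}\bigl(F_{M\times[t,s)}\bigr)[1]\in\Sh(M)$, not the sheaf itself, and neither $C_s=0$ nor $\lim_s C_s=0$ follows. For a concrete witness take $M=\RR$ and $F=1_{\{x\geq u\}}$ on $\RR_x\times\RR_u$ (which lies in $\{\tau\leq 0\}$ and is $I$-noncharacteristic): then $C_s=1_{[t,s)}[1]$ on $\RR_x$, which satisfies $\Gamma_c(\RR,C_s)=0$ yet $C_s\neq 0$ for every $s>t$. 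The vanishing $\lim_{s\to t^+}C_s=0$ does hold here, but establishing it requires a section-wise argument: since $\Gamma(U,-)$ commutes with limits, one must show $\lim_s\Gamma(U,C_s)=0$ for each open $U\subseteq M$, and this is a \emph{relative} (over~$M$) Morse-type statement, not the global one you invoked. Your acknowledgement of an ``obstacle'' in (4) is apt, but the specific conflation of $\Gamma_c(M\times[t,s),F)$ with the sheaf $1_{[t,s)}[1]\circ F$ is exactly where the argument breaks, and closing it is not just bookkeeping.
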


%%% Necessity of the noncharacteristic condition
\begin{remark}
We note that the noncharacteristic condition cannot be dropped.
For example, consider the case $M = \{ * \}$ and take $F = 1_{(-\infty,0]}$. 
Then $F_t = 1_\cV$ when $t \leq 0$ and $0$ otherwise.
Thus $F_0 = \operatorname{colim}_{r < 0}  F_r$ but $F_0 \neq \lim_{s > 0}  F_s$.
\end{remark}

%%% Homotopical property of the continuation map

We also mention some homotopical invariance properties of the continuation maps.
Let $J$ another open interval and use $(s,\sigma)$ to denote the coordinates of its cotangent bundle.
Let $G \in \Sh(M \times I \times J)$ be a sheaf such that $\ms(G) \subseteq \{ \tau \leq 0 \}$.
For any $x \in I$, we use $G_{t = x} \coloneqq G|_{M \times \{x\} \times J }$ to denote the restriction
and similarly for $G_{s = y}$, $y \in J$.
Note by (2) of Proposition \ref{prop:noncharacteristic-ms-es}, the same condition $\ms(G_{s = y}) \subseteq \{ \tau \leq 0 \}$ holds.
Assume further that there exists $a \leq b$ in $I$ such that $\ms(G_{t = a})$, $\ms(G_{t = b}) \subseteq T^* M \times 0_J$.
By Lemma \cite[Lemma 3.22]{Kuo-wrapped-sheaves}, this implies that there exist $F_a$, $F_b \in \Sh(M)$ such that
$G_{t = a} = p_s^* F_a$ and $G_{t = b} = p_s^* F_b$
where we use $p_s: M \times J \rightarrow M$ to denote the projection.
Note that, for each $y \in J$, the restriction $G_{s = y}$ induces a continuation map
$c(G,y,a,b): F_a \rightarrow F_b$.

%%% Auxiliary picture
$$
\begin{tikzpicture}

%%% The box
\draw [thick] (0,0) rectangle (5,3);

%%% The slices
\draw [thick] (1,0) -- (1,3);
\draw [thick] (4,0) -- (4,3);

%%% Objects on the slices
\node at (0.6,2.5) {$F_a$};
\node at (3.6,2.5) {$F_b$};

% Horizontal Arrows
\draw [->, thick] (1.1,1.5) -- (3.9,1.5) node [midway, above] {$c(G,y,a,b)$};
\draw [->, thick] (1.1,0.5) -- (3.9,0.5) node [midway, above] {$c(G,y^\prime,a,b)$};

%%% Nodes marking the slices
\node at (1,3.25) {$t = a$};
\node at (4,3.25) {$t = b$};
\node at (-0.7,0.5) {$s = y^\prime$};
\node at (-0.7,1.5) {$s = y$};

\end{tikzpicture}
 $$

%%% Statement of the Proposition
\begin{proposition} \label{homotopy_independence_of_continuation_maps}
The morphism $c(G,y,a,b)$ is independent of $y \in J$.
More generally, similar statements can be made for higher homotopical independence.
\end{proposition}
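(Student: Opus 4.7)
The plan is to upgrade the continuation maps to a $J$-family, namely to produce a single morphism $C: G|_{t=a} \to G|_{t=b}$ in $\Sh(M \times J)$ whose fiberwise restriction at each $s = y$ recovers $c(G,y,a,b)$, and then to identify $C$ as canonically pulled back along $p_s: M \times J \to M$ from a single morphism $\phi: F_a \to F_b$. To build $C$, I would apply the relative convolution framework recalled after Corollary \ref{msconv}, viewing $G$ as a $J$-parametrized family on $M \times I$ whose microsupport lies in $T^* M \times \{\tau \leq 0\} \times T^* J$. Convolving the morphism of kernels $1_{(-\infty,a)}[1] \to 1_{(-\infty,b)}[1]$ against $G$ in the $I$-direction (with $J$ a parameter) yields $C$, and base change ensures that for the slice inclusion $j_y: M \hookrightarrow M \times J$ with $j_y(m) = (m,y)$, the restriction $j_y^* C$ equals $c(G,y,a,b)$.

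The hypothesis $G|_{t=a} = p_s^* F_a$ and $G|_{t=b} = p_s^* F_b$ places $C$ in $\Map_{\Sh(M \times J)}(p_s^* F_a, p_s^* F_b)$. Since $J$ is a contractible open interval, the derived sections satisfy $\pi_* 1_J \simeq 1_\cV$ for $\pi: J \to \{*\}$, so by the projection formula $(p_s)_* p_s^* F_b \simeq F_b \otimes \pi_* 1_J \simeq F_b$, and adjunction yields
$$\Map_{\Sh(M \times J)}(p_s^* F_a, p_s^* F_b) \simeq \Map_{\Sh(M)}(F_a, F_b).$$
Hence $C = p_s^* \phi$ for a unique $\phi: F_a \to F_b$. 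Because $p_s \circ j_y = \id_M$, we conclude $c(G,y,a,b) = j_y^* C = j_y^* p_s^* \phi = \phi$ for every $y \in J$, giving the independence claim.

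For the higher homotopical independence statement, the same computation carried out with $J$ replaced by any contractible parameter manifold $B$ still gives $\Map_{\Sh(M \times B)}(p_s^* F_a, p_s^* F_b) \simeq \Map_{\Sh(M)}(F_a, F_b)$, so the $B$-family continuation map is again pulled back from a single morphism $F_a \to F_b$; applying this to cubes or simplices in $J$ (equivalently, to points, paths, and higher homotopies of paths) supplies the higher coherence data. The main obstacle is justifying the compatibility in the first step, namely that the relative convolution genuinely defines a morphism in $\Sh(M \times J)$ whose fiber restrictions agree with the classical continuation maps; this is a base change assertion, but requires care about the modified projections $r_{ij}$ in the $B$-family framework and the analog of the identity $F = 1_{\{t' > t\}}[1] \circ F$ in the $J$-parametrized setting.
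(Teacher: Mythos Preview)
The paper does not actually supply a proof of this proposition; it is stated as a recall from \cite[Section 3.2]{Kuo-wrapped-sheaves}, so there is no in-text argument to compare against. That said, your approach is correct and is essentially the argument one expects: lift the continuation map to a morphism $C$ in $\Sh(M \times J)$ via relative convolution in the $I$-direction, then use contractibility of $J$ and the adjunction $(p_s^*, (p_s)_*)$ to see that $\Map(p_s^* F_a, p_s^* F_b) \simeq \Map(F_a, F_b)$, forcing $C = p_s^*\phi$ and hence $j_y^* C = \phi$ for all $y$.

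The one point you flag as needing care---the $J$-parametrized analog of $G = 1_{\{t'>t\}}[1] \circ G$---is not a real obstacle. The kernel $1_{\{t'>t\}}[1]$ convolves only in the $I$-variable, and the relevant adjunction (left adjoint to the inclusion of sheaves with $\tau \leq 0$) works identically with $M$ replaced by $M \times J$, since the microsupport hypothesis on $G$ is only a condition on the $\tau$-coordinate. Base change for $j_y^*$ against the relative convolution is likewise immediate because the convolution pushforward is along the proper $I$-direction, transverse to the slicing in $J$. So the proof goes through as written.
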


%%% Subsection_3-2: Isotopies of sheaves
\subsection{Isotopies of sheaves}\label{Isotopies of sheaves}

We recall the notion of isotopies of sheaves based on the main theorem of Guillermou, Kashiwara, and Schapira
in \cite{Guillermou-Kashiwara-Schapira} and some applications. 
This section is a summary of \cite[section 3.2]{Kuo-wrapped-sheaves} of the first author. 

%%% GKS sheaf quantization
\begin{theorem}[{\cite[Proposition 3.2, Remark 3.9]{Guillermou-Kashiwara-Schapira}}]\label{thm:GKS}
Let $M$ be a manifold and $B$ a contractible finite dimensional manifold.
For a $B$-family contact isotopies $\Phi:  S^* M \times B \times J \rightarrow S^* M$ where $J$ is an open interval,
there exists a unique sheaf kernel
$K(\Phi) \in \Sh(M \times M \times J \times B)$ such that
\begin{enumerate}
\item $K(\Phi) |_{b = b_0} = 1_{\Delta_M}$, and 
\item $\msnz(K(\Phi)) \subseteq \Lambda_\Phi$ where
\begin{equation}\label{contact_movie}
\Lambda_\Phi = \left\{ \left(x, -\xi, \varphi_{t,b}(x,\xi), t, - \alpha(V_{\Phi_b})(\varphi_{t,b}(x,\xi)),
b, - \alpha_{\varphi_{t,b}(x,\xi)} \circ d (\Phi \circ i_{x,\xi,t})_b(\cdot) \right) \right\}
\end{equation}
is the contact movie of $\Phi$.
\end{enumerate}
Moreover, $\msnz(K(\Phi)) = \Lambda_\Phi$ is simple along $\Lambda_\Phi$,
both projections $\supp(K) \hookrightarrow M \times M \times B \rightarrow M \times B$ are proper,
and the composition is compatible with convolution in the sense that
\begin{enumerate}
\item $K(\Psi \circ \Phi) = K(\Psi) \circ|_{B} K(\Phi)$,
\item $K(\Phi^{-1}) \circ|_{B} K(\Phi) = K(\Phi) \circ|_{B} K(\Phi^{-1}) = 1_{\Delta_M \times B}$.
\end{enumerate}
Here $\Phi^{-1}$ is the $B$-family of isotopies given by $\Phi^{-1}(-,t,b) \coloneqq \varphi_{t,b}^{-1} $.
\end{theorem}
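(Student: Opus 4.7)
The plan is to prove existence and uniqueness separately, with uniqueness done first, and then to deduce the composition/inverse compatibilities as formal consequences of uniqueness.

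For uniqueness, suppose $K_1, K_2 \in \Sh(M \times M \times I \times B)$ both satisfy conditions (1) and (2). I would study the sheaf $\mhom(K_1, K_2)$, whose microsupport is controlled by Proposition \ref{prop:ss-muhom} in terms of the normal cone $C(\msif(K_1), \msif(K_2))$. Since both microsupports are contained in the simple Legendrian movie $\Lambda_\Phi$, we get a precise control of the microsupport of $\mhom(K_1,K_2)$ along $B$, in particular showing that the projection to $B$ is noncharacteristic for this hom-sheaf away from the obvious directions. Combined with the contractibility of $B$ and the initial condition $K_i|_{b=b_0} = 1_{\Delta_M}$, the microlocal Morse lemma (Lemma \ref{prop:morselemma}) and the noncharacteristic deformation lemma let us propagate the tautological isomorphism at $b=b_0$ uniquely along any path in $B$, yielding an isomorphism $K_1 \simeq K_2$ compatibly.

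For existence, I would proceed locally and glue. At each point $(x_0,\xi_0,b_0) \in S^*M \times B$ the contact isotopy is, up to shrinking a neighborhood and applying a contact Darboux chart, generated by a smooth contact Hamiltonian, and Kashiwara–Schapira's theory of quantized contact transformations (\cite{KS}, Chapter 7) provides an explicit local sheaf kernel realizing it, with the prescribed microsupport and initial condition. On overlaps, the local uniqueness proved above identifies these local kernels canonically, so they descend to a global sheaf kernel $K(\Phi)$ on $M \times M \times I \times B$. Simplicity along $\Lambda_\Phi$ is a local property and is built into the Kashiwara–Schapira local models (the initial kernel $1_{\Delta_M}$ is simple, and simplicity is preserved by the non-characteristic propagation). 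Properness of both projections $\mathrm{supp}(K(\Phi)) \to M \times B$ follows since $\Phi$ is the identity at $b=b_0$ and the singular support condition forces $\supp(K(\Phi))$ to trace out a continuously varying deformation of the diagonal, whose restriction to each slice $\{b\} \times \{t\}$ sits over the graph of $\varphi_{t,b}$.

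Finally, the compatibilities (1) and (2) with composition and inverses are purely formal consequences of the uniqueness statement. Indeed, both $K(\Psi \circ \Phi)$ and $K(\Psi) \circ|_B K(\Phi)$ are sheaves on $M \times M \times I \times B$ (using Corollary \ref{msconv} to check that the convolution has microsupport contained in the contact movie of $\Psi \circ \Phi$, which is precisely the composite correspondence), both restrict to $1_{\Delta_M}$ at $b=b_0$, and both have microsupport contained in $\Lambda_{\Psi \circ \Phi}$; uniqueness then identifies them. The inverse identities follow by taking $\Psi = \Phi^{-1}$ and invoking the initial case, together with the fact that $K(\Phi^{-1}) \circ|_B K(\Phi)$ visibly satisfies the defining properties of the identity isotopy's kernel.

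The main obstacle I expect is in the local existence step: producing a sheaf kernel from scratch for an arbitrary contact isotopy, since one needs to compatibly quantize the flow of the contact Hamiltonian. This is where Kashiwara–Schapira's local construction for contactomorphisms close to the identity does the real work, and absent that black box one would have to redo a substantial amount of microlocal analysis. The uniqueness argument is essentially a controlled propagation and is much more elementary once the microsupport estimates in Proposition \ref{prop:mses} and Proposition \ref{prop:ss-muhom} are in hand.
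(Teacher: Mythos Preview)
The paper does not prove this theorem; it is quoted from \cite{Guillermou-Kashiwara-Schapira} as a black box and used as input to the rest of Section~\ref{sec:isotopies_of_sheaves}. There is therefore no proof in the paper to compare your proposal against.

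That said, your proposal is a reasonable outline of how the result is actually established in \cite{Guillermou-Kashiwara-Schapira}, with one genuine gap in the existence step. Your uniqueness argument is essentially the one used there: since $\Lambda_\Phi$ is the conormal to a graph, $\sHom(K_1,K_2)$ is locally constant along $I \times B$ and the initial condition determines it. However, your appeal to ``Kashiwara--Schapira's theory of quantized contact transformations (\cite{KS}, Chapter~7)'' for local existence does not work as stated. Theorem~7.2.1 of \cite{KS} produces, for a single contactomorphism and only microlocally, an equivalence of stalk categories of $\msh$; it does not give an honest sheaf kernel on $M \times M$, and it certainly does not give one varying in a family over $I \times B$ with the required initial condition. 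The actual existence proof in \cite{Guillermou-Kashiwara-Schapira} proceeds differently: one first reduces (via uniqueness and a gluing-in-time argument) to isotopies defined for small time, and then constructs the kernel by an explicit trick---adding an extra $\RR$-variable to homogenize the Hamiltonian and invoking a concrete model. Without that or an equivalent device, your local existence step is not justified.
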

This theorem is usually referred as \textit{Guillermou-Kashiwara-Schapira sheaf quantization},
since it is a categorical analogue of producing an operator from an contact isotopy,
and we will refer the sheaf kernel $K(\Phi)$ as the GKS sheaf quantization kernel associated to $\Phi$.
A corollary of this construction is that contact isotopies act on sheaves and the action is compatible with the microsupport:
\begin{corollary}[{\cite[Equation (4.4)]{Guillermou-Kashiwara-Schapira}}]\label{cor:GKS}
Let $\Phi:S^* M \times B \rightarrow S^* M$ be a contact isotopy.
Then the convolution
\begin{align*}
K(\Phi)_t \circ (-): \Sh(M) &\rightarrow \Sh(M) \\
F &\mapsto K(\Phi)_t \circ F
\end{align*} is an equivalence with inverse $K(\Phi^{-1})_t \circ (-)$.
For any $F \in \Sh(M)$,
$\msnz(K(\Phi) \circ F) = \Lambda_\Phi \circ \dot{\ms}(F)$.
In particular, if we set $F_t \coloneqq K(\Phi)_t \circ F$, then $\msnz(F_t) = \varphi_t(\msnz(F))$ for $t \in B$.
Furthermore, if $F$ has compact support, then so does $F_t$ for all $t \in B$.
\end{corollary}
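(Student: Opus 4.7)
The plan is to derive everything from the properties of the GKS sheaf quantization kernel $K(\Phi)$ listed in Theorem \ref{thm:GKS}, together with the microsupport estimate for convolutions (Corollary \ref{msconv}) and the associativity of convolution (Proposition \ref{conv:asso}).

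First I would establish the equivalence. Restricting the kernel identities $K(\Phi^{-1}) \circ_B K(\Phi) = K(\Phi) \circ_B K(\Phi^{-1}) = 1_{\Delta_M \times B}$ from Theorem \ref{thm:GKS} to the time slice $\{t\}$ gives $K(\Phi^{-1})|_t \circ_M K(\Phi)|_t = K(\Phi)|_t \circ_M K(\Phi^{-1})|_t = 1_{\Delta_M}$. By associativity of convolution, convolving an object $F \in \Sh(M)$ with $1_{\Delta_M}$ is naturally isomorphic to $F$, so $K(\Phi)|_t \circ (-)$ and $K(\Phi^{-1})|_t \circ (-)$ are mutually inverse endofunctors of $\Sh(M)$.

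Next I would prove the microsupport equality $\msnz(K(\Phi) \circ F) = \Lambda_\Phi \circ \dot{\ms}(F)$. Apply Corollary \ref{msconv} to the convolution $K(\Phi) \circ F$: the properness hypothesis follows from the properness of $\supp(K(\Phi)) \to M \times B$ asserted in Theorem \ref{thm:GKS}, and the transversality hypothesis is satisfied because $\Lambda_\Phi$ is the graph of a contactomorphism so its intersection with the zero section in the middle factor is trivial. This yields $\msnz(K(\Phi) \circ F) \subseteq \Lambda_\Phi \circ \dot{\ms}(F)$. For the reverse inclusion, apply the same estimate to $K(\Phi^{-1}) \circ (K(\Phi) \circ F) = F$ (using the equivalence just proved and associativity) to get $\dot{\ms}(F) \subseteq \Lambda_{\Phi^{-1}} \circ \msnz(K(\Phi) \circ F)$; composing with $\Lambda_\Phi$ on the left and using $\Lambda_\Phi \circ \Lambda_{\Phi^{-1}} = \Delta_{\dT^* M}$ (which is the microsupport shadow of the kernel identity) gives the desired equality. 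Restricting to the time-$t$ slice and unwinding the formula (\ref{contact_movie}) for $\Lambda_\Phi$, the slice of the contact movie at time $t$ is the graph of $\varphi_t$, which immediately yields $\msif(F_t) = \varphi_t \msif(F)$.

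Finally, for the support statement, the first projection $\supp(K(\Phi)) \hookrightarrow M \times M \times B \to M \times B$ is proper by Theorem \ref{thm:GKS}, hence so is its restriction at time $t$; combined with the compactness of $\supp(F)$, the set $\pi_{13}(\supp(K(\Phi)|_t) \cap (M \times \supp(F)))$ is compact, and it contains $\supp(F_t)$, which gives compactness. I expect no serious obstacle: the main subtlety is verifying the non-characteristic hypothesis in Corollary \ref{msconv} (which is automatic because $\Lambda_\Phi$ is Legendrian and lives over the graph of a diffeomorphism) and carefully propagating the equivalence to upgrade the microsupport containment into an equality.
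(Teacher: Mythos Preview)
The paper does not supply its own proof of this corollary; it is simply recorded with a citation to \cite[Equation (4.4)]{Guillermou-Kashiwara-Schapira}. Your derivation from Theorem~\ref{thm:GKS} via associativity of convolution and the microsupport estimate of Corollary~\ref{msconv} is exactly the standard argument in that reference, and it is correct.

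One small point worth tightening: in the reverse microsupport inclusion you write ``$K(\Phi^{-1}) \circ (K(\Phi) \circ F) = F$'', but $K(\Phi) \circ F$ lives on $M \times I$, so the outer convolution is the relative one $\circ|_I$ and the identity you recover is $p^*F$ rather than $F$ itself. This is harmless---the relative microsupport estimate mentioned just after Corollary~\ref{msconv} applies---but it is cleaner to run the argument time-slice by time-slice: from $F_t = K(\Phi)|_t \circ F$ and $F = K(\Phi^{-1})|_t \circ F_t$ one gets the two containments $\msif(F_t) \subseteq \varphi_t(\msif(F))$ and $\msif(F) \subseteq \varphi_t^{-1}(\msif(F_t))$ directly, hence equality. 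Either route is fine.
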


\begin{corollary}[{\cite[Theorem 7.2.1]{KS}}, {\cite[Lemma 5.6]{NadShen}}]\label{cor:cont-trans}
Let $\Phi:S^* M \times B \rightarrow S^* M$ be a contact isotopy.
Then the convolution induces a morphism between sheaf of categories
\begin{align*}
K(\Phi)_t \circ (-): \msh_\Lambda &\rightarrow \varphi_t^* \msh_{\varphi_t(\Lambda)} \\
F &\mapsto K(\Phi)_t \circ F
\end{align*} and is an equivalence whose inverse is given by $K(\Phi^{-1})_t \circ (-)$.
\end{corollary}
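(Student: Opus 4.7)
The plan is to reduce the statement to what is essentially already recorded for $\Sh(M)$ in Corollary \ref{cor:GKS}, and then check that convolution by $K(\Phi)|_t$ respects the construction of $\msh_\Lambda$ as a sheafified quotient. Recall that $\msh^{\pre}(\Omega) = \Sh(M)/\Sh_{\Omega^c}(M)$ for conic open $\Omega \subseteq T^{*}M$, with $\msh^{\pre}_\Lambda(\Omega) \subseteq \msh^{\pre}(\Omega)$ cut out by the microsupport condition $\ms_\Omega(F) \subseteq \Lambda$. So it suffices to produce, compatibly in $\Omega$, a functor at the level of presheaves
$$K(\Phi)|_t \circ (-):\ \msh^{\pre}_\Lambda(\Omega) \longrightarrow \msh^{\pre}_{\varphi_t(\Lambda)}(\varphi_t(\Omega)),$$
and an analogous one built from $K(\Phi^{-1})|_t$, such that they are mutually inverse; sheafification then yields the asserted morphism of sheaves of categories on $S^{*}M$ together with its inverse.

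First I would start with the equivalence $K(\Phi)|_t \circ (-): \Sh(M) \xrightarrow{\sim} \Sh(M)$ and the microsupport identity $\msif(K(\Phi)|_t \circ F) = \varphi_t(\msif(F))$ from Corollary \ref{cor:GKS}. Applied to $F \in \Sh_{\Omega^c}(M)$ this gives $\msif(K(\Phi)|_t \circ F) \subseteq \varphi_t(\Omega^c) = \varphi_t(\Omega)^c$, so convolution restricts to an equivalence $\Sh_{\Omega^c}(M) \xrightarrow{\sim} \Sh_{\varphi_t(\Omega)^c}(M)$ with inverse given by $K(\Phi^{-1})|_t \circ (-)$ (invertibility at the kernel level comes from the identity $K(\Phi) \circ|_B K(\Phi^{-1}) = 1_{\Delta_M \times B}$ in Theorem \ref{thm:GKS}). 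Passing to Verdier quotients then yields an equivalence $\msh^{\pre}(\Omega) \xrightarrow{\sim} \msh^{\pre}(\varphi_t(\Omega))$, and the same microsupport identity shows it sends $\msh^{\pre}_\Lambda(\Omega)$ to $\msh^{\pre}_{\varphi_t(\Lambda)}(\varphi_t(\Omega))$, with the analogous statement for $K(\Phi^{-1})|_t$ providing the inverse.

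Next I would check compatibility with restriction maps, i.e., that for $\Omega' \subseteq \Omega$ conic open, the square
\begin{equation*}
\begin{tikzcd}
\msh^{\pre}_\Lambda(\Omega) \arrow[r, "K(\Phi)|_t \circ -"] \arrow[d] & \msh^{\pre}_{\varphi_t(\Lambda)}(\varphi_t(\Omega)) \arrow[d] \\
\msh^{\pre}_\Lambda(\Omega') \arrow[r, "K(\Phi)|_t \circ -"] & \msh^{\pre}_{\varphi_t(\Lambda)}(\varphi_t(\Omega'))
\end{tikzcd}
\end{equation*}
commutes. This is tautological: both compositions are induced by the same endofunctor $K(\Phi)|_t \circ (-)$ of $\Sh(M)$, followed by projection to the appropriate quotient, and the quotient $\Sh(M)/\Sh_{(\Omega')^c}(M)$ factors through $\Sh(M)/\Sh_{\Omega^c}(M)$ whenever $\Omega' \subseteq \Omega$. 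Thus we have a morphism of presheaves of categories $\msh^{\pre}_\Lambda \to \varphi_t^{*}\msh^{\pre}_{\varphi_t(\Lambda)}$, with inverse coming from $\Phi^{-1}$ by the same argument.

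Finally, sheafification is functorial, so the above morphism of presheaves descends to the morphism of sheaves $\msh_\Lambda \to \varphi_t^{*}\msh_{\varphi_t(\Lambda)}$, and since the presheaf-level maps are mutually inverse (their composites are the identity functors on the respective quotients, by $K(\Phi) \circ K(\Phi^{-1}) = 1_{\Delta_M}$), so are the sheafified ones. The only mildly subtle point is making sure that $\varphi_t^{*}\msh_{\varphi_t(\Lambda)}$ genuinely agrees with the pushforward of presheaves we have constructed, but this is immediate from the fact that $\varphi_t$ is a contactomorphism and hence preserves conic open sets and their inclusions; there is no nontrivial obstacle, only bookkeeping.
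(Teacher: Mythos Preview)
Your argument is correct and is exactly the natural unpacking of why this corollary follows from Corollary~\ref{cor:GKS} and the definition of $\msh^{\pre}$. The paper does not supply its own proof here; it records the statement as a corollary with references to \cite[Theorem 7.2.1]{KS} and \cite[Lemma 5.6]{NadShen}, so there is nothing further to compare against.

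One very minor remark: you phrase the microsupport transport via $\msif$, whereas the presheaf $\msh^{\pre}$ is defined using conic opens in $T^*M$. This is harmless because the sheaf $\msh_\Lambda$ in the statement lives on $S^*M$ (equivalently on $\dot T^*M$), so it suffices to work with conic opens $\Omega \subseteq \dot T^*M$, where the identity $\dot{\ms}(K(\Phi)|_t \circ F) = \varphi_t(\dot{\ms}(F))$ from Corollary~\ref{cor:GKS} is exactly what you need. No change to the logic is required.
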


\begin{example}\label{GKS_standard_Reeb}
Let $(M,g)$ be a Riemannian manifold with non-zero injective radius.
The cosphere bundle $S^* M$ can be identified with the unit sphere bundle in $T^* M$ by $g$ as a contact hypersurface.
Take $\Phi$ to be the Reeb flow and $F = 1_{x}$ to be a skyscraper at some point $x \in M$.
Then, for small $t < 0$, 
$F_t$ is given by $1_{\overline{B_{\epsilon(t)}(x)}}$ the constant sheaf supported on some small closed ball center at $x$,
and, for small $t > 0$, $F_t$ is given by $\sHom(1_{\overline{B_{\epsilon(t)}(x)}}, \omega_M)$ wher $\omega_M$ is the dualizing sheaf. 
%\sayCK{I think the later object is the correct one since if $1_{\overline{B_{\epsilon(t)}(x)}} \rightarrow 1_\Delta$ is the negative continuation map, then taking $\sHom(-,p_1^* \omega_M))$ should give the positive one because $\sHom(\Delta,p_1^* \omega_M) = \Delta_* \Delta^! p_2^! 1_M) = 1_{\Delta}$.}
When the base manifold $M$ is orientable, the later is isomorphic to
$1_{B_{\epsilon(t)}(x)}[\dim M]$ the constant sheaf supported on some
small open ball centered at $x$ with a shift by the dimension of $M$.
%\sayCK{I should mention that I didn't explain what being orientable means over $\cV$. I think the correct notation should be some canonical map in the framework to $\Pic(\cV)$ vanishes. If we that's too much of a detour, we can just say this thing is locally isomorphic to $1_{B_{\epsilon(t)}(x)} [\dim M]$.}
\end{example}

Now, consider the case when $B = I$ is given by an open interval containing $0$ and assume that $\Phi$ is positive, i.e., 
$\alpha(\partial_t \varphi_t) \geq 0$.
In this case, the consideration of the previous section \ref{continuation maps} implies that there are continuation maps
$$ K(\Phi)_s \rightarrow K(\Phi)_t, \ \text{for} \ s \leq t$$
and it induces continuation maps $F_s \rightarrow F_t$, $s \leq t$, for $F \in \Sh(M)$.
We note that if there is an homotopy between two positive isotopies $\Phi$ and $\Psi$,
then by Proposition \ref{homotopy_independence_of_continuation_maps}, the induced continuation maps
by $K(\Phi)$ and $K(\Psi)$ are identified.
Now given a closed subset $X \subseteq S^* M$, one can consider the totality of positive isotopies,
declare that a morphism between two positive isotopies $\Phi_1 \rightarrow \Phi_2$ is a further isotopy $\Psi$ of the same kind from $\Phi_1$
such that $\Phi_2 = \Psi \# \Phi_1$, and one compares different morphisms by homotopies, etc..
Then, for $F \in \Sh(M)$, GKS sheaf quantization produces a diagram whose vertices are given by the time-$1$ sheaves $F^w$,
and whose arrows are given by continuation maps $c(\Psi): F^w \rightarrow F^{w^\prime}$.
The main theorem we recall in this section will be that colimit/limit over increasingly positive/negative isotopies
provides a description for the tautological inclusion $\Sh_X(M) \subseteq \Sh(M)$.

\begin{theorem}[{\cite[Theorem 1.2]{Kuo-wrapped-sheaves}}]\label{w=ad}
Let $\iota_{X *}: \Sh_X(M) \hookrightarrow \Sh(M)$ denote the tautological inclusion.
Then the left and right adjoints are given by the positive/negative colimiting/limiting wrapping
$$ \wrap_X^+(F) \coloneqq \clmi{F \rightarrow F^w} \, F^w, \ \wrap_X^-(F) \coloneqq \lmi{F^{w-} \rightarrow F} \, F^{w-}.$$
\end{theorem}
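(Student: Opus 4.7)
The plan is to verify directly that $\wrap_X^+$ is left adjoint to the inclusion $\iota_{X*}:\Sh_X(M)\hookrightarrow\Sh(M)$; the case of $\wrap_X^-$ and the right adjoint is formally dual, using negative isotopies and limits together with the non-characteristic limit statement in Proposition~\ref{prop:properties_of_continuation_maps}(4). Following the setup in Section~\ref{Isotopies of sheaves}, I would index the colimit by the filtered poset of positive contact isotopies $\Phi$ of $S^*M$ whose compactly supported Hamiltonians vanish on a neighborhood of $X$, with time-$1$ wrapping $F^w \coloneqq K(\Phi)|_1 \circ F$ and continuation map $F\to F^w$ produced by GKS quantization. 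I must then show (a) that $\wrap_X^+(F)\in\Sh_X(M)$, and (b) that the canonical map $F\to \wrap_X^+(F)$ satisfies the universal property of the unit of the desired adjunction.

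For (a), fix $(x_0,\xi_0)\in S^*M\setminus X$. Since $X$ is closed, one may freely design positive Hamiltonians in a small conic neighborhood $\Omega\ni(x_0,\xi_0)$ disjoint from $X$; I would choose a cofinal subfamily of wrappings $\Phi$ whose time-$1$ contactomorphism displaces $\msif(F)\cap\Omega$ out of $\Omega$, and whose further refinements in the diagram continue to avoid $(x_0,\xi_0)$. By Corollary~\ref{cor:GKS}, $\msif(F^w)=\varphi_1(\msif(F))$, so along this cofinal subdiagram $(x_0,\xi_0)$ is avoided uniformly. Combining the cofinality with the microsupport estimate for filtered colimits in Proposition~\ref{prop:mses}(9) gives $(x_0,\xi_0)\notin \msif(\wrap_X^+(F))$.

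For (b), let $G\in\Sh_X(M)$ and let $\Phi$ be any wrapping in the indexing diagram. Since the Hamiltonian of $\Phi$ vanishes on a neighborhood of $\msif(G)\subseteq X$, each $\varphi_s$ acts as the identity near $\msif(G)$. A non-characteristic deformation argument using Proposition~\ref{prop:properties_of_continuation_maps}(2) and (4), or equivalently the microlocal Morse Lemma~\ref{prop:morselemma} applied along the isotopy parameter, shows that the continuation map $c^\Phi_G:G\to G^w$ is an isomorphism. Combining this with the equivalence $K(\Phi)|_1\circ(-)$ of Corollary~\ref{cor:GKS}, precomposition with the unit $F\to F^w$ induces an isomorphism $\Hom(F^w,G)\xrightarrow{\sim}\Hom(F,G)$ that is natural in $w$. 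Passing to the limit,
\[\Hom_{\Sh(M)}(\wrap_X^+(F),G) \;=\; \lim_w \Hom(F^w,G) \;=\; \Hom_{\Sh(M)}(F,G),\]
which is precisely the desired adjunction.

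The main obstacle is ensuring that the indexing poset of wrappings is simultaneously (i) rich enough that in (a), for each $(x_0,\xi_0)\notin X$, one can construct a cofinal subfamily dragging $\msif(F)$ clear of a fixed conic neighborhood of $(x_0,\xi_0)$, and (ii) coherent enough for continuation maps to organize into a genuine filtered diagram whose colimit is computable. This requires a non-trivial cofinality/genericity argument in the (infinite-dimensional) space of compactly supported positive contact Hamiltonians on $S^*M\setminus X$, together with a subanalytic regularity check so that the closure operation in Proposition~\ref{prop:mses}(9) does not reintroduce stray covectors outside $X$; the homotopy-invariance of continuation maps from Proposition~\ref{homotopy_independence_of_continuation_maps} should handle the higher-coherence issue.
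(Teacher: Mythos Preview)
The paper does not give its own proof of this statement; it is quoted from \cite[Theorem~1.2]{Kuo-wrapped-sheaves} and used as a black box, so there is no in-paper argument to compare against directly.

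Your part~(b) is correct and essentially complete. Since each wrapping Hamiltonian vanishes on a neighborhood of $X\supseteq\msif(G)$, the contact movie $\Lambda_\Phi\circ\msif(G)$ has vanishing $\tau$-component, so $K(\Phi)\circ G$ is constant along $I$ and the continuation map $G\to G^w$ is an isomorphism; together with the invertibility of $K(\Phi)|_1\circ(-)$ from Corollary~\ref{cor:GKS} this gives $\Hom(F^w,G)\simeq\Hom(F,G)$ naturally in $w$, and passing to the limit yields the adjunction.

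Part~(a), however, has a genuine gap that is not merely the coherence bookkeeping you flag. The approach through Proposition~\ref{prop:mses}(9) cannot succeed in general: take $M=S^1$, $X=\varnothing$, and $F=\bigoplus_{q\in\QQ\cap[0,1)}1_q$. Then $\msif(F)=S^*S^1$, hence $\msif(F^w)=\varphi_w(S^*S^1)=S^*S^1$ for every wrapping $w$, so no cofinal subfamily can avoid any given covector and the closure estimate is vacuous. Yet the theorem still holds, since $\wrap_\varnothing^+(F)=\bigoplus_q\wrap_\varnothing^+(1_q)$ is a direct sum of local systems and therefore has empty $\msif$. The point is that $\msif$ of a filtered colimit can be strictly smaller than the closure of the union, and here it must be. A different mechanism is required for~(a): one useful observation is that $H\coloneqq\wrap_X^+(F)$ is wrapping-invariant, meaning the continuation map $H\to H^\psi$ is an isomorphism for every positive $\psi$ supported off $X$ (by cofinality of post-composition in the index category), and this invariance, rather than a raw microsupport bound, is what forces $\msif(H)\subseteq X$. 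Extracting the conclusion from this invariance is the substantive step carried out in the cited reference.
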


For $G \in \Sh(M)$, it is in general hard to compute $\wrap_X^+(G)$ (resp.~$\wrap_X^-(G)$) since it is given by a colimit (resp.~a limit) over a rather large index category. 
Nevertheless, when $X = \Lambda$ and $\msif(G)$ are both isotropic, the underlying geometry can sometimes provide a cofinal (resp.~final) one parameter family $G_t$ so that $G_0 = G$ and $\wrap_\Lambda^+ G = \operatorname{colim}_{t \rightarrow +\infty} \, G_t$ (resp.~$\wrap_\Lambda^- G = \lim_{t \rightarrow -\infty} G_t$). In this case, a natural question is when, for a fixed $F \in \Sh(M)$ also with isotropic singular support, the canonical map
$$ \Hom(F,G) \rightarrow \Hom(F, \wrap_\Lambda^+G)$$
is an isomorphism. One such a case which we will encounter is the following:
\begin{lemma}\label{lem:nearby_cycle}
Let $\Lambda$ be a fixed compact isotropic and $F, G \in \Sh(M)$ with isotropic singular support. Assume that $\msif(F) \cap \Lambda = \varnothing$, and there is a positive isotopy $\varphi_t$, $t \in \RR$, on $S^* M$ such that for any open neighborhood $\Omega$ of $\Lambda$, there is $T = T(\Lambda)$ such that $\varphi_t(\msif(G)) \subseteq \Omega$ for $t \geq T$, and $\msif(F) \cap \varphi_t( \msif(G)) = \varnothing$ for all $t \geq 0$, then the canonical map
$$ \Hom(F,G) \rightarrow \Hom(F, \wrap_\Lambda^+G)$$
is an isomorphism. A similar statement holds for $\Hom(G,F) \rightarrow \Hom(\wrap_\Lambda^-G,F)$ when given a negative isotopy satisfying a similar condition.
\end{lemma}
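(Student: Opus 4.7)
The plan is to show that the cofibre $C := \mathrm{cof}(G \to \wrap_\Lambda^+ G)$ satisfies $R\Hom(F, C) = 0$, which is equivalent to the stated isomorphism. I would break this into three steps.

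First, by Theorem~\ref{w=ad} together with the asymptotic hypothesis on $\varphi_t$ (that $\varphi_t(\msif(G))$ eventually lies in any prescribed neighbourhood of $\Lambda$), the one-parameter family $G_t := K(\Phi)|_t \circ G$ for $t \geq 0$ is cofinal in the filtered diagram of positive wrappings of $G$. Hence $\wrap_\Lambda^+ G \simeq \clmi_{t \geq 0} G_t$ and $C \simeq \clmi_t C_t$ with $C_t := \mathrm{cof}(G \to G_t)$.

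Next, the technical core is to show that for each $t \geq 0$ the continuation map $R\Hom(F, G) \to R\Hom(F, G_t)$ is an isomorphism, i.e., $R\Hom(F, C_t) = 0$. I would consider the GKS interpolation $\tilde G := K(\Phi) \circ G \in \Sh(M \times J)$ and the auxiliary sheaf $L := \sHom(p_M^* F, \tilde G)$ on $M \times J$. Positivity of $\Phi$ gives $\ms(\tilde G) \subseteq \{\tau \leq 0\}$ in the $J$-direction (Theorem~\ref{thm:GKS}), while the hypothesis $\msif(F) \cap \varphi_s(\msif(G)) = \varnothing$ for $s \in [0, t]$ ensures the non-characteristic condition $\ms(p_M^* F) \cap \ms(\tilde G) \subseteq 0_{M \times J}$ on $M \times [0, t]$. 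Proposition~\ref{prop:mses}(8) then yields $\ms(L)|_{M \times [0, t]} \subseteq \ms(\tilde G) - \ms(p_M^* F) \subseteq \{\tau \leq 0\}$. A careful inspection of the zero-$M$-covector contributions, using the explicit form \eqref{contact_movie} of $\msif(K(\Phi))$ (whose non-zero-section part always has non-zero $M$-covector) together with Corollary~\ref{msconv}, shows that such contributions reduce to the zero section, so $\ms(p_{I*} L)|_{[0, t]}$ lies in $0_{[0, t]}$. Thus $H := p_{I*} L$ is locally constant on $[0, t]$; since $H_s = R\Hom(F, G_s)$ by base change, local constancy yields the desired isomorphism.

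Finally, to pass to the colimit, observe that $\msif(C) \subseteq \msif(G) \cup \Lambda$ is disjoint from $\msif(F)$ by hypothesis, so by Proposition~\ref{prop:ss-muhom} we have $\mhom(F, C) = 0$ on $S^* M$. Sato's fibre sequence (Proposition~\ref{thm:sato}) together with Remark~\ref{rem:DFotimesG}, which is applicable because $F$ is constructible by Theorem~\ref{extri}, gives $\sHom(F, C) \simeq \ND{M}(F) \otimes C \simeq \clmi_t (\ND{M}(F) \otimes C_t)$. Applying $R\Gamma(M; -)$ and commuting with the filtered colimit in this constructible isotropic setting yields $R\Hom(F, C) = \clmi_t R\Hom(F, C_t) = 0$, as desired.

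The hardest part will be the microsupport analysis in the second step, namely verifying that the zero-$M$-covector contributions to $\ms(\tilde G)$ are confined to the zero section of $T^*(M \times J)$. The dual statement for $\Hom(\wrap_\Lambda^- G, F)$ and negative isotopies follows by a symmetric argument using limits instead of colimits.
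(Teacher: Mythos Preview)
Your approach is quite different from the paper's, which simply invokes the nearby cycle theorem \cite[Theorem 4.2]{NadShen} (via \cite[Theorem 5.15]{Kuo-wrapped-sheaves}) and observes that compactness of $\Lambda$ supplies the required \emph{gappedness} condition. Your direct argument is natural to try, but it has a genuine gap exactly where that black-box result is doing work.

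The main problem is the passage to the colimit. Even granting Step~2, you only obtain $\Hom(F,C_t)=0$ for each $t$; to conclude $\Hom(F,\colim_t C_t)=0$ you must commute $\Gamma(M;-)$ with the filtered colimit after your Sato trick. But neither $M$, nor $\supp(F)$, nor $\supp(G)$ is assumed compact in this lemma (the paper's proof explicitly notes this), so $\Gamma(M;-)$ need not preserve filtered colimits, and the phrase ``in this constructible isotropic setting'' does not supply a reason. Note also that your Sato reduction gives $\sHom(F,C)=\colim_t\sHom(F,C_t)$, so the argument would close if you knew $\sHom(F,C_t)=0$ as a \emph{sheaf}; but Step~2 cannot be upgraded to that statement, since $\sHom(F,G_s)$ is generically non-constant in $s$ (only its global sections might be).

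Step~2 itself also leans on a pushforward microsupport estimate for $p_{I*}L$ that needs properness. You correctly verify $\ms(L)\cap(0_M\times T^*I)\subseteq 0_{M\times I}$, but to deduce $\ms(p_{I*}L)\subseteq 0_I$ you need Proposition~\ref{prop:mses}(4), which requires $p_I$ proper on $\supp(L)$; Lemma~\ref{mscomp} does not help because $\ms(L)$ is not of product form $T^*M\times Y$. The paper runs into the same obstruction in the proof of Proposition~\ref{prop:hom_w_pm} and there imposes compact support explicitly; here compact support is unavailable, and the substitute---gappedness coming from compactness of $\Lambda$---is precisely the input to the Nadler--Shende nearby cycle theorem that the paper invokes. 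Finally, the cofinality of the single family $\{G_t\}$ in Step~1 is not a formal consequence of Theorem~\ref{w=ad}; it is part of the content of \cite{Kuo-wrapped-sheaves} and should be cited rather than asserted.
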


\begin{proof}
This is essentially \cite[Theorem 5.15]{Kuo-wrapped-sheaves}. The point is that we would like to apply the main theorem about nearby cycle, \cite[Theorem 4.2]{NadShen}. Although we did not assume compactness on $\supp(F)$ and $\supp(G)$ here as in \cite[Theorem 5.15]{Kuo-wrapped-sheaves}, the compactness assumption on $\Lambda$ will be sufficient to implied the gappedness condition for \cite[Theorem 4.2]{NadShen}.
\end{proof}

\begin{remark}
    In practice, when we can find an increasing sequence of positive Hamiltonian flows $\varphi^k_t$, $k \in \NN$, such that for any open neighborhood $\Omega$ of $\Lambda$, there is $K \in \NN$ such that $\varphi^k_t(\msif(G)) \subseteq \Omega$ for $k \geq K$, then the condition in the lemma holds. Indeed, we can define a time dependent smooth Hamiltonian $\varphi^t$ such that $\varphi_t(\msif(G)) = \varphi^k_{t-k}(\msif(G))$ when $t \in [k + 1 - \epsilon, k+1]$. That satisfies the condition in the lemma.
\end{remark}

%%% Section 4-3: The category of positive wrappings
%\sayCK{My opinion it's more natural to put Sato-Sabloff here. The reason is that we expect that positive wrappings won't change
%Hom since that's what Floer theory plus Ganatra-Pardon-Shende tells us. Then the
%natural question is what does the negative wrappings do
%and we have an answer in terms of standard sheaf theory.
%Finally, the continuation map nonsense tells us there's 
%a canonical map between them. Then Sato-Sabloff tells us
%the cofiber is given by $\mhom$.
%}

%\sayCK{I'm currently a little doubtful why doubling is not enough for us to get swappability implies spherical? It seems a little redundant (and confusing at the moment) to consider $\widehat{\wsh}_\Lambda(M)$.}

\section{Doubling and fiber sequence}\label{sec:doubling}
%\sayWL{The structure of the whole section may be changed. We can discuss on Monday. One possible thing to do is to put the adjoint functor part first (current section 4.1 and 4.5; there are overlaps though), and then explain that these constructions can be interpreted in a more local way as the doubling (section 4.2--4.4); in particular the diber sequence commutes with the more classical Sato fiber sequence. Finally we can remark that the doubling coincides with the formula for adjoints in section 4.1.}
%\sayCK{That's what you have to explain to me on Monday. The most crucial confusion I have is why we have to use this $\widehat{\wsh}_{\Lambda,\pm \epsilon}(M)$ at all? Can't we, by the Guillermou doubling construction you make, identify it as some subcategory of $\Sh_{T_{-\epsilon}(\Lambda) \cup T_{\epsilon} }(\Lambda)$? Maybe the image of $\msh_{\Lambda}(\Lambda)$ under doubling, or something more sophisticated if that's too naive?}
%\sayWL{The point to use $\wsh_\Lambda(M)$, as I have remarked in Section 5.3, is that I want to formulate a criterion Theorem 5.7, that will ensure both (2) and (4) in the definition.}

    Our goal in this section is to interpret the left and right adjoint functors of microlocalization
    $$m_\Lambda: \Sh_\Lambda(M) \rightarrow \msh_\Lambda(\Lambda).$$
    by the doubling construction in sheaf theory (which is also known as the antimicrolocalization functor \cite{NadShen} or the Guillermou convolution functor \cite{JinTreu}).

    First, we will realize the doubling functor with respect to an arbitrary Reeb flow $T_t$, $t \in \RR$, on $S^{*}M$ and show that this defines a fully faithful functor.
    
\begin{theorem}\label{thm:doubling}
    Let $\Lambda \subseteq S^{*}M$ be a compact subanalytic Legendrian and $c(\Lambda)$ be the length of the shortest Reeb chord on $\Lambda$ with respect to the Reeb flow $T_t$. Then for $0 < \epsilon < c(\Lambda)/2$, there is a fully faithful functor
    $$w_\Lambda: \msh_\Lambda(\Lambda) \hookrightarrow \Sh_{T_\epsilon(\Lambda) \cup T_{-\epsilon}(\Lambda)}(M).$$
\end{theorem}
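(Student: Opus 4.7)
Given $G \in \msh_\Lambda(\Lambda)$, my plan is to construct $w_\Lambda(G)$ directly via Guillermou-Kashiwara-Schapira sheaf quantization of the Reeb flow together with a continuation map, check its microsupport, verify descent along $m_\Lambda$, and finally prove full faithfulness. Pick a lift $\widetilde G \in \Sh_\Lambda(M)$ with $m_\Lambda \widetilde G \simeq G$ (for example $\widetilde G = m_\Lambda^l(G)$), and let $K(T_t)$ denote the GKS kernel of the Reeb flow (Theorem \ref{thm:GKS}). Positivity of $T_t$ yields a continuation morphism $K(T_{-\epsilon}) \to K(T_\epsilon)$ as in Section \ref{continuation maps}, and hence a natural map $K(T_{-\epsilon}) \circ \widetilde G \to K(T_\epsilon) \circ \widetilde G$. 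Set
$$w_\Lambda(G) := \mathrm{fib}\bigl( K(T_{-\epsilon}) \circ \widetilde G \longrightarrow K(T_\epsilon) \circ \widetilde G \bigr).$$
By Corollary \ref{cor:GKS} we have $\msif(K(T_{\pm\epsilon}) \circ \widetilde G) = T_{\pm\epsilon}(\Lambda)$, and these two Legendrians are disjoint because $2\epsilon < c(\Lambda)$. The microlocal triangular inequality of Proposition \ref{prop:mses}(1) then forces $\msif(w_\Lambda G) \subseteq T_\epsilon(\Lambda) \cup T_{-\epsilon}(\Lambda)$, so $w_\Lambda(G)$ lies in the asserted subcategory.

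To descend the construction along the quotient $m_\Lambda : \Sh_\Lambda(M) \twoheadrightarrow \msh_\Lambda(\Lambda) = \Sh_\Lambda(M)/\Sh_\varnothing(M)$, I would verify that the corresponding endofunctor of $\Sh_\Lambda(M)$ vanishes on sheaves $F$ with $\msif(F) = \varnothing$. For such $F$ the family $K(T_\bullet) \circ F \in \Sh(M \times \RR)$ also has empty microsupport at infinity (by Corollary \ref{msconv}), so is locally constant in both variables; Proposition \ref{prop:properties_of_continuation_maps}(4) then makes the continuation between any two slices an isomorphism, so $w_\Lambda F = 0$ and the universal property of the Verdier quotient supplies the functor on $\msh_\Lambda(\Lambda)$.

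The bulk of the work is full faithfulness. Applying $\Hom(-, w_\Lambda G')$ to the defining fiber sequence for $w_\Lambda G$ and using that convolution with $K(T_{\pm\epsilon})$ is an autoequivalence of $\Sh(M)$ (Corollary \ref{cor:GKS}), one rewrites $\Hom(w_\Lambda G, w_\Lambda G')$ as an iterated (co)fiber of the groups $a_t := \Hom_{\Sh(M)}(\widetilde G, K(T_t) \circ \widetilde G')$ for $t \in \{-2\epsilon, 0, 2\epsilon\}$, with transitions induced by continuation. Plugging Sato's fiber sequence (Proposition \ref{thm:sato}) into each $a_t$, the microlocal contribution $\Gamma(S^*M, \mhom(\widetilde G, K(T_t)\widetilde G')|_{S^*M})$ is, by Proposition \ref{prop:ss-muhom}, supported on $\Lambda \cap T_t(\Lambda)$ and hence nontrivial only for $t$ near $0$. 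Converting between microsheaves on $T_{\pm\epsilon}\Lambda$ and on $\Lambda$ via the contact transformation of Corollary \ref{cor:cont-trans}, the iterated (co)fiber should collapse to $\Gamma(\Lambda, \mhom(\widetilde G, \widetilde G')|_\Lambda) = \Hom_{\msh_\Lambda(\Lambda)}(G, G')$.

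The hard part will be this last collapse: the two Legendrians $T_\epsilon \Lambda$ and $T_{-\epsilon}\Lambda$ a priori each contribute a copy of $\Hom_{\msh_\Lambda(\Lambda)}(G, G')$ to the microlocal term (differing by a degree shift), while the classical term $\Gamma(M, \ND{M}\widetilde G \otimes K(T_t)\widetilde G')$ in Sato's fiber sequence must cancel one of them so that only a single $\Hom_{\msh_\Lambda(\Lambda)}(G, G')$ survives. Verifying this cancellation demands careful bookkeeping of shifts and signs, and I would likely invoke the nearby-cycle statement of Lemma \ref{lem:nearby_cycle} to control the large-time behaviour of the $a_t$'s as the wrapping parameter varies.
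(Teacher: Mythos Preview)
Your approach has a genuine gap at the construction step. You assume every $G \in \msh_\Lambda(\Lambda)$ admits a global lift $\widetilde G \in \Sh_\Lambda(M)$ with $m_\Lambda \widetilde G \simeq G$, equivalently that $\msh_\Lambda(\Lambda) = \Sh_\Lambda(M)/\Loc(M)$; but $\msh_\Lambda(\Lambda)$ is obtained by \emph{sheafifying} the presheaf $\Omega \mapsto \Sh(M)/\Sh_{\Omega^c}(M)$, so while there is a fully faithful map $\Sh_\Lambda(M)/\Loc(M) \hookrightarrow \msh_\Lambda(\Lambda)$, it need not be essentially surjective --- objects of $\msh_\Lambda(\Lambda)$ are compatible systems of local data that may have no global sheaf representative. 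Your suggested lift $\widetilde G = m_\Lambda^l(G)$ does not help: the assertion $m_\Lambda m_\Lambda^l \simeq \mathrm{id}$ says $m_\Lambda^l$ is fully faithful, which via Theorem~\ref{cor:doubling} is essentially the full faithfulness of $w_\Lambda$ you are trying to prove. Indeed, the paper later presents Guillermou's sheaf quantization (Theorem~\ref{thm:Gui}) as a nontrivial consequence of the doubling construction; your argument would trivialize it.

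The paper circumvents this by working locally. By Corollary~\ref{lem:refine-cutoff}, local lifts $F_\alpha \in \Sh_{\Lambda \cap S^*U_\alpha}(U_\alpha)$ with $m_{\Lambda \cap S^*U_\alpha}(F_\alpha) = G|_{S^*U_\alpha}$ do exist on small open sets; one applies your cofiber formula to $j_{\alpha *}F_\alpha$ and restricts to a good refinement $V_\alpha$. The Sato--Sabloff computation (Proposition~\ref{prop:doubling_ff}, essentially your full-faithfulness sketch carried out carefully at the local level) then shows that the local $\Hom$'s agree with $\Gamma(S^*V_\alpha, \mhom(G, G'))$, and since $\mhom$ is a sheaf these glue to give both the global object $w_\Lambda(G)$ and its full faithfulness simultaneously. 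A technical complication your global formula would have sidestepped --- that the ambient Reeb flow $T_t$ need not preserve $S^*U_\alpha$ --- is handled by the good-covering machinery (Definition~\ref{def:goodcover}, Lemmas~\ref{lem:lower*=lower!}--\ref{lem:muhom-cutoff}).
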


    Then, we show that by wrapping positively or negatively around $S^{*}M \backslash \Lambda$, we will get the left and right adjoints from the doubling functor.%\sayWL{By the way, do you have a preference of whether we should use $f^l$ for left adjoint or rather $f^L$?}\sayCK{I'm fine with either. Maybe just use lower case since our $m_\Lambda$ is lower case?}

\begin{theorem}\label{thm:doubling_ad}
    Let $\Lambda \subseteq S^{*}M$ be a compact subanalytic Legendrian. Then there are equivalences
    $$m_\Lambda^l = \wrap_\Lambda^+ \circ w_\Lambda[-1], \;\; m_\Lambda^r = \wrap_\Lambda^- \circ w_\Lambda: \msh_\Lambda(\Lambda) \rightarrow \Sh_\Lambda(M)$$
    where $\wrap_\Lambda^\pm$ are the functors given in Theorem \ref{w=ad}.
    In particular, the left and the right adjoint $m_\Lambda^l$ and $m_\Lambda^r$ can be decomposed to a inclusion followed by a quotient:
    $$ \msh_\Lambda(\Lambda) \hookrightarrow \Sh_{T_\epsilon(\Lambda) \cup T_{-\epsilon}(\Lambda)}(M) \twoheadrightarrow \Sh_\Lambda(M).$$
\end{theorem}

    The doubling functor in sheaf theory goes back to Guillermou \cite{Gui}*{Section 13-15}, and is also formulated in a different way in Nadler-Shende \cite{NadShen}*{Section 6}. Here we will generalize that functor to arbitrary Reeb flows on $S^{*}M$. In Lagrangian Floer theory, the stop doubling construction has been discussed in the setting of Fukaya-Seidel categories \cite{AbGan} (see also \cite{AbSmithKhov,AbAurouxHMS}) as the cup functor and also in the setting of partially wrapped Fukaya categories as the doubling trick \cite{Ganatra-Pardon-Shende2}*{Example 8.7}, cup functor or Orlov functor \cite{SylvanOrlov}. Recently the doubling trick has been used in the theory of (twisted) generating families \cite{TwistGF}*{Theorem~C}. %\sayCK{Read it.}
    
    While Theorem \ref{thm:doubling} is a immediate corollary of Nadler-Shende \cite{NadShen}*{Section 6}, Theorem \ref{thm:doubling_ad} which relates the doubling functor to both the left and right adjoint of microlocalization is rather a nontrivial result and does not seem to follow from \cite{NadShen}*{Section 6}, which is why we need to generalize the approach in Guillermou \cite{Gui}*{Section 13-15}. 
    
    We will deduce the doubling construction through two independent approaches (that are morally closely related), both using sheaf theoretic wrappings discussed in Section \ref{Isotopies of sheaves}.
    
    For the first approach in Section \ref{sec:sato-sab} and \ref{sec:doubling-local}, we consider a single wrapping defined globally. The difference between positive and negative wrapping which leads to the Sato-Sabloff fiber sequence, which provides a new interpretation of the Sato fiber sequence (Theorem \ref{thm:sato}) in microlocal theory of sheaves from the perspective of Hamiltonian isotopies of sheaves. Then we define the doubling by locally considering the difference between positive and negative wrapping of the sheaf. While this approach is very straight forward, the disadvantage is that it involves auxiliary choices and relies on the crucial observation on positive and negative wrappings to begin with.

    For the second approach in Section \ref{sec:ad-micro} and \ref{sec:double-ad-micro}, we consider functorial gluings of families of small wrappings defined locally. We interpret local adjoints of microlocalizations by small wrappings following Theorem \ref{w=ad} and then consider the gluing of the local adjoints of microlocalizations with respect to corestriction functors. While this approach is more functorial and independent of auxiliary choices, the disadvantage is that it is really involved when taking the colimits and showing the relation between the left and right adjoint also requires extra work.
    
    Finally, we also show in Section \ref{sec:serre} a Sabloff-Serre duality using the Verdier duality on sheaves, which has appeared in a number of works in symplectic geometry \cite{Sabduality,EESduality,SeidelFukI}.

\begin{comment}
    In Section \ref{sec:sato-sab}, we compare the fiber sequence and the Sato fiber sequence, showing that they are isomorphic, and thus comclude the Sato-Sabloff fiber sequence.     
    When discussing the Sato-Sabloff fiber sequence, we also show in Section \ref{sec:serre} a Sabloff duality using the Verdier duality on sheaves, which has appeared in a number of works in symplectic geometry \cite{Sabduality,EESduality,SeidelFukI}.
    
    In Section \ref{sec:ad-micro}, we study the fiber sequences coming from adjoints of microlocalizations and explain the role of wrappings in the sequence. Then in Section \ref{sec:doubling-local}, using the Sato-Sabloff fiber sequence, we define the doubling construction, which allows us to provides a uniform characterization of the adjoints of microlocalization in Section \ref{sec:double-ad}.
\end{comment}

%%% Sato-Sabloff fiber sequence
\subsection{Sato-Sabloff fiber sequence}\label{sec:sato-sab}

For compact subanalytic Legendrians $\Lambda_{0}, \Lambda_{1} \subset S^{*}M$, we let $c(\Lambda_0, \Lambda_1)$ be the minimal absolute value of lengths of Reeb chords between $\Lambda_0$ and $\Lambda_1$ with respect to the Reeb flow $T_t$. Abusing notations, we also use $T_t$ to denote the associated functor of its time-$t$ flow which acts on sheaves on $M$. 
The key proposition of this section is that the $\Hom$ in $\msh_\Lambda(\Lambda)$ can be computed as a difference between the positive and negative wrappings.

Similar considerations have also appeared in previous works of for example Guillermou \cite[Section 11--13]{Gui} and Tamarkin \cite[Equation~(1)]{Tamarkin2}.

\begin{theorem}[Sato-Sabloff fiber sequence]\label{thm:sato-sab}
    Let $\Lambda, \Lambda' \subseteq S^*M$ be compact subanalytic Legendrians. Let $T_t: S^*M \to S^*M$ be a non-negative contact flow such that 
    $$\Lambda \cap T_\epsilon(\Lambda') = \varnothing$$
    for any small $\epsilon \neq 0$. Then for $F \in \Sh_{\Lambda}(M)$ and $G \in \Sh_{\Lambda'}(M)$ such that $\mathrm{supp}(F) \cap \mathrm{supp}(G)$ is compact in $M$, there is a fiber sequence
    $$\Hom(F, T_{-\epsilon}(G)) \xrightarrow{c} \Hom(F, T_{\epsilon}(G)) \rightarrow  \Gamma(\dT^* M, \mhom(F, G))$$
    where $c$ is induced by the continuation map $T_{-\epsilon}(G) \rightarrow T_\epsilon(G)$.
\end{theorem}
\begin{remark}\label{rem:multi-component-sato}
    We remark that the above computation in particular works in the case when $\Lambda = \Lambda'$ and we take microlocalization along a single connected component $\Lambda_i \subseteq \Lambda \subseteq S^{*}M$. Let $\widetilde{T}_t: S^{*}M \rightarrow S^{*}M$ be a Hamiltonian flow such that $\widetilde{T}_t|_{T_t(\Lambda_i)} = T_t$ is the Reeb flow while $\widetilde{T}_t|_{\Lambda \backslash \Lambda_i} = \mathrm{id}$. Then there is a fiber sequence 
    $$\Hom(F, \widetilde{T}_{-\epsilon}(G)) \rightarrow \Hom(F, \widetilde{T}_{\epsilon}(G)) \rightarrow \Gamma(\Lambda_i, \mhom(F,G) )$$
\end{remark}

The above expression will be useful in Section \ref{sec:doubling-local} when we construct the doubling functor in Theorem \ref{thm:doubling}. Here we note that there is another expression, 
$$\Hom(\wrap_\Lambda^- T_{-\epsilon}(F),G) \rightarrow \Hom(F,G) \rightarrow \Hom(m_\Lambda^l m_\Lambda (F),G)$$
where we use Theorem \ref{w=ad} to identify $\Hom(\wrap_\Lambda^- T_{-\epsilon}(F),G) = \Hom(T_{\epsilon}(F), G) = \Hom(F,T_{-\epsilon}(G))$ and Definition-Theorem \ref{mu-hom_as_hom} to identify $\Gamma(\Lambda,\mhom(F,G)) = \Hom(m_\Lambda^l m_\Lambda (F), G)$.
Equivalently, we have the fiber sequence
$$m_\Lambda^l m_\Lambda \rightarrow \id \rightarrow \wrap_\Lambda^+ T_{\epsilon}$$
between endofunctors on $\Sh_\Lambda(M)$. A similarly discussion holds for the left adjoints. Later in Section \ref{sec:ad-micro} we will explain another perspective of understanding the fiber sequence.

\begin{definition}\label{def:wrap_once_functors}
We define the positive and negative warp-once functor $S_\Lambda^+$ and $S_\Lambda^-: \Sh_\Lambda(M) \rightarrow \Sh_\Lambda(M)$ as the compositions
$$S_\Lambda^+(F) \coloneqq \wrap_\Lambda^+ T_{\epsilon}(F), \; S_\Lambda^-(F) \coloneqq \wrap_\Lambda^- T_{-\epsilon} (F) .$$
\end{definition}

The above discussion show that $S_\Lambda^-$ and $S_\Lambda^+$ is the cotwist and dual cotwist associated to the adjunctions $m_\Lambda^l \dashv m_\Lambda \dashv m_\Lambda^r$. See Section \ref{sec:spherical} for the terminology.

Using the Sato fiber sequence Theorem \ref{thm:sato}, the proof of Theorem \ref{thm:sato-sab} is reduced to the following proposition. Let $p: M \to \{*\}$ be the projection map.

\begin{proposition}\label{prop:hom_w_pm}
    Let $\Lambda_{0}$, $\Lambda_{1} \subseteq S^{*}M$ be compact subanalytic Legendrians, ${F} \in \Sh_{\Lambda_0}(M)$, ${G} \in \Sh_{\Lambda_1}(M)$ and $\mathrm{supp}(F) \cup \mathrm{supp}(G)$ is compact. Then there is a commutative diagram%\sayWL{By the way, the reason I changed some conditions of $\mathrm{supp}(F) \cap  \mathrm{supp}(G)$ being compact into $\mathrm{supp}(F)$ and $\mathrm{supp}(G)$ being compact separately is because I am not sure if the intersection being compact also ensures that the support of $\sHom(F, T_t(G))$ is still compact.}\sayCK{My believe is that the same reason for keeping the support of one sheaf compact should hold here. That is, the GKS sheaf kernel is proper along both projections. In fact, its support is contained in "the points swapped by the isotopy from the diagonal". But we mostly care about compactly supported sheaves anyway so we don't lose much if you change it back.}. When $0 < \epsilon < c(\Lambda_0, \Lambda_1)$, there is a commutative diagram
$$
\begin{tikzpicture}
% Nodes
\node at (0,1.7) {$p_*\Delta^*\mathscr{H}om(\pi_1^*{F}, \pi_2^*{G})$};
\node at (6,1.7) {$\Hom({F, G})$};
\node at (0,0) {$\Hom({F}, T_{-\epsilon}({G}))$};
\node at (6,0) {$\Hom({F}, T_{\epsilon}({G}))$};

% Horizontal Arrows
\draw [->, thick] (2.1,1.7) -- (4.9,1.7) node [midway, above] {$ $};
\draw [->, thick] (1.6,0) -- (4.5,0) node [midway, above] {$c$};

% Vertical Arrows
\draw [->, thick] (0,1.4) -- (0,0.3) node [midway, left] {\rotatebox{90}{$\sim$}}; 
\draw [->, thick] (6,1.4) -- (6,0.3) node [midway, right] {\rotatebox{90}{$\sim$}};
\end{tikzpicture}
$$

\noindent where $c$ is the continuation map associated to the Reeb flow and the bottom arrow is the canonical map in Theorem \ref{thm:sato}.
\end{proposition}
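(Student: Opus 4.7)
The plan is to rewrite every corner of the diagram in the form $\Gamma(M, \ND{M}(F) \otimes T_t G)$ for $t \in \{-\epsilon, 0, \epsilon\}$ and then compare the three values using the Reeb continuation. By Remark~\ref{rem:DFotimesG}, the cohomological constructibility of $F$ (from its subanalytic Legendrian microsupport) gives $\Delta^* \sHom(\pi_1^* F, \pi_2^* G) \simeq \ND{M}(F) \otimes G$, so the top-left equals $\Gamma_c(M, \ND{M}(F) \otimes G)$ (using compactness of $\supp F \cap \supp G$). Since $\epsilon < c(\Lambda_0, \Lambda_1)$ forces $\Lambda_0 \cap T_{\pm\epsilon}(\Lambda_1) = \varnothing$, Proposition~\ref{prop:mses}(8) yields $\sHom(F, T_{\pm\epsilon} G) \simeq \ND{M}(F) \otimes T_{\pm\epsilon} G$, hence $\Hom(F, T_{\pm\epsilon} G) \simeq \Gamma_c(M, \ND{M}(F) \otimes T_{\pm\epsilon} G)$.

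Next I would obtain the two vertical isomorphisms via the continuation-limit formulas of Proposition~\ref{prop:properties_of_continuation_maps}. The contact-movie formula~\eqref{contact_movie} shows the $\tau$-component of $\ms(K(T) \circ G)$ on $\dot T^*M$ equals $-\alpha(V_T)|\xi| < 0$, so $K(T) \circ G$ is $I$-noncharacteristic; part~(4) then gives $T_0 G \simeq \lim_{s > 0} T_s G$, and since $\Hom(F, -)$ preserves limits, $\Hom(F, G) \simeq \lim_{s > 0} \Hom(F, T_s G)$. To identify this limit with $\Hom(F, T_\epsilon G)$, I would study the ambient family sheaf $\widetilde H := \pi_M^* \ND{M}(F) \otimes (K(T) \circ G) \in \Sh(M \times I)$: on $M \times (0,\epsilon)$ and $M \times (-\epsilon, 0)$, the disjointness $\Lambda_0 \cap T_t(\Lambda_1) = \varnothing$ validates the hypothesis of Proposition~\ref{prop:mses}(7), so the tensor-sum estimate combined with $\tau = -|\xi|$ gives $\ms(\widetilde H)|_{t \neq 0}$ with $\xi$-component vanishing only on the zero section. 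Proposition~\ref{prop:mses}(4) then forces $\pi_{I,!}\widetilde H$ to be locally constant on each of the two open intervals, so $s \mapsto \Gamma_c(M, \ND{M}(F) \otimes T_s G) \simeq \Hom(F, T_s G)$ is constant there, giving $\lim_{s > 0} \Hom(F, T_s G) \simeq \Hom(F, T_\epsilon G)$ and hence the right vertical iso. Dually, Proposition~\ref{prop:properties_of_continuation_maps}(3) gives $\colim_{r < 0} T_r G \simeq G$; since $\ND{M}(F) \otimes (-)$ commutes with colimits while $\Gamma_c$ commutes with filtered colimits of sheaves whose supports lie in a fixed compact set (bounded using Corollary~\ref{cor:GKS}), we obtain $\Gamma_c(M, \ND{M}(F) \otimes G) \simeq \colim_{r < 0} \Gamma_c(M, \ND{M}(F) \otimes T_r G) \simeq \Hom(F, T_{-\epsilon} G)$, the left vertical iso.

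Finally, commutativity of the diagram follows by embedding it inside the natural commutative diagram of sheaves on $M$
\begin{equation*}
\begin{tikzcd}
\ND{M}(F) \otimes T_{-\epsilon}G \ar[r] \ar[d,"\sim"'] & \ND{M}(F) \otimes G \ar[d] \ar[r] & \ND{M}(F) \otimes T_\epsilon G \ar[d,"\sim"] \\
\sHom(F, T_{-\epsilon} G) \ar[r] & \sHom(F, G) \ar[r] & \sHom(F, T_\epsilon G),
\end{tikzcd}
\end{equation*}
where the horizontals are induced by the continuations $T_{-\epsilon}G \to G \to T_\epsilon G$, the outer verticals are the identifications of paragraph one, and the middle vertical is the first map of the Sato fiber sequence (Theorem~\ref{thm:sato}); naturality of Sato makes this commute, and applying $\Gamma(M, -)$ with the identifications of paragraph two yields the proposition's diagram. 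The main technical obstacle is the microsupport computation underlying the local-constancy of $\pi_{I,!}\widetilde H$: it requires combining Proposition~\ref{prop:mses}(4,7), the $\,\widehat+\,$-estimates of Proposition~\ref{prop:noncharacteristic-ms-es}, and the convolution estimate Corollary~\ref{msconv} to confirm that the sum of microsupports on $t \neq 0$ has no $\xi = 0$ contributions off the zero section, while recognizing that this hypothesis must genuinely fail at $t = 0$ because $\Lambda_0 \cap \Lambda_1 \neq \varnothing$ there -- a failure which is the geometric origin of the nontriviality of the Sato fiber sequence's third term.
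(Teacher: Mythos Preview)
Your proposal is correct and follows essentially the same strategy as the paper: both arguments hinge on the microsupport estimate for the Reeb family (your ``main technical obstacle'' is precisely the paper's Lemma~\ref{lem:ss-reeb}), then combine Proposition~\ref{prop:properties_of_continuation_maps}(3)--(4) with local constancy on $(-c,0)\cup(0,c)$ to identify the two vertical arrows, and finally use naturality of the Sato map for commutativity. The only packaging difference is that where the paper works with two auxiliary sheaves on $M\times\RR$---the total Hom $\sHom(q^*F,K(T)\circ G)$ for the right vertical and its $*$-variant $(\Delta\times\id_\RR)^*\sHom(\pi_{1,\RR}^*q^*F,\pi_{2,\RR}^*(K(T)\circ G))$ for the left---you work uniformly with the single tensor object $\widetilde H=\pi_M^*\ND{M}(F)\otimes(K(T)\circ G)$; by Remark~\ref{rem:DFotimesG} this \emph{is} the paper's $*$-variant, and your route has the minor advantage that applying Proposition~\ref{prop:properties_of_continuation_maps}(4) directly to $K(T)\circ G$ (which is $I$-noncharacteristic everywhere, since $\tau<0$ off the zero section) sidesteps the question of whether the total Hom sheaf is noncharacteristic at $t=0$ when $\Lambda_0\cap\Lambda_1\neq\varnothing$.
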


Before entering the proof of Proposition \ref{prop:hom_w_pm}, we recall that the continuation map $T_{-\epsilon}(F)\rightarrow T_\epsilon(F)$ is constructed the GKS sheaf kernel associated to the Reeb flow.
Write $q: M \times \mathbb{R} \rightarrow M$ and $t: M \times \mathbb{R} \rightarrow \mathbb{R}$ for the projection maps. For a subanalytic Legendrian $\Lambda \subseteq S^{*}M$, consider the Legendrian movie of $\Lambda$ under the identity flow
$$\Lambda_q = \{(x, \xi, t, 0) | (x, \xi) \in \Lambda, t \in \RR\} \subset S^{*}(M \times \mathbb{R}).$$ 
Let $T_t: S^{*}M \rightarrow S^{*}M$ be any Reeb flow defined by the positive Hamiltonian $H: S^{*}M \rightarrow \mathbb{R}$ and consider the Legendrian movie of $\Lambda$ under the Reeb flow
$$\Lambda_T = \{(x, \xi, t, \tau) | (x, \xi) \in T_t(x_0, \xi_0), \tau = -H \circ T_t(x_0, \xi_0), (x_0, \xi_0) \in \Lambda\} \subset S^{*}(M \times \mathbb{R}).$$
%\sayCK{My belief is $\tau$ should be $-H(T_t(x_0,\xi_0)$ instead. At least, that should be what (4) of Theorem 3.5 suggests. Maybe a simpler, more convincing way to see it is that $\tau$ should probably depends on $t$ in general. (It doesn't affect the proof though.)}
%Abusing notations, we will write $T_t: \Sh_\Lambda(M) \xrightarrow{\sim} \Sh_{T_t(\Lambda)}(M)$ for the equivalence functor induced by the Reeb flow.
A standard trick is to consider the total sheaf Hom, $\mathscr{H}om(q^*{F}, K(T)  \circ {G})$. 
The following singular support estimate is essentially the same as \cite{LiEstimate}*{Lemma 4.1}.

    Let $\mathcal{Q}_\pm(\Lambda_0, \Lambda_1)$ be the set of unoriented Reeb chords from $\Lambda_0$ to $\Lambda_1$, namely
    $$\mathcal{Q}_\pm(\Lambda_0, \Lambda_1) = \{(x_0, \xi_0, x_1, \xi_1) \in \Lambda_0 \times \Lambda_1 |\, \exists\, t \in \mathbb{R}, T_t(x_0, \xi_0) = (x_1, \xi_1)\}.$$
    For a Reeb chord such that $T_t(x_0, \xi_0) = (x_1, \xi_1)$, we call $t \in \RR$ the length of the Reeb chord.

\begin{lemma}\label{lem:ss-reeb}
    Let $\Lambda_{0,1} \subset S^{*}M$ be subanalytic Legendrians, $T_t: S^{*}M \rightarrow S^{*}M$ be any Reeb flow and ${F} \in \Sh_{\Lambda_0}(M), {G} \in \Sh_{\Lambda_1}(M)$. Then
    \begin{gather*}
    \ms^\infty(\mathscr{H}om(q^*{F}, K(T)  \circ {G})) \cap \{(x, 0, t, \tau) \in S^{*}(M \times \mathbb{R}) | \tau > 0\} = \varnothing, \\
    \ms^\infty(\mathscr{H}om(q^*{F}, K(T) \circ {G})) \cap \{(x, 0, t, \tau) \in S^{*}(M \times \mathbb{R}) | \tau < 0\} \hookrightarrow \mathcal{Q}_\pm(\Lambda_0, \Lambda_1).
    \end{gather*}
    The $t$ coordinates in the intersection correspond to lengths of Reeb chords in $\mathcal{Q}_\pm(\Lambda_0, \Lambda_1)$.
    In particular, $\mathscr{H}om(q^*{F}, K(T) \circ {G})$ is $\RR$-noncharacteristic away from the length spectrum of Reeb chords.
\end{lemma}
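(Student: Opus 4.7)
The plan is to apply standard Kashiwara--Schapira microsupport estimates to $\sHom(q^* F, K(T) \circ G)$. First, the microsupports of the two inputs are easy to record: the noncharacteristic estimate of Proposition \ref{prop:mses}(5) (for the submersion $q$) gives
$$\ms(q^* F) \subseteq \{(x,\xi,t,0) : (x,\xi) \in \ms(F)\} \cup 0_{M \times \RR},$$
while Corollary \ref{cor:GKS} together with the formula \eqref{contact_movie} for the contact movie yields
$$\ms(K(T) \circ G) \subseteq \{(x,\xi,t,\tau) : (x,\xi) \in T_t(\ms(G)),\ \tau = -\alpha(V_T)(x,\xi)\} \cup 0_{M \times \RR}.$$
Because $T_t$ is a Reeb flow, $\alpha(V_T)$ is strictly positive off the zero section, so the non-zero-section part of $\ms(K(T) \circ G)$ lies in $\{\tau < 0\}$ while the non-zero-section part of $\ms(q^* F)$ lies in $\{\tau = 0\}$. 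In particular these two microsupports meet only in $0_{M \times \RR}$, which is exactly the hypothesis for Proposition \ref{prop:mses}(8), yielding
$$\ms(\sHom(q^* F, K(T) \circ G)) \subseteq \ms(K(T) \circ G) - \ms(q^* F).$$

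The second step is to intersect this bound with the subset $\{(x,0,t,\tau) : \tau \neq 0\}$ of purely $\RR$-directional covectors. At a given base point $(x,t)$, a fiberwise difference $(\xi,\tau) - (\xi',0)$ equals $(0,\tau)$ with $\tau \neq 0$ precisely when $\xi = \xi' \neq 0$ lies in $\ms(F) \cap T_t \ms(G)$; by definition this datum is a Reeb chord in $\mathcal{Q}_\pm(\Lambda_0,\Lambda_1)$ of length $t$, whose endpoint is $(x,\xi) \in \Lambda_0$, and whose induced $\tau$-value $-\alpha(V_T)(x,\xi)$ is strictly negative. This simultaneously delivers emptiness in $\{\tau > 0\}$ and containment in the length spectrum in $\{\tau < 0\}$, which also implies $\RR$-noncharacteristicity away from the length spectrum.

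The main obstacle I expect is not any of the above computations individually, but the clean verification of the noncharacteristic hypothesis $\ms(q^* F) \cap \ms(K(T) \circ G) \subseteq 0_{M \times \RR}$: this is the step where the positivity of the Reeb flow does all the work, forcing the $\tau$-component of $\ms(K(T) \circ G)$ into $\{\tau<0\}$ and thereby separating it from $\ms(q^* F) \subseteq \{\tau = 0\}$. Once this separation is in place, the Minkowski difference of the two microsupports directly encodes the Reeb chord geometry, and the remaining identifications of signs and lengths are immediate from \eqref{contact_movie}.
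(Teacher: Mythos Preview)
Your proposal is correct and follows essentially the same route as the paper: both compute $\ms^\infty(q^*F)=\Lambda_{0,q}$ and $\ms^\infty(K(T)\circ G)=\Lambda_{1,T}$, observe that the Reeb positivity forces $\Lambda_{1,T}\subset\{\tau<0\}$ while $\Lambda_{0,q}\subset\{\tau=0\}$ so that the hypothesis of Proposition~\ref{prop:mses}(8) holds, and then read off the intersection with $\{(x,0,t,\tau)\}$ from the Minkowski difference $(-\Lambda_{0,q})+\Lambda_{1,T}$ as the Reeb chord condition. The only cosmetic difference is that the paper works directly at infinity in $S^*(M\times\RR)$ whereas you phrase things conically in $T^*(M\times\RR)$ and are a bit more explicit about ruling out the zero-section contributions; the arguments are otherwise identical.
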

\begin{proof}
    Since $\ms^\infty(q^*{F}) \cap \ms^\infty(K(T) \circ {G}) = \Lambda_{0,q} \cap \Lambda_{1,T} = \varnothing$, we can apply the singular support estimate (8) of Proposition \ref{prop:mses}
    $$\ms^\infty(\mathscr{H}om(q^*{F}, K(T) \circ {G})) \subset (-\ms^\infty(q^*{F})) + \ms^\infty(K(T) \circ {G}) = (-\Lambda_{0,q}) + \Lambda_{1,T}.$$
    Hence $(x, 0, t, \tau) \in (-\Lambda_{0,q}) + \Lambda_{1,T}$ if and only if there exists a pair $(x_0, \xi_0) \in \Lambda_0, (x_1, \xi_1) \in \Lambda_1$ such that $(x_1, \xi_1) = T_t(x_0, \xi_0)$, or in other words there is a Reeb chord from $\Lambda_0$ to $\Lambda_1$ of length $u$. In particular, we know that $\tau = -H(x_0, \xi_0) < 0$ is determined by such a pair. Hence when $\tau > 0$, there will never be $(x, 0, t, \tau) \in (-\Lambda_{0,q}) + \Lambda_{1,T}$. Therefore
    \begin{gather*}
    \ms^\infty(\mathscr{H}om(q^*{F}, K(T) \,\circ  {G})) \cap \mathrm{Graph}(dt) = \varnothing, \\
    \ms^\infty(\mathscr{H}om(q^*{F}, K(T) \circ {G}))  \,\cap \mathrm{Graph}(-dt) \hookrightarrow \mathcal{Q}_\pm(\Lambda_0, \Lambda_1),
    \end{gather*}
    where our injection maps $(x, 0, t, -\tau)$ to the Reeb chord of length $u$ connecting $(x_0, \xi_0) \in \Lambda_0$ and $(x_1, \xi_1) = T_t(x_0, \xi_0) \in \Lambda_1$.
\end{proof}

\begin{proof}[Proof of Proposition \ref{prop:hom_w_pm}]
%\sayCK{Your old argument of extending Hamiltonian by setting the first component isn't quite correct, if I understand correctly: $\overline{H}$ is not well-defined for when $x_2 = 0$.}
Denote by $i_t$ the inclusion of the slice of $M \times M$ at $t$.
We first prove the more straightforward statement of $\Hom(F,G) \xrightarrow{\sim} \Hom(F,T_\epsilon(G))$.
The above Lemma \ref{lem:ss-reeb} implies that, by Proposition \ref{prop:mses}~(5), the $\epsilon$-slice of the total Hom sheaf $\mathscr{H}om(q^*{F}, K(T)  \circ {G})$ is the same as
$$ i_\epsilon^* \mathscr{H}om(q^*{F}, K(T)  \circ {G}) = i_\epsilon^! \mathscr{H}om(q^*{F}, K(T)  \circ {G})[-1] = \sHom(F,T_\epsilon(G)).$$
Thus, we may apply Proposition \ref{prop:properties_of_continuation_maps}~(3) and get
$$\sHom(F,G) \xrightarrow{\sim} \lmi{t \rightarrow 0^+} \sHom(F,T_t(G)).$$
Applying $\Gamma(M;-)$, we obtain that $\Hom(F,G) \xrightarrow{\sim} \lim_{t \rightarrow 0^+}  \Hom(F,T_t(G))$.
Denote by $t: M \times \RR \rightarrow \RR$ the projection to the parameter space.
But by the above Lemma \ref{lem:ss-reeb} and Proposition \ref{prop:mses}~(4), the sheaf $t_* \mathscr{H}om(q^*{F}, K(T)  \circ {G}) $ is a constant sheaf over $\left(0,c(\Lambda) \right)$.
Note we use the assumption that $\supp(F)$ and $\supp(G)$ are compact in order to obtain microsupport estimation for pushforward.
%\sayCK{I realized one question we may ask (for later) is what's dose $\cof(Hom(G,F_{C(\Lambda)-\epsilon}) \rightarrow Hom(G,F_{C(\Lambda)+\epsilon}) )$ mean? I supposedly this should give us some categorified statement for non-displacebility.}
%\sayWL{It is probably not relevant to the current paper, but I can compute that in $M \times \RR$, when there is a single reeb chord and it is non-degenerate, the cofiber is the Hom between the microstalks. I think Ike has a more elegant computation and he even covers the Morse-Bott case when we have a smooth family of chords.}
%\sayCK{I see Thanks for sharing. (Feel free to comment everything out.)}
Thus the later limit when restricting to $0 < \epsilon < c(\Lambda)$ is a constant diagram and the projection
$$\lmi{t \rightarrow 0^+}  \Hom(F,T_t(G)) \xrightarrow{\sim} \Hom(F,T_{\epsilon}(G))$$
is an isomorphism for $0 < \epsilon < c(\Lambda)$.

To prove the statement for $\Hom(F,T_{-\epsilon}(G)) \rightarrow \Hom(F,G)$,
let $\pi_{i,\RR}: M \times M \times \RR \rightarrow M \times \RR$ denote the $\RR$-parameter version of the projection to the $i$-th component.
Instead of the total Hom sheaf $ \mathscr{H}om(q^*{F}, K(T)  \circ {G})$, we will consider its $*$-variant $(\Delta \times \id_{\RR})^* \sHom( \pi_{1,\RR}^* q^* F, \pi_{2,\RR}^* K(T) \circ G)$. Proposition \ref{prop:mses}~(5) implies that the canonical map $f^*H \otimes f^!1_Y \rightarrow f^!H$ is an isomorphism when $f$ is noncharacteristic to the sheaf $H$.
Thus, there is a canonical map 
$$\Delta^* \sHom(\pi_1^* F, \pi_2^* G) \rightarrow \Delta^! \sHom(\pi_1^* F, \pi_2^! G) = \sHom(F,G).$$
Here, we use the fact that $\Delta^! 1_{M \times M} = \omega_M^{-1}$, by \cite[Lemma 2.41]{Kuo-wrapped-sheaves}, is an invertible sheaf so we can multiply the morphism with its inverse $\omega_M$.
Similarly, there is an canonical map $$(\Delta \times \id_{\RR})^* \sHom( \pi_{1,\RR}^* q^* F, \pi_{2,\RR}^* K(T) \circ G) \rightarrow \mathscr{H}om(q^*{F}, K(T)  \circ {G})$$
which is an isomorphism over $\left(-c(\Lambda),0 \right)$ by a similar microsupport estimation as the above Lemma \ref{lem:ss-reeb} and Proposition \ref{prop:mses}~(5).
Thus, by consider the $\epsilon$-slice for $-c(\Lambda) < \epsilon < 0$ and the $0$-slice, we obtain the following commutative diagram:
$$
\begin{tikzpicture}
% Nodes
\node at (0,1.7) {$\Delta^* \sHom(\pi_1^* F, \pi_2^* T_{-\epsilon}(G))$};
\node at (5,1.7) {$\Delta^* \sHom (\pi_1^* F, \pi_2^* G)$};
\node at (0,0) {$\sHom(F,T_{-\epsilon} (G))$};
\node at (5,0) {$\sHom(F,G)$};

% Horizontal Arrows
\draw [->, thick] (2.3,1.7) -- (3.2,1.7) node [midway, above] {$ $};
\draw [->, thick] (1.6,0) -- (3.9,0) node [midway, above] {$c$};

% Vertical Arrows
\draw [->, thick] (0,1.4) -- (0,0.3) node [midway, left] {\rotatebox{90}{$\sim$}}; 
\draw [->, thick] (5,1.4) -- (5,0.3) node [midway, right] {$ $};

\end{tikzpicture}
$$ 
Apply Proposition \ref{prop:properties_of_continuation_maps}~(4) to $(\Delta \times \id_{\RR})^* \sHom( \pi_{1,\RR}^* F, \pi_{2,\RR}^* K(T) \circ G)$, we obtain that
$$\Delta^* \sHom (\pi_1^* F, \pi_2^* G) \xleftarrow{\sim} \clmi{-t \rightarrow 0^-} \, \Delta^* \sHom(\pi_1^* F, \pi_2^* T_{-t}(G)) \xrightarrow{\sim}  \clmi{-t \rightarrow 0^-} \, \sHom(F,T_{-t}(G) ).$$
Since $\supp(F)$ and $\supp(G)$ are compact, $p_* = p_!$ is colimit preserving, and thus we conclude that $\operatorname{colim}_{-t \rightarrow 0^-} \Hom(F,T_{-t}(G)) \xrightarrow{\sim} p_*\Delta^* \sHom (\pi_1^* F, \pi_2^* G))$ is an isomorphism.
The same argument as in the positive case then implies that the colimit diagram is constant and thus the inclusion 
$$\Hom(F,T_{-\epsilon}(G)) \rightarrow \clmi{-t \rightarrow 0^-} \Hom(F,T_{-t}(G))$$
is an isomorphism for $-c(\Lambda) < -\epsilon < 0$.

Finally, we notice that the diagram commute in the statement commute because it is a composition of the following two commutative diagram:

\[
\begin{tikzpicture}
% Nodes
\node at (0,1.7) {$p_*\Delta^*\mathscr{H}om(\pi_1^*{F}, \pi_2^*{G}))$};
\node at (6,1.7) {$\Hom({F, G})$};
\node at (0,0) {$\Hom({F}, T_{-\epsilon}({G}))$};
\node at (6,0) {$\Hom({F}, T_{\epsilon}({G}))$};

% Horizontal Arrows
\draw [->, thick] (2.1,1.7) -- (4.9,1.7) node [midway, above] {$ $};
\draw [->, thick] (1.5,0) -- (4.5,0) node [midway, above] {$ $};

% Vertical Arrows
\draw [->, thick] (0,1.4) -- (0,0.3) node [midway, left] {\rotatebox{90}{$\sim$}}; 
\draw [->, thick] (6,1.4) -- (6,0.3) node [midway, right] {\rotatebox{90}{$\sim$}};

% Factorizing Arrow
\draw [->, thick] (1.6,0.3) -- (5,1.3) node [midway, above] {$ $};

% Commuting symbols
\node[scale=1.5] at (1.6,1) {$\circlearrowleft$};
\node[scale=1.5] at (4.6,0.7) {$\circlearrowleft$};

\end{tikzpicture} \qedhere
\]
\end{proof}

\begin{remark}
    The identity $\Hom({F, G}) \simeq \Hom({F}, T_{\epsilon}({G}))$ is often referred to as the perturbation trick, and has in fact appeared in previous works of Guillermou \cite[Corollary 16.6]{Gui} for the special case of vertical translation on $M \times \RR$, and Zhou for arbitrary Reeb flows \cite{Zhou}.
    %\sayCK{Making a reminder for myself: I browsed through some Guillermou's paper and have the impression that this appears there even earlier.}\sayWL{You're probably right. My impression is that perhaps Guillermou (or maybe others) considered this when $T$ is the vertical pushoff.}
    %\sayCK{I added the reference for Guillermou's result.} 
    The proof here follows \cite[Proposition 3.18]{Kuo-wrapped-sheaves} of the first author. 
\end{remark}

\subsection{Sabloff-Serre duality}\label{sec:serre} %\sayCK{My apology: I realized that maybe we need assumption to ensure that $S_\Lambda^-$ restricts to $\Sh_\Lambda(M)^b$? But main equation in this section is correct but the functor won't be a Serr\'e functor without this assumption. So maybe we will incorporate this to the section about restricting spherical functors to small categories? If you agree, feel free to do it and comment this out.}
%\sayWL{I've done that. Will comment it out tomorrow.}
    In this section, we illustrate an additional property that arises from the Sato-Sabloff fiber sequence and prove a Sabloff-Serr{e} duality that
    $$\Hom(F, T_{-\epsilon}(G) \otimes \omega_M) = \Hom(F, T_\epsilon(G))^\vee.$$
    Such duality between a positive Reeb pushoff and a negative Reeb pushoff has been understood in symplectic geometry in a number of works. In Legendrian contact homology, this is known as the Sabloff duality \cite{Sabduality,EESduality}, and in Fukaya-Seidel categories, this is known as the Poincar\'{e}-Lefschetz duality proved by Seidel \cite{SeidelFukI}.
    
    In the previous work of the second author \cite{LiEstimate}, we proved a version of Sabloff duality that for $\Lambda \subseteq J^1(N) \cong S^{*}_{\tau > 0}(N \times \bR)$ and $F, G \in \Sh^b_\Lambda(N \times \bR)$ with compact supports when $N$ is orientable. Here we prove a general version of it.

Recall that we have assumed throughtout the paper that $\cV$ is a rigid symmetric monoidal category. We will explicitly use the following consequence from the rigidity assumption on $\cV$ in the computation.

\begin{lemma}[{\cite[Proposition 4.9]{Hoyois-Scherotzke-Sibilla}}]
Assume $\cV_0$ is a rigid symmetric monoidal category.
Then there is a canonical equivalence of symmetric monoidal $\infty$-categories
$$\cV_0 \rightarrow \cV_0^{op}, \, X \mapsto X^\vee \coloneqq \Hom(X,1_{\cV}).$$
In particular, $(X^\vee)^\vee = X$.
\end{lemma}

If $L$ is invertible, then $\sHom(F,G \otimes L) = \sHom(F,G) \otimes L$. Thus, when $M$ is a manifold, the Verdier duality $\VD{M}$ differs from the naive duality $\ND{M}(F) \coloneqq \sHom(F,1_M)$ by tensing $\omega_M^{-1}$. The following proposition is the main result in this section, which generalizes the Sabloff duality in \cite{LiEstimate} to arbitrary manifolds.

\begin{proposition}[Sabloff-Serr{e} duality]\label{prop:sab-serre}
    Let $\Lambda, \Lambda' \subseteq S^*M$ be compact subanalytic Legendrians. Let $T_t: S^*M \to S^*M$ be a non-negative contact flow such that 
    $$\Lambda \cap T_\epsilon(\Lambda') = \varnothing$$
    for any small $\epsilon \neq 0$. Then for $F \in \Sh_{\Lambda}(M)$ and $G \in \Sh_{\Lambda'}(M)$ such that $\mathrm{supp}(F) \cap \mathrm{supp}(G)$ is compact in $M$ and $F$ has perfect stalks,
    $$\Hom({F}, T_{-\epsilon}({G}) \otimes \omega_M) \simeq p_!(\VD{M}(F) \otimes G) \simeq \Hom({G, F})^\vee.$$
    In particular, when $M$ is oriented, $\Hom({F}, T_{-\epsilon}({G}))[-n] \simeq \Hom({G, F})^\vee.$
\end{proposition}
\begin{proof} %\sayCK{You can see from the following computation that this is a consequence of Sato-Sabloff we can move it there or some earlier discussion if we want.}
    By Proposition \ref{prop:hom_w_pm} and Remark \ref{rem:DFotimesG},
    \begin{align*}
        \Hom(F,T_{-\epsilon}(G) \otimes \omega_M) &= p_* \left( \Delta^* \sHom(\pi_1^* F, \pi_2^* (G \otimes \omega_M)) \right) \\
        &= p_* ( \ND{M}(F) \otimes G \otimes \omega_M) = p_* (\VD{M}(F) \otimes G).
    \end{align*}
    The compact support assumption then implies that 
    \begin{align*}
        \Hom(F,T_{-\epsilon}(G) \otimes \omega_M)^\vee &= \Hom(p_! ( \VD{M}(F) \otimes G), 1_\cV) = \Hom( \VD{M}(F) \otimes G, \omega_M) \\
        &= \Hom\left(G, \VD{M} \circ \VD{M}(F) \right) = \Hom(G, F). \qedhere
    \end{align*}
\end{proof}

    In Section \ref{sec:serre-proper}, we will see that the above proposition plays a key role in the result regarding Serr{e} functors. Actually, one may have noticed that by Theorem \ref{w=ad}, we have shown
    $$\Hom(F, S_\Lambda^-(G) \otimes \omega_M) = \Hom(G, F)^\vee.$$
    However, we do not know whether $S_\Lambda^-$ sends $\Sh^b_\Lambda(M)$ to $\Sh^b_\Lambda(M)$ (in fact, in general it does not; see Section \ref{sec:example}). This issue will be addressed in Section \ref{sec:serre-proper}.

\subsection{Doubling from Sato-Sabloff sequence}\label{sec:doubling-local}
    Let us construct the doubling functor in this section. We will realize the doubling construction by using one single Reeb flow and hence write down the doubling functor explicitly on each local chart. 
    
    Consider ${F} \in \msh_\Lambda(\Lambda)$. First, we use the following lemma to find local representatives of $F$ by sheaves. The following result has previously been obtained by Guillermou \cite{Gui} by applying the refined microlocal cut-off lemma \cite{KS}*{Proposition 6.1.4}.

\begin{lemma}[Guillermou \cite{Gui}*{Lemma 6.7} or \cite{Guisurvey}*{Lemma 10.2.5}]\label{lem:refine-cutoff-0}
    Let $\Lambda \subseteq S^{*}M$ be a locally closed subanalytic Legendrian such that $\pi|_\Lambda: \Lambda \rightarrow M$ is finite. Then for $(x, \xi) \in \Lambda$, there is a neighbourhood $U$ of $x \in M$ and ${F}_U \in \Sh_{\Lambda \cap S^{*}U}(U)$ such that $m_{\Lambda \cap S^{*}U}({F}_U) = {F} \in \msh_\Lambda(S^*U)$.
\end{lemma}

    By the lemma, we consider an open covering $\mathscr{U} = \{U_\alpha\}_{\alpha \subseteq I}$ of $M$ and ${F}_{\alpha} \in \Sh_{\Lambda}(U_\alpha)$ such that
    $$m_{\Lambda \cap \Omega_\alpha}({F}_{\alpha}) = {F}|_{\Lambda \cap \Omega_\alpha} \in \msh_\Lambda(\Lambda \cap S^{*}U_\alpha).$$
    Here, we will always use $\alpha \subseteq I$ for the multi-index subset where $U_{\alpha_1\cdots\alpha_k} \coloneqq U_{\alpha_1} \cap \dots \cap U_{\alpha_k}$.

    Using Theorem \ref{thm:sato-sab}, we can find that before further applying wrapping by $\wrap_\Lambda^\pm$, the left and right adjoints of the microlocalization $m_\Lambda$ can be characterized by the difference between small positive and negative wrappings $T_{-\epsilon}(F_\alpha) \to T_\epsilon(F_\alpha)$.
    Therefore, we would like to construct a sheaf $w_\Lambda(F)$ which locally on an open subset $U_\alpha$ will be of the form $$w_\Lambda({F})_{U_\alpha} = \mathrm{Cofib}(T_{-\epsilon}({F}_\alpha) \rightarrow T_\epsilon({F}_\alpha)).$$
    However there is some technical issue that, under the Reeb flow $T_t$ on $S^*M$, it is not even true that $T_{\pm \epsilon}(\Lambda \cap S^*U_\alpha) \subseteq S^*U_\alpha$, and hence the above formula does not seem to be meaningful even at the first place.

\begin{figure}[h!]
    \centering
    \includegraphics[width=\textwidth]{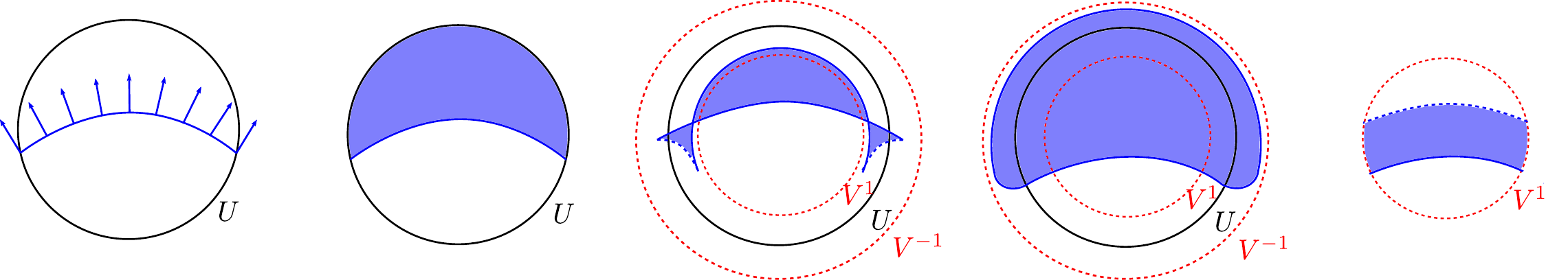}
    \caption{We consider the open subset $U$ (in black), the Legendrian $\Lambda$ (in blue), and the Reeb flow being the geodesic flow, where $T_{\pm \epsilon}(\Lambda \cap S^*U_\alpha) \not\subseteq S^*U_\alpha$. Let $F_U$ be the sheaf as in the 2nd figure. Then $T_{\pm \epsilon}(j_{U *}F_U)$ are illustrated in the 3rd and 4th figure. The supports of the sheaves are in $V^{-1}$, while the singular support coming from $T_{\pm \epsilon}(N^*_{in/out}U)$ are outside $V^1$. Finally, $w_\Lambda(F)_V$ is shown in the 5th figure.}\label{fig:doubling-cutoff}
\end{figure}

    Our solution to the above problem is as follows. We will need to push forward the sheaves on $U$ to sheaves on $M$ so as to apply the Reeb flow on the ambient manifold $S^*M$. The singular support of the resulting sheaves consists of both the Reeb pushoff of the Legendrian $T_{\pm \epsilon}(\Lambda \cap S^*U_\alpha)$ and the Reeb pushoff of the unit conormal bundle of the boundary $T_{\pm \epsilon}(N^*_{in/out} U_\alpha)$.
    
    To block off the effect coming from the second part (which may come into $S^*U_\alpha$ under the Reeb flow), we will need to restrict the sheaf to a smaller neighbourhood $V_\alpha \subseteq U_\alpha$. This accounts for the following complicated definition.

\begin{definition}\label{def:goodcover}
    Let $\mathscr{U} = \{U_\alpha\}_{\alpha \subseteq I}$ be an open covering of $M$, $\Lambda \subseteq S^{*}M$ a closed subanalytic Legendrian and $\Lambda'$ a generic Hamiltonian perturbation of $\Lambda$. Then $\mathscr{U}$ is a good covering with respect to $\Lambda \subset S^{*}M$ if
    \begin{enumerate}
      \item $U_{\alpha}$ are contractible;
      \item $\partial U_\alpha$ are piecewise smooth with transverse intersections;
      \item $N^*_{out}{U_{\alpha}} \cap \Lambda' = \varnothing$.
    \end{enumerate}
    Given a good covering $\mathscr{U}$ with respect to $\Lambda$, a good family of refinement with respect to $\Lambda$ is $\mathscr{V}^t = \{V_\alpha^t\}_{\alpha \subseteq I}$ where $t \in [-1, 1]$ is a family of open covering with $\mathscr{V}^0 = \mathscr{U}$ such that
    \begin{enumerate}
      \item $V_\alpha^{t'} \subseteq V_\alpha^t$ for any $-1 \leq t \leq t' \leq 1$;
      \item $V_{\alpha}^t$ are contractible for any $-1 \leq t \leq 1$;
      \item $\partial V_\alpha^t$ are piecewise smooth with transverse intersections for any $-1 \leq t \leq 1$;
      \item there exists some Riemannian metric $g$ on $M$ and some $\epsilon > 0$ so that
      $$\mathrm{dist}_g(\partial U_\alpha, \partial V_\alpha^{\pm 1}) \geq \epsilon.$$
      \item $N^*_{out}{V_{\alpha}^t} \cap \Lambda' = \varnothing\,(\alpha \subseteq I)$ for any $-1 \leq t \leq 1$;
    \end{enumerate}
    For simplicity, we will call $\mathscr{V} = \mathscr{V}^1$ a good refinement of $\mathscr{U}$.
\end{definition}
\begin{remark}
    This definition is in the same spirit as \cite{Guisurvey}*{Definition 11.4.1}. The reason that we also need to choose a family of good refinement instead of only a good covering is that here we need to consider an arbitrary Reeb flow, while Guillermou considered only the vertical translation in $S^*_{\tau>0}(M \times \mathbb{R})$ and chose only open subsets of the form $U_i \times I_i \subset M \times \mathbb{R}$. Here we are adding the contractibility assumption simply the discussion when constructing good refinements.
\end{remark}

\begin{lemma}
    For any open covering $\mathscr{U}_0$ on $M$ and a closed subanalytic Legendrian $\Lambda \subseteq S^{*}M$, there exists a refinement $\mathscr{U}$ with respect to $\Lambda$ such that $\mathscr{U}$ admits a good family of refinements $\mathscr{V}^t$.
\end{lemma}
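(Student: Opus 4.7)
The plan is to produce $\mathscr{U}$ from a sufficiently fine subanalytic Whitney triangulation refining $\mathscr{U}_0$, then construct the family $\mathscr{V}^t$ using Riemannian collars of each $\partial U_\alpha$, and finally choose the generic Hamiltonian perturbation $\Lambda'$ transverse to all the outward conormal data.

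Equip $M$ with a real analytic Riemannian metric $g$. By Lemma~\ref{cbs} choose a $C^1$ Whitney triangulation $\mathscr{S}$ of $M$ refining $\mathscr{U}_0$ with $\Lambda \subseteq N^*_\infty \mathscr{S}$, and barycentrically subdivide if needed. For each vertex $v_\alpha \in \mathscr{S}$ define $U_\alpha$ to be a small $C^1$-smoothing of the closure of the open star $\mathrm{St}(v_\alpha)$, so that $\partial U_\alpha$ is piecewise $C^1$ with transverse intersections and the combinatorial nerve coincides with that of the original cover by open stars. Each finite intersection $U_{\alpha_1\cdots\alpha_k}$ then deformation retracts onto the open star of the simplex $[v_{\alpha_1},\ldots,v_{\alpha_k}]$, which is contractible; thus the combinatorial and boundary portions of Definition~\ref{def:goodcover} hold.

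Next, fix a small parameter $\delta > 0$ and use the geodesic exponential along the outward unit normal to $\partial U_\alpha$ (suitably smoothed at corners) to define, for $t \in [-1,1]$, the open set $V_\alpha^t$ whose boundary is obtained from $\partial U_\alpha$ by flowing a signed geodesic distance $-t\delta$. The required nesting is built in; the boundaries remain piecewise $C^1$ with transverse intersections because they are $C^1$-small perturbations of $\partial U_\alpha$; the distance estimate $\mathrm{dist}_g(\partial U_\alpha,\partial V_\alpha^{\pm 1}) = \delta$ is immediate; and contractibility of the finite intersections $V_{\alpha_1\cdots\alpha_k}^t$ together with the covering property of each $\mathscr{V}^t$ follow for small $\delta$ from continuity of the combinatorial nerve under small perturbations of the $U_\alpha$'s, after arranging a slight excess overlap among the $U_\alpha$'s in the first step.

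Finally, observe that both $\Lambda$ and each $N^*_{out} U_{\alpha_1\cdots\alpha_k} \cap S^* M$ are $(n-1)$-dimensional singular subanalytic Legendrians in the $(2n-1)$-dimensional contact manifold $S^* M$. Since the sum of their dimensions is strictly less than $\dim S^* M$, a sufficiently small generic compactly supported Hamiltonian perturbation $\Lambda'$ of $\Lambda$ is disjoint from each $N^*_{out} U_{\alpha_1\cdots\alpha_k}$; by paracompactness this can be achieved for all multi-indices simultaneously. The \emph{main obstacle} is now condition~(5) for the whole family $\mathscr{V}^t$: the union $\bigcup_t N^*_{out} V_{\alpha_1\cdots\alpha_k}^t$ has dimension $n$ in $S^*M$, complementary to $\Lambda'$, so isolated intersection points would be the generic phenomenon for a free $t$-family. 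The resolution is to leverage that disjointness of compact closed subsets is an open condition, together with the continuous dependence of $N^*_{out} V_{\alpha_1\cdots\alpha_k}^t$ on $t$ under the collar parametrization: starting from disjointness at $t=0$, shrink the collar width $\delta$ until $\Lambda'$ remains disjoint from $N^*_{out} V_{\alpha_1\cdots\alpha_k}^t$ for all $t \in [-1,1]$, at which point all conditions of Definition~\ref{def:goodcover} are satisfied.
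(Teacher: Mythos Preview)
Your argument is correct and follows essentially the same route as the paper: both obtain condition~(3) from the dimension count (two $(n-1)$-dimensional Legendrians in a $(2n-1)$-manifold are generically disjoint) and condition~(5) from the openness of disjointness under small perturbation of the boundaries, your ``shrink $\delta$'' step being exactly the paper's remark that the $N^*_{out}V^t_{\alpha_1\cdots\alpha_k}$ can be taken as small perturbations of $N^*_{out}U_{\alpha_1\cdots\alpha_k}$. The only cosmetic difference is that you build $\mathscr{U}$ from smoothed open stars of a Whitney triangulation while the paper simply invokes the convex neighbourhood theorem to get geodesically convex balls; either produces contractible sets with piecewise $C^1$ boundary refining $\mathscr{U}_0$.
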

\begin{proof}
    The existence of a refinement $\mathscr{U}$ of $\mathscr{U}_0$ satisfying (1)~\&~(2) follows from convex neighbourhood theorem in Riemannian geometry. The reason that
    $$N^*_{out}{U_{\alpha_1\cdots\alpha_k}} \cap \Lambda' = \varnothing, \,\,\alpha_1, \cdots, \alpha_k \in I$$
    for a generic perturbation $\Lambda'$ of $\Lambda$ is because $\bigcup_{\alpha_1, \cdots, \alpha_k \in I}N^*_{out}{U_{\alpha_1\cdots\alpha_k}}$ is also a subanalytic Legendrian and hence the sum of dimensions is less than $2\dim M - 1$.

    The existence of a family of refinement $\mathscr{V}^t$ of $\mathscr{U}$ satisfying (1)--(4) is again convex neighbourhood theorem. The reason that
    $$N^*_{out}{V_{\alpha_1\cdots\alpha_k}^t} \cap \Lambda' = \varnothing, \,\,\alpha_1, \cdots, \alpha_k \in I, \, -1 \leq t \leq 1$$
    for a generic perturbation $\Lambda'$ is because we can choose $\mathscr{V}^t$ such that $\bigcup_{\alpha_1, \cdots, \alpha_k \in I}N^*_{out}{V_{\alpha_1\cdots\alpha_k}^t}$ are small perturbations of $\bigcup_{\alpha_1, \cdots, \alpha_k \in I}N^*_{out}{U_{\alpha_1\cdots\alpha_k}}$. This completes the proof.
\end{proof}
\begin{remark}
    From now on, without loss of generality (by Theorem \ref{cor:GKS} and Theorem \ref{cor:cont-trans}) we will always assume that $\Lambda = \Lambda'$. In other words, we assume that $N^*_{out}{U_{\alpha_1\cdots\alpha_k}} \cap \Lambda = \varnothing$ and  $N^*_{out}{V_{\alpha_1\cdots\alpha_k}^t} \cap \Lambda = \varnothing$ for any $-1 \leq t \leq 1$.
\end{remark}

    The main microlocal properties of families of good refinements of coverings that we are going to use are given as follows.

\begin{lemma}\label{lem:lower*=lower!}
    Let $\mathscr{U}$ be a good covering of $M$ with a good family of refinements $\mathscr{V}^t$ with respect to $\Lambda$. Write $j_\alpha: U_\alpha \hookrightarrow M$. Then given ${F} \in \Sh(U_\alpha)$, for $\epsilon > 0$ sufficiently small, we have
    $$T_{\pm \epsilon}(j_{\alpha !}{F})|_{V_\alpha} \xrightarrow{\sim} T_{\pm \epsilon}(j_{\alpha *}{F})|_{V_\alpha}.$$
\end{lemma}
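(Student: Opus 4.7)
The proof reduces to showing that the cone $C := \cone(j_{\alpha !}F \to j_{\alpha *}F)$ satisfies $T_{\pm\epsilon}(C)|_{V_\alpha} = 0$ for $\epsilon > 0$ sufficiently small. Since $j_{\alpha !}F$ and $j_{\alpha *}F$ both agree with $F$ on $U_\alpha$ and vanish outside $\overline{U_\alpha}$, we immediately get $\supp(C) \subseteq \partial U_\alpha$.

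The key input is a support estimate for the GKS kernel: for a Reeb flow $T_t$ generated by a Hamiltonian $H$ on $S^*M$, there should exist a constant $c_H > 0$ depending only on the $C^0$-norm of $H$ such that, for $|t|$ sufficiently small,
$$\supp(K(T_t)) \subseteq \{(x_1,x_2) \in M \times M : d_g(x_1, x_2) \leq c_H |t|\}.$$
The model is the geodesic flow in Example \ref{GKS_standard_Reeb}, where $T_\epsilon(1_x)$ is supported in the closed metric ball $\overline{B_\epsilon(x)}$; the general case follows from the initial condition $K(T_0) = 1_{\Delta_M}$ combined with the fact that $K(T_t)$ is simple along its contact movie $\Lambda_{T_t}$, whose projection to $M \times M$ lies in the $c_H|t|$-neighborhood of the diagonal.

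Granting this estimate, I apply base change to the convolution formula $T_{\pm\epsilon}(C) = {p_2}_!(K(T_{\pm\epsilon}) \otimes p_1^* C)$ along the open inclusion $V_\alpha \hookrightarrow M$. The tensor product $K(T_{\pm\epsilon}) \otimes p_1^* C$ restricted to $M \times V_\alpha$ has support contained in
$$\{(x_1,x_2) \in \partial U_\alpha \times V_\alpha : d_g(x_1,x_2) \leq c_H|\epsilon|\}.$$
By condition (4) of Definition \ref{def:goodcover}, $d_g(\partial U_\alpha, V_\alpha) \geq \epsilon_0$ for a fixed $\epsilon_0 > 0$ (namely the $\epsilon$ from the good refinement data). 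Choosing $|\epsilon| < \epsilon_0/c_H$, this locus is empty, so $(K(T_{\pm\epsilon}) \otimes p_1^* C)|_{M \times V_\alpha} = 0$ and therefore $T_{\pm\epsilon}(C)|_{V_\alpha} = 0$, yielding the desired isomorphism.

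The main technical obstacle is the Lipschitz-type support estimate on $K(T_{\pm\epsilon})$ in the second paragraph. This is intuitively clear and explicit for the standard Reeb flow, but rigorously requires unwinding the simple sheaf structure along the contact movie $\Lambda_{T_t}$ and its projection to $M \times M$. Once this is established, the remaining argument is essentially combinatorial and the lemma follows.
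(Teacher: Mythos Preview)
Your approach is correct but takes a different route from the paper. You work with a \emph{support} estimate on the GKS kernel $K(T_t)$ and then push it through the convolution formula; the paper instead works with the \emph{microsupport} estimate that GKS provides directly. Concretely, the paper observes that $\ms^\infty(C) \subseteq S^*M|_{\partial U_\alpha}$ (from Proposition~\ref{prop:noncharacteristic-ms-es} and the triangle inequality), and then Corollary~\ref{cor:GKS} gives $\ms^\infty(T_{\pm\epsilon}(C)) \subseteq T_{\pm\epsilon}(S^*M|_{\partial U_\alpha})$. Since the Reeb flow on $S^*M$ moves base points by $O(\epsilon)$, this set misses $S^*V_\alpha$ for small $\epsilon$, so $T_{\pm\epsilon}(C)|_{V_\alpha}$ is locally constant; combined with $C|_{V_\alpha}=0$ at $\epsilon=0$ one propagates in the family $K(T)\circ C$ over $V_\alpha \times (-\epsilon_0,\epsilon_0)$ to conclude vanishing.

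The upshot is that your ``technical obstacle'' (the Lipschitz support bound on $K(T_t)$) is real but is proved by exactly the same mechanism the paper uses on $T_{\pm\epsilon}(C)$: the microsupport of $K(T)$ on $M\times M\times I$ is the contact movie $\Lambda_T$, which for small $t$ projects near $\Delta_M$, and $K(T)|_{t=0}=1_{\Delta}$, so propagation gives your support estimate. In other words, you have factored the paper's one-step argument into two steps, the first of which (your kernel estimate) is a lemma of independent interest but not strictly needed here. The paper's route is slightly more economical because GKS hands you the microsupport of $T_{\pm\epsilon}(C)$ for free, whereas it does not directly hand you the support of $K(T_t)$.
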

\begin{proof}
    Consider the mapping cone
    $$\mathrm{Cofib}(T_{\pm \epsilon}(j_{\alpha !}{F}) \rightarrow T_{\pm \epsilon}(j_{\alpha *}{F})) \simeq T_{\pm \epsilon} \mathrm{Cofib}(j_{\alpha !}{F} \rightarrow j_{\alpha *}{F}).$$
    Then by Proposition \ref{prop:noncharacteristic-ms-es}, $\ms^\infty(\mathrm{Cofib}(j_{\alpha !}{F} \rightarrow j_{\alpha *}{F})) \subseteq S^*M|_{\partial U_\alpha}$, since $j_{\alpha !}{F} \simeq j_{\alpha *}{F}$ in the interior of $T^*U_\alpha$. By Corollary \ref{cor:GKS} we know that
    $$\ms^\infty(T_{\pm \epsilon}\mathrm{Cofib}(j_{\alpha !}{F} \rightarrow j_{\alpha *}{F})) \subseteq T_{\pm \epsilon}(S^*M|_{\partial U_\alpha}).$$
    Since $T_{\pm \epsilon}(S^*M|_{\partial U_\alpha}) \cap S^*V_\alpha = \varnothing$, we know that in particular for $\epsilon > 0$ sufficiently small,
    $$\ms^\infty(T_{\pm \epsilon}\mathrm{Cofib}(j_{\alpha !}{F} \rightarrow j_{\alpha *}{F})) \cap S^*V_\alpha = \varnothing.$$
    Moreover, since $j_{\alpha !}{F} \simeq j_{\alpha *}{F}$ in $T^*V_\alpha$, by the above singular support estimate, we can conclude that $T_{\pm \epsilon}(j_{\alpha !}{F}) \simeq T_{\pm \epsilon}(j_{\alpha *}{F})$ in $T^*V_\alpha$, which proves the isomorphism as claimed.
\end{proof}

\begin{lemma}\label{lem:hom-cutoff}
    Let $\mathscr{U}$ be a good covering of $M$ with a good family of refinements $\mathscr{V}^t$ with respect to $\Lambda$. Write $j_\alpha: U_\alpha \hookrightarrow M$. Then given ${F}, {G} \in \Sh_{\Lambda \cap S^{*}U_\alpha}(U_\alpha)$, for $\epsilon > 0$ sufficiently small, we have
    $$\Hom(T_{\pm \epsilon}(j_{\alpha !}{F}), T_{\pm \epsilon}(j_{\alpha *}{G})) \xrightarrow{\sim} \Hom(T_{\pm \epsilon}(j_{\alpha !}{F})|_{V_\alpha}, T_{\pm \epsilon}(j_{\alpha *}{G})|_{V_\alpha}).$$
\end{lemma}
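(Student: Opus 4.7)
The plan is to show that the restriction map $\Gamma(M, H) \to \Gamma(V_\alpha, H)$ is an isomorphism, where $H \coloneqq \sHom(A, B)$ with $A = T_{\pm\epsilon}(j_{\alpha!}F)$ and $B = T_{\pm\epsilon}(j_{\alpha*}G)$. I would factor this through the intermediate open $V_\alpha^{-1}$: the first map $\Gamma(M, H) \to \Gamma(V_\alpha^{-1}, H)$ is handled by a support argument, and the second $\Gamma(V_\alpha^{-1}, H) \to \Gamma(V_\alpha, H)$ by the microlocal Morse lemma.

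For the support step, the GKS sheaf kernel $K(T_{\pm\epsilon})$ has compact support contracting to the diagonal as $\epsilon \to 0$, so $\mathrm{supp}(A)$ and $\mathrm{supp}(B)$ both lie in an $O(\epsilon)$-neighborhood of $\overline{U_\alpha}$. By property~(4) of the good family of refinements, this neighborhood is contained in $V_\alpha^{-1}$ for $\epsilon > 0$ sufficiently small, and the containment $\mathrm{supp}(H) \subseteq \mathrm{supp}(A)$ then gives the first restriction for free.

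For the Morse step I would construct a smooth function $\phi: V_\alpha^{-1} \to \mathbb{R}$ whose sublevel sets realize the family, $\{\phi < s\} = V_\alpha^{-s}$ for $s \in [-1, 1]$, using properties (1)--(3) of the good refinement. Applying Lemma \ref{prop:morselemma} with $a = -1$ and $b = 1$ yields the desired isomorphism provided the non-characteristic condition $d\phi(x) \notin \ms(H)$ holds whenever $\phi(x) \in [-1, 1)$. Since $d\phi(x)$ at $x \in \partial V_\alpha^t$ is proportional to the outward conormal to $V_\alpha^t$, this reduces to verifying
$$N^*_{out}V_\alpha^t \cap \msif(H) = \varnothing \quad \text{for all } t \in [-1, 1].$$

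The main obstacle is this microsupport disjointness check. Feeding the Sato fiber sequence (Proposition \ref{thm:sato}) into the estimates of Proposition \ref{prop:ss-muhom} and Remark \ref{rem:ss-muhom} (plus the standard tensor bound on the local piece) produces
$$\msif(H) \subseteq -\msif(A) \,\widehat+_\infty\, \msif(B),$$
together with $\msif(A) \subseteq T_{\pm\epsilon}(\Lambda \cup N^*_{out}U_\alpha)$ and $\msif(B) \subseteq T_{\pm\epsilon}(\Lambda \cup N^*_{in}U_\alpha)$. Property~(5) of the good refinement gives $N^*_{out}V_\alpha^t \cap \Lambda = \varnothing$, and the genericity used to construct good covers forces $N^*_\infty \partial U_\alpha \cap \Lambda = \varnothing$; by continuity, both disjointness statements persist for the Reeb translates when $\epsilon$ is small. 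The delicate point is that the $\widehat+_\infty$-operation allows limiting covectors at distinct but converging base points, so one must upgrade the pointwise disjointness to a transversality statement between the codimension-one family $\{N^*_{out}V_\alpha^t\}_{t \in [-1, 1]}$ and each of the four Reeb-translated Legendrian pieces---an open condition ensured by a generic choice of $\mathscr{V}^t$ together with smallness of $\epsilon$.
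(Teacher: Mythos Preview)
Your approach is essentially the paper's: a support argument to pass from $M$ to $V_\alpha^{-1}$, then the microlocal Morse lemma along the family $\{V_\alpha^t\}$ to reach $V_\alpha$, with the substance lying in the verification that $N^*_{out}V_\alpha^t \cap \msif\big(\sHom(A,B)\big) = \varnothing$. Two minor points where you diverge: the Sato fiber sequence detour is unnecessary---the paper invokes Proposition~\ref{prop:mses}(8) directly to bound $\ms(\sHom(A,B))$; and your closing ``transversality/genericity'' hand-wave can be replaced by the paper's concrete computation, which unwinds the definition of $\widehat+$ at a point $x \in \partial V_\alpha^t$ and uses that $x$ is bounded away from $\partial U_\alpha$ (property~(4), so the $N^*_{in/out}U_\alpha$ contribution is forced to vanish) together with $N^*_{out}V_\alpha^t \cap \Lambda = \varnothing$ (property~(5), so the $\Lambda$-contribution is excluded)---no genericity beyond what is already built into Definition~\ref{def:goodcover} is needed. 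Also, your bound $\msif(A) \subseteq T_{\pm\epsilon}(\Lambda \cup N^*_{out}U_\alpha)$ should read $\widehat+$ rather than $\cup$ per Proposition~\ref{prop:noncharacteristic-ms-es}, though over the interior of $U_\alpha$ this distinction is immaterial.
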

\begin{proof}
    Consider the sheaf $\mathscr{H}om(T_{\pm \epsilon}(j_{\alpha !}{F}), T_{\pm \epsilon}(j_{\alpha *}{G}))$. We know by Proposition \ref{prop:mses}~(8) that
    $$\ms(\mathscr{H}om(T_{\pm \epsilon}(j_{\alpha !}{F}), T_{\pm \epsilon}(j_{\alpha *}{G}))) \subseteq T_{\pm \epsilon}(-\ms(j_{\alpha !}{F})) + T_{\pm \epsilon}(\ms(j_{\alpha *}{F})).$$
    Consider the family of good refinements $V_\alpha^t$, $t \in [-1, 1]$, where $V_\alpha^0 = U_\alpha$ and $V_\alpha^1 = V_\alpha$. We know by Proposition \ref{prop:noncharacteristic-ms-es} that
    $$\ms^\infty(j_{\alpha !}{F}) \subseteq \Lambda \,\widehat +\, N^*_{out}{U_\alpha} , \;\; \ms^\infty(j_{\alpha *}{F}) \subseteq \Lambda \,\widehat +\, N^*_{in}{U_\alpha}.$$
    Therefore by the condition on the family of good refinements $V_\alpha^t$
    $$N^{*}_{out}{V_\alpha^t} \cap (-\ms^\infty(j_{\alpha !}{F})) = N^{*}_{out}{V_\alpha^t} \cap \ms^\infty(j_{\alpha *}{F}) = \varnothing.$$
    Indeed, for $(x, \xi) \in N^{*}_{out}{V_\alpha^t}$, $(x, \xi) \in N^*_{in}{U_\alpha} \,\widehat +\, (\pm \Lambda)$ only if there exists $(x_n, -\xi_n) \in N^*_{in}{U_\alpha}$, $(y_n, \eta_n) \in \pm \Lambda$ such that
    $x_n, y_n \rightarrow x, \, -\xi_n + \eta_n \rightarrow \xi, \, |x_n - y_n||\xi_n| \rightarrow 0.$
    However, the fact that $\Lambda \cap N^*_{in/out}V_\alpha^t = \varnothing$ immediately implies that $\eta_n \rightarrow 0$. This forces $-\xi_n \rightarrow \xi$, which implies $\xi = 0$, so in the unit cotangent bundle the intersections are empty.
    Hence for $\epsilon > 0$ sufficiently small, we have $\mathrm{supp}(T_{\pm \epsilon}(j_{\alpha !}{F}))$, $\mathrm{supp}(T_{\pm \epsilon}(j_{\alpha *}{G})) \subset V_\alpha^{-1}$, and
    $$N^{*}_{out}{V_\alpha^t} \cap T_{\pm \epsilon}(-\ms^\infty(j_{\alpha !}{F})) = N^{*}_{out}{V_\alpha^t} \cap T_{\pm \epsilon}(\ms^\infty(j_{\alpha *}{F})) = \varnothing.$$
    Therefore, by microlocal Morse lemma Proposition \ref{prop:morselemma}, restricting from $V_\alpha^{-1}$ to $V_\alpha^1$ we have
    $$\Gamma(M, \mathscr{H}om(T_{\pm \epsilon}(j_{\alpha !}{F}), T_{\pm \epsilon}(j_{\alpha *}{G}))) \simeq \Gamma(V_\alpha, \mathscr{H}om(T_{\pm \epsilon}(j_{\alpha !}{F}), T_{\pm \epsilon}(j_{\alpha *}{G}))),$$
    which shows the isomorphism.
\end{proof}

\begin{lemma}\label{lem:muhom-cutoff}
    Let $\mathscr{U}$ be a good covering of $M$ with a good family of refinements $\mathscr{V}^t$ with respect to $\Lambda$. Write $j_\alpha: U_\alpha \hookrightarrow M$. Then given ${F}, {G} \in \Sh_{\Lambda \cap S^{*}U_\alpha}(U_\alpha)$, for $\epsilon > 0$ sufficiently small, we have
    $$\Gamma(S^{*}V_\alpha^{-1}, \mhom(j_{\alpha !}{F}, j_{\alpha *}{G})) \xrightarrow{\sim} \Gamma(S^{*}V_\alpha^1, \mhom({F}, {G})).$$
\end{lemma}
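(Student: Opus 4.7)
The plan is first to reduce the claim to a restriction statement for the single sheaf $\mhom(F,G)$, and then to verify the hypothesis of the microlocal Morse lemma. Since $V_\alpha^{-1}\subset U_\alpha$, we have $(j_{\alpha!}F)|_{V_\alpha^{-1}}=F|_{V_\alpha^{-1}}$ and $(j_{\alpha*}G)|_{V_\alpha^{-1}}=G|_{V_\alpha^{-1}}$. By locality of $\mhom$ as the internal $\sHom$ of the sheaf of categories $\msh$ (Definition-Theorem \ref{mu-hom_as_hom}), this yields
\[
\mhom(j_{\alpha!}F, j_{\alpha*}G)\big|_{\dot T^*V_\alpha^{-1}} \;=\; \mhom(F,G)\big|_{\dot T^*V_\alpha^{-1}},
\]
and it suffices to prove that the restriction map
\[
\Gamma\bigl(S^*V_\alpha^{-1},\,\mhom(F,G)\bigr) \longrightarrow \Gamma\bigl(S^*V_\alpha^1,\,\mhom(F,G)\bigr)
\]
is an isomorphism.

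Next, the plan is to apply the microlocal Morse lemma (Proposition \ref{prop:morselemma}) on the contact manifold $S^*M$, equivalently on $\dot T^*M$ by the $\RR_{>0}$-conicity of $\mhom$, using the function $\phi=\phi_0\circ\pi$, where $\phi_0:M\to\RR$ is a smooth function extending the defining data of the family so that $\{\phi_0 < -t\}=V_\alpha^t$ for $t\in[-1,1]$. Compactness of $\overline{V_\alpha^{-1}}$ together with the inclusion $\mathrm{supp}(\mhom(F,G))\subseteq \Lambda\cap S^*U_\alpha$ ensures that $\phi$ restricted to $\mathrm{supp}(\mhom(F,G))\cap\phi^{-1}([-1,1])$ is proper. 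Morse then gives the desired restriction isomorphism provided $d\phi\notin\ms(\mhom(F,G))$ everywhere on the strip $\{-1\leq\phi<1\}$.

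To verify this noncharacteristic condition the plan is to invoke Proposition \ref{prop:ss-muhom}, which gives $\ms(\mhom(F,G))\subseteq C(\widehat\Lambda,\widehat\Lambda)$, identified as a subset of $T^*(\dot T^*M)$ via the Hamiltonian isomorphism $H:T(\dot T^*M)\xrightarrow{\sim}T^*(\dot T^*M)$, $v\mapsto\iota_v\omega$. A direct computation in canonical coordinates $(q,p)$ shows that the horizontal covector $d\phi=\nabla\phi_0\cdot dq$ lies in $H(T_{(x,\xi)}\widehat\Lambda)$ at a smooth point of $\widehat\Lambda$ exactly when $\nabla\phi_0$ lies in the vertical (fiber-direction) component $(d\pi(T\widehat\Lambda))^\perp$ of $T\widehat\Lambda$. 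For $x\in\partial V_\alpha^t$ this forces the direction $[\nabla\phi_0]=[N^*_{out}V_\alpha^t|_x]$ to be compatible with a tangency of the base projection $\pi(\widehat\Lambda)$ to $\partial V_\alpha^t$, which is precisely what is ruled out by the good refinement condition $N^*_{out}V_\alpha^t\cap\Lambda=\varnothing$ of Definition \ref{def:goodcover}, combined with the generic choice of $\mathscr V^t$ and the fact that $\widehat\Lambda$ is $\RR_{>0}$-conic with Euler vector field already in $T\widehat\Lambda$.

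The main obstacle will be the treatment of points of $\widehat\Lambda$ which are not smooth, including caustic points where the projection $\pi:\widehat\Lambda\to M$ drops rank, since at such points the normal cone $C(\widehat\Lambda,\widehat\Lambda)$ can strictly exceed the tangent space of any single smooth stratum. The plan for handling this is to upgrade the transversality step in Definition \ref{def:goodcover}: the same dimension counting that produces $N^*_{out}V_\alpha^t\cap\Lambda=\varnothing$ for a generic Hamiltonian perturbation of $\Lambda$ within its contact isotopy class is to be used to simultaneously arrange $N^*_{out}V_\alpha^t\cap C(\widehat\Lambda,\widehat\Lambda)\subseteq 0_M$ uniformly in $t\in[-1,1]$, by running the argument on each subanalytic stratum of $\Lambda$ and using that singular strata are lower dimensional. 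Once this strengthened noncharacteristic property holds on $\{-1\leq\phi<1\}$, Proposition \ref{prop:morselemma} produces the required isomorphism and hence the lemma.
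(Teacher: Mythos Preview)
There is a genuine gap at the very first step. In the good family of refinements (Definition \ref{def:goodcover}) the sets \emph{shrink} as $t$ increases: $V_\alpha^{t'}\subseteq V_\alpha^t$ for $t\le t'$, with $V_\alpha^0=U_\alpha$. Thus $V_\alpha^{-1}\supseteq U_\alpha$, not $V_\alpha^{-1}\subseteq U_\alpha$. Your claimed identification $(j_{\alpha!}F)|_{V_\alpha^{-1}}=F|_{V_\alpha^{-1}}$ and $(j_{\alpha*}G)|_{V_\alpha^{-1}}=G|_{V_\alpha^{-1}}$ is therefore false: over $V_\alpha^{-1}\setminus U_\alpha$ these are genuinely different sheaves (extension by zero versus $*$-pushforward), and $\mhom(j_{\alpha!}F,j_{\alpha*}G)$ will in general have support meeting $S^*M|_{\partial U_\alpha}$. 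The reduction to the undecorated $\mhom(F,G)$ on $S^*V_\alpha^{-1}$ simply does not hold, and the rest of the argument, which only controls $C(\widehat\Lambda,\widehat\Lambda)$, never sees this boundary contribution.

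The paper's proof is designed exactly to handle this. It keeps $j_{\alpha!}F$ and $j_{\alpha*}G$ throughout, uses Proposition \ref{prop:noncharacteristic-ms-es} to bound their microsupports by $\Lambda\,\widehat+\,N^*_{out/in}U_\alpha$, then pushes $\mhom$ down to $M$ via $\dot\pi_*$ and applies Remark \ref{rem:ss-muhom} to get
\[
\ms^\infty\bigl(\dot\pi_*\mhom(j_{\alpha!}F,j_{\alpha*}G)\bigr)\subseteq -(\Lambda\,\widehat+\,N^*_{out}U_\alpha)\,\widehat+_\infty\,(\Lambda\,\widehat+\,N^*_{in}U_\alpha).
\]
The noncharacteristic check $N^*_{out}V_\alpha^t\cap\ms^\infty(\dot\pi_*\mhom)=\varnothing$ is then done by a case split on $x\in\partial U_\alpha$ versus $x\in U_\alpha$; the first case is where the $N^*_{in/out}U_\alpha$ terms actually matter and cannot be absorbed into the good-refinement condition $N^*_{out}V_\alpha^t\cap\Lambda=\varnothing$ alone. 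Your proposal to ``upgrade'' Definition \ref{def:goodcover} by a dimension count so that $N^*_{out}V_\alpha^t\cap C(\widehat\Lambda,\widehat\Lambda)\subseteq 0_M$ would, even if made precise, still not address this boundary term; it is the wrong object to be transverse to.
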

\begin{proof}
    Consider the good family of refinements $V_\alpha^t$ where $V_\alpha^0 = U_\alpha$ and $V_\alpha^1 = V_\alpha$. Note that by Proposition \ref{prop:noncharacteristic-ms-es} we have
    $$\ms^\infty(j_{\alpha !}{F}) \subset \Lambda \,\widehat +\, N^*_{out}{U_\alpha}, \;\; \ms^\infty(j_{\alpha *}{F}) \subset \Lambda \,\widehat +\, N^*_{in}{U_\alpha},$$
    and then by Proposition \ref{prop:ss-muhom} \cite{KS}*{Corollary 6.4.3} one can deduce that
    $$\ms^\infty(\mhom(j_{\alpha !}{F}, j_{\alpha *}{G})) \subset C(\Lambda \,\widehat +\, N^*_{out}{U_\alpha}, \Lambda \,\widehat +\, N^*_{in}{U_\alpha}) \subseteq S^*(S^{*}M).$$
    Then by Proposition \ref{prop:mses}~(4) and Remark \ref{rem:ss-muhom} we know that
    $$\ms^\infty(\dot{\pi}_*\mhom(j_{\alpha !}{F}, j_{\alpha *}{G})) \subset -(\Lambda \,\widehat +\, N^*_{out}{U_\alpha}) \,\widehat+_\infty\, (\Lambda \,\widehat +\, N^*_{in}{U_\alpha}).$$
    We know $(x, \xi) \in (-\Lambda \,\widehat +\, N^*_{in}{U_\alpha}) \,\widehat+\, (\Lambda \,\widehat +\, N^*_{in}{U_\alpha}))$ if and only if there exists $(x_n, -\xi_n) \in -\Lambda \,\widehat +\, N^*_{in}{U_\alpha}$ and $(y_n, \eta_n) \in \Lambda \,\widehat +\, N^*_{in}{U_\alpha}$ such that
    $x_n, y_n \rightarrow x, \; -\xi_n + \eta_n \rightarrow \xi, \; |x_n - y_n| |\xi_n| \rightarrow 0$. When we consider $x \in \partial U_\alpha$, we know that $(x, \xi) \in \pm \Lambda \,\widehat+\, N^*_{in}U_\alpha$ and hence $(x, \xi) \notin  N^*_{out}V_\alpha^t$. When we consider $x \in U_\alpha$, we know that $(x, \xi) \in \Lambda$ and hence $(x, \xi) \notin  N^*_{out}V_\alpha^t$ because $\pm \Lambda \cap N^*_{out}V_\alpha^t = \varnothing$. These two facts imply that in the unit cotangent bundle the following intersection is empty
    $$N^*_{out}V_\alpha^t \cap \ms^\infty(\dot{\pi}_*\mhom(j_{\alpha !}{F}, j_{\alpha *}{G})) = \varnothing.$$
    Therefore, by microlocal Morse lemma Proposition \ref{prop:morselemma}, restricting from $V_\alpha^{-1}$ to $V_\alpha^1$, we can show that the isomorphism holds.
\end{proof}

    Given ${F} \in \msh_\Lambda(\Lambda)$, by Lemma \ref{lem:refine-cutoff} there exists an open covering $\mathscr{U} = \{U_\alpha\}_{\alpha \subseteq I}$ and a collection of sheaves $\{{F}_\alpha\}_{\alpha \subseteq I}$ where ${F}_\alpha \in \Sh_\Lambda(U_\alpha)$ such that ${F}|_{\Lambda \cap S^{*}U_\alpha} = m_\Lambda({F}_\alpha)$. Write $j_\alpha: U_\alpha \hookrightarrow M$. Choose a good family of refinements $\mathscr{V}^t$ of $\mathscr{U}$ for $t \in [-1, 1]$.

\begin{definition}
    For ${F} \in \msh_\Lambda(\Lambda)$, an open covering $\mathscr{U}$, a good family of refinements $\mathscr{V}^t$, and a collection of sheaves $\{{F}_\alpha\}_{\alpha \subseteq I}$ where ${F}_\alpha \in \Sh_\Lambda(U_\alpha)$, we define by Lemma \ref{lem:lower*=lower!}
    \[\begin{split}
    w_\Lambda({F})_{V_\alpha} & = \mathrm{Cofib}(T_{-\epsilon}(j_{\alpha *}{F}_\alpha)|_{V_\alpha} \rightarrow T_\epsilon(j_{\alpha *}{F}_\alpha)|_{V_\alpha}) \\
    & = \mathrm{Cofib}(T_{-\epsilon}(j_{\alpha !}{F}_\alpha)|_{V_\alpha} \rightarrow T_\epsilon(j_{\alpha !}{F}_\alpha)|_{V_\alpha}).
    \end{split}\]    
\end{definition}

\begin{proposition}\label{prop:doubling_ff}
    Let $\Lambda \subseteq S^{*}M$ be a compact subanalytic Legendrian, $\mathscr{U}$ a good open covering and $\mathscr{V}^t$ a good family of refinements with respect to $\Lambda$. Then for $\epsilon > 0$ sufficiently small, there is a natural isomorphism
    $$\Hom(w_\Lambda({F})_{V_\alpha}, w_\Lambda({G})_{V_\alpha}) \xrightarrow{\sim} \Gamma(S^{*}V_\alpha, \mhom({F}_{V_\alpha}, {G}_{V_\alpha})).$$
\end{proposition}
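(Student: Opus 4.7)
My plan is to identify $w_\Lambda(F)_{V_\alpha}$ with the restriction to $V_\alpha$ of a globally defined sheaf on $M$, reduce the $\Hom$ computation on $V_\alpha$ to one on $M$ by microlocal Morse arguments, and then apply the Sato--Sabloff fiber sequence (Corollary \ref{cor:sato-sab}) together with the $\mhom$-cutoff Lemma \ref{lem:muhom-cutoff}.

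Write $F' := j_{\alpha!}F_\alpha$ and $G' := j_{\alpha*}G_\alpha$. By Lemma \ref{lem:lower*=lower!}, $w_\Lambda(F)_{V_\alpha}$ and $w_\Lambda(G)_{V_\alpha}$ are the restrictions to $V_\alpha$ of $\mathrm{Cofib}(T_{-\epsilon}F' \to T_\epsilon F')$ and $\mathrm{Cofib}(T_{-\epsilon}G' \to T_\epsilon G')$ respectively. Plugging the two cofiber sequences into $\Hom_{V_\alpha}(-,-)$ presents the left-hand side as an iterated fiber of cofibers built from the four local Hom-groups $\Hom_{V_\alpha}(T_\delta F'|_{V_\alpha}, T_{\delta'}G'|_{V_\alpha})$ with $\delta,\delta' \in \{\pm\epsilon\}$. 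For each sign-combination, the singular support of $\sHom(T_\delta F', T_{\delta'}G')$ sits inside $-T_\delta(\Lambda \,\widehat+\, N^*_{out}U_\alpha) \,\widehat+\, T_{\delta'}(\Lambda \,\widehat+\, N^*_{in}U_\alpha)$, and the same kind of bookkeeping as in the proof of Lemma \ref{lem:hom-cutoff} shows that for $\epsilon$ small this set is disjoint from $N^*_{out}V_\alpha^t$ for every $t \in [-1,1]$; the two hypotheses that matter are the uniform distance $\mathrm{dist}_g(\partial U_\alpha, \partial V_\alpha^{\pm 1}) \geq \epsilon$ and the disjointness of $\Lambda$ from the outward conormals of all $V_\alpha^t$ built into Definition \ref{def:goodcover}. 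The microlocal Morse lemma then lifts each entry to $\Hom_M(T_\delta F', T_{\delta'}G') = \Hom_M(F', T_{\delta'-\delta}G')$ via the global $T_\epsilon$-equivalence on $\Sh(M)$.

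The resulting square of $\Hom_M$-groups has corners $\Hom_M(F', T_{-2\epsilon}G')$, $\Hom_M(F', G')$, $\Hom_M(F', G')$, $\Hom_M(F', T_{2\epsilon}G')$, with all arrows positive continuations. The local constancy of $t \mapsto \Hom_M(F', T_t G')$ on $(0,c(\Lambda))$ from the proof of Proposition \ref{prop:hom_w_pm} --- valid here because short Reeb chords between $\ms(F')$ and $\ms(G')$ only come either from $\Lambda$-to-$\Lambda$ chords of length $\geq c(\Lambda)$ or from boundary pieces uniformly separated from $\Lambda$ --- forces the right vertical and bottom horizontal arrows to be isomorphisms, so the bottom cofiber vanishes. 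Meanwhile Sato--Sabloff applied with parameter $2\epsilon < c(\Lambda)$ identifies the top-row cofiber with $\Gamma(\dT^*M, \mhom(F', G'))$, so the iterated fiber collapses to $\Gamma(\dT^*M, \mhom(j_{\alpha!}F_\alpha, j_{\alpha*}G_\alpha))$. Since $\supp(\mhom(F', G')) \subseteq \ms(F') \cap \ms(G') \subseteq S^*V_\alpha^{-1}$ by the good-cover conditions, Lemma \ref{lem:muhom-cutoff} then yields the desired identification with $\Gamma(S^*V_\alpha, \mhom(F_{V_\alpha}, G_{V_\alpha}))$.

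The main obstacle is the uniform microsupport bookkeeping for all four sign-combinations in the lifting step: one needs the Morse argument to work for the cross-sign pair $T_\epsilon F'$ versus $T_{-\epsilon}G'$, not only for the matched-sign cases treated in Lemma \ref{lem:hom-cutoff}. The one-parameter refinement family $\mathscr{V}^t$ built into Definition \ref{def:goodcover}, together with its uniform distance bound, is precisely what makes this simultaneous argument possible.
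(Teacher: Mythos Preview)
Your proposal is correct and follows essentially the same route as the paper. The paper expands $w_\Lambda(G)_{V_\alpha}$ as a cofiber first and treats the two resulting terms separately (showing the $T_\epsilon G'$-term vanishes via the perturbation trick and identifying the $T_{-\epsilon}G'$-term with $\Gamma(\mhom)$ via Sato--Sabloff), while you expand both cofibers at once, lift all four corners to $\Hom_M$, and then analyze the resulting square; the ingredients (Lemma~\ref{lem:hom-cutoff}, Proposition~\ref{prop:hom_w_pm}/Corollary~\ref{cor:sato-sab}, Lemma~\ref{lem:muhom-cutoff}) and their roles are identical in both arguments. Your explicit remark that the Morse-lemma lift must also cover the cross-sign pair $T_\epsilon F'$ versus $T_{-\epsilon}G'$ is exactly the point the paper uses when it cites Lemma~\ref{lem:hom-cutoff} for $\Hom(T_\epsilon(j_{\alpha!}F_\alpha), T_{-\epsilon}(j_{\alpha*}G_\alpha))$ without further comment, so you have correctly isolated the one step that goes slightly beyond the literal statement of that lemma.
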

\begin{proof}
    Writing down the definition of $w_\Lambda({F})_{V_\alpha}$ and $w_\Lambda({G})_{V_\alpha}$, we have
    \[\begin{split}
    \Hom(w_\Lambda({F})&{}_{V_\alpha}, w_\Lambda({G})_{V_\alpha}) \\
    \simeq \mathrm{Cofib} \big(&\Hom(\mathrm{Cofib}(T_{-\epsilon}(j_{\alpha !}{F}_\alpha)|_{V_\alpha} \rightarrow T_\epsilon(j_{\alpha !}{F}_\alpha)|_{V_\alpha}), T_{-\epsilon}(j_{\alpha *}{G}_\alpha)|_{V_\alpha}) \\
    & \rightarrow \Hom(\mathrm{Cofib}(T_{-\epsilon}(j_{\alpha !}{F}_\alpha)|_{V_\alpha} \rightarrow T_\epsilon(j_{\alpha !}{F}_\alpha)|_{V_\alpha}), T_\epsilon(j_{\alpha *}{G}_\alpha)|_{V_\alpha})\big).
    \end{split}\]
    For the first term, we claim that
    \[\begin{split}
    \Hom(&\mathrm{Cofib}(T_{-\epsilon}(j_{\alpha !}{F}_\alpha)|_{V_\alpha} \rightarrow T_\epsilon(j_{\alpha !}{F}_\alpha)|_{V_\alpha}), T_{-\epsilon}(j_{\alpha *}{G}_\alpha)|_{V_\alpha}) \\
    &\simeq \Gamma(S^{*}V_\alpha, \mhom({F}_{V_\alpha}, {G}_{V_\alpha})).
    \end{split}\]
    To prove this, we apply the Sato-Sabloff fiber sequence Theorem \ref{thm:sato-sab} and get
    \[\begin{split}
    \Hom(T_\epsilon(j_{\alpha !}{F}_\alpha), T_{-\epsilon}(j_{\alpha *}{G}_\alpha)) & \rightarrow \Hom(T_{-\epsilon}(j_{\alpha !}{F}_\alpha), T_{-\epsilon}(j_{\alpha *}{G}_\alpha)) \\
    & \rightarrow \Gamma(T^{*,\infty}V_\alpha^{-1}, \mhom(j_{\alpha !}{F}_\alpha, j_{\alpha *}{G}_\alpha)).
    \end{split}\]
    Given the non-characteristic assumption on $\mathscr{V}^t$ with respect to $\Lambda$, we can apply Lemma \ref{lem:hom-cutoff} to restrict the corresponding $\mathscr{H}om(-, -)$ sheaves from $V_\alpha^{-1}$ to $V_\alpha^1$, and get quasi-isomorphisms
    \[\begin{split}
    \Hom(T_\epsilon(j_{\alpha !}{F}_\alpha), T_{-\epsilon}(j_{\alpha *}{G}_\alpha)) &\simeq \Hom(T_\epsilon(j_{\alpha !}{F}_\alpha)|_{V_\alpha}, T_{-\epsilon}(j_{\alpha *}{G}_\alpha)|_{V_\alpha}), \\
    \Hom(T_\epsilon(j_{\alpha !}{F}_\alpha), T_{\epsilon}(j_{\alpha *}{G}_\alpha)) &\simeq \Hom(T_\epsilon(j_{\alpha !}{F}_\alpha)|_{V_\alpha}, T_{\epsilon}(j_{\alpha *}{G}_\alpha)|_{V_\alpha}).
    \end{split}\]
    On the other hand, we can also apply Lemma \ref{lem:muhom-cutoff} to restrict the corresponding $\mhom(-, -)$ sheaves from $S^{*}V_\alpha^{-1}$ to $S^{*}V_\alpha^1$, and get
    $$\Gamma(S^{*}V_\alpha^{-1}, \mhom(j_{\alpha !}{F}_\alpha, j_{\alpha *}{G}_\alpha)) \simeq \Gamma(S^{*}V_\alpha^1, \mhom({F}_\alpha, {G}_\alpha)).$$
    Since the restriction maps commute with all the maps in the Sato-Sabloff fiber sequence, this proves our first claim. For the second term, we claim that there is a quasi-isomorphism
    $$\Hom(\mathrm{Cofib}(T_{-\epsilon}(j_{\alpha !}{F}_\alpha)|_{V_\alpha} \rightarrow T_\epsilon(j_{\alpha !}{F}_\alpha)|_{V_\alpha}), T_\epsilon(j_{\alpha *}{G}_\alpha)|_{V_\alpha}) \simeq 0.$$
    Indeed by Proposition \ref{prop:hom_w_pm} we know that
    $$\Hom(T_\epsilon(j_{\alpha !}{F}_\alpha), T_\epsilon(j_{\alpha *}{G}_\alpha)) \simeq \Hom(T_{-\epsilon}(j_{\alpha !}{F}_\alpha), T_\epsilon(j_{\alpha *}{G}_\alpha)) \simeq \Hom(j_{\alpha !}{F}_\alpha, j_{\alpha *}{G}_\alpha).$$
    and the isomorphism is witnessed by the precomposition with the canonical continuation map $T_{-\epsilon}(j_{\alpha *}{F}_\alpha) \rightarrow T_\epsilon(j_{\alpha *}{F}_\alpha)$. Again by the non-characteristic assumption on $\mathscr{U}, \mathscr{V}$ with respect to $\Lambda$, we can apply Lemma \ref{lem:hom-cutoff} to restrict the corresponding $\mathscr{H}om(-, -)$ sheaves from $V_\alpha^{-1}$ to $V_\alpha^1$, and the quasi-isomorphisms still hold. This proves our second claim.
\end{proof}

%\begin{corollary}\label{cor:doubling_ff}
%    Let $\Lambda \subseteq S^{*}M$ be a closed subanalytic Legendrian, $\mathscr{U}$ a good open covering and $\mathscr{V}^t$ a good family of refinements with respect to $\Lambda$. Then for $\epsilon > 0$ sufficiently small,
%    $$\Hom(w_\Lambda({F})_{V_{\alpha_i}}|_{V_{\alpha_1 \cdots \alpha_k}}, w_\Lambda({G})_{V_{\alpha_j}}|_{V_{\alpha_1 \cdots \alpha_k}}) \simeq \Gamma(S^{*}V_{\alpha_1 \cdots \alpha_k}, \mhom({F}_{\alpha_i}|_{V_{\alpha_1 \cdots \alpha_k}}, {G}_{\alpha_j}|_{V_{\alpha_1 \cdots \alpha_k}})).$$
%\end{corollary}
%\begin{proof}
%    Write $j_{\alpha_1 \cdots \alpha_k}: U_{\alpha_1 \cdots \alpha_k} \hookrightarrow M$. Note that
%    \[\begin{split}
%    w_\Lambda({F})_{\alpha_i}|_{V_{\alpha_1 \cdots \alpha_k}} & = \mathrm{Cofib}(T_{-\epsilon}(j_{\alpha_i *}{F}_{\alpha_i})|_{V_{\alpha_i}} \rightarrow T_\epsilon(j_{\alpha_i *}{F}_{\alpha_i})|_{V_{\alpha_i}})|_{V_{\alpha_1 \cdots \alpha_k}} \\
%    & \simeq \mathrm{Cofib}(T_{-\epsilon}(j_{\alpha_1 \cdots \alpha_k *}{F}_{\alpha_i})|_{V_{\alpha_1 \cdots \alpha_k}} \rightarrow T_\epsilon(j_{\alpha_1 \cdots \alpha_k *}{F}_{\alpha_i})|_{V_{\alpha_1 \cdots \alpha_k}}).
%    \end{split}\]
%    Then the corollary immediately follows from Proposition \ref{prop:doubling_ff}.
%\end{proof}

    By Proposition \ref{prop:doubling_ff}, we can conclude that indeed the family of sheaves $\{w_\Lambda({F})_\alpha\}_{\alpha \subseteq I}$ on the refined open cover of $\mathscr{V}$ can be glued to a global object.

\begin{proof}[Proof of Theorem \ref{thm:doubling}]
    Consider the functor $w_\Lambda: \msh_\Lambda(\Lambda) \rightarrow \Sh_{T_{-\epsilon}(\Lambda) \cup T_\epsilon(\Lambda)}(M)$. We apply Proposition \ref{prop:doubling_ff} to show that $w_\Lambda$ is fully faithful, i.e.
    $$\Hom(w_\Lambda({F}), w_\Lambda({G})) \xrightarrow{\sim} \Gamma(S^*M, \mhom({F}, {G})).$$

    First of all, since $\Lambda \subset S^{*}M$ is compact, we may choose assume that there are only finite open subsets $U_\alpha \in \mathscr{U}$ such that $\pi(\Lambda) \cap U_\alpha \neq \varnothing$. Hence there exists a uniform $\epsilon > 0$ sufficiently small such that Proposition \ref{prop:doubling_ff} hold for all $U_\alpha \in \mathscr{U}$. By Proposition \ref{prop:doubling_ff}, and the fact that these quasi-isomorphisms commute with restriction maps, we have the following diagram
    \[\xymatrix{
    \bigoplus_{\alpha \in I}\Hom(w_\Lambda({F})_{V_\alpha}, w_\Lambda({G})_{V_\alpha}) \ar[r] \ar[d]^{\rotatebox{90}{$\sim$}} & \bigoplus_{\alpha, \beta \in I}\Hom(w_\Lambda({F})_{V_{\alpha \beta}}, w_\Lambda({G})_{V_{\alpha \beta}}) \ar@<-.5ex>[r] \ar@<.5ex>[r] \ar[d]^{\rotatebox{90}{$\sim$}} & \cdots \\
    \bigoplus_{\alpha \in I}\Gamma(S^{*}V_\alpha, \mhom({F}, {G})) \ar[r] & \bigoplus_{\alpha, \beta \in I}\Gamma(S^{*}V_{\alpha \beta}, \mhom({F}, {G})) \ar@<-.5ex>[r] \ar@<.5ex>[r] & \cdots
    }\]
    Note that $\mathscr{V}$ is a good covering such that any finite intersection is contractible. Therefore, by taking the homotopy colimit of the above diagram, we get the quasi-isomorphism of global sections
    \[\begin{split}
    \Hom(w_\Lambda({F}), w_\Lambda({G})) & \xrightarrow{\sim} \lmi{\alpha \in I}\Hom(w_\Lambda({F})_{V_\alpha}, w_\Lambda({G})_{V_\alpha}) \\
    & \xrightarrow{\sim} \lmi{\alpha \in I}\, \Gamma(S^{*}V_\alpha, \mhom({F}, {G})) \xrightarrow{\sim} \Gamma(S^*M, \mhom({F}, {G})).
    \end{split}\]
    %\sayCK{I believe the colimit should be a limit here; you are probably using the fact that sheaf-Hom is a sheaf so you can match things locally.}
    %\sayWL{Thanks! This is the fact I'm using. p.s. a general question, is it true that in some pretty general cases $\clmi{\alpha} j_{\alpha,!}F_\alpha$ is the same as $\lmi{\alpha} j_{\alpha,*}F_\alpha$? Or, say, what is the relation between using colimits and limits to glue sheaves?} \sayCK{I think this holds in the case when you have a sheaf of categories so that its restriction maps admit both adjoints. Consider, e.g., the statement that $\Sh$ forms a sheaf or $\Sh(M) \xrightarrow{\lmi{\alpha} j_\alpha^*} \Sh(U_\alpha)$ where $\{U_\alpha\}$ is an open cover of $M$. Then the fact that the restriction $j_\alpha^*$ has both adjoints reflect on the fact that, if you have $F \in \Sh(M)$, you can glue it in two way: $F = \clmi{\alpha} F_{U_\alpha} = \lmi{\alpha} \Gamma_{U_\alpha} F$. }

    Finally, we need to check that the doubling functor can be defined for any $0 < \epsilon < c(\Lambda)/2$ where $c(\Lambda)$ is the length of the shortest Reeb chord on $\Lambda$. This is because when $0 < \epsilon < c(\Lambda)/2$, $T_{-\epsilon}(\Lambda) \cup T_\epsilon(\Lambda)$ are related by Hamiltonian isotopies supported away from $\Lambda$. We can choose $H: S^{*}M \rightarrow \mathbb{R}$ with compact support (since $\Lambda$ is compact) such that
    $$H|_\Lambda = 0, \;\; H|_{\bigcup_{\epsilon \in [c, c']}T_\epsilon(\Lambda)} = 1, \;\; H|_{\bigcup_{\epsilon \in [c, c']}T_{-\epsilon}(\Lambda)} = -1.$$
    Then the contact Hamiltonian flow is the integration of the corresponding compactly supported Hamiltonian vector field.
\end{proof}
\begin{remark}
    The condition that $\Lambda$ is compact plays an important role in the proof. First, it ensures that there exists a uniform $\epsilon > 0$ such that the doubling functor is locally defined among all $V_\alpha \in \mathscr{V}$. Secondly, it ensures that there exists a compactly supported Hamiltonian isotopy relating $T_{-\epsilon}(\Lambda) \cup T_\epsilon(\Lambda)$ for $0 < \epsilon < c(\Lambda)/2$ (otherwise, though we can similarly define the Hamiltonian function, it is unclear whether the Hamiltonian vector field is complete).
\end{remark}

    Moreover, we can in fact prove Theorem \ref{thm:doubling_ad} by almost the same computation as we did in proving full faithfulness using the following propositions.

\begin{proposition}\label{prop:doubling_rightad}
    Let $\Lambda \subseteq S^{*}M$ be a compact subanalytic Legendrian, $\mathscr{U}$ a good open covering and $\mathscr{V}^t$ a good family of refinements with respect to $\Lambda$. Then there is a natural isomorphism
    $$\Hom(F|_{V_\alpha}, w_\Lambda(G)_{V_\alpha}) \simeq \Gamma(S^*V_\alpha, \mhom(F_\alpha, G_\alpha)).$$
\end{proposition}
\begin{proof}
    Writing down the definition of $w_\Lambda({G})_{V_\alpha}$, we have
    \begin{align*}
    \Hom({F}|_{V_\alpha}, w_\Lambda({G})_{V_\alpha}) 
    \simeq \Hom\big({F}|_{V_\alpha}, \mathrm{Cofib}(T_{-\epsilon}(j_{\alpha *}{G}_\alpha)|_{V_\alpha} \rightarrow T_\epsilon(j_{\alpha *}{G}_\alpha)|_{V_\alpha})\big).
    \end{align*}
    Given the non-characteristic condition for the good family of refinements $\mathscr{V}^t$, by Lemma \ref{lem:hom-cutoff} we know that
    \[\begin{split}
    \Hom({F}|_{V_\alpha}, T_{-\epsilon}(j_{\alpha *}{G}_\alpha)|_{V_\alpha}) & \simeq \Hom(j_{\alpha !}{F}_{\alpha}, T_{-\epsilon}(j_{\alpha *}{G}_\alpha)),\\
    \Hom({F}|_{V_\alpha}, T_\epsilon(j_{\alpha *}{G}_\alpha)|_{V_\alpha}) & \simeq \Hom(j_{\alpha !}{F}_{\alpha}, T_\epsilon(j_{\alpha *}{G}_\alpha)).
    \end{split}\]
    In addition, it is easy to show that the restriction functors above commute with the canonical map $T_{-\epsilon}(j_{\alpha *}{G}_\alpha)|_{V_\alpha} \rightarrow T_\epsilon(j_{\alpha *}{G}_\alpha)|_{V_\alpha}$. Therefore, by the Sato-Sabloff exact triangle Theorem \ref{thm:sato-sab} we can conclude that
    \[\begin{split}
    \Hom&({F}|_{V_\alpha}, w_\Lambda({G})_{V_\alpha}) \\
    & \simeq \Hom\big(j_{\alpha !}{F}_{\alpha}, \mathrm{Cofib}(T_{-\epsilon}(j_{\alpha *}{G}_\alpha) \rightarrow  T_\epsilon(j_{\alpha *}{G}_\alpha))\big) \\
    & \simeq \Gamma(S^{*}U_\alpha, \mhom(j_{\alpha !}{F}_{\alpha}, j_{\alpha *}{G}_\alpha)) \simeq \Gamma(S^{*}V_\alpha, \mhom({F}_{\alpha}, {G}_\alpha)),
    \end{split}\]
    where the last inequality again follows from non-characteristic deformation in Lemma \ref{lem:muhom-cutoff}. Hence the proof is completed.
\end{proof}

    The following theorem follows from exactly the same argument and thus will be omitted.

\begin{proposition}\label{prop:doubling_leftad}
    Let $\Lambda \subseteq S^{*}M$ be a compact subanalytic Legendrian, $\mathscr{U}$ a good open covering and $\mathscr{V}^t$ a good family of refinements with respect to $\Lambda$. Then there is a natural isomorphism
    $$\Hom(w_\Lambda(F)_{V_\alpha}, G|_{V_\alpha}) \simeq \Gamma(S^*V_\alpha, \mhom(F_\alpha, G_\alpha)).$$
\end{proposition}

\begin{proof}[Proof of Theorem \ref{thm:doubling_ad}]
    Using Theorem \ref{w=ad}, it suffices to prove that
    $$\Hom(F, w_\Lambda(G)) \simeq \Gamma(S^*M, \mhom(F, G)), \; \Hom(w_\Lambda(F)[-1], G) \simeq \Gamma(S^*M, \mhom(F, G)).$$
    Then as in the proof of Theorem \ref{thm:doubling}, these isomorphisms follow from Proposition \ref{prop:doubling_rightad} and \ref{prop:doubling_leftad}.
\end{proof}

\begin{remark}\label{rem:doubling_ad}
    In fact, in the proof we have shown that  for ${F} \in \Sh_\Lambda(M), {G} \in \msh_\Lambda(M)$,
    $$\Hom({F}, w_\Lambda({G})) \simeq \Hom(T_\epsilon({F}), w_\Lambda({G})) \simeq \Gamma(S^*M, \mhom(m_\Lambda({F}), {G})).$$
    and respectively for ${F} \in \msh_\Lambda(M), {G} \in \Sh_\Lambda(M)$,
    $$\Hom(w_\Lambda({F})[-1], {G}) \simeq \Hom(w_\Lambda({F})[-1], T_{-\epsilon}({G})) \simeq \Gamma(S^*M, \mhom({F}, m_\Lambda({G}))),$$
    Note that this is also a direct corollary of Proposition \ref{prop:hom_w_pm}.
\end{remark}
%\sayCK{Rather than saying this, we can probably just apply whatever we prove in the earlier section?}\sayWL{Sure!}

\begin{remark}\label{rem:multi-component-double}
    Moreover, we remark that when we apply the doubling construction to a single connected component $\Lambda_i \subseteq \Lambda$, using the same argument, one can still show that 
    $$m_{\Lambda_i}^l = \mathfrak{W}_\Lambda^+ \circ w_{\Lambda_i}[-1], \;\; m_{\Lambda_i}^r = \mathfrak{W}_\Lambda^- \circ w_{\Lambda_i}.$$
    The reader may compare it with the discussion in Remark \ref{rem:multi-component-sato} and \ref{rem:multi-component-ad}.
\end{remark}

\begin{remark}
    Guillermou \cite{Gui} constructed a sheaf in $M \times \bR_t$ with singular support in a closed Legendrian with no Reeb chords $\Lambda \subseteq S^*_{\tau>0}(M \times \bR_t)$ by first constructing a doubling with singular support in $\Lambda \cup T_\epsilon(\Lambda)$ and then push one copy off via the Reeb flow so that one can cut off the sheaf in $M \times \bR_t$ along the $\bR_t$-direction and get a sheaf with singular support in $\Lambda$. Using Theorem \ref{thm:doubling_ad}, that construction can be interpreted as the image of the composition of the doubling functor $w_\Lambda$ and the positive wrapping $\wrap_\Lambda^+$, which therefore Guillermou's sheaf quantization $\msh_\Lambda(\Lambda) \to \Sh_\Lambda(M \times \bR)$ is simply the left adjoint of the microlocalization.
\end{remark}

    Finally, we remark that the construction immediately implies the following fiber sequence.

\begin{corollary}\label{cor:exact-tri}
    Let $\Lambda \subseteq S^{*}M$ be a compact subanalytic Legendrian. Then there is a fiber sequence of functors
    $$T_{-\epsilon} \rightarrow T_\epsilon \rightarrow w_\Lambda \circ m_\Lambda.$$
\end{corollary}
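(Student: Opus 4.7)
The plan is to identify $w_\Lambda(m_\Lambda F)$ with the cofiber $\mathrm{Cofib}(T_{-\epsilon}F \to T_\epsilon F)$ for every $F \in \Sh_\Lambda(M)$, where the map is the continuation associated to the positive Reeb isotopy $T_t$. I will construct this identification locally on a good refinement $\mathscr{V}^t$ of a good cover $\mathscr{U}$ and then glue. Since $F$ itself, restricted to $U_\alpha$, serves as a local representative of $m_\Lambda F|_{\Lambda \cap S^*U_\alpha}$, Definition of $w_\Lambda$ gives a tautological fiber sequence on each $V_\alpha$:
\[
T_{-\epsilon}(j_{\alpha *}(F|_{U_\alpha}))|_{V_\alpha} \longrightarrow T_\epsilon(j_{\alpha *}(F|_{U_\alpha}))|_{V_\alpha} \longrightarrow w_\Lambda(m_\Lambda F)|_{V_\alpha}.
\]

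The key step is to show that the adjunction unit $F \to j_{\alpha *}(F|_{U_\alpha})$, whose fiber $G_\alpha := i_!i^!F$ is supported in $M\setminus U_\alpha$ with $\ms^\infty(G_\alpha) \subseteq \Lambda \,\widehat{+}\, N^*_{out}U_\alpha$, induces isomorphisms $T_{\pm\epsilon}F|_{V_\alpha} \xrightarrow{\sim} T_{\pm\epsilon}(j_{\alpha *}(F|_{U_\alpha}))|_{V_\alpha}$ for $\epsilon>0$ sufficiently small. Equivalently, I must show $T_{\pm\epsilon}(G_\alpha)|_{V_\alpha} = 0$. To this end I consider the parameterized sheaf $\widetilde{G}_\alpha := K(T) \circ_M G_\alpha \in \Sh(M \times \mathbb{R})$, whose slice at $t$ is $T_t(G_\alpha)$, and analyze $\ms(\widetilde{G}_\alpha) \cap \bigl(T^*V_\alpha \times T^*(-\epsilon,\epsilon)\bigr)$ via the convolution estimate (Corollary \ref{msconv}) applied to the contact movie $\ms(K(T)) = \Lambda_\Phi$ from Theorem \ref{thm:GKS}. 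Any nonzero microsupport point $(x,\xi,t,\tau)$ with $x \in V_\alpha$ must arise from a pair $(y,\eta)\in \ms(G_\alpha)$ with $(x,\xi)=T_t(y,\eta)$; since $y \in \overline{M\setminus U_\alpha}$ and the Reeb flow propagates base points at bounded speed while the good refinement condition guarantees $\mathrm{dist}_g(\partial U_\alpha, \partial V_\alpha) \geq \epsilon_0 > 0$, no such point exists for $|t| < \epsilon_0/C$. Limit points with $\xi=0$, $\tau\neq 0$ are excluded by the homogeneity $\tau = -H_T(T_t(y,\eta)) \to 0$ as $|\eta| \to 0$. Hence $\widetilde{G}_\alpha|_{V_\alpha \times (-\epsilon,\epsilon)}$ is a local system on a contractible space whose $t=0$ stalk vanishes, so it is zero.

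Combining with the tautological fiber sequence yields the local fiber sequence
\[
T_{-\epsilon}F|_{V_\alpha} \longrightarrow T_\epsilon F|_{V_\alpha} \longrightarrow w_\Lambda(m_\Lambda F)|_{V_\alpha}.
\]
The connecting map $T_\epsilon F|_{V_\alpha} \to w_\Lambda(m_\Lambda F)|_{V_\alpha}$ is the composition of $T_\epsilon$ applied to the unit $F \to j_{\alpha *}(F|_{U_\alpha})$ with the tautological projection to the cofiber. These local maps are compatible on overlaps because they are induced by natural unit morphisms and because different local representatives agree under the gluing data intrinsic to the construction of $w_\Lambda$; they therefore assemble into a global natural transformation $T_\epsilon \to w_\Lambda \circ m_\Lambda$ of functors on $\Sh_\Lambda(M)$. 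The fiber of this transformation, being $T_{-\epsilon}$ on each $V_\alpha$ of a cover, is globally $T_{-\epsilon}$, which yields the claimed fiber sequence.

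The main obstacle is the microsupport analysis for $\widetilde{G}_\alpha$ on $V_\alpha \times (-\epsilon,\epsilon)$: one needs to carefully combine the conic behaviour of $\ms(K(T))$ near the zero section with the quantitative buffer distance $\mathrm{dist}_g(\partial U_\alpha, \partial V_\alpha)$ built into the good refinement, and to choose $\epsilon$ smaller than both $c(\Lambda)/2$ (so that the doubling functor from Theorem \ref{thm:doubling} is defined) and $\epsilon_0/C$ (so that Reeb propagation does not reach $V_\alpha$). Everything else is a formal consequence of the definition of $w_\Lambda$ and standard sheaf-theoretic gluing.
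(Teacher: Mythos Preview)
Your proposal is correct and is precisely the unwinding that the paper intends when it says ``the doubling construction immediately implies'' the corollary. The paper gives no separate proof; the content is that a global $F\in\Sh_\Lambda(M)$ furnishes, via $F|_{U_\alpha}$, local representatives for $m_\Lambda F$ in the very construction of $w_\Lambda$, so the local pieces of $w_\Lambda(m_\Lambda F)$ are tautologically cofibers of the continuation map applied to $j_{\alpha*}(F|_{U_\alpha})$, and one then matches these with the restrictions of the global $\mathrm{Cofib}(T_{-\epsilon}F\to T_\epsilon F)$ by the same mechanism as Lemma~\ref{lem:lower*=lower!}. Your parametric argument for $T_{\pm\epsilon}(G_\alpha)|_{V_\alpha}=0$ is exactly this mechanism made explicit.

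Two minor points. First, the correct estimate for $G_\alpha=i_!i^!F$ involves $N^*_{in}U_\alpha$ (coming from $j_{\alpha*}$), not $N^*_{out}U_\alpha$; more to the point, $\ms^\infty(G_\alpha)$ also contains $\Lambda\cap S^*(M\setminus\overline{U_\alpha})$, since $G_\alpha$ agrees with $F$ there. This does not affect your argument, because what you actually use is that $\ms^\infty(G_\alpha)$ is a compact set (it sits inside the compact $\Lambda$ union something over $\partial U_\alpha$) whose base projection lies in $M\setminus U_\alpha$. Second, rather than phrasing the key step as ``the Reeb flow propagates base points at bounded speed'', it is cleaner (and closer to the paper's use of Corollary~\ref{cor:GKS}) to say: $\ms^\infty(T_t G_\alpha)=T_t(\ms^\infty(G_\alpha))$ varies continuously with $t$ on a compact set, hence stays disjoint from $S^*V_\alpha$ for all small $t$, so the parametric sheaf is locally constant on $V_\alpha\times(-\epsilon,\epsilon)$ and vanishes there since it vanishes at $t=0$. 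This avoids any appeal to a metric speed bound.
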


\subsection{Functorial local adjoints of microlocalization}\label{sec:ad-micro}
%\sayCK{I realized the Sato-Sabloff fiber sequence is a consequence of wrap-once functors being (dual) (co)twists, which in my opinion is probably the most natural way to find it. By the way, feel free to get a better title.}

%%% Geometric description of antimicrolocalizations
In Section \ref{sec:sato-sab}, we find that the cotwist and dual cotwist of the adjunctions $m_\Lambda^l \dashv m_\Lambda \dashv m_\Lambda^r$ defined by the fiber sequences
$$ m_\Lambda^l m_\Lambda \rightarrow \id \rightarrow S_\Lambda^+, \, S_\Lambda^- \rightarrow \id \rightarrow m_\Lambda^r m_\Lambda$$
are given by the wrap-once functors $S_\Lambda^\pm$ in Definition \ref{def:wrap_once_functors}.
The goal of this section is to show that, the above observation goes deeper to the functors $m_\Lambda^l$ and $m_\Lambda^r$ that they admit descriptions by wrappings as well.

%\sayCK{Probably should mention the word twist, cotwist etc. somehow. But I don't want to restrict this discussion to spherical functors since $S_\Lambda^+$ should be symplectic invariant, independent of whether $m_\Lambda$ is spherical or not.}
%\sayCK{I commented out some words referring to your previous work since it's a little out of the place after my statement breaks down. If you're fine with it, please comment out this comment.}
%We see that, this will lead us to rediscovered the work of the second author in \cite{LiEstimate}, where he reinterprets the Sato exact triangle (Proposition \ref{thm:sato}) and proved a Sato-Sabloff exact triangle for Legendrians $\Lambda \subset S^{*}_{\tau>0}(N \times \mathbb{R}_t)$ using vertical translation as the Reeb flow.  Here we generalize the result to arbitrary $S^{*}M$ using arbitrary Reeb flow, or equivalently, positive Hamiltonian flow. 

%%% Setting notations for restrictions of microsheaves
Recall that $\msh_\Lambda$ is a sheaf on $T^* M$ supported on $\Lambda$.
To give a description for an object $F \in \msh_\Lambda(\Lambda)$, 
we choose an open cover $\{U_\alpha\}_{\alpha \subseteq I}$ of $M$ and $\{\Omega_\alpha\}_{\alpha \subseteq I}$ of $\Lambda \subseteq S^*M$ 
such that $\Omega_\alpha \subseteq S^{*}U_\alpha$.
Then we have the following commutative diagram induced by the corresponding inclusion of open sets in $T^* M$:
$$
\begin{tikzpicture}
% Nodes
\node at (0,1.7) {$\Sh_\Lambda(M)$};
\node at (4,1.7) {$\Sh_\Lambda(U_\alpha)$};
\node at (0,0) {$\msh_\Lambda(\dT^* M)$};
\node at (4,0) {$\msh_\Lambda(\Omega_\alpha)$};

% Horizontal Arrows
\draw [->, thick] (0.8,1.7) -- (3.2,1.7) node [midway, above] {$j_\alpha^*$};
\draw [->, thick] (1.1,0) -- (3.1,0) node [midway, above] {$r_\alpha^*$};

% Vertical Arrows
\draw [->, thick] (0,1.4) -- (0,0.3) node [midway, left] {$m_{\Lambda}$};
\draw [->, thick] (4,1.4) -- (4,0.3) node [midway, right] {$m_{\Lambda \cap \Omega_\alpha}$};
\end{tikzpicture}
$$
%%% Setting notations for $!$-push of microsheaves
Since the restriction maps admit left adjoints, we can pass to left adjoints and obtain the following diagram:
$$
\begin{tikzpicture}
% Nodes
\node at (0,1.7) {$\Sh_\Lambda(M)$};
\node at (4,1.7) {$\Sh_\Lambda(U_\alpha)$};
\node at (0,0) {$\msh_\Lambda(\dT^* M)$};
\node at (4,0) {$\msh_\Lambda(\Omega_\alpha)$};

% Horizontal Arrows
\draw [->, thick] (3.2,1.7) -- (0.8,1.7) node [midway, above] {$\wrap_\Lambda^+ {j_\alpha}_!$};
\draw [->, thick] (3.1,0) -- (1.1,0) node [midway, above] {${r_\alpha}_!$};

% Vertical Arrows
\draw [->, thick] (0,0.3) -- (0,1.4) node [midway, left] {$m_\Lambda^l$};
\draw [->, thick] (4,0.3) -- (4,1.4) node [midway, right] {$m_{\Lambda \cap \Omega_\alpha}^l$};
\end{tikzpicture}
$$
Here we use the left adjoint $\wrap_\Lambda^+: \Sh(M) \rightarrow \Sh_\Lambda(M)$ from Theorem \ref{w=ad}.
Now the equivalence $\msh_\Lambda(\Lambda) \xrightarrow{\sim} \lim_{\alpha \in I} \msh_\Lambda(\Omega_\alpha)$ implies that
$$F = \clmi{\alpha \in I} \, {r_\alpha}_! r_\alpha^* F, \; F \in \msh_\Lambda(\Lambda)$$ where
$r_\alpha^*: \msh_\Lambda(\Lambda) \leftrightharpoons \msh_\Lambda(\Omega_\alpha): {r_\alpha}_!$
is the adjunction given by restrictions.

%%% Picking a family of representatives
For each $\alpha \in I$, consider the fiber sequence (in $\PrLcs$)
$$ K(U_\alpha, \Omega_\alpha) \hookrightarrow \Sh_\Lambda(U_\alpha, \Omega_\alpha) \rightarrow \msh_\Lambda(\Omega_\alpha).$$
By Theorem \ref{w=ad}, the left adjoint of the inclusion
$K(U_\alpha,\Omega_\alpha) \hookrightarrow \Sh_\Lambda(U_\alpha,\Omega_\alpha)$
can be described by
\begin{align*}
\Sh_\Lambda(U_\alpha,\Omega_\alpha) &\rightarrow K(U_\alpha,\Omega_\alpha) \\
F &\mapsto \clmi{w \in W^+(\Lambda \cap \Omega_\alpha^c)} F^w \eqqcolon \wrap_\alpha^+(F)
\end{align*}
where $W^+(\Lambda \cap \Omega_\alpha^c)$ means the category of positive wrappings compactly supported in $\Omega_\alpha \cup (S^*U_\alpha \setminus \Lambda)$\footnote{Recall that $\wrap_\Lambda^\pm$ is defined by taking colimits/limits among all positive/negative wrappings in $S^*M \setminus \Lambda$. Here $\wrap_{\Omega^c}^\pm$ is therefore defined by wrappings supported in $\Omega$.}. Therefore, the formal property of the adjunction $m_\Lambda^l \dashv m_\Lambda$ implies the following (local) fiber sequence:

\begin{lemma}\label{lem:loc-ad-of-microlocalize}
    Let $\Lambda \subseteq S^{*}M$ be a subanalytic Legendrian, $U \subseteq M$ and $\Omega \subseteq S^*U$ be an open neighbourhood of some connected component of $\Lambda \cap S^*U$, such that there is a fiber sequence
    $$ K(U, \Omega) \hookrightarrow \Sh_\Lambda(U, \Omega) \rightarrow \msh_\Lambda(\Omega).$$
    Then given $F \in \Sh_\Lambda(U, \Omega)$, there is a fiber sequence where $\Omega^c = S^*U \backslash \Omega$
    $$m_{\Lambda \cap \Omega}^l m_{\Lambda \cap \Omega}(F) \rightarrow F \rightarrow \wrap_{\Lambda \cap \Omega^c}^+ F.$$
\end{lemma}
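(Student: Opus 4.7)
The plan is to recognize the claim as the standard fiber sequence associated to a Verdier localization in $\PrLcs$, namely the sequence $K(U,\Omega) \xrightarrow{i} \Sh_\Lambda(U,\Omega) \xrightarrow{q} \msh_\Lambda(\Omega)$, combined with the already-established identification of the left adjoint of $i$ as a positive wrapping functor.

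Both $i$ and $q = m_{\Lambda \cap \Omega}$ admit left adjoints in $\PrLcs$, which I will denote $i^l$ and $q^l = m_{\Lambda \cap \Omega}^l$. The paragraph immediately preceding the lemma observes, as a direct consequence of Theorem \ref{w=ad}, that $i^l$ coincides with the positive wrapping functor $\wrap_{\Lambda \cap \Omega^c}^+$, the colimit being taken over positive isotopies compactly supported in $\Omega \cup (S^*U \setminus \Lambda)$. Thus the content of the lemma reduces to producing, for each $F \in \Sh_\Lambda(U,\Omega)$, a fiber sequence
$$q^l q(F) \xrightarrow{\epsilon_F} F \xrightarrow{\eta_F} i\, i^l(F)$$
assembled from the counit and unit of the two adjunctions.

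To identify the cofiber, I would argue as follows. Let $C$ denote the cofiber of $\epsilon_F$. Since $q$ is a Verdier quotient one has $q q^l = \id$, and since $q$ preserves colimits, applying $q$ to the cofiber sequence yields $q(C) = \cof(\id_{q(F)}) = 0$, so $C$ lies in the essential image of $i$; write $C = i(C')$. For any $G \in K(U,\Omega)$ one then has $\Hom(q^l q(F), i G) = \Hom(qF, q i G) = 0$ (the last equality because $q i = 0$), so the long exact sequence associated with the cofiber gives $\Hom(F, i G) \xrightarrow{\sim} \Hom(C, i G) = \Hom(C', G)$, and the Yoneda lemma inside $K(U,\Omega)$ forces $C' = i^l(F)$. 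The resulting canonical equivalence $C \simeq i\, i^l(F) = \wrap_{\Lambda \cap \Omega^c}^+(F)$ is precisely the asserted fiber sequence.

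The argument is entirely formal once one has the recollement structure together with the description of $i^l$ via positive wrapping from Theorem \ref{w=ad}. I therefore do not foresee a substantive obstacle.
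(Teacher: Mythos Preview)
Your proposal is correct and is essentially the same approach as the paper's: the paper simply asserts that ``the formal property of the adjunction $m_\Lambda^l \dashv m_\Lambda$ implies the following (local) fiber sequence'' without spelling out the argument, and what you have written is precisely the standard recollement/localization argument that justifies this. The only input beyond pure category theory is the identification $i^l = \wrap_{\Lambda\cap\Omega^c}^+$ from Theorem~\ref{w=ad}, which both you and the paper invoke in the paragraph preceding the lemma.
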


For a microsheaf $F \in \msh_\Lambda(\Lambda)$, the fiber sequence
$$ K(U_\alpha, \Omega_\alpha) \hookrightarrow \Sh_\Lambda(U_\alpha, \Omega_\alpha) \rightarrow \msh_\Lambda(\Omega_\alpha)$$
implies that there exists some $F_\alpha \in \Sh_\Lambda(U_\alpha, \Omega_\alpha)$ such that $r_\alpha^* F \xrightarrow{\sim} m_{\Lambda \cap \Omega_\alpha} (F_\alpha)$.
By adjunction this identification is induced by a morphism $m_{\Lambda \cap \Omega_\alpha}^l r_\alpha^* F \rightarrow F_\alpha$, %\sayCK{I think this one is an $(-)^l$ instead of $(-)^r$ and so are the latter ones? (You go back to the correct handiness in the lemma thouhg.)}
and the fiber sequence provides the commuting diagram with rows being fiber sequences
%%% Commuting diagram
$$
\begin{tikzpicture}
% Nodes

\node at (-4,1.7) {$m_{\Lambda \cap \Omega_\alpha}^l m_{\Lambda \cap \Omega_\alpha} m_{\Lambda \cap \Omega_\alpha}^l r_\alpha^* F$};
\node at (0,1.7) {$m_{\Lambda \cap \Omega_\alpha}^l r_\alpha^* F$};
\node at (4,1.7) {$\wrap_{\alpha}^+ m_{\Lambda \cap \Omega_\alpha}^l r_\alpha^* F$};
\node at (-4,0) {$m_{\Lambda \cap \Omega_\alpha}^l m_{\Lambda \cap \Omega_\alpha} F_\alpha$};
\node at (0,0) {$F_\alpha$};
\node at (4,0) {$\wrap_{\alpha}^+ F_\alpha$};

% Horizontal Arrows
\draw [->, thick] (-1.7,1.7) -- (-1.1,1.7) node [midway, above] {$ $};
\draw [->, thick] (1.1,1.7) -- (2.5,1.7) node [midway, above] {$ $};
\draw [->, thick] (-2.3,0) -- (-0.4,0) node [midway, above] {$ $};
\draw [->, thick] (0.4,0) -- (3.2,0) node [midway, above] {$ $};

% Vertical Arrows
\draw [->, thick] (-4,1.2) -- (-4,0.3) node [midway, right] {$ $};
\draw [->, thick] (0,1.2) -- (0,0.3) node [midway, right] {$ $};
\draw [->, thick] (4,1.2) -- (4,0.3) node [midway, right] {$ $};
\end{tikzpicture}
$$

%%% Realizing a microsheaf locally as a doubling 
Here by definition $\wrap_{\alpha}^+ :  \Sh_\Lambda(U_\alpha ,\Omega_\alpha) \rightarrow K(U_\alpha,\Omega_\alpha)$
is the left adjoint of the standard inclusion and its effect is to blow away all microsupport in $\Omega_\alpha$. Hence by definition, $\wrap_{\alpha}^+ m_{\Lambda \cap \Omega_\alpha}^l r_\alpha^* F = 0$ since $m_{\Lambda \cap \Omega_\alpha}^l r_\alpha^* F$ has microsupport in $\Omega_\alpha$.
Therefore, we conclude that
$$m_{\Lambda \cap \Omega_\alpha}^l r_\alpha^* F  \rightarrow F_\alpha \rightarrow \wrap_{\alpha}^+ F_\alpha$$
with the morphisms given above is a fiber sequence.
Thus we can expression $m_{\Lambda \cap \Omega_\alpha}^l$ by
\begin{align*}
m_{\Lambda}^l(F)  &= m_{\Lambda}^l  \clmi{\alpha \in I} \, {r_\alpha}_! r_\alpha^* F 
=  \clmi{\alpha \in I} \, m_{\Lambda}^l  {r_\alpha}_! r_\alpha^* F \\
&=  \clmi{\alpha \in I} \, \wrap_\Lambda^+ {j_\alpha}_!  m_{\Lambda \cap \Omega_\alpha}^l r_\alpha^* F
=  \clmi{\alpha \in I} \, \wrap_\Lambda^+ {j_\alpha}_!  \mathrm{Fib} \left( F_\alpha \rightarrow \wrap_{\alpha}^+ F_\alpha \right).
\end{align*}
%Since we are in the stable setting, $\mathrm{Fib} = \mathrm{Cofib}[-1]$ is also a colimit, so the left adjoints, $\wrap_\Lambda^+$ and ${j_\alpha}_!$, commute with it.
%Thus, we can define a left doubling functor by
%$$
%\hat{w}_{\Lambda}^l (\sF) = \clmi{\alpha \in I} \, \mathrm{Fib} \left( {j_\alpha}_!  \sF_\alpha \rightarrow {j_\alpha}_!  \wrap_{\Omega_\alpha}^+ \sF_\alpha \right).
%$$
In summary, we have the lemma
\begin{lemma}\label{lem:left_right_adjoints_of_microlocalization}
Fix an open cover $\{U_\alpha\}_{\alpha \subseteq I}$ of $M$ and $\{\Omega_\alpha\}_{\alpha \subseteq I}$ of $\Lambda \subseteq S^*M$ 
such that $ \Omega_\alpha \subseteq S^{*}U_\alpha$.
Let $F \in \msh_\Lambda(\Lambda)$.
Then the left and right adjoint of $m_\Lambda: \Sh_\Lambda(M) \rightarrow \msh_\Lambda(\Lambda)$
can be described as
$$ m_{\Lambda}^l(F) =  \clmi{\alpha \subseteq I} \, \wrap_\Lambda^+ {j_\alpha}_!  \mathrm{Fib} \left( F_\alpha \rightarrow \wrap_{\alpha}^+ F_\alpha \right),$$
and
$$ m_{\Lambda}^r(F) =  \lmi{\alpha \subseteq I} \, \wrap_\Lambda^-  {j_\alpha}_* \mathrm{Cofib} \left( \wrap_{\alpha}^- F_\alpha \rightarrow F_\alpha \right)$$
where the $F_\alpha \in \Sh_\Lambda(U_\alpha, \Omega_\alpha)$'s are local representatives of $F$.
\end{lemma}

%%% Announcement for doubling by a more refined choice of representatives
\begin{remark}
In the following section, we will make a more careful choice of the covers $\{U_\alpha\}_{\alpha \subseteq I}$, $\{\Omega_\alpha\}_{\alpha \subseteq I}$,
and representatives $\{ F_\alpha\}_{\alpha \subseteq I}$. Such a choice will provide us with a variant of antimicrolocalization result:
a result which embedding microsheaves into certain category whose objects are represented by sheaves.
\end{remark}

%%% Microlocalization on a single component of the Legendrian
\begin{remark}\label{rem:multi-component-ad}
    Following Remark \ref{rem:multi-component-sato}, we remark that the above computation also works in the case when we take microlocalization along a single connected component $\Lambda_i \subseteq \Lambda \subseteq S^{*}M$. In this case 
    $$m_\Lambda^l, m_\Lambda^r: \msh_{\Lambda}(\Lambda_i) \rightarrow \Sh_\Lambda(M)$$
    can be computed by the above formula as well.
\end{remark}

%%% Refined microlocal cut-off of Guillermou
    We remark that the lemma above allows us to further cut-off the singular support of the local representative $F_\alpha^0 \in \Sh_{\Lambda \cap \Omega_\alpha}(U_\alpha, \Omega_\alpha)$ and obtain a refined local representative $F_\alpha \in \Sh_{\Lambda \cap \Omega_\alpha}(U_\alpha)$ such that
    $$m_{\Lambda \cap \Omega_\alpha}(F_\alpha) = F|_{\Lambda \cap \Omega_\alpha} \in \msh_\Lambda(\Lambda \cap \Omega_\alpha),$$
    which give a different proof of the lemma stated in the previous section.

\begin{corollary}[Lemma \ref{lem:refine-cutoff-0}]\label{lem:refine-cutoff}
    Let $\Lambda \subseteq S^{*}M$ be a locally closed subanalytic Legendrian such that $\pi|_\Lambda: \Lambda \rightarrow M$ is finite. Then for $(x, \xi) \in \Lambda$, there is a neighbourhood $U$ of $x \in M$ and ${F}_U \in \Sh_{\Lambda \cap S^{*}U}(U)$ such that $m_{\Lambda \cap S^{*}U}({F}_U) = {F} \in \msh_\Lambda(S^*U)$.
\end{corollary}
\begin{proof}
    Consider an open neighbourhood $\Omega \subset S^{*}U$ of all finite components of $\Lambda \cap S^{*}U$ such that $\msh_\Lambda(\Omega)$ fits in a fiber sequence
    $$ K(U, \Omega) \hookrightarrow \Sh_\Lambda(U, \Omega) \rightarrow \msh_\Lambda(\Omega).$$
    For a representative $F \in \Sh_\Lambda(U, \Omega)$, by Lemma \ref{lem:loc-ad-of-microlocalize} we know the fiber sequence
    $$ m_{\Lambda \cap S^{*}U}^l m_{\Lambda \cap S^{*}U}( F|_{U} ) \rightarrow F|_{U} \rightarrow \wrap_{\Lambda \cap \Omega^c}^+(F|_{U}).$$
    Then we claim that $m_{\Lambda \cap S^{*}U}^l m_{\Lambda \cap S^{*}U} F|_{U} \in \Sh_{\Lambda \cap S^{*}U}(U)$. Actually, since $W^+(\Omega)$ only consists of positive wrappings supported in $\Omega$, we know that
    $$F|_{U} \rightarrow \wrap_{\Lambda \cap \Omega^c}^+(F|_{U})$$
    is an isomorphism in $S^{*}U \backslash \Omega$. Hence $m_{\Lambda \cap S^{*}M}^l m_{\Lambda \cap S^{*}M}( F|_{U}) \in \Sh_\Omega(U)$. Therefore, since $F|_U$ and $\wrap_{\Omega^c}^+(F|_{U}) \in \Sh_\Lambda(U, \Omega)$, we can conclude that 
    \begin{equation*}
        m_{\Lambda \cap S^{*}M}^l m_{\Lambda \cap S^{*}M}( F|_{U}) \in \Sh_{\Lambda \cap S^*M}(U). \qedhere
    \end{equation*}
\end{proof}

Finally, we use the characterization of $m_\Lambda^l$ to characterize the cotwist $S_\Lambda^+$ of the adjunction pair $m_\Lambda \vdash m_\Lambda^l$ in terms of global wrappings in Definition \ref{def:wrap_once_functors} without appealing to the Sato-Sabloff fiber sequence (though one may realize that the ingredients of the Sato fiber sequence is hidden in Lemma \ref{lem:loc-ad-of-microlocalize}). 
Consider
$$m_\Lambda^l m_\Lambda (F)  = \wrap_\Lambda^+ \clmi{\alpha \subseteq I} \, j_{\alpha !}\mathrm{Fib}(F|_{U_\alpha}\rightarrow \wrap_\alpha^+ F|_{U_\alpha}).$$
It implies the following different characterization of the cofiber in terms of local wrappings
$$S_{\Lambda,{alg}}^+(F) = \wrap_\Lambda^+ \left( \clmi{\alpha \subseteq I} \, j_{\alpha !} (\wrap_\alpha^+F|_{U_\alpha}) \right).$$

\begin{theorem}\label{thm:wrap-local-to-global}
For $\Lambda \subseteq S^*M$ a compact subanalytic Legendrian, we have
$$S_\Lambda^+(F) = \wrap_\Lambda^+ T_\epsilon(F) = \wrap_\Lambda^+ \left( \clmi{\alpha \in I} \, j_{\alpha !} (\wrap_\alpha^+F|_{U_\alpha}) \right).$$
\end{theorem}

The statement relating (the colimit of) local wrappings to global wrappings is nontrivial since it is in general hard to glue positive Hamiltonian flows geometrically. We need the following technical lemma which glue local Hamiltonian pushoffs of sheaves in $\Sh_\Lambda(M)$.

\begin{lemma}\label{lem:continuation}
Let $\Phi$ and $\Psi: S^* M \times I \rightarrow S^* M$ be compactly supported contact isotopies with Hamiltonians $H_\varphi$ and $H_\psi$. Let $K_{\min}$, $K_{\max} \in \Sh(M \times M \times I)$ be the sheaf kernel given by %\sayWL{I changed the definition of $K_{\max}$ so that my argument about corners work.} \sayCK{Indeed, both of the kernel coming from taking colimits is probably the more correct thing to do since we are studying a subdiagram in a colimit diagram. Also, both the limit and the colimit have the property that they factorize through a continuation map to a large enough displacing isotopy so it's fine for our main purpose. Still, I don't quite understand the geometric concern of having corners. Maybe you can explain that to me when we meet?}
$$\clmi{H \leq \min(H_\varphi,H_\psi)} \, K(\Phi_H), \ \clmi{H \leq \max(H_\varphi,H_\psi)} K(\Phi_H)  \in \Sh(M \times M \times I)$$ where $K(H)$ is the GKS sheaf quantization of the contact isotopy $\Phi_H$ associated to the smooth function $H$.
Then the diagram of sheaves
$$
\begin{tikzpicture}
% Nodes
\node at (0,1.7) {$K_{\min}$};
\node at (3.3,1.7) {$K(\Phi)$};
\node at (0,0) {$K(\Psi)$};
\node at (3.3,0) {$K_{\max}$};
% Horizontal Arrows
\draw [->, thick] (0.6,1.7) -- (2.7,1.7) node [midway, above] {$ $};
\draw [->, thick] (0.6,0) -- (2.7,0) node [midway, above] {$ $};
% Vertical Arrows
\draw [->, thick] (0,1.3) -- (0,0.3) node [midway, right] {$ $}; 
\draw [->, thick] (3.3,1.3) -- (3.3,0.3) node [midway, right] {$ $};
\end{tikzpicture}
$$ 
is a pullback/pushout diagram. 
In particular, when $H_{\max} = \max(H_\varphi, H_\psi )$ is a smooth Hamiltonian, $K_{\max} = K(H_{\max}) \in \Sh_{\Lambda_{\Phi_{\max}}}(M \times M \times I)$.
\end{lemma}

\begin{proof}
Pick an increasing sequence of non-negative Hamiltonians $H_k \rightarrow \min(H_\varphi,H_\psi)$ in $L^\infty$-norm as $k \rightarrow \infty$. One can check that this is a cofinal sequence of Hamiltonians.
Denote by $\Theta_k$ for the associated isotopies and $K(\Theta_k) \in \Sh(M \times M \times I)$ the corresponding GKS sheaf quantizations. 
Note that %\sayCK{Do you mean more like "Note"?}
$(H_k)_{k \in \NN}$ form a Cauchy sequence with respect to the $L^\infty$-norm on smooth functions. Let
$$K_{\min} = \clmi{k \rightarrow \infty}\, K(\Theta_k).$$%\sayCK{Sorry I haven't thought through the Asano-Ike paper. However, I'm not sure if this is exactly the kernel they use. My concern is that their functions are not necessarily increasing so they had to do something smart to modify into a situation like that and take colimit. It can be possible that it produces the same object but I'm not familiar with the paper enough to tell.}
%\sayCK{First, (sorry, I don't know the continuous theory enough), does a Cauchy sequence of compactly supported Hamiltonians has a limit? If so, we don't really need to assume the limit exist. Secondly, We don't actually care if the (co)limiting kernel comes from a function or not. All we need is the statement that they factor throughs continuation maps, which is automatic by how they are defined, and satisfy the proposed fiber diagram. Lastly, I realized that I was wrong. Because of uniqueness and the microsupport estimation we have (modding out your concern of connecting corners which I haven't thought through, we know our kernel, although constructed in a more naive way, should be the same as theirs so I believe the argument we have here is fine.}
%\sayWL{First, of course the Hamiltonian functions have a limit, but I don't know if the diffeomorphisms limit to a homeomorphism. Second, I think you are right. We just need the singular support estimate for limits.}
As a corollary of Proposition \ref{prop:mses}~(9), we have the microsupport estimation
$$\ms( K_{\min} ) = \ms\Big( \clmi{k \rightarrow \infty} K(\Theta_k)\Big) \subseteq \bigcap_{N \in \NN} \overline{ \bigcup_{k \geq N} \ms(K(\Theta_k)) } = \bigcap_{N \in \NN} \overline{ \bigcup_{k \geq N} \Lambda_{\Theta_k} }.$$
Since the colimit is also the limit, in the sense of analysis, %\sayCK{This is the one place we use lim as $\epsilon-\delta$ limit so maybe remark it.}
of the sheaf kernels with respect to the $L^\infty$-norm (equivalently, the Hofer norm) on Hamiltonian functions, using the completeness theorem of sheaf kernels by Asano-Ike \cite[Proposition 5.12]{AsanoIke-C0}, we know that $K_{\min}|_{t=0} = 1_\Delta$ and $K_{\min}$ is invertible as a sheaf kernel. 
%\sayCK{I'm not sure if it's easy to see that it's invertible, which is the main reason why I delayed this proof for so long. I think what you convinced me is that the microsupport estimation plus the main theorem in \cite{Asano-Ike1} implies this. But I'm not sure if I can see it directly.}
%\sayCK{I feel this should be a "because"? That is, the microsupport estimation and being $1_\Delta$ at $0$ is independent of Asano-Ike. And if I recall correctly, the trouble we have is to explain why it's still invertible after taking the colimit and that's the place we need [AI] since it tells us such a kernel is unique and happens to be invertible.}\sayWL{Is this better? As you said the microsupport estimate is independent of [AI], maybe shall we list it as a separate property?} 
Similarly, by considering an increasing cofinal sequence $H'_k \rightarrow \max(H_\varphi, H_\psi)$ in $L^\infty$-norm as $k \rightarrow \infty$ we can define $K_{\max} \in \Sh(M \times M \times I)$.

Define $K'_{\max} \coloneqq \mathrm{Cofib}(K(\Phi) \leftarrow K_{\min} \rightarrow K(\Psi))$. We now show that $K_{\max} = K'_{\max}$. Since $K_{\max}|_{t = 0} = K'_{\max}|_{t = 0} = 1_\Delta$, it suffices to check the isomorphism on the singular support by considering the microlocal restriction $\Sh(M) \rightarrow \msh^\pre(\dT^*M)$. We make use of the fact that
$$K'_{\max} = \clmi{k \rightarrow \infty}\,\mathrm{Cofib}(K(\Phi) \leftarrow K(\Theta_k) \rightarrow  K(\Psi)).$$ %\sayCK{Typo? ``$=$" should hold only after restrictions.}
Let $\Omega_{\varphi} = \{(x, \xi) \,|\, H_\varphi(x, \xi) \geq H_\psi(x, \xi)\}$ and $\Omega_{\psi} = \{(x, \xi) \,|\, H_\psi(x, \xi) \geq H_\varphi(x, \xi)\}$.
By picking $\varphi_k$ more carefully, we may assume that $\varphi_k = \psi$ on a family of increasing exhausting open subsets of $\Omega_\varphi$ and $\varphi_k = \varphi$ on a family of increasing exhausting open subsets of $\Omega_\psi$. Thus, the colimit sheaf on the interior $\dT^*M \times \Omega_{\varphi}^\circ \times T^*I$ is computed by
$$\mathrm{Cofib}(K(\Phi) \leftarrow K(\Theta_k) \xrightarrow{\sim} K(\Psi)).$$
Thus on the interior $\dT^*M \times \Omega_{\varphi}^\circ \times T^*I$ we have isomorphisms of the microlocal restrictions
\begin{align*}
    K_{\max}|_{\dT^*M \times \Omega_{\varphi}^\circ \times T^*I} \simeq K'_{\max}|_{\dT^*M \times \Omega_{\varphi}^\circ \times T^*I} \simeq K(\Phi)|_{\dT^*M \times \Omega_{\varphi}^\circ \times T^*I}.
\end{align*}
A similar observation holds on the interior $\dT^*M \times \Omega_{\psi}^\circ \times T^*I$. 

Finally, consider the closed subset $Z = \dT^*M \backslash (\Omega_\varphi^\circ \cup \Omega_\psi^\circ)$. Since $H_\varphi = H_\psi$ on $Z$, we can choose $H'_k \rightarrow \max(H_\varphi, H_\psi)$ in $L^\infty$-norm as $k \rightarrow \infty$ such that $H'_k = H_k$ over a decreasing family of neighborhoods $\{Z_k\}_{k \in \NN}$ of $Z$. We also choose $H_{\varphi,k} \to H_\varphi$ and $H_{\psi, k} \to H_{\psi}$ in  $L^\infty$-norm as $k \rightarrow \infty$ such that $H_{\varphi,k} = H_{\psi,k} = H_k$ over the decreasing family of neighborhoods of $Z$. 
We make use of the fact that
$$K'_{\max} = \clmi{k \to \infty}\, \mathrm{Cofib}(K(\Phi_k) \leftarrow K(\Theta_k) \rightarrow K(\Psi_k)).$$
We obtain that on the neighbourhood $Z_k$ of $Z$, the colimit is computed by
$$\mathrm{Cofib}(K(\Phi_k) \xleftarrow{\sim} K(\Theta_k) \xrightarrow{\sim} K(\Psi_k)) \xrightarrow{\sim} K(\Theta'_k).$$
Therefore, we know the isomorphism $K_{\max}|_{\dT^*M \times Z \times T^*I} \simeq K_{\max}|_{\dT^*M \times Z \times T^*I}$ of microlocal restrictions. This concludes the pullback/pushout diagram.
\end{proof}

\begin{remark}
    In particular, our proof implies that $\ms(K_{\max}) = (\ms(K(\Phi)) \cap \dT^*M \times \Omega_\varphi \times T^*I) \cup (\ms(K(\Psi)) \cap \dT^*M \times \Omega_\psi \times T^*I) \cup (\ms(K_{\min}) \cap \dT^*M \times Z \times T^*I)$.
\end{remark}

\begin{remark}
    The sequence of non-negative Hamiltonians defines a Cauchy sequence of sheaf kernels in the category of sheaves with respect to the Hofer norm of Hamiltonians (more precisely, with respect to the interleaving distance on the sheaf category). However, we do not know whether the sequence of Hamiltonian diffeomorphisms converge to a homeomorphism.
\end{remark}

Using the above lemma, one may hope to glue all small wrappings $\varphi_\alpha \in W^+(\Lambda \cap \Omega_\alpha^c)$ for $\alpha \subseteq I$ and conclude that the colimit $\mathrm{colim}_{\alpha \subseteq I} j_{\alpha !}(\wrap_\alpha^+ F|_{U_\alpha})$ is isomorphic to $T_\epsilon(F)$. However, this is not true as the large colimit will push the singular support of $F$ off to the boundary of the union of the open subsets which is impossible for one to control. Therefore, we will have to choose a smaller wrapping category whose colimit is isomorphic to $T_\epsilon(F)$ and make sure that any further wrapping is contained in $W^+(\Lambda)$ which can be absorbed into the colimit $\wrap_\Lambda^+: \Sh(M) \to \Sh_\Lambda(M)$.

\begin{proof}[Proof of Theorem \ref{thm:wrap-local-to-global}]
Consider the left hand side. Without loss of generality, now assume that the open cover $\{\Omega_\alpha\}_{\alpha \subseteq I}$ is locally finite and $\Omega_\alpha \subseteq S^*M$ are sufficiently small open subsets. Consider for $\alpha \subseteq I$ all the combinations of small positive wrapping $\varphi_\alpha^\star \in W^+(\Omega_\alpha^c)$ such that the Hamiltonian $\max_{\alpha \subseteq I} H_{\varphi_\alpha^*} = \epsilon H_T$ where $H_T$ is the Hamiltonian that defines the Reeb flow $T_t: S^*M \rightarrow S^*M$. 
%\sayCK{Not sure if it's needed but I feel it's safer to assume the interior has already cover $\Lambda$}
Denote by $W^+(\Omega_\alpha)^\star$ the corresponding subcategories of positive wrappings. 
%By Lemma \ref{lem:continuation}, we know that
%$$\clmi{\alpha \subseteq I, \varphi_\alpha^\star \in W^+(\Omega_\alpha^c)^\star} \, F^{\varphi_\alpha^\star} = \clmi{H_\varphi \leq \max_{\alpha \in I}(H_{\varphi_\alpha^\star})}F^{\varphi}.$$
Since $\max_{\alpha \subseteq I}(H_{\varphi_\alpha^\star}) = \epsilon H_T$, by Lemma \ref{lem:continuation}, there exists a continuation map 
$$\clmi{\alpha \subseteq I, \varphi_\alpha^\star \in W^+(\Omega_\alpha^c)^\star}\, F^{\varphi_\alpha^\star} \xrightarrow{\sim} T_\epsilon(F).$$
%Since $\Lambda \subseteq \bigcup_{\alpha \in I} \mathrm{supp}({\varphi_\alpha^\star})^\circ$, by Proposition \ref{prop:mses}~(9), we know that 
%$$\ms^\infty\Big(\clmi{H_\varphi \leq \max_{\alpha \in I}(H_{\varphi_\alpha^\star})}F^{\varphi}\Big) \cap \Lambda = \varnothing.$$
%\sayCK{I'm not sure if I believe the function $\varphi_\mathrm{max}^\star$ exists. If my understanding is correct, Asano-Ike they either only conclude that the kernel exists or they have to assume the function exists and then discuss its relation with the kernel?}
%\sayWL{You are right (they basically require the function exists), though I think the argument probably still works as we can probably still build a continuation map. Maybe I should simply use $H_{\max}$}
For any $p \in \Lambda \cap S^*U_\alpha$, we know that there exists $\varphi_\alpha^\star \in W^+(\Omega_\alpha^c)^\star$ such that $\varphi_\alpha^\star(p) \notin \Lambda$. Therefore, the diagram of compositions $\varphi_\alpha' \circ \varphi_\alpha^\star$ for all $\varphi_\alpha^\star \in W^+(\Omega_\alpha^c)^\star$ and all $\varphi_\alpha' \in W^+(\Lambda)$ forms a cofinal diagram in $W^+(\Omega_\alpha^c)$. Since $\Omega_\alpha \subseteq S^*M$ are chosen to be sufficiently small, any further wrapping in $W^+(\Omega_\alpha^c) \backslash W^+(\Omega_\alpha^c)^\star$ are also supported away from $\Lambda$. Hence by the definition of $\wrap_\Lambda^+$ we can conclude that
\begin{equation*}
    S_\Lambda^+(F) = \wrap_\Lambda^+ T_\epsilon(F) = \wrap_\Lambda^+ \clmi{\alpha \subseteq I, \varphi_\alpha^\star \in W^+(\Omega_\alpha^c)^\star} \, F^{\varphi_\alpha^\star} =  \wrap_\Lambda^+ \clmi{\alpha \subseteq I, \varphi_\alpha \in W^+(\Lambda \cap \Omega_\alpha^c)} \, F^{\varphi_\alpha}.
\end{equation*}

Consider the right hand side. Fix $\alpha \subseteq I$ and consider $j_{\alpha !} (\wrap_\alpha^+F|_{U_\alpha})$.
The functor $\wrap_\alpha^+$ is given by wrappings $\varphi_\alpha$ which are compactly supported on $\Omega_\alpha \cup (S^* U_\alpha \setminus \Lambda)$, i.e.~$\varphi_\alpha \in W^+(\Lambda \cap \Omega_\alpha^c)$. %\sayWL{I removed the assumption that $\varphi_\alpha$ is supported in $\Omega_\alpha$ and explained it later.}\sayCK{I'm not sure if I notice where the explanation is. Also, my intuition is that this probably doesn't really matter, bascially because of some kind of microlocal cut-off lemma? Another I should mention is the notation $W^+(\Omega)$. I take the notation from one of the Ganatra-Pardon-Shende papers and put the open set where the isotopies compactly supported in inside the parathesis (and hence the notation $\wrap_\Lambda^+$ that the closed sets are subscripted.) I have no strong opinions on this one but just mention that in the future if we are to talk about wrapping on Liuoville sectors, it might be slightly bit more natural to use the original notation.} 
Thus
$$j_{\alpha !} (\wrap_\alpha^+F|_{U_\alpha}) = j_{\alpha !} \left(\clmi{\varphi_\alpha \in W^+(\Lambda \cap \Omega_\alpha^c)} (F|_{U_\alpha})^{\varphi_\alpha} \right)
= \clmi{\varphi_\alpha \in W^+(\Lambda \cap  \Omega_\alpha^c)} (F_{U_\alpha})^{\varphi_\alpha}.$$
Here, we use the fact that $\varphi_\alpha$ is compactly supported in $S^* U_\alpha$.
Now, we notice that the colimit $\operatorname{colim}_{\alpha \subseteq I}$ is given by a \v{C}ech diagram, so for any $l \leq k$, the intersection contributes a family of maps $F_{U_{\alpha_1 \dots \alpha_k}}^{\varphi_{\alpha_1 \dots \alpha_k}} \rightarrow F_{U_{\alpha_1 \dots \alpha_l}}^{\varphi_{\alpha_1 \dots \alpha_l}}$
where $\varphi_{\alpha_1 \dots \alpha_k}$ runs through all wrappings which are compactly supported in $\Omega_{\alpha_1 \dots \alpha_k}\coloneqq \Omega_{\alpha_1} \cap \dots \cap \Omega_{\alpha_k}$ and smaller than $\varphi_{\alpha_1 \dots \alpha_l}$ as wrappings on $\Omega_{\alpha_1 \dots \alpha_l}$. 
%\sayCK{This is the mistake you pointed out to me in a previous email. I combined the two colimits incorrectly.}
Hence by considering the colimit with respect to the diagram of the continuation maps $F_{U_{\alpha_1 \dots \alpha_k}}^{\varphi_{\alpha_1 \dots \alpha_k}} \rightarrow F_{U_{\alpha_1 \dots \alpha_l}}^{\varphi_{\alpha_1 \dots \alpha_l}}$, we get
$$S_{\Lambda,alg}^+(F) = \wrap_\Lambda^+ \clmi{\alpha \subseteq I, \varphi_\alpha \in W^+(\Lambda \cap \Omega_\alpha^c)}F_{U_\alpha}^{\varphi_\alpha}.$$
In other words, the colimit is small positive wrappings of $F_{U_{\alpha_1 \dots \alpha_l}}$ on each $\Omega_{\alpha_1 \dots \alpha_l}$ glued along their intersections by small positive wrappings of $F_{U_{\alpha_1\dots\alpha_k}}$ on $\Omega_{\alpha_1 \dots \alpha_k}$ for $l \leq k$. Then we complete the diagram of the continuation maps by including $F_{U_{\alpha_1 \dots \alpha_k}}^{\varphi_{\alpha_1 \dots \alpha_{l}}} \rightarrow F_{U_{\alpha_1\dots\alpha_{l}}}^{\varphi_{\alpha_1 \dots \alpha_{l}}}$ for any $l \leq k$ as follows. 
\[\mbox{\footnotesize
\xymatrix{
\cdots \ar[r] \ar[d] \ar[dr] & \cdots \ar[r] \ar[d] \ar[dr] & \cdots \ar[r] \ar[d] \ar[dr] &  \cdots \ar[d] \\
\cdots \ar[r] \ar[d] & \bigoplus F_{U_{\alpha_1\alpha_2\alpha_3}}^{\varphi_{\alpha_1\alpha_2\alpha_3}} \ar[r] \ar[d] \ar[dr] & \bigoplus F_{U_{\alpha_1\alpha_2}}^{\varphi_{\alpha_1\alpha_2\alpha_3}} \ar[r] \ar[d] \ar[dr] & \bigoplus F_{U_{\alpha_1}}^{\varphi_{\alpha_1\alpha_2\alpha_3}} \ar[d] \\
\cdots \ar[r] \ar[d] & \bigoplus F_{U_{\alpha_1\alpha_2\alpha_3}}^{\varphi_{\alpha_1\alpha_2}} \ar[r] \ar[d] & \bigoplus F_{U_{\alpha_1\alpha_2}}^{\varphi_{\alpha_1\alpha_2}} \ar[r] \ar[d] \ar[dr] & \bigoplus F_{U_{\alpha_1}}^{\varphi_{\alpha_1\alpha_2}} \ar[d] \\
\cdots \ar[r] & \bigoplus F_{U_{\alpha_1\alpha_2\alpha_3}}^{\varphi_{\alpha_1}} \ar[r] & \bigoplus F_{U_{\alpha_1\alpha_2}}^{\varphi_{\alpha_1}} \ar[r] & \bigoplus F_{U_{\alpha_1}}^{\varphi_{\alpha_1}} .
}}\]
Using the fact that the upper triangular part of the diagram is cofinal, we can conclude that $S_{\Lambda, alg}^+(F)$ can also be considered as the colimit of the large diagram.
Consider the colimit along each horizontal line which yields a cofinal sequence in the large diagram. We can thus conclude that
\begin{equation*}
    S_{\Lambda,alg}^+(F) = \wrap_\Lambda^+ \clmi{\alpha \subseteq I, \varphi_\alpha \in W^+(\Lambda \cap \Omega_\alpha^c)}F_{U_\alpha}^{\varphi_\alpha} = \wrap_\Lambda^+ \clmi{\alpha \subseteq I, \varphi_\alpha \in W^+(\Lambda \cap \Omega_\alpha^c)} \, F^{\varphi_\alpha} = S_\Lambda^+(F). \qedhere
\end{equation*}
\end{proof}

\begin{theorem}\label{thm:wrap-local-to-global-2}
For $\Lambda \subseteq S^*M$ a compact subanalytic Legendrian, we have
$$S_\Lambda^-(F) = \wrap_\Lambda^- T_{-\epsilon}(F) = \wrap_\Lambda^- \left( \lmi{\alpha \subseteq I} \, j_{\alpha !} (\wrap_\alpha^-F|_{U_\alpha}) \right).$$
\end{theorem}

\subsection{Functorial global adjoints of microlocalizations}\label{sec:double-ad-micro}
    Consider ${F} \in \msh_\Lambda(\Lambda)$. Then by Corollary \ref{lem:refine-cutoff} there exists an open covering $\{U_\alpha\}_{\alpha\subseteq I}$ of $M$ and ${F}_{\alpha} \in \Sh_{\Lambda \cap S^*U_\alpha}(U_\alpha)$ such that
    $$m_{\Lambda \cap S^*U_\alpha}({F}_{\alpha}) = {F}|_{\Lambda \cap S^*U\alpha} \in \msh_\Lambda(\Lambda \cap S^{*}U_\alpha).$$
    From Section \ref{sec:ad-micro}, we have seen that the adjoints of microlocalization can be characterized by
    \begin{align*}
        m_\Lambda^l(F) &= \wrap_\Lambda^+ \clmi{\alpha \subseteq I} \, j_{\alpha !}\mathrm{Fib}(F_\alpha \rightarrow \wrap_\alpha^+F_\alpha), \\
        m_\Lambda^r(F) &= \wrap_\Lambda^- \lmi{\alpha \subseteq I} \, j_{\alpha *}\mathrm{Cofib}(\wrap_\alpha^- F_\alpha \rightarrow F_\alpha).
    \end{align*}
    Our goal in this section is to interpret the local adjoints as positive and respectively negative pushoff $\mathrm{Fib}(F_\alpha \to F_\alpha^{\varphi_\alpha})$ and $\mathrm{Cofib}(F_\alpha^{\varphi_\alpha^{-1}} \rightarrow F_\alpha)$
    and then obtain the doubling functor as the adjoint functors of microlocalization from local-to-global principle. 
    
    One may hope to glue all small wrappings $\varphi_\alpha \in W^+(\Lambda \cap \Omega_\alpha^c)$ for $\alpha \subseteq I$ and define the doubling as the colimit $\mathrm{colim}_{\alpha \subseteq I} j_{\alpha !}\mathrm{Fib}(F_\alpha \to \wrap_\alpha^+ F_\alpha)$. However, similar to the issue in Theorem \ref{thm:wrap-local-to-global}, this is not true as the large colimit will push the singular support of $F$ off to the boundary of the union of the open subsets which is impossible for one to control. Moreover, the local representatives $F_\alpha \in \Sh_{\Lambda}(U_\alpha)$ may not be identical on the intersections and that will introduce extra difficulty to control the singular support of the gluing. However, similar to the proof of Theorem \ref{thm:wrap-local-to-global}, we will choose a smaller wrapping category and prove that the colimit has singular support $\Lambda \cup T_\epsilon(\Lambda)$ and make sure that any further wrapping is contained in $W^+(\Lambda)$ which can be absorbed into the colimit $\wrap_\Lambda^+: \Sh(M) \to \Sh_\Lambda(M)$.

    For each $\alpha \subseteq I$, consider all small positive wrapping $\varphi_\alpha^\star \in W^+(\Omega_\alpha^c)$ such that the Hamiltonian $\max_{\alpha \subseteq I} H_{\varphi_\alpha^*} = \epsilon H_T$ where $H_T$ is the Hamiltonian that defines the Reeb flow. Denote by $W^+(\Omega_\alpha)^\star$ the corresponding subcategories of positive wrappings. It is clear that for any $p \in \Lambda \cap S^*U_\alpha$, there exists $\varphi_\alpha^\star \in W^+(\Omega_\alpha^c)^\star$ such that $\varphi_\alpha^\star(p) \notin \Lambda$. See Figure \ref{fig:doubling-glue}.

\begin{figure}[h!]
    \centering
    \includegraphics[width=0.85\textwidth]{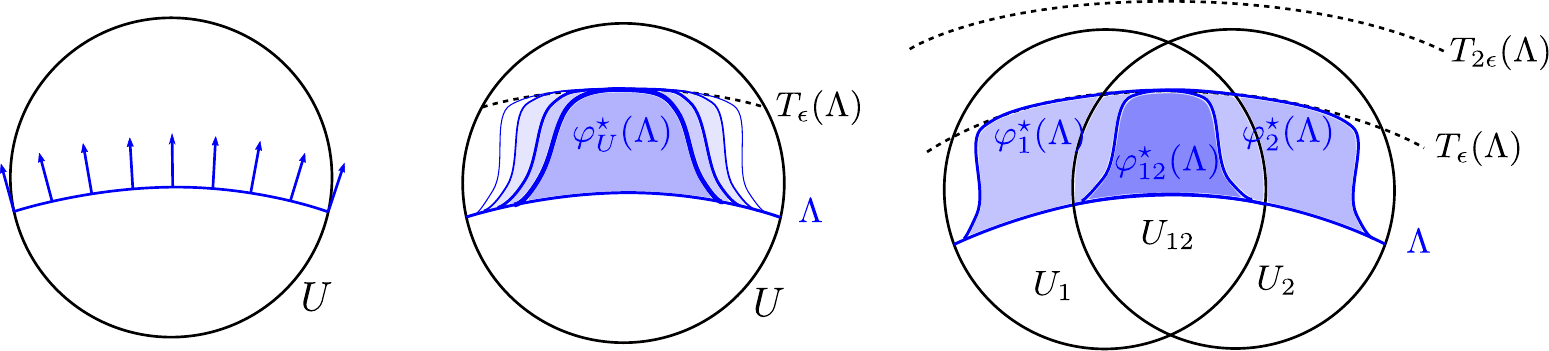}
    \caption{The collection of all small positive wrappings $\varphi_\alpha^\star \in W^+(\Omega_\alpha^c)$ such that $\max_{\alpha \subseteq I} H_{\varphi_\alpha^\star} = \epsilon H_T$ and the gluing of the small wrappings on the intersection.}
    \label{fig:doubling-glue}
\end{figure}
    
    Given $F \in \msh_\Lambda(\Lambda)$, by Corollary \ref{lem:loc-ad-of-microlocalize} there exists a locally finite open covering $\mathscr{U} = \{U_\alpha\}_{\alpha \subseteq I}$, $\{\Omega_\alpha\}_{\alpha \subseteq I}$ and a collection of sheaves $F_\alpha \in Sh_\Lambda(U_\alpha)$ such that $$F|_{\Lambda \cap S^*U_\alpha} = m_\Lambda(F_\alpha).$$
    Given the wrapping subcategories $W^+(\Omega_\alpha)^\star$, we define
    $$w_\Lambda^l(F) = \clmi{\alpha \subseteq I, \varphi_\alpha^\star \in W^+(\Omega_\alpha^c)^\star}\, j_{\alpha !} \mathrm{Fib} \big( F_\alpha \rightarrow F_\alpha^{\varphi_\alpha^\star} \big).$$ 
    %\sayCK{I changed a few (now in olive) but should say that now I'm not sure if it should be fiber or cofiber for $(-)^l$ so I've stopped doing. (Maybe this is intentional?)}
    By Lemma \ref{lem:loc-ad-of-microlocalize}, 
    %\sayCK{The cited seems to be a lemma instead of a corollary. Or maybe the label's wrong?} 
    there are continuation maps $j_{\alpha_1 \dots \alpha_k !}F_{\alpha_1 \dots \alpha_k} \to j_{\alpha_1 \dots \alpha_l !}F_{\alpha_1 \dots \alpha_l}$ for $l \leq k$, which induce continuation maps $j_{\alpha_1 \dots \alpha_k !} \mathrm{Fib} (F_{\alpha_1 \dots \alpha_k} \to F_{\alpha_1 \dots \alpha_k}^{\varphi_{\alpha_1 \dots \alpha_k}}) \to j_{\alpha_1 \dots \alpha_l !} \mathrm{Fib} (F_{\alpha_1 \dots \alpha_l} \to F_{\alpha_1 \dots \alpha_l}^{\varphi_{\alpha_1 \dots \alpha_l}})$.

\begin{theorem}\label{thm:doubling_l_ad_twocopy}
    For $\Lambda \subseteq S^{*}M$ a compact subanalytic Legendrian, we have
    \begin{align*}
    m_{\Lambda \subseteq \Lambda \cup T_{2\epsilon}(\Lambda)}^l(F) = \wrap_{\Lambda \cup T_{2\epsilon}(\Lambda)}^+ \circ w_\Lambda^l(F).
    \end{align*}
\end{theorem}
\begin{proof}
    %First, we show that $m_{\Lambda \cup T_\epsilon(\Lambda)}^l(F) = \wrap_{\Lambda \cup T_\epsilon(\Lambda)}^+ \circ w_\Lambda(F)[-1]$. The other identity will follow from a similar argument. 
    From what we have proved in Section \ref{sec:ad-micro}, it suffices to show that
    $$\wrap_{\Lambda \cup T_{2\epsilon}(\Lambda)}^+ \clmi{\alpha \subseteq I} \, j_{\alpha !}\mathrm{Fib}(F_\alpha \rightarrow \wrap_\alpha^+F_\alpha) = \wrap_{\Lambda \cup T_{2\epsilon}(\Lambda)}^+ \clmi{\alpha \subseteq I, \varphi_\alpha^\star \in W^+(\Omega_\alpha^c)^\star} j_{\alpha !}\mathrm{Fib} \big( F_\alpha \rightarrow F_\alpha^{\varphi_\alpha^\star} \big).$$
    Note that the colimit $\mathrm{colim}_{\alpha \subseteq I}$ is given by a \v{C}ech diagram, so any intersection $U_{\alpha_1 \dots \alpha_k} \subseteq U_{\alpha_1 \dots \alpha_l}$ for $l \leq k$ contributes to a family of continuation maps
    $$j_{\alpha_1 \dots \alpha_k !}\mathrm{Fib}\big( F_{\alpha_1 \dots \alpha_k} \rightarrow F_{\alpha_1 \dots \alpha_k}^{\varphi_{\alpha_1 \dots \alpha_k}} \big) \to j_{\alpha_1 \dots \alpha_l !}\mathrm{Fib}\big( F_{\alpha_1 \dots \alpha_l} \rightarrow F_{\alpha_1 \dots \alpha_l}^{\varphi_{\alpha_1 \dots \alpha_l}} \big)$$
    where $\varphi_{\alpha_1 \dots \alpha_k}$ runs through all wrappings compactly supported in $\Omega_{\alpha_1 \dots \alpha_k}$ and are smaller than $\varphi_{\alpha_1 \dots \alpha_l}$ in $\Omega_{\alpha_1 \dots \alpha_l}$ for $l \leq k$. Hence, by considering the colimit with respect to continuation maps $j_{\alpha_1 \dots \alpha_k !}\mathrm{Fib}(F_{\alpha_1 \dots \alpha_k} \rightarrow F_{\alpha_1 \dots \alpha_k}^{\varphi_{\alpha_1 \dots \alpha_k}}) \to j_{\alpha_1 \dots \alpha_l !}\mathrm{Fib}(F_{\alpha_1 \dots \alpha_l} \rightarrow F_{\alpha_1 \dots \alpha_l}^{\varphi_{\alpha_1 \dots \alpha_l}})$, we can write
    \begin{align*}
    \clmi{\alpha \subseteq I} \,j_{\alpha !}\mathrm{Fib}(F_\alpha \rightarrow \wrap_\alpha^+F_\alpha) = \clmi{\alpha \subseteq I, \varphi_\alpha \in W^+(\Lambda \cap \Omega_\alpha^c)} \,j_{\alpha !} \mathrm{Fib}\big( F_\alpha \rightarrow F_\alpha^{\varphi_\alpha} \big). 
    \end{align*}
    Consider now the wrappings $\varphi_\alpha'$ that are supported in $S^*U_\alpha \setminus \Lambda$, i.e.~$\varphi_\alpha' \in W^+(\Lambda)$. There are canonical isomorphisms $\mathrm{Fib}\big( F_\alpha \rightarrow (F_\alpha^{\varphi_\alpha^\star})^{\varphi_\alpha'} \big) \to \mathrm{Fib}\big( F_\alpha \rightarrow F_\alpha^{\varphi_\alpha^\star} \big)^{\varphi_\alpha'}$. Since for any $p \in \Lambda \cap S^*U_\alpha$, there is $\varphi_\alpha^\star \in W^+(\Omega_\alpha^c)^\star$ such that $\varphi_\alpha^\star(p) \notin \Lambda$, the diagram of compositions $\varphi_\alpha' \circ \varphi_\alpha^\star$ for all possible $\varphi_\alpha^\star \in W^+(\Omega_\alpha^c)^\star$ and all $\varphi_\alpha' \in W^+(\Lambda)$ forms a cofinal diagram in the wrapping category $W^+(\Lambda \cap \Omega_\alpha^c)$. Since $\Omega_\alpha \subseteq S^*M$ are chosen to be sufficiently small, any further wrapping in $W^+(\Omega_\alpha^c) \backslash W^+(\Omega_\alpha^c)^\star$ are also supported away from $\Lambda$. %\sayCK{My recollection is that this is the place where $\max$ to the Reeb flow is used. Do you mind saying a few words why that condition helps? I can probably recall it but I'm not sure our readers can figure it out easily.} 
    we can conclude that
    \begin{align*}
    \clmi{\alpha \subseteq I} \,j_{\alpha !}\mathrm{Fib}(F_\alpha \rightarrow \wrap_\alpha^+F_\alpha) = \clmi{\alpha \subseteq I, \varphi_\alpha^\star \in W^+(\Omega_\alpha^c)^\star, \varphi_\alpha \in W^+(\Lambda)} \,j_{\alpha !} \left( \mathrm{Fib}\big( F_\alpha \rightarrow F_\alpha^{\varphi_\alpha^\star} \big)^{\varphi_{\alpha}'} \right).
    \end{align*}
    Then we complete the diagram of the continuation maps by including $j_{\alpha\alpha_2\dots \alpha_k !}\mathrm{Fib}\big( F_{\alpha_1\dots \alpha_k} \rightarrow F_{\alpha_1\dots \alpha_k}^{\varphi_{\alpha\alpha_2\dots \alpha_k}^\star} \big)^{\varphi_{\alpha\alpha_2\dots \alpha_l}'} \to j_{\alpha_1 \dots \alpha_l !}\mathrm{Fib}\big( F_{\alpha_1 \dots \alpha_l} \rightarrow F_{\alpha_1 \dots \alpha_l}^{\varphi_{\alpha_1 \dots \alpha_l}^\star} \big)^{\varphi_{\alpha\alpha_2\dots \alpha_l}'}$ for any $l \leq k$ as follows.
\[\mbox{\tiny
\xymatrix@C=2mm{
\ar[r] \ar[dr] & \cdots \ar[r] \ar[d] \ar[dr] & \cdots \ar[r] \ar[d] \ar[dr] &  \cdots \ar[d] \\ 
\ar[r] & \bigoplus j_{\alpha_1 \alpha_2 \alpha_3 !} \mathrm{Fib}\big(F_{\alpha_1 \alpha_2 \alpha_3} \rightarrow F_{\alpha_1 \alpha_2 \alpha_3}^{\varphi_{\alpha_1 \alpha_2 \alpha_3}^\star}\big)^{\varphi_{\alpha_1\alpha_2\alpha_3}'} \ar[r] \ar[d] \ar[dr] & \bigoplus j_{\alpha_1 \alpha_2 !}\mathrm{Fib}\big( F_{\alpha_1\alpha_2} \to F_{\alpha_1\alpha_2}^{\varphi_{\alpha_1\alpha_2}^\star} \big)^{\varphi_{\alpha_1\alpha_2\alpha_3}'} \ar[r] \ar[d] \ar[dr] & \bigoplus j_{\alpha !}\mathrm{Fib}\big( F_{\alpha_1} \to F_{\alpha_1}^{\varphi_{\alpha_1}^\star} \big)^{\varphi_{\alpha_1\alpha_2\alpha_3}'} \ar[d] \\
\ar[r] & \bigoplus j_{\alpha_1 \alpha_2 \alpha_3 !}\mathrm{Fib}\big(F_{\alpha_1 \alpha_2 \alpha_3}  \rightarrow F_{\alpha_1 \alpha_2 \alpha_3}^{\varphi_{\alpha_1 \alpha_2 \alpha_3}^\star} \big)^{\varphi_{\alpha_1\alpha_2}'} \ar[r] \ar[d] & \bigoplus j_{\alpha_1 \alpha_2 !}\mathrm{Fib}\big(F_{\alpha_1\alpha_2} \to F_{\alpha_1\alpha_2}^{\varphi_{\alpha_1\alpha_2}^\star} \big)^{\varphi_{\alpha_1\alpha_2}'} \ar[r] \ar[d] \ar[dr] & \bigoplus j_{\alpha !}\mathrm{Fib}\big(F_{\alpha_1} \to F_{\alpha_1}^{\varphi_{\alpha_1}^\star} \big)^{\varphi_{\alpha_1\alpha_2}'} \ar[d] \\
\ar[r] & \bigoplus j_{\alpha_1 \alpha_2 \alpha_3 !}\mathrm{Fib}\big( F_{\alpha_1 \alpha_2 \alpha_3} \rightarrow F_{\alpha_1 \alpha_2 \alpha_3}^{\varphi_{\alpha_1 \alpha_2 \alpha_3}^\star}\big)^{\varphi_{\alpha_1}'} \ar[r] & \bigoplus j_{\alpha_1 \alpha_2 !}\mathrm{Fib}\big(F_{\alpha_1\alpha_2} \to F_{\alpha_1\alpha_2}^{\varphi_{\alpha_1\alpha_2}^\star}\big)^{\varphi_{\alpha_1}'} \ar[r] & \bigoplus j_{\alpha_1 !}\mathrm{Fib}\big(F_{\alpha_1} \to F_{\alpha_1}^{\varphi_{\alpha_1}^\star} \big)^{\varphi_{\alpha_1}'} .
}}\]
    Using the fact that the upper triangular part of the diagram is cofinal, we can conclude that $m_{\Lambda \cup T_\epsilon(\Lambda)}^l(F)$ can be considered as the colimit of the large diagram. By considering the colimit along each horizontal line which yields a cofinal sequence in the large diagram, this discussion implies that
    \begin{equation*}
    m_{\Lambda \subseteq \Lambda \cup T_{2\epsilon}(\Lambda)}^l(F) = \wrap_{\Lambda \cup T_{2\epsilon}(\Lambda)}^+ \clmi{\alpha \subseteq I, \varphi_\alpha^\star \in W^+(\Omega_\alpha^c)^\star} \,j_{\alpha !} \mathrm{Fib}\big( F_\alpha \rightarrow F_\alpha^{\varphi_\alpha^\star}\big) . \qedhere
    \end{equation*}
\end{proof}
    
    Moreover, we can prove that the doubling functor $m_{\Lambda \cup T_{2\epsilon}(\Lambda)}^l = \wrap_{\Lambda \cup T_{2\epsilon}(\Lambda)}^+ \circ w_\Lambda^l$ is fully faithful. The following lemma is analogous to Lemma \ref{lem:continuation}.

\begin{lemma}\label{lem:double-glue-ss}
    Let $\Phi_1$ and $\Phi_2: S^*M \times I \to S^*M$ be Hamiltonian isotopies with Hamiltonians $H_{\varphi_1}$ and $H_{\varphi_2}$ supported in $S^*U_1$ and $S^*U_2$. Let $F_1 \in \Sh_\Lambda(U_1)$, $F_2 \in \Sh_\Lambda(U_2)$ and $F_{12} \in \Sh_\Lambda(U_{12})$ be sheaves such that $m_\Lambda(F_1) = m_\Lambda(F_2) = m_\Lambda(F_{12})$. Let $K_{\min}$, $K_{\max} \in \Sh(M \times M \times I)$ be the sheaf kernel given by 
    $$\clmi{H \leq \min(H_{\varphi_1},H_{\varphi_2})} \, K(H), \ \clmi{H \leq \max(H_{\varphi_1},H_{\varphi_2})} K(H) \in \Sh(M \times M)$$ where $K(H)$ is the GKS sheaf quantization of the contact isotopy associated to the smooth function $H$. Let $q: U_\alpha \times I \to U_\alpha$ be the projections.
    Then consider the pullback/pushout diagram of sheaves
$$
\begin{tikzpicture}
% Nodes
\node at (0,1.7) {$j_{12 !} \mathrm{Fib}(q^*F_{12} \to K_{\min} \circ 
F_{12})$};
\node at (6.3,1.7) {$j_{1 !} \mathrm{Fib}(q^*F_1 \to K(\Phi_1) \circ F_{1})$};
\node at (0,0.1) {$j_{2 !} \mathrm{Fib}(q^*F_2 \to K(\Phi_2) \circ F_2)$};
\node at (6.3,0) {$\clmi{ \alpha \subseteq \{1,2\}}\,j_{\alpha\, !} \,\mathrm{Fib}(q^*F_\alpha \to K(\Phi_\alpha) \circ F_\alpha)$ }; 
%\node at (6.3,0) {$\clmi{1 \leq \alpha \leq 2}\,j_{\alpha\, !} \,\mathrm{Fib}(F_i \to K(\Phi_i) \circ F_i).$};
% Horizontal Arrows
\draw [->, thick] (2.5,1.7) -- (3.9,1.7) node [midway, above] {$ $};
\draw [->, thick] (2.4,0.1) -- (3.2,0.1) node [midway, above] {$ $};
%\draw [->, thick] (2.2,0.1) -- (3.7,0.1) node [midway, above] {$ $};

% Vertical Arrows
\draw [->, thick] (0,1.3) -- (0,0.4) node [midway, right] {$ $}; 
\draw [->, thick] (6.3,1.3) -- (6.3,0.4) node [midway, right] {$ $};
\end{tikzpicture}
$$ 
%{\color{olive} Here we abuse the notation and use $F_\alpha$ to mean its pullback along $M \times I \rightarrow M $.} 
Let $U \coloneqq U_1 \cup U_2$, $\Lambda_{\min} \coloneqq \ms(K_{\min}) \circ \Lambda$ and $\Lambda_{\max} \coloneqq  \ms(K_{\max}) \circ \Lambda$. Then 
$$\clmi{\alpha \subseteq \{1, 2\}}\,j_{\alpha !} \mathrm{Fib}(q^*F_\alpha \to K(\Phi_\alpha) \circ F_\alpha)|_{U \times I} \in \Sh_{\Lambda_{\mathrm{id}} \cup \Lambda_{\max}}(U \times I).$$
In particular, when $H_{\max} = \max( H_1, H_2 )$ is smooth, the singular support is contained in $\Lambda_{\mathrm{id}} \cup \Lambda_{\Phi_{\max}}$.
\end{lemma}
\begin{proof}
    Let $\Omega_{1} = \{(x, \xi) \,|\, H_{\varphi_1}(x, \xi) \geq H_{\varphi_2}(x, \xi)\}$ and $\Omega_{2} = \{(x, \xi) \,|\, H_{\varphi_2}(x, \xi) \geq H_{\varphi_1}(x, \xi)\}$.
    The restriction $\Sh(M) \rightarrow \msh^\pre(\Omega_1)$ or $\msh^\pre(\Omega_2)$ preserves limits, colimits, and microsupports, so for the purpose of computing 
    $$\ms\left( \clmi{ \alpha \subseteq \{1,2\} }\,j_{\alpha !} \mathrm{Fib}(q^*F_\alpha \to K(\Phi_\alpha) \circ F_\alpha) \right) \cap {\Omega}_1,$$
    it is enough to compute the diagram there.
    Consider a family of Hamiltonians $H_{k} \leq \min\{H_{\varphi_1}, H_{\varphi_2}\}$. Assume that $H_k = H_{\varphi_2}$ on an increasing family of exhausting open subsets of $\Omega_1$. Since we know that the microlocal restriction of $F_2$ and $F_{12}$ on $\Lambda \cap S^*U_{12}$ are identical, the colimit in the interior $\Omega_{1}^\circ$ is computed by
    $$\mathrm{Cofib}\left(j_{1 !} \mathrm{Fib}(q^*F_{1} \to K(\Phi_1) \circ F_{1}) \leftarrow j_{12 !} \mathrm{Fib}(q^*F_{12} \to K(H_k) \circ F_{12}) \xrightarrow{\sim} j_{2 !} \mathrm{Fib}(q^*F_{2} \to K(\Phi_2) \circ F_{2})\right).
    $$
    A similar observation holds on the interior $\Omega_{2}^\circ $. Thus
    \begin{align*}
    \ms\left(\clmi{ \alpha \subseteq \{1,2\} } \mathrm{Fib}(q^*F_\alpha \to K(\Phi_\alpha) \circ F_\alpha) \right) \cap \Omega_{1}^\circ \subseteq \Lambda_{\mathrm{id}} \cup \Lambda_{\Phi_1}, \\
    \ms\left(\clmi{ \alpha \subseteq \{1,2\} } \mathrm{Fib}(q^*F_\alpha \to K(\Phi_\alpha) \circ F_\alpha) \right) \cap \Omega_{2}^\circ \subseteq \Lambda_{\mathrm{id}} \cup \Lambda_{\Phi_2}.
    \end{align*}
    We know that the singular support of the colimit sheaf is contained in the union of $\Lambda_{\mathrm{id}}$, $\Lambda_{\Phi_1}$, $\Lambda_{\Phi_2}$ and $\Lambda_{\min}$. Let $Z = \dT^*M \setminus (\Omega_1^\circ \cup \Omega_2^\circ )$. Our singular support estimate shows that $\Lambda_{\Phi_1} \cap {\Omega}_2^\circ$ and $\Lambda_{\Phi_2} \cap {\Omega}_1^\circ$ are not contained in the singular support. On the other hand, Lemma \ref{lem:continuation} implies that
    $$\Lambda_{\max} = (\Lambda_{\Phi_1} \cap {\Omega}_1) \cup (\Lambda_{\Phi_2} \cap {\Omega}_2) \cup (\Lambda_{\min} \cap Z).$$
    Hence we can conclude that the colimit sheaf has singular support in $\Lambda_{\mathrm{id}} \cup \Lambda_{\max}$.
\end{proof}

\begin{theorem}\label{thm:doubling_l_ad_ff}
    For $\Lambda \subseteq S^{*}M$ a compact subanalytic Legendrian, the left doubling is fully faithful: $$w_{\Lambda}^l: \msh_\Lambda(\Lambda) \hookrightarrow \Sh_{\Lambda \cup T_{\epsilon}(\Lambda)}(M).$$
    Moreover, there is a nonnegative Hamiltonian $\widetilde{T}_{t}: S^*M \to S^*M$ so that $m_{\Lambda \subseteq \Lambda \cup T_{2\epsilon}(\Lambda)}^l = \widetilde{T}_\epsilon \circ w_\Lambda^l$.
\end{theorem}
\begin{proof}
    For $k \in \NN$, let $\widetilde{T}_{\epsilon-1/k}: S^*M \to S^*M$ be the nonnegative Hamiltonian flows such that $\widetilde{T}_{\epsilon-1/k}(\Lambda) = \Lambda$ and $\widetilde{T}_{\epsilon-1/k} \circ T_{\epsilon}(\Lambda) = T_{2\epsilon-1/k}(\Lambda)$. %\sayCK{Just want to check that it's not $\widetilde{T}_{\epsilon - 1/n} \circ T_\epsilon (\Lambda) = T_{2 \epsilon - 1/n}(\Lambda)$?} 
    This defines a cofinal sequence of wrappings from $\Lambda \cup T_\epsilon(\Lambda)$ to $\Lambda \cup T_{2\epsilon}(\Lambda)$ which is supported away from $\Lambda \subseteq S^*M$. Since $\max_{\alpha \subseteq I}H_{\varphi_\alpha^\star} = \epsilon H_T$ where $H_T$ is the Hamiltonian for the Reeb flow, we know by Lemma \ref{lem:double-glue-ss} that
    $$\ms\left(\clmi{\alpha \subseteq I, \varphi_\alpha^\star \in W^+(\Omega_\alpha^c)^\star}\, j_{\alpha !} \mathrm{Fib}\big ( F_\alpha \to F_\alpha^{\varphi_\alpha^\star}\big)\right) \subseteq \Lambda \cup T_\epsilon(\Lambda).$$
    By Lemma \ref{lem:nearby_cycle} we know that
    $\wrap_{\Lambda \cup T_{2\epsilon}(\Lambda)}^+$ is fully faithful on $\Sh_{\Lambda \cup T_{\epsilon}(\Lambda)}(M)$. Hence full faithfulness of $w_\Lambda^l$ can be reduced to the full faithfulness of $m_{\Lambda \subseteq \Lambda \cup T_{2\epsilon}(\Lambda)}^l$.
    
    For any $F, G \in \msh_\Lambda(\Lambda)$, using adjunction we have
    $$\Hom(m_{\Lambda \subseteq \Lambda \cup T_{2\epsilon}(\Lambda)}^l(F), m_{\Lambda \subseteq \Lambda \cup T_{2\epsilon}(\Lambda)}^l(G)) = \Gamma(\Lambda, \mhom(F, m_{\Lambda } \circ m_{\Lambda \subseteq \Lambda \cup T_{2\epsilon}(\Lambda)}^l(G))).$$
    Therefore, it suffices to show that for any $F \in \msh_\Lambda(\Lambda)$, $m_{\Lambda} \circ m_{\Lambda \subseteq \Lambda \cup T_{2\epsilon}(\Lambda)}^l(F) = F$. 
    Since  $\widetilde{T}_\epsilon: S^*M \to S^*M$ defines a cofinal wrapping from $\Lambda \cup T_\epsilon(\Lambda)$ to $\Lambda \cup T_{2\epsilon}(\Lambda)$ which is supported away from $\Lambda \subseteq S^*M$. Therefore, as the microlocalization functor $m_\Lambda$ only depends on the microlocal behaviour of the sheaf in a neighbourhood of $\Lambda \subseteq S^*M$, we have
    $$m_{\Lambda} \circ m_{\Lambda \subseteq \Lambda \cup T_{2\epsilon}(\Lambda)}^l(F) = m_\Lambda \circ w_\Lambda^l (F) = m_{\Lambda}\left(\clmi{\alpha \in I, \varphi_\alpha^\star \in W^+(\Omega_\alpha^c)^\star}\, j_{\alpha !} \mathrm{Fib}\big ( F_\alpha \to F_\alpha^{\varphi_\alpha^\star}\big)\right).$$
    Note that for any $\alpha \in I$, $\mathrm{Fib}\big ( F_\alpha \to F_\alpha^{\varphi_\alpha^\star}\big)$ is compactly supported in $U_\alpha$. Since the microlocalization functor $m_\Lambda$ is limit preserving and colimit preserving, we can thus write
    $$m_{\Lambda \cap \Omega_\alpha} \circ m_{\Lambda \subseteq \Lambda \cup T_{2\epsilon}(\Lambda)}^l(F) = \clmi{\alpha \subseteq I, \varphi_\alpha^\star \in W^+(\Omega_\alpha^c)^\star}\, m_{\Lambda \cap \Omega_\alpha} \left( \mathrm{Fib}\big ( F_\alpha \to F_\alpha^{\varphi_\alpha^\star}\big) \right).$$
    For any $\varphi_\alpha^\star \in W^+(\Omega_\alpha^c)^\star$, write $r_{\alpha}: \mathrm{supp}(H_{\varphi_{\alpha}^\star} )^\circ \hookrightarrow \Omega_{\alpha}$ the inclusion of the interior of the supports. Consider the morphism $F_{\alpha} \to F_{\alpha}^{\varphi_{\alpha}^\star}$ under the microlocal restriction $\Sh(M) \to \msh(S^*M) \to \msh(\Lambda)$. %\sayCK{Do we need to take Fib here?} 
    On $\mathrm{supp}(H_{\varphi_{\alpha}^\star} )^\circ$, we know that $m_\Lambda(F_{\alpha}^{\varphi_{\alpha}^\star}) = 0$ since the singular support is pushed off from $\Lambda$, and thus the morphism is zero microlocally. On the complement of $\mathrm{supp}(H_{\varphi_{\alpha}^\star} )^\circ$, we know that $m_\Lambda(F_{\alpha}^{\varphi_{\alpha}^\star}) = m_\Lambda(F_{\alpha})$, and thus the morphism is the identity microlocally. %\sayCK{I'm not really sure which morphism should be isomorphic here?}
    Therefore,
    $$ m_{\Lambda \cap \Omega_{\alpha}} \mathrm{Fib}\Big( F_{\alpha} \to F_{\alpha}^{\varphi_{\alpha}^\star} \Big) = r_{\alpha !} r_{\alpha}^* F_{\alpha}.$$
    Choose the collections $\varphi_\alpha^\star \in W^+(\Omega_\alpha^c)$ such that $H_{\varphi_{\alpha_1 \dots \alpha_k}^\star} = \min_{1\leq i\leq k}H_{\varphi_\alpha^\star}$. Since $\mathrm{supp}(H_{\varphi_{\alpha_1 \dots \alpha_k}^\star} )^\circ = \bigcup_{1 \leq i \leq k} \mathrm{supp}(H_{\varphi_{\alpha_i}^\star})^\circ$, by taking the colimit with respect to the \v{C}ech cover, we have
    $$m_{\Lambda} \circ m_{\Lambda \subseteq \Lambda \cup T_{2\epsilon}(\Lambda)}^l(F) = \clmi{\alpha \in I, \varphi_\alpha^\star \in W^+(\Omega_\alpha^c)^\star}\, r_{\alpha !} r_\alpha^* F_\alpha = F.$$
    This proves full faithfulness of the doubling functor.
\end{proof}

    We can conclude that the two constructions of doubling functors in Section \ref{sec:doubling-local} and \ref{sec:double-ad-micro} are identical. By Theorem \ref{thm:doubling_l_ad_ff}, we know that there is a Hamiltonian such that $m_{\Lambda \cup T_{2\epsilon}(\Lambda)}^l = \widetilde{T}_\epsilon \circ w_\Lambda^l$. By Theorem \ref{thm:doubling_ad} and Remark \ref{rem:multi-component-ad}, we know that for the doubling functor $w_\Lambda$ defined in Section \ref{sec:doubling-local}, there is also a Hamiltonian such that $m_{\Lambda \cup T_{2\epsilon}(\Lambda)}^l = {T}_\epsilon \circ w_\Lambda[-1]$. Therefore, they are identical up to a Hamiltonian isotopy action and degree shifting.

    Similarly, given any $F \in \msh_\Lambda(\Lambda)$, consider the open coverings $\{U_\alpha\}_{\alpha \subseteq I}$, $\{\Omega_\alpha\}_{\alpha \subseteq I}$, the collection of sheaves $F_\alpha \in \Sh_\Lambda(U_\alpha)$, and the collection of positive wrappings $\varphi_\alpha^\star \in W^+(\Omega_\alpha^c)^\star$ such that $\max_{\alpha \subseteq I} H_{\varphi_\alpha^\star}|_{\Omega_\alpha'} = \epsilon H_T$. We can define
    $$w_\Lambda^r(F) = \lmi{\alpha \subseteq I, \varphi_\alpha^\star \in W^+(\Omega_\alpha^c)^\star} j_{\alpha *} \mathrm{Cofib}\big(F_\alpha^{(\varphi_\alpha^\star)^{-1}} \to F_{\alpha} \big).$$ %\sayCK{I feel there should be some negative wrapping in this formular? Maybe $\mathrm{Cofib} (F_\alpha^{\varphi_\alpha^{\star,-} } \to F_{\alpha})$? Or else, the microsupport right below doesn't seem right to me.}
    Using the same argument, we can prove the following result:

\begin{theorem}\label{thm:doubling_r_ad_twocopy}
    For $\Lambda \subseteq S^{*}M$ a compact subanalytic Legendrian, we have
    \begin{align*}
    m_{\Lambda \subseteq T_{-2\epsilon}(\Lambda) \cup \Lambda}^r(F) = \wrap_{T_{-2\epsilon}(\Lambda) \cup \Lambda}^- \circ w_\Lambda^r(F).
    \end{align*}
    Moreover, the right doubling is fully faithful
    $$w_{\Lambda}^r: \msh_\Lambda(\Lambda) \hookrightarrow \Sh_{T_{-\epsilon}(\Lambda) \cup \Lambda}(M)$$
    and there is a nonnegative Hamiltonian $\widetilde{T}_t: S^*M \to S^*M$ so that $m_{\Lambda \subseteq T_{-2\epsilon}(\Lambda) \cup \Lambda}^r = \widetilde{T}_{-\epsilon} \circ w_\Lambda^r(F).$
    %\sayCK{$\wrap^-$ here?}
\end{theorem}

    We can prove that the left and right doubling functor that we defined are related in the following way, which will thus give an alternative proof of Theorem \ref{thm:doubling_ad}.

\begin{theorem}\label{thm:l_doubling=r_doubling}
    For $\Lambda \subseteq S^{*}M$ a compact subanalytic Legendrian, there is a natural isomorphism
    $$T_{\epsilon/2} \circ w_\Lambda^r (F) = T_{\epsilon/2} \circ w_\Lambda^l (F)[1].$$
\end{theorem}

\begin{lemma}\label{lem:negative-wrap-positive-doubling}
    For $\Lambda \subseteq S^{*}M$ a compact subanalytic Legendrian, we have
    $$\wrap_{T_{-\epsilon}(\Lambda) \cup \Lambda}^- \circ w_\Lambda^l (F) = 0.$$
\end{lemma}
\begin{proof}
    By Lemma \ref{lem:double-glue-ss}, we know that $w_\Lambda^l (F) \in \Sh_{\Lambda \cup T_\epsilon(\Lambda)}(M)$. For $k \in \NN$, let $\widetilde{T}_{-\epsilon+1/k}: S^*M \to S^*M$ be a sequence of nonpositive Hamiltonian flows such that $\widetilde{T}_{-\epsilon+1/k}(\Lambda) = \Lambda$ and $\widetilde{T}_{-\epsilon+1/k}\circ T_\epsilon(\Lambda) = T_{1/k}(\Lambda)$. Then this defines a cofinal wrapping from $\Lambda \cup T_{\epsilon}(\Lambda)$ to $\Lambda$. By Lemma \ref{lem:double-glue-ss} and the uniqueness theorem of GKS sheaf quantizations, we know that the sheaf defined by the cofinal sequence of wrapping is exactly
    $$\widetilde{T}_{-\epsilon+1/k} \circ w_\Lambda^l (F) = \clmi{\alpha \in I}\,j_{\alpha !} \mathrm{Cofib}(F_\alpha \to F_\alpha^{(\varphi_\alpha^\star)^{1/k}}).$$
    Follwing the argument in Lemma \ref{lem:nearby_cycle} or \cite[Theorem 5.15]{Kuo-wrapped-sheaves}, we can realize $\wrap_{T_{-\epsilon}(\Lambda) \cup \Lambda}^- \circ w_\Lambda^l (F)$ as the nearby cycle sheaf of
    $\mathrm{colim}_{\alpha \subseteq I}\,j_{\alpha !} \mathrm{Cofib}(q^*F_\alpha \to K(\Phi_\alpha) \circ F_\alpha)|_{M \times (0, 1]}.$
    This therefore implies 
    $$\wrap_{T_{-\epsilon}(\Lambda) \cup \Lambda}^- \circ w_\Lambda^l (F) = \clmi{\alpha \subseteq I}\,j_{\alpha !} \mathrm{Cofib}(q^*F_\alpha \to K(\Phi_\alpha) \circ F_\alpha)|_{M \times 0} = 0,$$
    which completes the proof.
\end{proof}

\begin{proof}[Proof of Theorem \ref{thm:l_doubling=r_doubling}]
    By Lemma \ref{lem:double-glue-ss} and Lemma \ref{lem:nearby_cycle}, we know that $\wrap_{T_{-2\epsilon}(\Lambda) \cup \Lambda}^-$ and $\wrap_{\Lambda \cup T_{2\epsilon}(\Lambda)}^+$ are fully faithful on the essential images of $w_\Lambda^r$ and $w_\Lambda^l$ and are realized by a single Hamiltonian flow. Therefore, by Theorem \ref{thm:doubling_l_ad_twocopy} and \ref{thm:doubling_r_ad_twocopy}, %\sayCK{Maybe some more explanation.} 
    it suffices to show that there exists a natural isomorphism
    $$T_{\epsilon} \circ m_{\Lambda \subseteq T_{-2\epsilon}(\Lambda) \cup \Lambda}^r (F) = T_{\epsilon} \circ m_{\Lambda \subseteq \Lambda \cup T_{2\epsilon}(\Lambda)}^l (F)[1].$$

    First, we construct a natural morphism as follows. Consider the counit of the adjunction $\mathrm{id} \to m_{\Lambda} m_{\Lambda \subseteq \Lambda \cup T_{2\epsilon}(\Lambda)}^l$, this induces a natural morphism
    $$m_{\Lambda \subseteq T_{-2\epsilon}(\Lambda) \cup \Lambda}^r \to m_{\Lambda \subseteq T_{-2\epsilon}(\Lambda) \cup \Lambda}^r m_{\Lambda} m_{\Lambda \subseteq \Lambda \cup T_{2\epsilon}(\Lambda)}^l.$$
    Then consider the unit of the adjunction $\mathrm{id} \to m_{\Lambda \subseteq T_{-2\epsilon}(\Lambda) \cup \Lambda}^r m_{\Lambda}$. By Theorem \ref{thm:wrap-local-to-global-2}, the fiber is 
    $$S_{\Lambda\subseteq T_{-2\epsilon}(\Lambda) \cup \Lambda}^- \coloneqq \wrap_{T_{-2\epsilon}(\Lambda) \cup \Lambda}^- \widetilde{T}_{-\epsilon}: \Sh(M) \to \Sh_{T_{-2\epsilon}(\Lambda) \cup \Lambda}(M),$$
    where $\widetilde{T}_{\pm \epsilon}: S^*M \to S^*M$ is the Hamiltonian flow compactly supported away from $T_{-2\epsilon}(\Lambda)$ that acts as the Reeb flow on $\Lambda$ in the neighbourhood. Thus by the fiber sequence Lemma \ref{lem:left_right_adjoints_of_microlocalization} and Theorem \ref{thm:wrap-local-to-global-2} we have a natural morphism 
    $$m_{\Lambda \subseteq T_{-2\epsilon}(\Lambda) \cup \Lambda}^r m_{\Lambda} \to S_{\Lambda\subseteq T_{-2\epsilon}(\Lambda) \cup \Lambda}^-[1].$$
    whose fiber is $\wrap_{T_{-2\epsilon}(\Lambda) \cup \Lambda}^-: \Sh(M) \to \Sh_{T_{-2\epsilon}(\Lambda) \cup \Lambda}(M)$. This induces the natural morphism
    $$m_{\Lambda \subseteq T_{-2\epsilon}(\Lambda) \cup \Lambda}^r m_{\Lambda} m_{\Lambda \subseteq \Lambda \cup T_{2\epsilon}(\Lambda)}^l \to S_{\Lambda\subseteq T_{-2\epsilon}(\Lambda) \cup \Lambda}^- m_{\Lambda \subseteq \Lambda \cup T_{2\epsilon}(\Lambda)}^l [1],$$
    whose fiber is $\wrap_{T_{-2\epsilon}(\Lambda) \cup \Lambda}^- \circ m_{\Lambda \subseteq \Lambda \cup T_{2\epsilon}(\Lambda)}^l$. Without loss of generality, assume $\widetilde{T}_{\pm \epsilon}: S^*M \to S^*M$ acts as the Reeb flow ${T}_{\pm \epsilon}: S^*M \to S^*M$ in the complement of a neighbourhood of $T_{-2\epsilon}(\Lambda)$. We get ${T}_{-\epsilon} \circ m_{\Lambda \subseteq \Lambda \cup T_{2\epsilon}(\Lambda)}^l (F) \in \Sh_{T_{-\epsilon}(\Lambda) \cup T_{\epsilon}(\Lambda)}(M)$. For $k \in \NN$, the sequence of Reeb flows $T_{-\epsilon+1/k}$ gives a cofinal wrapping from $T_{-\epsilon}(\Lambda) \cup T_{\epsilon}(\Lambda)$ to $T_{-2\epsilon}(\Lambda) \cup \Lambda$. Hence we have
    $$S_{\Lambda}^- m_{\Lambda \subseteq \Lambda \cup T_{2\epsilon}(\Lambda)}^l(F) = \wrap_{T_{-2\epsilon}(\Lambda) \cup \Lambda}^- T_{-\epsilon} \circ m_{\Lambda \subseteq \Lambda \cup T_{2\epsilon}(\Lambda)}^l(F) = T_{-2\epsilon} \circ m_{\Lambda \subseteq \Lambda \cup T_{2\epsilon}(\Lambda)}^l(F).$$
    Thus by composing the two natural morphisms, we get a natural morphism
    $$m_{\Lambda \subseteq T_{-2\epsilon}(\Lambda) \cup \Lambda}^r \to m_{\Lambda \subseteq T_{-2\epsilon}(\Lambda) \cup \Lambda}^r m_{\Lambda} m_{\Lambda \subseteq \Lambda \cup T_{2\epsilon}(\Lambda)}^l \to T_{-2\epsilon} \circ m_{\Lambda \subseteq \Lambda \cup T_{2\epsilon}(\Lambda)}^l [1].$$
    
    Consider the natural isomorphism given by Theorem \ref{thm:doubling_l_ad_ff} that $F \simeq m_{\Lambda} m_{\Lambda \subseteq \Lambda \cup T_{2\epsilon}(\Lambda)}^l (F)$. This implies that
    $$m_{\Lambda \subseteq T_{-2\epsilon}(\Lambda) \cup \Lambda}^r(F) \simeq m_{T_{-2\epsilon}(\Lambda) \cup \Lambda}^r m_{\Lambda} m_{\Lambda \subseteq \Lambda \cup T_{2\epsilon}(\Lambda)}^l (F).$$
    By Lemma \ref{lem:negative-wrap-positive-doubling}, we know that
    $\wrap_{T_{-2\epsilon}(\Lambda) \cup \Lambda}^- \circ m_{\Lambda \subseteq \Lambda \cup T_{2\epsilon}(\Lambda)}^l \simeq 0$. Therefore, 
    $$m_{\Lambda \subseteq T_{-2\epsilon}(\Lambda) \cup \Lambda}^r m_{\Lambda} m_{\Lambda \subseteq \Lambda \cup T_{2\epsilon}(\Lambda)}^l(F) \simeq T_{-2\epsilon} \circ m_{\Lambda \subseteq \Lambda \cup T_{2\epsilon}(\Lambda)}^l(F) [1].$$
    Therefore, we can conclude that $T_{\epsilon} \circ m_{\Lambda \subseteq T_{-2\epsilon}(\Lambda) \cup \Lambda}^r (F) = T_{\epsilon} \circ m_{\Lambda \subseteq \Lambda \cup T_{2\epsilon}(\Lambda)}^l (F).$
\end{proof}

\subsection{Relative doubling and fiber sequence}
    In this last section, we generalize the above results on fiber sequences and doubling functor to the microlocalization along a general open subset $\Omega$ inside $\Lambda \subseteq S^*M$. Since the arguments will be the same as (either approach in) the previous sections, we will not write down detailed proofs again.

\begin{definition}
    Let $\Lambda \subseteq S^*M$ be a compact subanalytic Legendrian, $\Omega \subseteq \Lambda$ be an open subset and $\tilde{\Omega}^c \subseteq S^*M$ be an open neighbourhood of $\Omega^c \subseteq \Lambda$. Define a smooth cut-off function $\hat H: S^*M \to [0, 1]$ such that 
    $$\hat H^{-1}(0) \cap \Lambda = \Lambda \setminus \Omega, \;\; \hat H^{-1}(1) \subseteq \Omega^c.$$
    using the smooth Urysohns' lemma. Let the non-negative contact flow $\hat T_{t}: S^*M \to S^*M$ be the flow defined by the contact Hamiltonian $\hat H$ such that $\alpha(\partial_t \hat T_t) = \hat H$.
\end{definition}

\begin{theorem}[Relative Sato-Sabloff fiber sequence]
    Let $\Lambda, \Lambda' \subseteq S^*M$ be a compact subanalytic Legendrian and $\Omega \subseteq \Lambda \cap \Lambda'$ be an open subset. Let $\hat T_t: S^*M \to S^*M$ be a non-negative contact flow such that $\hat H^{-1}(0) \cap (\Lambda \cap \Lambda') = (\Lambda \cap \Lambda') \setminus \Omega$ and 
    $$\Lambda \cap \hat T_\epsilon(\Lambda') = \varnothing$$
    for any small $\epsilon \neq 0$. Then for $F \in \Sh_\Lambda(M)$ and $G \in \Sh_{\Lambda'}(M)$, there is a fiber sequence
    $$\Hom(F, \hat T_{-\epsilon}(G)) \xrightarrow{c} \Hom(F, \hat T_{\epsilon}(G)) \to \Gamma(\Omega, \mhom(F, G)).$$
\end{theorem}
\begin{remark}
    Let $T_t: S^*M \to S^*M$ be a positive contact flow such that $T_t = \hat T_t$ away from a neighbourhood of $(\Lambda \cap \Lambda') \setminus \Omega$. Then by Sato-Sabloff fiber sequence, we can show the following commutative diagram of fiber sequences
    $$\xymatrix{
    \Hom(F, \hat T_{-\epsilon}(G)) \ar[r] \ar[d] & \Hom(F, \hat T_{\epsilon}(G)) \ar[r] \ar[d] & \Gamma(\Omega, \mhom(F, G)) \ar[d] \\
    \Hom(F, T_{\epsilon}(G)) \ar[r] \ar[d] & \Hom(F, T_{\epsilon}(G)) \ar[r] \ar[d] & \Gamma(\Lambda \cap \Lambda', \mhom(F, G)) \ar[d] \\
    \Gamma((\Lambda \cap \Lambda') \setminus \Omega, \mhom(F, G)) \ar[r] & 0 \ar[r] & \Gamma((\Lambda \cap \Lambda') \setminus \Omega, \mhom(F, G))[1].
    }$$ %\sayCK{Should the $\hat{T}$'s on the second row be just $T$'s? Also, the first $\epsilon$ should be $-\epsilon$?}
    Thus, we can deduce the relative Sato-Sabloff fiber sequence (the vertical sequence in the left column) for open subsets from the Sato-Sabloff sequence for closed subsets in Theorem \ref{thm:sato-sab} (the two horizontal sequences in the first and second rows).
\end{remark}

    Let $\Lambda = \Lambda'$ be the same subanalytic Legendrian. As a consequence, we can obtain the relative doubling functor on microsheaves on an open subset of the subanalytic Legendrian $\msh_\Lambda(\Omega)$.

\begin{theorem}\label{thm:rel-double}
    Let $\Lambda \subseteq S^{*}M$ be a compact subanalytic Legendrian, $\Omega \subseteq \Lambda$ an open subset, and $c(\Omega)$ be the length of the shortest Reeb chord on $\Omega$ with respect to the non-negative Hamiltonian flow $\hat T_t$ supported on $\Omega$. Then for $0 < \epsilon < c(\Lambda)/2$, there is a fully faithful functor
    $$w_\Omega^l: \msh_\Lambda(\Omega) \hookrightarrow \Sh_{\overline{\Omega} \cup \hat T_{\epsilon}(\overline{\Omega})}(M), \; w_\Omega^r: \msh_\Lambda(\Omega) \hookrightarrow \Sh_{\overline{\Omega} \cup \hat T_{-\epsilon}(\overline{\Omega})}(M)$$
    that are related by $\hat T_\epsilon: \Sh_{\overline{\Omega} \cup \hat T_{-\epsilon}(\overline{\Omega})}(M) \xrightarrow{\sim} \Sh_{\overline{\Omega} \cup \hat T_{\epsilon}(\overline{\Omega})}(M)$. Moreover, there are equivalences
    $$m_\Omega^l = \wrap_\Lambda^+ \circ w_\Omega^l[-1], \;\; m_\Omega^r = \wrap_\Lambda^- \circ w_\Omega^r: \msh_\Lambda(\Omega) \rightarrow \Sh_\Lambda(M)$$
    where $\wrap_\Lambda^\pm$ are the functors given in Theorem \ref{w=ad}.
\end{theorem}
\begin{remark}\label{rem:rel-double}
    For the closed subset $\Omega^c \subseteq \Lambda$, let $\hat T_t$ be the non-negative Hamiltonian flow supported on $\Omega$, and $T_t: S^*M \to S^*M$ be a positive contact flow such that $T_t = \hat T_t$ away from an open neighbourhood $\tilde{\Omega}^c$ of $\Omega^c$. On the contrary, we obtain two doubling functors that are a priori different from each other
    $$w_{\Omega^c}^l: \msh_\Lambda(\Omega^c) \hookrightarrow \Sh_{\hat T_\epsilon(\tilde{\Omega}^c) \cup T_{\epsilon}(\tilde{\Omega}^c)}(M), \; w_{\Omega^c}^r: \msh_\Lambda(\Omega^c) \hookrightarrow \Sh_{\hat T_{-\epsilon}(\tilde{\Omega}^c) \cup T_{-\epsilon}(\tilde{\Omega}^c)}(M)$$
    such that $m_{\Omega^c}^l = \wrap_\Lambda^+ \circ w_{\Omega^c}^l[-1]$ and $m_{\Omega^c}^r = \wrap_\Lambda^- \circ w_{\Omega^c}^r$. When there is an open neighbourhood $\tilde{\Omega}^c$ of $\Omega^c$ such that $\msh_\Lambda(\tilde{\Omega}^c) \xrightarrow{\sim} \msh_\Lambda(\Omega^c)$, it follows that $m_{\tilde{\Omega}^c}^{l} = m_{\Omega^c}^{l}$ and $m_{\tilde{\Omega}^c}^{r} = m_{\Omega^c}^{r}$. In particular, when $\Omega^c$ is a smooth point on some Legendrian stratum $\tilde{\Omega} \subseteq \Lambda$, this construction recovers the sheaf theoretic linking disks at the Legendrian point \cite[Definition 4.14]{Kuo-wrapped-sheaves}.
\end{remark}

\begin{figure}
    \centering
    \includegraphics[width=0.8\textwidth]{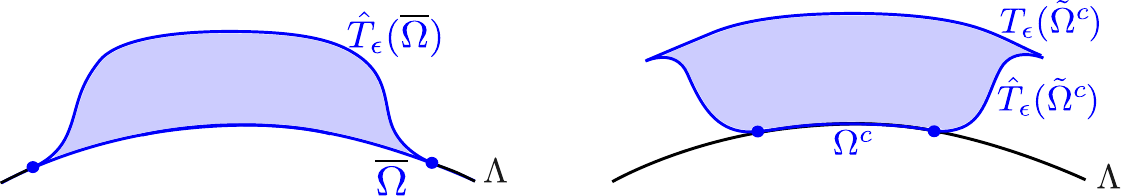}
    \caption{The relative doubling functor for an open subset $\Omega \subseteq \Lambda$ and for a closed subset $\Omega^c \subseteq \Lambda$ in Theorem \ref{thm:rel-double} and Remark \ref{rem:rel-double}.}
\end{figure}

\section{Spherical adjunction from microlocalization}

    With the preparation in the previous sections, we are able to prove Theorem \ref{thm:main}. Our main result in the section is the following theorem.
    
\begin{theorem}\label{thm:main-fun}
    Let $\Lambda \subset S^{*}M$ be a compact subanalytic Legendrian full stop or swappable stop. Then the microlocalization functor
    $$m_\Lambda: \Sh_\Lambda(M) \rightarrow \msh_\Lambda(\Lambda)$$
    is a spherical functor.
\end{theorem}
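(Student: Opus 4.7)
The plan is to verify the defining conditions of a spherical functor for $m_\Lambda$ by combining the doubling decomposition of its adjoints from Section~\ref{sec:doubling} with the swappability or fullness hypothesis. Recall that a functor $F$ admitting both adjoints $F^l \dashv F \dashv F^r$ is spherical precisely when the dual cotwist $\cof(F^l F \to \id)$ and the cotwist $\fib(\id \to F^r F)$ are autoequivalences and satisfy a natural compatibility relating $F^l$ and $F^r$ (any two of the four standard spherical conditions suffice). By Corollary~\ref{cor:sato-sab} and Definition~\ref{def:wrap_once_functors}, these two cotwists for $m_\Lambda^l \dashv m_\Lambda \dashv m_\Lambda^r$ are precisely the wrap-once functors $S_\Lambda^+ = \wrap_\Lambda^+ T_\epsilon$ and $S_\Lambda^- = \wrap_\Lambda^- T_{-\epsilon}$. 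The proof therefore reduces to two tasks: (i)~showing $S_\Lambda^\pm$ are autoequivalences of $\Sh_\Lambda(M)$, and (ii)~showing the compatibility morphism $m_\Lambda^l \to S_\Lambda^-[1] \circ m_\Lambda^r$ is an equivalence.

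For~(i) in the swappable case, I would use the hypothesized compactly supported positive Hamiltonian on $S^{*}M \setminus \Lambda$ whose time-one flow $\Phi$ sends $T_\epsilon(\Lambda)$ into an arbitrarily small neighborhood of $T_{-\epsilon}(\Lambda)$. Its GKS quantization (Theorem~\ref{thm:GKS} and Corollary~\ref{cor:GKS}) produces an autoequivalence of $\Sh_\Lambda(M)$ realizing the swap on microsupport at infinity, and Lemma~\ref{lem:nearby_cycle} identifies the colimit $\wrap_\Lambda^+ T_\epsilon F$ with this $\Phi$-action on $T_{-\epsilon} F$; the inverse comes from the backward isotopy, and symmetric considerations handle $S_\Lambda^-$. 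For~(i) in the full-stop case, one first reduces geometric to algebraic via Proposition~\ref{prop:full-geo=>alg}; under the algebraic hypothesis $\Sh^c_\Lambda(M)$ is smooth and proper (Propositions~\ref{prop:smooth}--\ref{prop:proper}), hence admits a Serr{e} functor, which by Sabloff--Serr{e} duality (Proposition~\ref{prop:sab-serre}) is identified with $S_\Lambda^- \otimes \omega_M$ on $\Sh^b_\Lambda(M)$. Ind-completion then upgrades this to an autoequivalence of all of $\Sh_\Lambda(M)$.

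For~(ii), I would exploit the doubling decompositions $m_\Lambda^l \simeq \wrap_\Lambda^+ w_\Lambda[-1]$ and $m_\Lambda^r \simeq \wrap_\Lambda^- w_\Lambda$ from Theorem~\ref{cor:doubling}. After substitution, the comparison morphism becomes a natural transformation between two wrappings of the doubled object $w_\Lambda F$, built from the fiber sequence $T_{-\epsilon} \to T_\epsilon \to w_\Lambda m_\Lambda$ of Corollary~\ref{cor:exact-tri} composed with a Reeb continuation map. The main obstacle, and the technical heart of the proof, will be to verify that this comparison is an isomorphism: in the swappable case, one must push the swap automorphism $\Phi$ inside the doubled sheaf $w_\Lambda F$, which straddles both sides of $\Lambda$; in the full-stop case, the Serr{e} equivalence plays the same role. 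In both situations, the noncharacteristic cutoff lemmas \ref{lem:hom-cutoff}--\ref{lem:muhom-cutoff} should localize the comparison to small neighborhoods $V_\alpha$ where the identification can be read off directly from the local Sato--Sabloff fiber sequence and the local description of microsheaves afforded by Lemma~\ref{lem:loc-ad-of-microlocalize}.
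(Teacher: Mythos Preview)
Your overall two-step plan---verify that $S_\Lambda^\pm$ are autoequivalences and that the canonical map relating $m_\Lambda^l$ and $m_\Lambda^r$ through the cotwist is an isomorphism---is exactly the strategy the paper follows. But the mechanisms you sketch for both steps diverge from the paper's, and in each case your version has a gap.

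For~(i) in the swappable case, the GKS quantization of the swap flow $\Phi$ is \emph{not} identified with $S_\Lambda^+$: the flow only sends $T_\epsilon(\Lambda)$ into a neighborhood of $T_{-\epsilon}(\Lambda)$, so $K(\Phi)\circ T_\epsilon F$ does not even lie in $\Sh_\Lambda(M)$, and there is no canonical comparison with $T_{-\epsilon}F$. The paper never attempts such an identification. Instead it proves directly that $S_\Lambda^+$ is \emph{fully faithful}: one factors $\Hom(F,G)=\Hom(T_\epsilon F,T_\epsilon G)\to\Hom(T_\epsilon F,\wrap_\Lambda^+T_\epsilon G)$ through the perturbation trick and then invokes Lemma~\ref{lem:nearby_cycle} with the swap isotopy as a cofinal wrapping (Proposition~\ref{prop:swap-condition2}). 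Since $S_\Lambda^-\dashv S_\Lambda^+$, symmetric full faithfulness yields the equivalence. In the full-stop case your appeal to abstract Serre-functor existence is circular: you need $S_\Lambda^-$ to preserve $\Sh^b_\Lambda(M)$ before Sabloff--Serr{e} can identify it. The paper avoids this by first proving $S_\Lambda^+$ fully faithful via wrapped sheaves (Proposition~\ref{prop:full-condition2}), then using Sabloff--Serr{e} for the other composite.

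For~(ii), the comparison you want is $m_\Lambda^r\to S_\Lambda^-\circ m_\Lambda^l[1]$ (you have the adjoints reversed). More importantly, your proposal to ``localize the comparison to small neighborhoods $V_\alpha$'' via the cutoff lemmas will not work: the functor $\wrap_\Lambda^+$ is a colimit over global positive isotopies and does not restrict to open charts. The paper's actual argument is global. The nontrivial bridge is Proposition~\ref{prop:natural-trans}, which shows the algebraic comparison is an isomorphism if and only if the continuation map
\[
\Hom(T_\epsilon F,\,w_\Lambda G)\longrightarrow\Hom(T_\epsilon F,\,\wrap_\Lambda^+w_\Lambda G)
\]
is one; establishing this equivalence requires unwinding both unit and counit through Lemmas~\ref{lem:natural-trans1}--\ref{lem:natural-trans2}. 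With that reduction in hand, the swappable case again follows from Lemma~\ref{lem:nearby_cycle} applied to the doubled microsupport $T_{-\epsilon}(\Lambda)\cup T_\epsilon(\Lambda)$ (Proposition~\ref{prop:swap-condition4}), and the full-stop case is settled by checking on the microstalk corepresentatives $\mu_i$ and dualizing via Sabloff--Serr{e} (Proposition~\ref{prop:full-condition4}). No \v{C}ech-type localization enters this step.
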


    Theorem \ref{thm:main} is going to be a formal consequence of Theorem \ref{thm:main-fun}, as we will discuss in Section \ref{sec:spherical}. In particular, this will imply that the left adjoint of microlocalization is also a spherical functor.
    
\begin{corollary}\label{cor:main-fun}
    Let $\Lambda \subseteq S^{*}M$ be a compact subanalytic Legendrian full stop or swappable stop. Then the left adjoint of the microlocalization functor
    $$m_\Lambda^l: \msh_\Lambda(\Lambda) \rightarrow \Sh_\Lambda(M)$$
    is a spherical functor.
\end{corollary}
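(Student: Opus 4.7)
The plan is to deduce this corollary as a purely formal consequence of Theorem \ref{thm:main-fun}, using the general principle that the adjoints of a spherical functor are themselves spherical.

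I will recall in Section \ref{sec:spherical} the standard equivalent characterizations of a spherical adjunction. One convenient symmetric formulation is that a functor $F: \mathcal{A} \to \mathcal{B}$ together with adjoints $F^l \dashv F \dashv F^r$ is spherical precisely when the cotwist $\mathrm{fib}(\mathrm{id}_\mathcal{A} \to F^r F)$ and the dual cotwist $\mathrm{cofib}(F^l F \to \mathrm{id}_\mathcal{A})[-1]$ are autoequivalences of $\mathcal{A}$. Whenever these conditions hold, there are canonical identifications of the form $F^r \simeq S_F[1] \circ F^l$ (up to shift/sign conventions), which in particular imply that $F^l$ admits its own left adjoint, given by $F$ composed with the inverse of the cotwist of $F$. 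Substituting these identifications into the spherical conditions for $F^l$ shows that the twists and cotwists of $F^l$ are again autoequivalences, obtained by shifting and inverting those of $F$. This stability of sphericality under passage to left or right adjoints is carried out in \cite{Spherical} in the dg setting and in \cite{SphericalInfty} in the stable $\infty$-categorical setting.

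Applying this formal fact to the situation of Theorem \ref{thm:main-fun}: the microlocalization $m_\Lambda: \Sh_\Lambda(M) \to \msh_\Lambda(\Lambda)$ is spherical, with cotwist $S_\Lambda^- = \wrap_\Lambda^- \circ T_{-\epsilon}$ and dual cotwist $S_\Lambda^+ = \wrap_\Lambda^+ \circ T_{\epsilon}$ (identified with the wrap-once functors via Corollary \ref{cor:exact-tri} and Proposition \ref{prop:wrap-local-to-global}), both of which are autoequivalences. Hence $m_\Lambda^l$ automatically admits both adjoints (its right adjoint is $m_\Lambda$ by definition, and its left adjoint is produced by composing $m_\Lambda$ with the inverse of the dual cotwist of $m_\Lambda$, shifted), and its associated twist and cotwist are the shifted inverses of $S_\Lambda^\mp$, hence still autoequivalences. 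This yields the sphericality of $m_\Lambda^l$.

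There is no substantive obstacle here: the entire geometric content is contained in Theorem \ref{thm:main-fun}, and the only task is bookkeeping inside the abstract spherical-functor formalism, together with verifying that the explicit wrap-once formulas transport as expected under passage to the adjoint.
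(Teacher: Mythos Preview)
Your proposal is correct and follows exactly the paper's approach: the corollary is deduced purely formally from Theorem \ref{thm:main-fun} via the general fact (recorded in Section \ref{sec:spherical}, citing \cite{Spherical,SphericalInfty}) that in a spherical adjunction both functors are spherical. One small imprecision: the twist of $m_\Lambda^l$ on $\Sh_\Lambda(M)$ is literally $S_\Lambda^+$ (not a shifted inverse), while the cotwist of $m_\Lambda^l$ lives on $\msh_\Lambda(\Lambda)$ and is the dual twist of $m_\Lambda$, not an $S_\Lambda^\mp$; this does not affect the argument since all that matters is that they are autoequivalences.
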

    
\begin{comment}
    From Section \ref{sec:double-ad}, we have known the left and right adjoints of the microlocalization in terms of doubling. In Section \ref{sec:natural-trans}, we will construct the natural transformation between the left and right adjoints. Then in Section \ref{sec:sphere-crit}, we introduce the notion of full stops and swappable stops following \cite{SylvanOrlov} and explain how these conditions lead to Theorem \ref{thm:main-fun}. Then in Section \ref{sec:prop-cpt}, we restrict the corresponding functors to certain subcategories and prove Corollary \ref{cor:main}. 
\end{comment}
    
    In Section \ref{sec:serre-proper}, we will combine the result with the Sabloff-Serr{e} duality in Proposition \ref{prop:sab-serre}, and show that when $\Lambda \subseteq S^*M$ is a full stop or swappable stop, the spherical cotwist, after tensoring with the dualizing sheaf, $S_\Lambda^- \otimes \omega_M$, is the Serr\'{e} functor on the subcategory $\Sh^b_\Lambda(M)_0$ of compactly supported sheaves with perfect stalks. This proves a folklore conjecture on Fukaya-Seidel categories when the ambient manifold is a cotangent bundle.

\subsection{Spherical adjunction and spherical functors}\label{sec:spherical}
    First of all, we recall the definition of spherical adjunctions in Dykerhoff-Kapranov-Schechtman-Soibelman \cite{SphericalInfty} in the setting of stable $\infty$-categories.
    
\begin{definition}[\cite{SphericalInfty}*{Definition 1.4.8}]\label{def:sphericalad}
    Let ${\sA, \sB}$ be stable ($\infty$-)categories and
    $$F: {\sA} \leftrightharpoons {\sB} : F^l$$
    be an adjunction of $\infty$-functors. Let $T'$ and $S'$ be the functors that fit into the fiber sequences
    $$T' \rightarrow \mathrm{id}_{\sB} \rightarrow F \circ F^l, \,\, F^l \circ F \rightarrow \mathrm{id}_{\sA} \rightarrow S'.$$
    Then $F: {\sA} \leftrightharpoons {\sB}: F^l$ is called a spherical adjunction if $T'$ and $S'$ are autoequivalences.
\end{definition}

    Given a spherical adjunction $F^l \dashv F$, one can in fact show that both $F$ and $F^l$ are spherical functors in the sense of Anno-Logvinenko \cite{Spherical}. We recall the definition of spherical functors in the setting of dg categories \cite{Spherical} and in the general case \cite{SphericalInfty,SphericalChrist}.

\begin{definition}\label{def:spherical}
    Let ${\sA, \sB}$ be stable ($\infty$-)categories and $F: {\sA} \rightarrow {\sB}$ an ($\infty$-)functor, with left and right adjoints $F^l$ and $F^r$. Let the spherical twist $T$, dual twist $T'$, cotwist $S$ and dual cotwist $S'$ be the functors that fit into the fiber sequences
    \[\begin{split}
    F \circ F^! \rightarrow \mathrm{id}_{\sB} \rightarrow T,\, & \, T' \rightarrow \mathrm{id}_{\sB} \rightarrow F \circ F^*, \\
    S \rightarrow \mathrm{id}_{\sA} \rightarrow F^! \circ F,\, & \, F^* \circ F \rightarrow \mathrm{id}_{\sA} \rightarrow S'.
    \end{split}\]
    Then $F$ is a spherical functor if the following conditions hold:
    \begin{enumerate}
      \item the spherical twist $T$ is an autoequivalence;
      \item the spherical cotwist $S$ is an autoequivalence;
      \item the composition $F^l \circ T[-1] \rightarrow F^l \circ F \circ F^r \rightarrow F^r$ is an isomorphism;
      \item the composition $F^r \rightarrow F^r \circ F \circ F^l \rightarrow S \circ F^l[1]$ is an isomorphism.
    \end{enumerate}
\end{definition}

\begin{proposition}[\cite{SphericalInfty}*{Corollary 2.5.13}]
    Let $F: {\sA} \leftrightharpoons {\sB} : F^l$ be a spherical adjunction. Then both $F$ and $F^l$ are spherical functors.
\end{proposition}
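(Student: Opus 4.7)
The plan is to start from a spherical adjunction $F^l \dashv F$, so by hypothesis $T' = \mathrm{fib}(\id_\sB \to FF^l)$ and $S' = \mathrm{cof}(F^l F \to \id_\sA)$ are autoequivalences, and deduce all four conditions of Definition \ref{def:spherical} for both $F$ and $F^l$. The first and most important step is to construct a right adjoint $F^r$ of $F$. In the presentable stable setting, this reduces to checking that $F$ preserves colimits; one proves this by combining the fiber sequence $F^l F \to \id_\sA \to S'$, the fact that $S'$ is an autoequivalence (hence colimit-preserving), and a conservativity argument showing that $F^l$ generates $\sA$ under colimits, which itself is a consequence of the sphericality of $S'$. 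Alternatively, once one expects a $4$-periodic chain of adjoints $\dots, F^l, F, F^r, \dots$, there is an explicit guess for $F^r$ in terms of $F^l$ and the twists, which one then verifies by manipulating the two fiber sequences.

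Once $F^r$ exists, I would identify the spherical twist $T = \mathrm{cof}(FF^r \to \id_\sB)$ and cotwist $S = \mathrm{fib}(\id_\sA \to F^r F)$ of the pair $(F, F^r)$ in terms of $T'$ and $S'$. Using the universal property of $F^r$ and the two fiber sequences defining $T'$ and $S'$, a diagram chase produces canonical comparison maps identifying $T$ with $T'^{-1}$ and $S$ with $S'^{-1}$ up to appropriate shifts. In particular, conditions (1) and (2) of Definition \ref{def:spherical} hold. Conditions (3) and (4), which are compatibility statements between the adjoints and the (co)twists, are then verified by unwinding the triangle identities for both adjunctions $F^l \dashv F$ and $F \dashv F^r$ and substituting the identifications of $T$ and $S$; each composition in (3) and (4) reduces to one of these triangle identities after applying the fiber sequence defining $T$ or $S$ to the appropriate term.

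Finally, to conclude that $F^l$ is also a spherical functor, I would apply the above argument to the spherical adjunction $F \dashv F^r$, which is spherical precisely because its twist and cotwist are $T$ and $S$, shown to be autoequivalences in the previous step. This produces a left adjoint $(F^l)^l$ of $F^l$ and exhibits $F^l$ as spherical, reflecting the $4$-periodic symmetry inherent in the notion of spherical functor. The main obstacle throughout is the coherent tracking of shifts and natural isomorphisms through the $\infty$-categorical manipulations, particularly in identifying $T$ with $T'^{-1}$ and $S$ with $S'^{-1}$ and verifying the compatibility conditions; this is the technical heart of DKSS's treatment in \cite{SphericalInfty} and is best handled using their formalism of spherical pairs and semi-orthogonal decompositions rather than ad hoc calculations.
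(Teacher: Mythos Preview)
The paper does not give its own proof of this proposition; it is simply cited from \cite{SphericalInfty}, with Remark~\ref{rem:sphere-ad} indicating the formula $F^r = F^l \circ T[-1]$ for the missing right adjoint. So there is no in-paper argument to compare against, only the cited source.

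Your overall strategy is the right one and matches the hint in Remark~\ref{rem:sphere-ad}: produce $F^r$ from $F^l$ and the twist, then identify the new twist and cotwist with the inverses of $T'$ and $S'$. Two comments on execution. First, your presentable-setting argument for colimit preservation of $F$ has a gap: the fiber sequence $F^lF \to \id_\sA \to S'$ only shows $F^lF$ preserves colimits, and the claimed conservativity or generation statement for $F^l$ does not follow from invertibility of $S'$ alone (every object sits in a triangle with $F^lFX$ and $S'X$, but $S'X$ is not obviously in the image of $F^l$). In any case the statement in \cite{SphericalInfty} is for arbitrary stable $\infty$-categories, so presentability is not available. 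Second, directly verifying that $F^l \circ T[-1]$ is right adjoint to $F$ by manipulating the two fiber sequences is genuinely delicate in the $\infty$-categorical setting; this is exactly where DKSS pass through the equivalence with $4$-periodic semi-orthogonal decompositions of the Grothendieck construction (Theorem~\ref{thm:4periodic} here), which you correctly flag at the end as the proper framework. So your sketch is morally correct but the substantive content lies in that last sentence rather than in the earlier ad hoc steps.
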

\begin{remark}\label{rem:sphere-ad}
    Given a spherical adjunction $F^l \dashv F$, let $T$ be the inverse of $T'$ and $S$ the inverse of $S'$. One can construct the right adjoint of $F$ by setting $F^r = F^l \circ T[-1]$. In fact, any spherical functor has iterated left and right adjoints of any order.
\end{remark}

    Therefore, to prove a spherical adjunction $F \vdash F^l$, it suffices to show that either of the functors is a spherical functor as in Definition \ref{def:spherical}. Moreover, the following theorem shows that it suffices to prove any two out of the four conditions.

\begin{theorem}[Anno-Logvinenko \cite{Spherical}, Christ \cite{SphericalChrist}]
    Let ${\sA, \sB}$ be stable categories, and $F: {\sA} \rightarrow {\sB}$ a functor satisfying any two of the four conditions in Definition \ref{def:spherical}. Then $F$ is a spherical functor. Moreover, $T, T'$ and $S, S'$ are inverse autoequivalences.
\end{theorem}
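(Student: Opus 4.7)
The plan is to prove the ``two-out-of-four'' criterion abstractly in any stable $\infty$-category, in three stages: first reducing via adjunction to three conditions, then constructing natural transformations witnessing that $T, T'$ (and $S, S'$) are mutually inverse, and finally establishing that any two of the remaining three conditions force the third.

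First, I would observe that conditions (3) and (4) are equivalent under the adjunction calculus of $F^l \dashv F \dashv F^r$. The natural transformation $\alpha \colon F^l T[-1] \to F^r$ of (3) and the transformation $\beta \colon F^r \to S F^l[1]$ of (4) both arise from the same Beck--Chevalley-type datum encoding the interplay between the fiber sequences defining $T$ and $S$ and the units/counits of the two adjunctions. Passing $\alpha$ through the adjunction rewrites it as $\beta$, using $T[-1] \to FF^r$ on the source side and $F^r F \to S[1]$ on the target side, so $\alpha$ is an equivalence if and only if $\beta$ is. This reduces the problem to the three conditions (1), (2), (3).

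Second, under condition (3) I would construct canonical natural transformations making $T, T'$ (resp.\ $S, S'$) mutually inverse once (1) (resp.\ (2)) is assumed. Starting from the two fiber sequences $T'[-1] \to F F^l \to \id_{\sB}$ and $F F^r \to \id_{\sB} \to T$, the composition of the counit $FF^r \to \id_{\sB}$ with the unit $\id_{\sB} \to FF^l$, combined with the octahedral axiom, produces a canonical natural transformation relating $T'$ and $T$ of the appropriate shift; once (1) is invoked, this descends to a map $T' \to T^{-1}$, and the symmetric construction provides a reverse. Condition (3) ensures these two maps are mutually inverse. The parallel construction on $\sA$ using the defining sequences for $S, S'$ yields $S \simeq (S')^{-1}$ under (2).

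Third, I would establish the three implications among (1), (2), (3). For (1) $+$ (3) $\Rightarrow$ (2), I would apply $F^l$ on the right to the fiber sequence $S \to \id_{\sA} \to F^r F$, use (3) to rewrite $F^r F$ in terms of $F^l T[-1] F$, and deduce from the invertibility of $T$ that $S$ is an autoequivalence via the standard fact that in a fiber sequence in a stable category, invertibility of two of the three terms (up to shift) controls the third; the symmetric case (2) $+$ (3) $\Rightarrow$ (1) is analogous. For (1) $+$ (2) $\Rightarrow$ (3), the crux, I would build an explicit candidate inverse $F^r \to F^l T[-1]$ to $\alpha$ by composing the unit of $F \dashv F^r$ with the inverses $T^{-1}$ and $S^{-1}$ provided by (1) and (2), and verify via a diagram chase that the composition with $\alpha$ is the identity. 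The main obstacle will be this last implication: producing the candidate inverse and verifying it coherently in a stable $\infty$-category requires careful bookkeeping of the interplay between the units and counits of the two adjunctions $F^l \dashv F$ and $F \dashv F^r$. In Anno--Logvinenko's dg framework this is carried out by explicit cone manipulations and the ``twist/cotwist braiding'' identities; in Christ's $\infty$-categorical framework, one leverages the theory of relative adjunctions and Beck--Chevalley conditions to package these coherences functorially. Once all three implications are in hand, the mutual inverseness of $T, T'$ and of $S, S'$ follows from step two applied under the full set of conditions.
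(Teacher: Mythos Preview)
The paper does not give its own proof of this theorem; it is quoted with attribution to Anno--Logvinenko and Christ and used as a black box. So there is no in-paper proof to compare your proposal against.

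That said, your proposal has a genuine gap in step~1. You claim that conditions (3) and (4) are equivalent because ``passing $\alpha$ through the adjunction rewrites it as $\beta$.'' But both $\alpha\colon F^l T[-1] \to F^r$ and $\beta\colon F^r \to S F^l[1]$ are natural transformations between functors $\sB \to \sA$; there is no adjunction between these three functors to pass through. The map $\alpha$ is built from the boundary map $T[-1] \to FF^r$ together with the \emph{counit} of $F^l \dashv F$, while $\beta$ is built from the \emph{unit} of $F^l \dashv F$ together with the boundary map $F^r F \to S[1]$. These are independent data, and invertibility of one does not formally imply invertibility of the other. Indeed, if your reduction were correct, the case (3)$+$(4) $\Rightarrow$ (1)$+$(2) would be vacuous, whereas in Anno--Logvinenko this is one of the nontrivial cases handled separately. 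The relation $F^l T \simeq S F^l$ (up to shift) that would make the composite $\beta\alpha$ invertible is itself a \emph{consequence} of the full spherical package, not an a priori identity.

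Your steps 2 and 3 are closer in spirit to the cited proofs, but the sketch of (1)$+$(2) $\Rightarrow$ (3) is not a proof: ``build a candidate inverse and verify via a diagram chase'' omits exactly the content. In Anno--Logvinenko the argument proceeds by explicit twisted-complex/cone manipulations case by case; in Christ it is repackaged via the equivalence with $4$-periodic semiorthogonal decompositions (cf.\ Theorem~\ref{thm:4periodic} in this paper). Neither route is a formal consequence of the adjunction calculus alone.
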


    From the discussion above, we know that in order to prove Theorem \ref{thm:main}, it suffices to prove Theorem \ref{thm:main-fun} stated at the beginning of the section.

\subsection{Natural transform between adjoints}\label{sec:natural-trans}
    Given the adjoint functors and the candidate cotwist in Section \ref{sec:doubling}, we will investigate the relation between the left and right adjoints via the algebraically defined natural transformation by the cotwist
    $$m_\Lambda^r \rightarrow m_\Lambda^r \circ m_\Lambda \circ m_\Lambda^l  \rightarrow {S}_\Lambda^- \circ m_\Lambda^l[1].$$
    The composition of the natural transformations should induce an equivalence in order for microlocalization to be a spherical functor, as stated in Definition \ref{def:spherical} Condition~(4).
    % \sayCK{Should the middle term have a $[-1]$? If the first map is given by $m_\Lambda^r \rightarrow m_\Lambda^r m_\Lambda m_\Lambda^l$, then it should be equivalent to $w_\Lambda \rightarrow w_\Lambda m_\Lambda w_\Lambda [-1]$?}
    %\sayWL{In the middle term, the $[-1]$ is absorbed in $m^l_\Lambda$ because $m_\Lambda^l = \iota_\Lambda^* \circ w_\Lambda[-1]$.}
    
    For $F \in \Sh_\Lambda(M)$ and $G \in \msh_\Lambda(\Lambda)$, we thus need to prove that the algebraically defined natural morphism
    $$\Hom(F, m_\Lambda^r(G)) \rightarrow  \Hom(F, m_\Lambda^r  m_\Lambda  m_\Lambda^l(G)) \rightarrow  \Hom(F, {S}_\Lambda^- m_\Lambda^l(G)[1])$$
    is an equivalence. On the other hand, Proposition \ref{w=ad} and \ref{prop:hom_w_pm} imply that
    \begin{align*}
        \Hom(F, m_\Lambda^r(G)) = \Hom(T_\epsilon(F), w_\Lambda(G)), \;\;
        \Hom(F, S_\Lambda^- m_\Lambda^l(G)[1]) = \Hom(T_\epsilon(F), \wrap_\Lambda^+ w_\Lambda(G)).
    \end{align*}
    Positive isotopies will then induce a geometrically defined natural morphism
    $$\Hom(T_\epsilon(F), w_\Lambda(G)) \rightarrow \Hom(T_\epsilon(F), \wrap_\Lambda^+  w_\Lambda(G)).$$
    Our main result in this section claims that the algebraically defined natural morphism induces an isomorphism if and only if the geometrically defined natural morphism induces an isomorphism.
    
\begin{proposition}\label{prop:natural-trans}
    Let $\Lambda \subseteq S^{*}M$ be a compact subanalytic Legendrian. Then for  ${F} \in \Sh_\Lambda(M)$ and ${G} \in \msh_\Lambda(\Lambda)$, the natural morphism induced by adjunctions
    $$\Hom(F, m_\Lambda^r(G)) \rightarrow  \Hom(F, m_\Lambda^r  m_\Lambda m_\Lambda^l(G)) \rightarrow  \Hom(F, {S}_\Lambda^-  m_\Lambda^l(G)[1])$$
    is an isomorphism if and only if the natural morphism induced by positive isotopies
    $$\Hom(T_\epsilon(F), w_\Lambda(G)) \rightarrow \Hom(T_\epsilon(F), \wrap_\Lambda^+  w_\Lambda(G))$$
    is an isomorphism.
\end{proposition}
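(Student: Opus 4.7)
The strategy is to unpack both natural transformations using the doubling decomposition, reduce the algebraic map to a single connecting map, identify the algebraic fiber sequence with a geometric one from Corollary \ref{cor:exact-tri}, and finally match the boundary with the continuation-induced map.

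\textbf{Common form via doubling.} By Theorems \ref{thm:doubling_rightad} and \ref{thm:doubling_leftad}, $m_\Lambda^r G = \wrap_\Lambda^- w_\Lambda(G)$ and $m_\Lambda^l G = \wrap_\Lambda^+ w_\Lambda(G)[-1]$, hence $S_\Lambda^- m_\Lambda^l(G)[1] = \wrap_\Lambda^- T_{-\epsilon} \wrap_\Lambda^+ w_\Lambda(G)$ by Definition \ref{def:wrap_once_functors}. Combining the adjunction $\iota \dashv \wrap_\Lambda^-$ with the perturbation iso $\Hom(F, w_\Lambda G) \simeq \Hom(T_\epsilon F, w_\Lambda G)$ of Remark \ref{rem:doubling_rightad} and the GKS equivalence $\Hom(\iota F, T_{-\epsilon} K) \simeq \Hom(T_\epsilon F, K)$, the source $\Hom(F, m_\Lambda^r G)$ and target $\Hom(F, S_\Lambda^- m_\Lambda^l G[1])$ of the algebraic map are canonically identified with $\Hom(T_\epsilon F, w_\Lambda G)$ and $\Hom(T_\epsilon F, \wrap_\Lambda^+ w_\Lambda G)$, matching those of the geometric map.

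\textbf{Reduction and fiber-sequence identification.} Since $m_\Lambda$ is a quotient functor with fully faithful left adjoint $m_\Lambda^l$ (as follows from the local fiber sequence $K(U,\Omega) \hookrightarrow \Sh_\Lambda(U, \Omega) \rightarrow \msh_\Lambda(\Omega)$ discussed in Section \ref{sec:ad-micro}), the unit $G \rightarrow m_\Lambda m_\Lambda^l G$ is an isomorphism, and so the first arrow $m_\Lambda^r \rightarrow m_\Lambda^r m_\Lambda m_\Lambda^l$ of the algebraic composition is invertible. The composed natural transformation thus reduces to the connecting map of the fiber sequence $S_\Lambda^- \rightarrow \mathrm{id} \rightarrow m_\Lambda^r m_\Lambda$ evaluated on $H = m_\Lambda^l G$. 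Applying $\wrap_\Lambda^-$ to the geometric fiber sequence $T_{-\epsilon} \rightarrow T_\epsilon \rightarrow w_\Lambda \circ m_\Lambda$ of Corollary \ref{cor:exact-tri} and using the identification $\wrap_\Lambda^- T_\epsilon H \simeq H$ on $\Sh_\Lambda(M)$ (a Yoneda consequence of Proposition \ref{prop:hom_w_pm} via the adjunction $\iota \dashv \wrap_\Lambda^-$), one recovers exactly the algebraic fiber sequence, and the two connecting maps are identified. Setting $H = m_\Lambda^l G$ and using $m_\Lambda m_\Lambda^l = \mathrm{id}$ (so $w_\Lambda m_\Lambda m_\Lambda^l G = w_\Lambda G$), the algebraic connecting map becomes $\wrap_\Lambda^-$ applied to the geometric boundary $w_\Lambda G \rightarrow T_{-\epsilon} \wrap_\Lambda^+ w_\Lambda G$ of the fiber sequence $T_{-\epsilon} \wrap_\Lambda^+ w_\Lambda G[-1] \rightarrow T_\epsilon \wrap_\Lambda^+ w_\Lambda G[-1] \rightarrow w_\Lambda G$.

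\textbf{Comparison with the continuation-induced map.} The main obstacle is the final identification: under $\Hom(T_\epsilon F, -)$ and the GKS iso $\Hom(T_\epsilon F, T_{-\epsilon} K) \simeq \Hom(T_\epsilon F, K)$, the algebraic map corresponds to the morphism induced by the composite $w_\Lambda G \xrightarrow{\partial} T_{-\epsilon} \wrap_\Lambda^+ w_\Lambda G \xrightarrow{c} \wrap_\Lambda^+ w_\Lambda G$, where $c$ is the continuation; one must show this composite equals the unit $\eta_G : w_\Lambda G \rightarrow \wrap_\Lambda^+ w_\Lambda G$ of the adjunction $\wrap_\Lambda^+ \dashv \iota$. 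The plan is to verify this locally on a good cover $\mathscr{V}$ in the sense of Definition \ref{def:goodcover}, using the explicit formula $w_\Lambda(G)_{V_\alpha} = \mathrm{Cofib}\bigl(T_{-\epsilon} j_{\alpha *} G_\alpha \rightarrow T_\epsilon j_{\alpha *} G_\alpha\bigr)|_{V_\alpha}$ from Section \ref{sec:doubling-local}: both $\partial$ and $c$ are realized by the GKS sheaf kernel associated to the Reeb flow, so $c \circ \partial$ assembles locally to a natural morphism $w_\Lambda G \rightarrow \iota \wrap_\Lambda^+ w_\Lambda G$, which by the universal property of $\eta_G$ must factor as $\iota(\bar\sigma) \circ \eta_G$ for a unique $\bar\sigma$, and a direct local computation (matching microstalks on $\Lambda$) shows $\bar\sigma = \mathrm{id}$.
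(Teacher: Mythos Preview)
Your approach is different from the paper's, and it carries two gaps that the paper's argument avoids.

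First, the reduction step. You claim the unit $G \to m_\Lambda m_\Lambda^l G$ is an isomorphism, citing the local fiber sequence of Section~\ref{sec:ad-micro}. That sequence only says the \emph{presheaf} microlocalization is a quotient on small charts; to conclude that the global restriction $\Sh_\Lambda(M)\to\msh_\Lambda(\Lambda)$ has fully faithful left adjoint you need a recollement argument for the sheaf of categories $\msh_\Lambda$ along the open inclusion $\dot T^*M\subset T^*M$, which the paper never states. More to the point, the paper does not use this fact: Lemma~\ref{lem:natural-trans2} shows instead that after the identifications of Theorems~\ref{thm:doubling_rightad}--\ref{thm:doubling_leftad} the first algebraic arrow becomes the composite $\Hom(T_\epsilon F,w_\Lambda G)\xrightarrow{\sim}\Hom(w_\Lambda m_\Lambda F,w_\Lambda G)\to\Hom(w_\Lambda m_\Lambda F,\wrap_\Lambda^+ w_\Lambda G)$, where the \emph{first} map is an isomorphism coming from the full faithfulness of $w_\Lambda$ (Theorem~\ref{thm:doubling}), not from any claim about $m_\Lambda m_\Lambda^l$.

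Second, the identification in your last paragraph is the crux and is not carried out. You need $c\circ\partial=\eta_G$, and you reduce this to showing a certain endomorphism $\bar\sigma$ of $\wrap_\Lambda^+ w_\Lambda G$ is the identity ``by matching microstalks on $\Lambda$''. But $\partial$ is the boundary of a cofiber sequence, not itself a continuation map, and $\wrap_\Lambda^+ w_\Lambda G$ is only characterized by its mapping property into $\Sh_\Lambda(M)$, so a microstalk check does not obviously pin down $\bar\sigma$. The paper sidesteps this entirely: it applies the fiber sequence $T_{-\epsilon}\to T_\epsilon\to w_\Lambda m_\Lambda$ of Corollary~\ref{cor:exact-tri} to the \emph{first} variable $F$, producing a $2\times 3$ commutative diagram whose top row consists of isomorphisms (again by full faithfulness of $w_\Lambda$ and the adjunction), whose bottom row is the algebraic composite (Lemmas~\ref{lem:natural-trans1}--\ref{lem:natural-trans2}), and whose right column is the geometric continuation map. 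The desired ``iff'' then follows by diagram chase, with no need to explicitly identify any connecting map with the unit of $\wrap_\Lambda^+$.
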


    We need to unpack the algebraic adjunctions between microlocalization and its left and right adjoints using results on the doubling functor.
    
    Firstly, we consider the natural transformation to the cotwist $m_\Lambda^r \circ m_\Lambda \rightarrow S_\Lambda^-[1]$. The following lemma follows directly from Corollary \ref{cor:exact-tri} that there is a fiber sequence $T_{-\epsilon} \rightarrow T_\epsilon \rightarrow w_\Lambda \circ m_\Lambda$.
    
\begin{lemma}\label{lem:natural-trans1}
    Let $\Lambda \subseteq S^{*}M$ be a compact subanalytic Legendrian. Then for ${F} \in \Sh_\Lambda(M)$ and ${G} \in \msh_\Lambda(\Lambda)$, there is a commutative diagram induced by natural transformations
    \[\xymatrix@C=5mm{
    \Hom(F, m_\Lambda^r  m_\Lambda  m_\Lambda^l(G)) \ar[r] \ar[d]^{\rotatebox{90}{$\sim$}} & \Hom(F, {S}_\Lambda^- m_\Lambda^l(G)[1]) \ar[d]^{\rotatebox{90}{$\sim$}} \\
    \Hom(w_\Lambda m_\Lambda({F}), \mathfrak{W}_\Lambda^+  w_\Lambda({G})) \ar[r] & \Hom(T_{\epsilon}({F}), \mathfrak{W}_\Lambda^+  w_\Lambda({G})).
    }\]
\end{lemma}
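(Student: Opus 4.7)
The plan is to unpack both vertical equivalences using the decompositions of the adjoints from Theorem~\ref{cor:doubling} together with Proposition~\ref{prop:hom_w_pm}, and then identify the two horizontal maps as the connecting morphisms of one and the same fiber sequence obtained from Corollary~\ref{cor:exact-tri}.

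First I will handle the right vertical. Writing $S_\Lambda^- = \mathfrak{W}_\Lambda^- \circ T_{-\epsilon}$ and $m_\Lambda^l(G)[1] = \mathfrak{W}_\Lambda^+ w_\Lambda(G)$ (both from Theorem~\ref{cor:doubling}), the adjunction $\iota_*\dashv \mathfrak{W}_\Lambda^-$ of Theorem~\ref{w=ad} absorbs $\mathfrak{W}_\Lambda^-$ against $F \in \Sh_\Lambda(M)$, and Proposition~\ref{prop:hom_w_pm} (more precisely the equivalence $T_\epsilon\colon \Sh(M) \xrightarrow{\sim} \Sh(M)$) converts $T_{-\epsilon}$ on the target into $T_\epsilon$ on the source. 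This gives the chain
\[
\Hom(F,S_\Lambda^- m_\Lambda^l(G)[1]) \;=\; \Hom(F,T_{-\epsilon}m_\Lambda^l(G)[1]) \;=\; \Hom(T_\epsilon(F), m_\Lambda^l(G)[1]) \;=\; \Hom(T_\epsilon(F), \mathfrak{W}_\Lambda^+ w_\Lambda(G)).
\]

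Next I will handle the left vertical by computing both sides as the same cofiber. Applying Corollary~\ref{cor:exact-tri} to $m_\Lambda^l(G) \in \Sh_\Lambda(M)$ produces the fiber sequence $T_{-\epsilon}m_\Lambda^l(G) \to T_\epsilon m_\Lambda^l(G) \to w_\Lambda m_\Lambda m_\Lambda^l(G)$, and applying $\Hom(F,-)$ and using $m_\Lambda^r = \mathfrak{W}_\Lambda^- w_\Lambda$ together with the adjunction absorption as above yields
\[
\Hom(F, m_\Lambda^r m_\Lambda m_\Lambda^l(G)) \;=\; \Hom(F, w_\Lambda m_\Lambda m_\Lambda^l(G)) \;=\; \mathrm{cofib}\!\left(\Hom(T_\epsilon F, m_\Lambda^l(G)) \to \Hom(F, m_\Lambda^l(G))\right),
\]
where I also use $\Hom(F,T_\epsilon H)=\Hom(F,H)$ for $H \in \Sh_\Lambda(M)$ from Proposition~\ref{prop:hom_w_pm}. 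On the other hand, applying Corollary~\ref{cor:exact-tri} to $F$ and then $\Hom(-,\mathfrak{W}_\Lambda^+ w_\Lambda(G)) = \Hom(-,m_\Lambda^l(G)[1])$ expresses $\Hom(w_\Lambda m_\Lambda(F), \mathfrak{W}_\Lambda^+ w_\Lambda(G))$ as the same cofiber, once one rewrites $\Hom(T_{-\epsilon} F, m_\Lambda^l(G)) = \Hom(F, T_\epsilon m_\Lambda^l(G)) = \Hom(F, m_\Lambda^l(G))$.

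Finally I will verify commutativity by showing that both horizontal arrows are, under the identifications above, the canonical map from a cofiber to the shift of its first term. For the bottom row this is tautological since the map $T_\epsilon(F) \to w_\Lambda m_\Lambda(F)$ is precisely the second arrow in the fiber sequence of Corollary~\ref{cor:exact-tri}. For the top row it suffices to observe that the natural transformation $m_\Lambda^r m_\Lambda \to S_\Lambda^-[1]$ is, by definition, the connecting map in the fiber sequence $S_\Lambda^- \to \mathrm{id} \to m_\Lambda^r m_\Lambda$ obtained by applying $\mathfrak{W}_\Lambda^-$ to the geometric fiber sequence $T_{-\epsilon} \to T_\epsilon \to w_\Lambda m_\Lambda$ (and noting $\mathfrak{W}_\Lambda^- T_\epsilon \simeq \mathrm{id}$ on $\Sh_\Lambda(M)$, again by Proposition~\ref{prop:hom_w_pm}). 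The main subtle point will be precisely this last identification of the algebraically defined counit-based fiber sequence with the geometric one coming from Corollary~\ref{cor:exact-tri}; once this is in place, naturality of $\Hom(F,-)$ applied to the common underlying triangle delivers the commutative square.
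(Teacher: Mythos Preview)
Your approach is correct and uses the same ingredients as the paper (Theorem~\ref{w=ad}, Theorem~\ref{cor:doubling}, and Corollary~\ref{cor:exact-tri}), but you take a more roundabout route than necessary. The paper does not pass through the explicit cofiber description of the left vertical at all: after absorbing $\mathfrak{W}_\Lambda^-$ via Theorem~\ref{w=ad} and substituting $m_\Lambda^l(G)[1]=\mathfrak{W}_\Lambda^+ w_\Lambda(G)$, the intermediate square has top row $\Hom(F, w_\Lambda m_\Lambda(H)[-1]) \to \Hom(F, T_{-\epsilon}(H))$ and bottom row $\Hom(w_\Lambda m_\Lambda(F), H) \to \Hom(T_\epsilon(F), H)$ with $H=\mathfrak{W}_\Lambda^+ w_\Lambda(G)$. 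Both horizontals are literally the same map from the fiber sequence of Corollary~\ref{cor:exact-tri}, applied once in the covariant slot and once in the contravariant slot; the verticals are just the adjunction $\Hom(T_\epsilon(-),-)\simeq\Hom(-,T_{-\epsilon}(-))$ (and its extension to the cofiber $w_\Lambda m_\Lambda$), so commutativity is immediate by naturality. Your cofiber computation recovers exactly this, but with extra bookkeeping.

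The ``subtle point'' you flag---that the algebraically defined transformation $m_\Lambda^r m_\Lambda \to S_\Lambda^-[1]$ coincides with the one obtained by applying $\mathfrak{W}_\Lambda^-$ to $T_{-\epsilon}\to T_\epsilon\to w_\Lambda m_\Lambda$---is handled in the paper by the discussion following Corollary~\ref{cor:sato-sab}: $S_\Lambda^-$ is \emph{identified} with the cotwist via the Sato--Sabloff sequence, so this identification is already in place and need not be re-argued here.
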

\begin{proof}
    Consider ${F} \in \Sh_\Lambda(M)$ and ${G} \in \msh_\Lambda(\Lambda)$. Theorem \ref{w=ad} implies that it suffices for us to show the following diagram
    \[\xymatrix@C=2.5mm{
    \Hom({F}, w_\Lambda  m_\Lambda  \circ \mathfrak{W}_\Lambda^+  w_\Lambda({G})[-1]) \ar[r] \ar[d]^{\rotatebox{90}{$\sim$}} & \Hom({F}, T_{-\epsilon}(  \mathfrak{W}_\Lambda^+  w_\Lambda({G})))  \ar[d]^{\rotatebox{90}{$\sim$}} \\
    \Hom(w_\Lambda  m_\Lambda({F}), \mathfrak{W}_\Lambda^+  w_\Lambda({G})) \ar[r] & \Hom(T_\epsilon({F}), \mathfrak{W}_\Lambda^+  w_\Lambda({G})).
    }\]
    Since the two horizontal morphisms are induced by the transformation $w_\Lambda \circ m_\Lambda[-1] \rightarrow T_{-\epsilon}$ and respectively $T_\epsilon \rightarrow w_\Lambda \circ m_\Lambda$ in Corollary \ref{cor:exact-tri}, we can conclude that the diagram commutes.
\end{proof}
    
    Secondly, we need to consider the unit $\mathrm{id} \rightarrow m_\Lambda \circ m_\Lambda^l$, which is slightly more difficult. The following lemma relies on Corollary \ref{cor:exact-tri}, and the fact that the adjunction in Theorem \ref{thm:doubling_ad} factors through the doubling functor by computation in Theorem \ref{thm:doubling}:
    $$\Gamma(\Lambda, \mhom(m_\Lambda(F), G)) \xrightarrow{\sim} \Hom(w_\Lambda m_\Lambda(F), w_\Lambda(G)) \xrightarrow{\sim} \Hom(T_\epsilon(F), w_\Lambda(G)).$$

\begin{lemma}\label{lem:natural-trans2}
    Let $\Lambda \subseteq S^{*}M$ be a compact subanalytic Legendrian. Then for  ${F} \in \Sh_\Lambda(M)$ and ${G} \in \msh_\Lambda(\Lambda)$, there is a commutative diagram induced by natural transformations
    \[\xymatrix@C=5mm{
    \Hom(F, m_\Lambda^r(G)) \ar[r] \ar[d]^{\rotatebox{90}{$\sim$}} & \Hom(F, m_\Lambda^r  m_\Lambda  m_\Lambda^l(G)) \ar[d]^{\rotatebox{90}{$\sim$}} \\
    \Hom(T_\epsilon({F}), w_\Lambda({G})) \ar[r] & \Hom(w_\Lambda  m_\Lambda({F}), \mathfrak{W}_\Lambda^+  w_\Lambda({G})).
    }\]
\end{lemma}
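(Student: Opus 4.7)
The plan is to establish both vertical isomorphisms via the doubling description of $m_\Lambda^l$ and $m_\Lambda^r$ together with the chain of adjunctions $m_\Lambda^l \dashv m_\Lambda \dashv m_\Lambda^r$, and then verify commutativity by a diagram chase using the triangle identities.

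For the left vertical isomorphism I would combine the adjunction $m_\Lambda \dashv m_\Lambda^r$ with the doubling full faithfulness in Theorem \ref{thm:doubling} and the perturbation trick of Remark \ref{rem:doubling_rightad} to obtain the chain
$$\Hom(F, m_\Lambda^r G) \simeq \Hom(m_\Lambda F, G) \simeq \Gamma(\Lambda, \mhom(m_\Lambda F, G)) \simeq \Hom(w_\Lambda m_\Lambda F, w_\Lambda G) \simeq \Hom(T_\epsilon F, w_\Lambda G),$$
in which a morphism $\phi \colon F \to m_\Lambda^r G$ is sent first to its adjoint $\phi' \colon m_\Lambda F \to G$ and then to $w_\Lambda \phi'$. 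For the right vertical isomorphism I would chain the adjunctions $m_\Lambda \dashv m_\Lambda^r$ and $m_\Lambda^l \dashv m_\Lambda$ to get $\Hom(F, m_\Lambda^r m_\Lambda m_\Lambda^l G) \simeq \Hom(m_\Lambda^l m_\Lambda F, m_\Lambda^l G)$, then expand $m_\Lambda^l = \wrap_\Lambda^+ \circ w_\Lambda[-1]$ via Theorem \ref{thm:doubling_leftad}, and finally use that $\wrap_\Lambda^+$ is the left adjoint of the inclusion $\iota \colon \Sh_\Lambda(M) \hookrightarrow \Sh(M)$ (so $\Hom(\wrap_\Lambda^+ X, Y) \simeq \Hom(X, Y)$ for any $Y \in \Sh_\Lambda(M)$, applied to $Y = \wrap_\Lambda^+ w_\Lambda G$) to land in $\Hom(w_\Lambda m_\Lambda F, \wrap_\Lambda^+ w_\Lambda G)$.

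To verify commutativity, I would unpack the top horizontal map: it is postcomposition with $m_\Lambda^r \eta_G$, where $\eta_G \colon G \to m_\Lambda m_\Lambda^l G$ is the unit of $m_\Lambda^l \dashv m_\Lambda$. A morphism $\phi$ with adjoint $\phi' \colon m_\Lambda F \to G$ is sent to the morphism whose $m_\Lambda \dashv m_\Lambda^r$ adjoint is $\eta_G \circ \phi'$; the triangle identity $\varepsilon_{m_\Lambda^l G} \circ m_\Lambda^l \eta_G = \mathrm{id}$ for $m_\Lambda^l \dashv m_\Lambda$ then identifies the further adjoint of $\eta_G \circ \phi'$ under $m_\Lambda^l \dashv m_\Lambda$ with simply $m_\Lambda^l \phi'$. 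Passing through $m_\Lambda^l = \wrap_\Lambda^+ w_\Lambda[-1]$ and invoking naturality of the unit $\mathrm{id} \to \iota\wrap_\Lambda^+$, the image in $\Hom(w_\Lambda m_\Lambda F, \wrap_\Lambda^+ w_\Lambda G)$ is the composite $w_\Lambda m_\Lambda F \xrightarrow{w_\Lambda \phi'} w_\Lambda G \to \wrap_\Lambda^+ w_\Lambda G$, i.e.\ postcomposition with the positive wrapping map. Under the identification $\Hom(T_\epsilon F, w_\Lambda G) \simeq \Hom(w_\Lambda m_\Lambda F, w_\Lambda G)$ coming from the proof of Theorem \ref{thm:doubling}, this is precisely the declared bottom horizontal map.

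The only substantive point --- and the mild obstacle --- is to confirm that each of the identifications used (the two adjunctions, the doubling full faithfulness of Theorem \ref{thm:doubling}, and the perturbation trick of Proposition \ref{prop:hom_w_pm}) is natural with respect to the morphism $\phi'$ being traced. Granted the explicit constructions from Sections \ref{sec:sato-sab}--\ref{sec:double-ad}, this reduces to routine bookkeeping with the triangle identities for $m_\Lambda^l \dashv m_\Lambda \dashv m_\Lambda^r$ and $\wrap_\Lambda^+ \dashv \iota$, and the square then commutes by inspection.
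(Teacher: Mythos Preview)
Your proposal is correct and follows essentially the same approach as the paper. Both arguments hinge on the same ingredients: the chain of adjunctions $m_\Lambda^l \dashv m_\Lambda \dashv m_\Lambda^r$, their realization via the doubling functor (Theorems \ref{thm:doubling_rightad} and \ref{thm:doubling_leftad}), the full faithfulness of $w_\Lambda$ (Theorem \ref{thm:doubling}), and the compatibility of these identifications. The only difference is presentational: you phrase the diagram chase via the triangle identity $\varepsilon_{m_\Lambda^l G}\circ m_\Lambda^l\eta_G=\id$ to identify the image of $\phi$ under the top path as $m_\Lambda^l\phi'$, whereas the paper phrases the same compatibility by observing that both the top horizontal map (via the unit of $m_\Lambda^l\dashv m_\Lambda$) and the bottom horizontal map factor through $\Gamma(\Lambda,\mhom(m_\Lambda F,G))$, using that the adjunction of Theorem \ref{thm:doubling_leftad} was constructed to factor through the doubling isomorphism of Theorem \ref{thm:doubling}.
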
    %\sayCK{T/F:The composition at the end is simply induced by $w_\Lambda(\mathscr{G}) \rightarrow \wrap_\Lambda^+(w_\Lambda(\mathscr{G}))$? I sort of assume this for the proof of swappable case.}\sayWL{That's true.}
\begin{proof}
    Consider ${F} \in \Sh_\Lambda(M)$ and ${G} \in \msh_\Lambda(\Lambda)$. By Theorem \ref{w=ad}, it suffices to show that there is a commutative diagram
    \[\xymatrix@C=2mm{
    \Hom({F}, w_\Lambda({G})) \ar[r]  \ar[d]^{\rotatebox{90}{$\sim$}} & \Hom({F}, w_\Lambda  m_\Lambda \circ \mathfrak{W}_\Lambda^+  w_\Lambda({G})[-1]) \ar[d]^{\rotatebox{90}{$\sim$}} \\
    \Hom(T_\epsilon({F}), w_\Lambda({G})) \ar[r] & \Hom(w_\Lambda  m_\Lambda({F}), \mathfrak{W}_\Lambda^+  w_\Lambda({G})).
    }\]
    where the morphism on the top is induced by adjunction, and the morphism on the bottom is the composition
    $$\Hom(T_\epsilon({F}), w_\Lambda({G})) \xrightarrow{\sim}   \Hom(w_\Lambda m_\Lambda({F}), w_\Lambda({G})) \rightarrow \Hom(w_\Lambda  m_\Lambda({F}), \mathfrak{W}_\Lambda^+  w_\Lambda({G})).$$
    Consider the unit of the adjunction between $m_\Lambda$ and $m_\Lambda^l = \wrap_\Lambda^+ \circ w_\Lambda[-1]$ in Theorem \ref{thm:doubling_ad}. Then we know that the morphism on the top factors as
    \[\xymatrix{
    \Gamma(\Lambda, \mhom(m_\Lambda(F), G)) \ar[r] \ar[d]^{\rotatebox{90}{$\sim$}} & \Gamma(\Lambda, \mhom(m_\Lambda(F), m_\Lambda \circ \mathfrak{W}_\Lambda^+  w_\Lambda({G})[-1])) \ar[d]^{\rotatebox{90}{$\sim$}} \\
    \Hom(F, w_\Lambda(G)) \ar[r] & \Hom(F, w_\Lambda m_\Lambda \circ \mathfrak{W}_\Lambda^+  w_\Lambda({G})[-1]).
    }\]
    Since the top horizontal morphism factors through $\Hom(w_\Lambda  m_\Lambda({F}), \mathfrak{W}_\Lambda^+  w_\Lambda({G}))$, it suffices to show that the following composition factors through $\Gamma(\Lambda, \mhom(m_\Lambda(F), G))$
    $$\Hom(T_\epsilon({F}), w_\Lambda({G})) \xrightarrow{\sim}   \Hom(w_\Lambda m_\Lambda({F}), w_\Lambda({G})) \rightarrow \Hom(w_\Lambda  m_\Lambda({F}), \mathfrak{W}_\Lambda^+  w_\Lambda({G})).$$
    Since the adjunction in Theorem \ref{thm:doubling_ad} factors through the isomorphism of doubling functor in Theorem \ref{thm:doubling}
    $$\Hom(T_\epsilon({F}), w_\Lambda({G})) \xrightarrow{\sim}   \Hom(w_\Lambda m_\Lambda({F}), w_\Lambda({G})) \xrightarrow{\sim} \Gamma(\Lambda, \mhom(m_\Lambda(F), G)),$$
    we can conclude that the diagram above indeed commutes.
\end{proof}

\begin{proof}[Proof of Proposition \ref{prop:natural-trans}]
    We consider the following commutative diagram, where the horizontal morphisms are induced from the identity $w_\Lambda \circ m_\Lambda = \mathrm{Cofib}(T_{-\epsilon} \rightarrow T_\epsilon)$, and vertical morphisms are induced by positive isotopies
    \[\xymatrix@C=2.5mm{
    \Hom(T_\epsilon({F}), w_\Lambda({G})) \ar[r] \ar@{=}[d]& \Hom(w_\Lambda \circ m_\Lambda({F}), w_\Lambda({G})) \ar[d] \ar[r] & \Hom(T_\epsilon({F}), w_\Lambda({G})) \ar[d] \\
    \Hom(T_\epsilon({F}), w_\Lambda({G})) \ar[r] & \Hom(w_\Lambda \circ m_\Lambda({F}), \mathfrak{W}_\Lambda^+ \circ w_\Lambda({G})) \ar[r] & \Hom(T_\epsilon({F}), \mathfrak{W}_\Lambda^+ \circ w_\Lambda({G})).
    }\]
    Lemma \ref{lem:natural-trans1} and \ref{lem:natural-trans2} imply that the algebraically defined natural transformation of functors is an isomorphism if and only if the composition of morphisms in the second row is an isomorphism. 
    
    From the computation in Theorem \ref{thm:doubling}, compared with Theorem \ref{thm:doubling_ad}, we know that horizontal natural morphisms in the first row are isomorphisms
    $$\Hom(T_\epsilon({F}), w_\Lambda({G})) \xrightarrow{\sim} \Hom(w_\Lambda \circ m_\Lambda({F}), w_\Lambda({G})) \xrightarrow{\sim} \Hom(T_\epsilon({F}), w_\Lambda({G})).$$
    Therefore, the composition of horizontal morphisms in the second row is an isomorphism if and only if the geometrically defined vertical morphism in the last column is an isomorphism.
\end{proof}

\subsection{Criterion for spherical adjunction}\label{sec:sphere-crit}
    With the presence of the adjunction pairs, fiber sequences and natural transformations in the previous sections, in this section, we will study the spherical cotwist and its dual cotwist, and prove Condition~(2) \& (4) in Definition \ref{def:spherical} under geometric assumptions. As we will see in the proof, both Condition~(2) \& (4) in will rely on some full faithfulness of the wrapping functor $\mathfrak{W}_\Lambda^+$.
    %\sayWL{The main issue if we do not want to mention wrapped sheaves is now we can fomulate this result. I do want to formulate a criterion that implies both condition (2) (cotwist is equivalence) and (4) (left/right adjoints are related by cotwist) at once. The issue is that (4) cannot be formulated as a full faithfulness statement if we work with sheaves instead of wrapped sheaves.}

    In this section, we will use the word stop for a closed subanalytic Legendrian in $S^{*}M$ (meaning that the positive wrappings in $S^{*}M$ are stopped by the subanalytic Legendrian), which comes from the study of symplectic topology and wrapped Fukaya categories \cite{Sylvan,Ganatra-Pardon-Shende1}.

\subsubsection{Spherical adjunction from full stops} %\sayCK{Add compactness to the introduction.}
%\sayWL{This part is finally finished. You can try to check the proof (maybe it is wrong). We can decide to move it elsewhere, since the proof is relying on the Serr\'{e} functor and also wrapped sheaves (we can also decide if we need to add a short paragraph reviewing results of wrapped sheaves).} %\sayCK{Thanks. Let me try to check it during the weekend.}
    We assume that $M$ is compact in this subsection. First, we introduce the notion of an algebraic full stop, which has been frequently used in wrapped Fukaya categories.

\begin{definition}
    Let $M$ be compact and $\Lambda \subseteq S^{*}M$ be a compact subanalytic Legendrian. Then $\Lambda$ is called a full stop if $ \Sh_\Lambda(M)$ is a proper category.
\end{definition}
\begin{remark}
    Recall that from Theorem \ref{thm:perfcompact} (\cite{Nadler-pants}*{Theorem 3.21} or \cite[Corollary 4.23]{Ganatra-Pardon-Shende3}), we know that
    $\Sh^b_\Lambda(M) = \Sh^{pp}_\Lambda(M).$
    From Proposition \ref{prop:smooth}, we know that in our case $\Sh^c_\Lambda(M)$ is smooth, which then implies Corollary \ref{cor:prop-in-perf} that
    $\Sh^b_\Lambda(M) \subseteq \Sh^c_\Lambda(M).$
    On the other hand, when $\Sh^c_\Lambda(M)$ is moreover proper, then we know that \cite[Lemma A.8]{Ganatra-Pardon-Shende3}
    $$\Sh^c_\Lambda(M) = \Sh^b_\Lambda(M).$$
    Conversely, when $M$ and $\Lambda$ are both compact, then if $\Sh^c_\Lambda(M) = \Sh^b_\Lambda(M)$, we can also tell that $\Sh^c_\Lambda(M)$ is proper using for example \cite[Lemma 4.2]{Ganatra-Pardon-Shende3}.
\end{remark}
 
\begin{example}\label{ex:triangle-full}
    Let $\mathcal{S} = \{X_\alpha\}_{\alpha \in I}$ be a subanalytic triangulation on $M$. Then the union of unit conormal bundles over all strata $\Lambda = N^*_\infty\mathcal{S} = \bigcup_{\alpha \in I}N^{*}_\infty{X_\alpha}$ defines a full stop \cite{Ganatra-Pardon-Shende3}*{Proposition 4.24}.
\end{example}

    We recall our notion of a geometric full stop in the introduction. For the definition of generalized linking disks, see for example \cite[Section 7.1]{Ganatra-Pardon-Shende3}.

\begin{definition}
    A closed subanalytic Legendrian $\Lambda \subseteq S^{*}M$ is called a geometric full stop if for a collection of generalized linking spheres $\Sigma \subseteq S^{*}M$ of $\Lambda$, there exists a compactly supported positive Hamiltonian on $S^{*}M \backslash \Lambda$ such that the Hamiltonian flow sends $D$ to an arbitrary small neighbourhood of $T_{-\epsilon}(\Lambda)$.
\end{definition}

    Following Ganatra-Pardon-Shende \cite{Ganatra-Pardon-Shende3}*{Proposition 6.7}, we prove that a geometric full stop is an algebraic full stop.

\begin{proposition}\label{prop:full-geo=>alg}
    Let $\Lambda \subseteq S^{*}M$ be a geometric full stop. Then $\Lambda$ is also an algebraic full stop.
\end{proposition}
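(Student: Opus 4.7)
The plan is to show that, assuming $\Lambda$ is a geometric full stop, every $\Hom$ in $\Sh_\Lambda^c(M)$ lies in $\cV_0$. Since $\Sh_\Lambda^c(M)$ is already known to be smooth (Proposition \ref{prop:smooth}) and to contain $\Sh_\Lambda^b(M)$ (Corollary \ref{cor:prop-in-perf}), properness is equivalent to $\Sh_\Lambda^c(M) = \Sh_\Lambda^b(M)$. First I would reduce to checking properness on a set of generators: by Proposition \ref{wdstopr}, $\Sh_\Lambda^c(M)$ is generated by the microstalk corepresentatives $L_{(x,\xi)} := \mu^l_{(x,\xi)}(1_\cV)$ for smooth Legendrian points $(x,\xi)\in\Lambda$ (the ``generalized linking disks''); their ``far ends'' at infinity are exactly the generalized linking spheres $\Sigma$ appearing in the definition of a geometric full stop. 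It therefore suffices to prove that $\Hom(L_{(x,\xi)}, L_{(y,\eta)})\in\cV_0$ for every pair of such smooth points.

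Next I would exploit the geometric full stop condition to push one linking disk toward $\Lambda$. Using the proposition that $\Hom(F,G)=\Hom(F,T_\epsilon G)=\Hom(F,\wrap_\Lambda^+ T_\epsilon G)$ (Proposition \ref{prop:hom_w_pm} together with Theorem \ref{w=ad}), I can rewrite
\[\Hom(L_{(x,\xi)}, L_{(y,\eta)}) = \Hom\bigl(L_{(x,\xi)}, S_\Lambda^+(L_{(y,\eta)})\bigr).\]
By hypothesis there is a compactly supported positive Hamiltonian $H$ on $S^*M\setminus\Lambda$ whose time-one flow sends $\Sigma$ into an arbitrary small neighbourhood of $T_{-\epsilon}(\Lambda)$. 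Rescaling and concatenating $H$ with itself, I obtain a cofinal sequence of positive wrappings $\varphi^n_t$ in the positive wrapping category of $\Lambda$; by GKS sheaf quantization (Theorem \ref{thm:GKS}) and Corollary \ref{cor:GKS} these produce a cofinal family $L_{(y,\eta)}\to L_{(y,\eta)}^{w_n}$ in the positive wrapping diagram such that $\msif(L_{(y,\eta)}^{w_n})$ accumulates inside $\Lambda$. Then Theorem \ref{w=ad} identifies $\wrap_\Lambda^+ T_\epsilon L_{(y,\eta)}$ with the colimit of the $L_{(y,\eta)}^{w_n}$.

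Once the singular support of $L_{(y,\eta)}^{w_n}$ is squeezed into an arbitrarily small neighbourhood of $\Lambda$, I would apply the refined microlocal cut-off of Corollary \ref{lem:refine-cutoff} (or rather its global consequence via Theorem \ref{thm:doubling_leftad}) to express $L_{(y,\eta)}^{w_n}$ as $m_\Lambda^l(F_n)$ for an appropriate $F_n\in\msh_\Lambda^c(\Lambda)$. Adjunction together with Definition-Theorem \ref{mu-hom_as_hom} then gives
\[\Hom\bigl(L_{(x,\xi)}, L_{(y,\eta)}^{w_n}\bigr) = \Gamma\bigl(\Lambda,\, \mhom(m_\Lambda L_{(x,\xi)}, F_n)\bigr).\]
Since $\Lambda$ is compact, the constructible sheaf $\mhom(m_\Lambda L_{(x,\xi)}, F_n)$ on $\Lambda$ has perfect stalks (Theorem \ref{thm:perfcompact}), so this global section lies in $\cV_0$, and $m_\Lambda(L_{(x,\xi)})$ is compact since microlocalization preserves compact objects (Remark \ref{rem:cores-cpt}).

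The main obstacle, and the final step of the argument, will be to show that the filtered colimit $\operatorname{colim}_n \Hom(L_{(x,\xi)}, L_{(y,\eta)}^{w_n})$ is \emph{eventually constant}, so that the colimit indeed lies in $\cV_0$. For this I would argue that the cone of $L_{(y,\eta)}^{w_n}\to L_{(y,\eta)}^{w_{n+1}}$ is microsupported in $S^*M\setminus\Lambda$ and can be further wrapped positively without affecting $\Hom$ out of $L_{(x,\xi)}$: the Sato-Sabloff fiber sequence (Corollary \ref{cor:sato-sab}) together with Lemma \ref{lem:nearby_cycle}, applied to each cone, forces the transition maps to be isomorphisms once $n$ is large enough that the wrapped singular support lies in a sufficiently small neighbourhood of $\Lambda$. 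Compactness of both $M$ and $\Lambda$ ensures a uniform such $n$, completing the proof that $\Sh_\Lambda^c(M)$ is proper.
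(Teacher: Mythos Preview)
Your argument contains a genuine gap at the very first step: the identity $\Hom(F,G)=\Hom(F,\wrap_\Lambda^+ T_\epsilon G)$ is false. By Theorem~\ref{w=ad}, $\wrap_\Lambda^+$ is the \emph{left} adjoint of the inclusion $\Sh_\Lambda(M)\hookrightarrow\Sh(M)$, so for $F\in\Sh_\Lambda(M)$ the adjunction reads $\Hom(\wrap_\Lambda^+ H,F)=\Hom(H,F)$, not $\Hom(F,\wrap_\Lambda^+ H)=\Hom(F,H)$. Consequently $\Hom(L_{(x,\xi)},L_{(y,\eta)})$ is \emph{not} in general equal to $\Hom(L_{(x,\xi)},S_\Lambda^+ L_{(y,\eta)})$; indeed the whole point of Section~\ref{sec:example} is to exhibit a $\Lambda$ for which $S_\Lambda^+$ fails to be an equivalence, so this equality cannot be a formal consequence of the cited results. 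The subsequent steps inherit further problems: the sheaf $L_{(y,\eta)}^{w_n}$ has singular support only \emph{near} $\Lambda$, not \emph{in} $\Lambda$, so it does not lie in $\Sh_\Lambda(M)$ and therefore cannot be of the form $m_\Lambda^l(F_n)$; and Remark~\ref{rem:cores-cpt} concerns $m_\Lambda^l$, not $m_\Lambda$ --- as shown in Section~\ref{sec:example}, $m_\Lambda$ need not preserve compact objects.

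The paper's proof takes a much shorter route by working in the wrapped sheaf category $\wsh_\Lambda(M)\simeq\Sh_\Lambda^c(M)$ of \cite{Kuo-wrapped-sheaves}. Objects there are represented by constructible sheaves with perfect stalks whose $\ms^\infty$ already misses $\Lambda$, and one has $\Hom_{\wsh_\Lambda(M)}(F,G)=\operatorname{colim}_{\varphi}\Hom(F,G^\varphi)$. Every term of this colimit lies in $\cV_0$ because $M$ is compact and $F,G^\varphi$ have perfect stalks. The geometric full stop hypothesis supplies a cofinal sequence of wrappings sending $\ms^\infty(G)$ into an arbitrarily small neighbourhood of $\Lambda$, hence eventually away from the compact set $\ms^\infty(F)\subset S^*M\setminus\Lambda$; once this happens the continuation maps become isomorphisms and the colimit stabilises inside $\cV_0$. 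No passage through $S_\Lambda^\pm$, microlocal cut-off, or $m_\Lambda^l$ is needed.
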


    To prove the proposition, we recall the following definition by the first author in \cite{Kuo-wrapped-sheaves}. Let $M$ be compact, and $\widetilde{\wsh}_\Lambda(M)$ be the category of constructible sheaves with perfect stalks whose singular support is disjoint from $\Lambda$. Let $\mathcal{C}_\Lambda(M)$ be all continuation maps of positive isotopies supported away from $\Lambda$. Then the category of wrapped sheaves is \cite[Definition 4.1]{Kuo-wrapped-sheaves}
    $$\wsh_\Lambda(M) \coloneqq \widetilde{\wsh}_\Lambda(M)/\mathcal{C}_\Lambda(M).$$
    We have $\Hom_{\wsh_\Lambda(M)}(F, G) = \operatorname{colim}_{\varphi \in W^+(S^*M \backslash \Lambda)}\Hom(F, G^\varphi)$, and \cite[Theorem 1.3]{Kuo-wrapped-sheaves}
    $$\wrap_\Lambda^+: \wsh_\Lambda(M) \xrightarrow{\sim} \Sh^c_\Lambda(M).$$

\begin{proof}[Proof of Proposition \ref{prop:full-geo=>alg}] %\sayCK{So we do need wrapped sheaves for this one. Let me recall it somewhere.}\sayCK{Actually, I take it back. We can also use the fact that, by choosing a Whitney triangulation $\sS$ such that $\Lambda \subseteq N^* \sS$, we see that $\wrap_\Lambda^+ 1_{\star(s)}$ generates $\Sh_\Lambda(M)^c$ and Hom between those guys are perfect because of full stop. But I guess using wrapped sheaves is a more concise way to say it.}
    We prove that $\wsh_\Lambda(M)$ is a proper category. Namely, for any ${F, G} \in \wsh_\Lambda(M)$,
    $$\Hom_{\wsh_\Lambda(M)}({F}, {G}) \in \cV_0.$$
    Indeed, note that $\ms^\infty({G}) \subseteq S^{*}M \backslash \Lambda$ is compact. Thus there exists a cofinal wrapping $\varphi_k \in W^+(S^{*}M \backslash \Lambda)$ such that $\varphi_{k}^1(\ms^\infty({G}))$ is contained in a neighbourhood of $\Lambda$, and is in particular away from $\ms^\infty({F})$. Therefore
    \[\begin{split}
    \Hom_{\wsh_\Lambda(M)}({F}, {G}) &= \clmi{\varphi \in W^+(S^{*}M \backslash \Lambda)}\Hom({F}, {G}^\varphi) \simeq \clmi{k \rightarrow \infty}\Hom({F}, {G}^{\varphi_k}) \\
    &\simeq \clmi{k \rightarrow \infty}\Hom({F}, {G}) = \Hom({F, G}) \in \cV_0,
    \end{split}\]
    which completes the proof for the first case of a geometric full stop.
\end{proof}

\begin{example}\label{ex:Lef-full}
    Let $\pi: T^*M \rightarrow \mathbb{C}$ be an exact symplectic Lefschetz fibration (whose existence is ensured by Giroux-Pardon \cite{GirouxPardon}), and $F = \pi^{-1}(\infty)$ a regular Weinstein fiber at infinity. Then the Lagrangian skeleton of the Weinstein manifold $\Lambda = \mathfrak{c}_F \subseteq S^{*}M$ defines a full stop \cite{Ganatra-Pardon-Shende2}*{Corollary 1.14} \& \cite{Ganatra-Pardon-Shende3}*{Proposition 6.7} under the equivalence between wrapped Fukaya categories and microlocal sheaf categories \cite{Ganatra-Pardon-Shende3}.

    Let $\Lambda = \Lambda_\Sigma^\infty \subseteq S^{*}T^n$ be the FLTZ skeleton 
    %\sayCK{So Harold complaint to me that he's never heard of this term. I briefly check the reference and assume that they are dealing with some generalized version of FLTZ skeletons? Maybe it's better to say ``...be FLTZ skeleton, or more generally, RSTZ..." since my impression is the first terminology is more standard.}\sayWL{Sure. I used RSTZ for the skeleton at infinity and FLTZ for the whole skeleton. Actually the only place I saw RSTZ was in Peng Zhou's paper.}
    associated to the toric fan $\Sigma$ \cite{FLTZCCC,FLTZHMS,RSTZSkel}. Gammage-Shende (under an extra assumption) \cite{GammageShende} and Zhou (without extra assumptions) \cite{ZhouSkel} show that it is indeed the Lagrangian skeleton of a regular fiber of a symplectic fibration $\pi: T^*T^n \rightarrow \mathbb{C}$, which is expected to be a Lefschetz fibration when the mirror toric stack $\mathcal{X}_\Sigma$ is smooth. The fact that $\Lambda_\Sigma^\infty$ is a full stop (when $\mathcal{X}_\Sigma$ is smooth) is also independently proved by Kuwagaki using mirror symmetry \cite{KuwaCCC}.
\end{example}

    When $\Lambda \subseteq S^{*}M$ is a full Legendrian stop, we know that $\Sh_\Lambda(M) = \mathrm{Ind}(\Sh^b_\Lambda(M))$. Therefore we only focus on results on the small category $\Sh^b_\Lambda(M)$.%\sayCK{Similar question as above. I only see that $\Sh_\Lambda(M)^c \subseteq \Sh_\Lambda(M)^b$ but don't quite understand how it's equal?}
    
    To show Condition~(2) that $S_\Lambda^+$ and $S_\Lambda^-$ are equivalences, we appeal to Section \ref{sec:serre} where $S_\Lambda^-$ is shown to abide Serre duality (up to a twist) on $\Sh^b_\Lambda(M)$.

\begin{proposition}\label{prop:full-condition2}
    Let $\Lambda \subseteq S^{*}M$ be a compact full Legendrian stop where $M$ is a closed manifold. Then there is a pair of inverse autoequivalences
    $$S^+_\Lambda: \Sh^b_\Lambda(M) \rightleftharpoons \Sh^b_\Lambda(M): S^-_\Lambda.$$
\end{proposition}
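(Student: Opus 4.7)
The plan is to identify $S^-_\Lambda$ and $S^+_\Lambda$ (up to twists by $\omega_M^{\pm 1}$) with the Serre functor and its inverse on $\Sh^b_\Lambda(M)$, via the Sabloff-Serre duality of Proposition \ref{prop:sab-serre}. First, note that the full stop hypothesis means $\Sh^c_\Lambda(M)$ is proper, and combined with Proposition \ref{prop:smooth}, $\Sh^b_\Lambda(M) = \Sh^c_\Lambda(M)$ is a smooth proper category. Hence it admits a Serre functor $\mathbb{S}$ which is automatically an autoequivalence, and inverse Serre $\mathbb{S}^{-1}$ satisfies $\Hom(\mathbb{S}^{-1}(F), G) = \Hom(G, F)^\vee$.

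Next, for $F, G \in \Sh^b_\Lambda(M)$, use that $\mathfrak{W}_\Lambda^-$ is the right adjoint to the inclusion $\Sh_\Lambda(M) \hookrightarrow \Sh(M)$ by Theorem \ref{w=ad}, and that it commutes with $\otimes \omega_M$ since $\omega_M$ is an invertible locally constant object. Combined with Proposition \ref{prop:sab-serre}, we obtain
$$\Hom(F, S^-_\Lambda(G) \otimes \omega_M) = \Hom(F, T_{-\epsilon}(G) \otimes \omega_M) \simeq \Hom(G, F)^\vee.$$
Since $\Sh^b_\Lambda(M)$ generates $\Sh_\Lambda(M)$ under filtered colimits, Yoneda forces $S^-_\Lambda(G) \otimes \omega_M \simeq \mathbb{S}(G)$ in $\Sh_\Lambda(M)$; in particular, $S^-_\Lambda(G)$ lies in $\Sh^b_\Lambda(M)$, and the restriction $S^-_\Lambda|_{\Sh^b_\Lambda(M)} = \mathbb{S} \otimes \omega_M^{-1}$ is an autoequivalence.

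By a parallel computation, use the perturbation identity $\Hom(T_\epsilon(F), H) \simeq \Hom(F, T_{-\epsilon}(H))$ for $F, H \in \Sh_\Lambda(M)$ (a direct consequence of Proposition \ref{prop:hom_w_pm} combined with $T_\epsilon$ being a global equivalence on $\Sh(M)$), and the fact that $\mathfrak{W}_\Lambda^+$ is the left adjoint to the inclusion. For $F, G \in \Sh^b_\Lambda(M)$,
$$\Hom(S^+_\Lambda(F) \otimes \omega_M^{-1}, G) = \Hom(T_\epsilon(F), G \otimes \omega_M) \simeq \Hom(F, T_{-\epsilon}(G) \otimes \omega_M) \simeq \Hom(G, F)^\vee,$$
identifying $S^+_\Lambda|_{\Sh^b_\Lambda(M)} = \mathbb{S}^{-1} \otimes \omega_M$. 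Since $\mathbb{S}$ commutes with tensoring by the invertible object $\omega_M$, the composite $S^-_\Lambda \circ S^+_\Lambda = (\mathbb{S} \otimes \omega_M^{-1}) \circ (\mathbb{S}^{-1} \otimes \omega_M) \simeq \id$ on $\Sh^b_\Lambda(M)$, and likewise in the reverse order, establishing the claimed pair of inverse autoequivalences.

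The main obstacle is not any single calculation but carefully reconciling the three structures at play: the (non-restricted) wrapping functors $\mathfrak{W}_\Lambda^\pm$ on the large category $\Sh_\Lambda(M)$, the Sabloff-Serre pairing which a priori only produces a bifunctor-level duality, and the subcategory $\Sh^b_\Lambda(M)$ where Serre functors live. The key input that bridges these is the full stop hypothesis, which both makes $\Sh^b_\Lambda(M) = \Sh^c_\Lambda(M)$ smooth-proper (forcing the Serre functor to exist as an honest autoequivalence) and makes $\Sh^b_\Lambda(M)$ Ind-generate $\Sh_\Lambda(M)$ (so that the Yoneda argument upgrades the bifunctor identity to a functor isomorphism).
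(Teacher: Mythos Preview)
Your treatment of $S^-_\Lambda$ is correct and is essentially a reformulation of the paper's second step: both use Sabloff--Serre duality (Proposition~\ref{prop:sab-serre}), and your Yoneda argument---testing $S^-_\Lambda(G)\otimes\omega_M$ against all $F\in\Sh^b_\Lambda(M)=\Sh^c_\Lambda(M)$---is valid because objects of $\Sh_\Lambda(M)=\Ind(\Sh^c_\Lambda(M))$ are determined by maps \emph{from} compact objects.

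The gap is in your treatment of $S^+_\Lambda$. The identity
\[
\Hom\bigl(S^+_\Lambda(F)\otimes\omega_M^{-1},\,G\bigr)\;\simeq\;\Hom(G,F)^\vee
\]
for all $G\in\Sh^b_\Lambda(M)$ only tests $S^+_\Lambda(F)\otimes\omega_M^{-1}$ by maps \emph{to} compact objects. In an Ind-category this does \emph{not} determine the object: co-Yoneda holds on the small category $\Sh^c_\Lambda(M)$, but the contravariant restricted Yoneda $\Sh_\Lambda(M)^{op}\to\Fun(\Sh^c_\Lambda(M),\cV)$ is not fully faithful. So you cannot conclude $S^+_\Lambda(F)\otimes\omega_M^{-1}\simeq\mathbb S^{-1}(F)$ without first knowing that $S^+_\Lambda(F)$ already lies in $\Sh^c_\Lambda(M)=\Sh^b_\Lambda(M)$. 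You never establish this, and it is not automatic: Section~\ref{sec:example} gives a (non-full) stop where $S^+_\Lambda$ fails to preserve $\Sh^c_\Lambda(M)$, so the full-stop hypothesis must enter nontrivially at this point.

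This is exactly why the paper's first step is not redundant with the Sabloff--Serre argument. The paper uses the wrapped-sheaf equivalence $\wrap_\Lambda^+:\wsh_\Lambda(M)\xrightarrow{\sim}\Sh^c_\Lambda(M)$ to see that $S^+_\Lambda(F)=\wrap_\Lambda^+T_\epsilon(F)\in\Sh^c_\Lambda(M)$ (since $T_\epsilon(F)$ is constructible with perfect stalks and singular support disjoint from $\Lambda$), and simultaneously to prove full faithfulness of $S^+_\Lambda$ on $\Sh^b_\Lambda(M)$. Once you add this single input, your co-Yoneda identification goes through on $\Sh^c_\Lambda(M)$ and the rest of your argument is fine.
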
 
\begin{proof}
    For $F, G \in \Sh^b_\Lambda(M)$, we claim that
    $$\Hom(F, G) \rightarrow \Hom_{\wsh_\Lambda(M)}(T_\epsilon(F), T_\epsilon(G)).$$%\sayCK{Discussion about this arrow being an isomorphism (in green):}
    Indeed, consider a sequence of descending open neighbourhoods $\{\Omega_k\}_{k \in \NN}$ of $\Lambda \subset S^{*}M$ such that $\Omega_{k+1} \subseteq \overline{\Omega_k}$ and
    $\bigcap_{k \in \NN} \Omega_k = \Lambda.$
    Let the sequence cofinal wrapping be the Reeb flow $T_{1/t}$ on $S^*M \backslash \Omega_k$ and identity on $\Omega_{k+1}$. Then
    \begin{align*}
    \Hom_{\wsh_\Lambda(M)}(T_\epsilon(F), T_\epsilon(G))
    &= \clmi{\delta \rightarrow 0^+} \Hom(T_\delta(F), T_\epsilon(G)) \\
    &= \clmi{\delta \rightarrow 0^+} \Hom(F, T_{\epsilon - \delta}(G)).
    \end{align*}
    Then the right hand side is a constant since $\Hom(F, T_{\epsilon - \delta}(G)) \xleftarrow{\sim} \Hom(F,G)$ by the perturbation trick Proposition \ref{prop:hom_w_pm}.%\sayWL{Yeah I think this is the correct argument. What I was thinking of is probably essentially the same, but this is a cleaner way to write it down.}
    
    On the other hand, we also know that $\wrap_\Lambda^+: \wsh_\Lambda(M) \rightarrow \Sh_\Lambda^b(M)$ is an equivalence \cite[Theorem 1.3]{Kuo-wrapped-sheaves}. Therefore
    $$\Hom_w(T_\epsilon(F), T_\epsilon(G)) \simeq \Hom(\wrap_\Lambda^+ \circ T_\epsilon(F), \wrap_\Lambda^+ \circ T_\epsilon(G)) = \Hom(S_\Lambda^+(F), S_\Lambda^+(G)).$$
    Since $S_\Lambda^-$ is the right adjoint of $S_\Lambda^+$, we know that $S^-_\Lambda \circ S^+_\Lambda = \mathrm{id}_{\Sh^b_\Lambda(M)}$.
    
    Then consider $F, G \in \Sh^b_\Lambda(M)$. Sabloff-Serr{e} duality Proposition \ref{prop:sab-serre} implies that
    $$\Hom(S^-_\Lambda(F), S_\Lambda^-(G)) = \Hom(G \otimes \omega_M^{-1}, S^-_\Lambda(F))^\vee = \Hom(F, G).$$
    Then since $S_\Lambda^-$ is the right adjoint of $S_\Lambda^+$, we know that $S^-_\Lambda \circ S^+_\Lambda = \mathrm{id}_{\Sh^b_\Lambda(M)}$.
\end{proof}

    Next, we show Condition~(4) that there is a natural isomorphism of functors $m_\Lambda^r \xrightarrow{\sim} S^-_\Lambda \circ m_\Lambda^l[1]$, which again requires Serr{e} duality in Section \ref{sec:serre}.
    
\begin{proposition}\label{prop:full-condition4}
    Let $\Lambda \subset S^{*}M$ be a compact subanalytic full Legendrian stop. Then for any $F \in \Sh_\Lambda^b(M)$ and $G \in \msh^c_\Lambda(\Lambda)$ there is an isomorphism 
    $$\Hom(F, m_\Lambda^r(G)) \rightarrow \Hom(w_\Lambda \circ m_\Lambda(G), m_\Lambda^l(G)) \rightarrow \Hom(T_\epsilon(F), m_\Lambda^l(G))$$
\end{proposition}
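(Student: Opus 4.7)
By Proposition \ref{prop:natural-trans} applied to $F \in \Sh_\Lambda^b(M)$ and $G \in \msh_\Lambda^c(\Lambda)$, the algebraically defined natural morphism
$$\Hom(F, m_\Lambda^r(G)) \to \Hom(F, m_\Lambda^r m_\Lambda m_\Lambda^l(G)) \to \Hom(F, S_\Lambda^- m_\Lambda^l(G)[1])$$
is an isomorphism if and only if the geometrically defined natural morphism
$$\Hom(T_\epsilon(F), w_\Lambda(G)) \longrightarrow \Hom(T_\epsilon(F), \mathfrak{W}_\Lambda^+ w_\Lambda(G))$$
is an isomorphism. By Remark \ref{rem:doubling_rightad}, the source of this second map is $\Hom(F, m_\Lambda^r(G))$; by Theorem \ref{cor:doubling} and Theorem \ref{thm:doubling_leftad}, the target equals $\Hom(T_\epsilon(F), m_\Lambda^l(G)[1])$, so proving the isomorphism yields the chain of isomorphisms in the statement. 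The plan is therefore to prove this second, geometric isomorphism.

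The first ingredient is compactness. Since $\Lambda$ is an algebraic full stop, $\Sh_\Lambda^c(M) = \Sh_\Lambda^b(M)$, and because $T_\epsilon$ is an autoequivalence of $\Sh(M)$ (Corollary \ref{cor:GKS}) carrying $\Sh_\Lambda(M)$ to $\Sh_{T_\epsilon(\Lambda)}(M)$, the object $T_\epsilon(F)$ is compact in $\Sh_{T_\epsilon(\Lambda)\cup\Lambda\cup T_{-\epsilon}(\Lambda)}(M)$ (this larger ambient category is still compactly generated, as $T_\epsilon(\Lambda)\cup\Lambda\cup T_{-\epsilon}(\Lambda)$ is a compact subanalytic Legendrian). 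Writing $\mathfrak{W}_\Lambda^+ w_\Lambda(G) = \operatorname{colim}_\varphi w_\Lambda(G)^\varphi$ as a filtered colimit over the wrapping category $W^+(S^*M\setminus\Lambda)$, compactness of $T_\epsilon(F)$ gives
$$\Hom(T_\epsilon(F), \mathfrak{W}_\Lambda^+ w_\Lambda(G)) \;=\; \clmi{\varphi \in W^+(S^*M\setminus \Lambda)} \Hom(T_\epsilon(F), w_\Lambda(G)^\varphi),$$
so our task becomes showing that the continuation maps $\Hom(T_\epsilon(F), w_\Lambda(G)) \to \Hom(T_\epsilon(F), w_\Lambda(G)^\varphi)$ stabilize to isomorphisms.

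The second ingredient is Lemma \ref{lem:nearby_cycle} applied as follows. Since $\msh_\Lambda^c(\Lambda)$ is generated by corepresentatives of microstalk functors at smooth Legendrian points (Proposition \ref{wdstopr}) and both sides preserve colimits in $G$, we may reduce to the case where $G$ is such a corepresentative; the doubling $w_\Lambda(G)$ is then localized near a small neighborhood of the chosen point and morally a generalized linking sphere. Using the geometric full stop condition (which, via Proposition \ref{prop:full-geo=>alg}, gives the algebraic full stop; we first treat this geometric case), we obtain a positive compactly supported Hamiltonian on $S^*M\setminus\Lambda$ whose flow sends the generalized linking sphere(s) appearing in $\ms^\infty(w_\Lambda(G))$ into arbitrarily small neighborhoods of $T_{-\epsilon}(\Lambda)$, hence eventually into any neighborhood $\Omega$ of $\Lambda$. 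The hypothesis of Lemma \ref{lem:nearby_cycle} $\msif(T_\epsilon(F))\cap\Lambda=\varnothing$ holds since $\Lambda\cap T_\epsilon(\Lambda)=\varnothing$ for small $\epsilon$, and disjointness from $\ms^\infty(T_\epsilon(F))$ along the flow holds after an initial small perturbation.

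The main obstacle is handling the algebraic definition of full stop purely from the assumption that $\Sh_\Lambda^c(M)$ is proper, without a given geometric flow. The plan here is to bypass Lemma \ref{lem:nearby_cycle} entirely and argue via duality: using Sabloff-Serre duality (Proposition \ref{prop:sab-serre}), both $\Hom(F, m_\Lambda^r(G))$ and $\Hom(F, S_\Lambda^- m_\Lambda^l(G)[1])$ can be rewritten as duals of Homs in the opposite direction, where the roles of $m_\Lambda^l$ and $m_\Lambda^r$ are swapped and the natural transformation is identified (up to twist by $\omega_M$) with the one implementing condition (3) of Definition \ref{def:spherical}. Combined with Proposition \ref{prop:full-condition2} (which gives condition (2), invertibility of $S_\Lambda^\pm$), this allows us to invoke the theorem of Anno-Logvinenko/Christ to conclude: two of the four conditions for sphericality suffice, and the remaining ones, including (4), follow formally. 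Handling this algebraic case cleanly, in particular the compact-support hypothesis in Sabloff-Serre duality, is the step that requires the most care.
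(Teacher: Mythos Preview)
Your reduction via Proposition~\ref{prop:natural-trans} and the restriction to generators $G=\mu_i$ are correct and match the paper. However, both of your proposed routes to the geometric isomorphism have genuine gaps.

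The compactness argument does not go through: $T_\epsilon(F)$ is compact in $\Sh_{T_\epsilon(\Lambda)}(M)$, but the wrapped objects $w_\Lambda(G)^\varphi$ do not lie in any fixed $\Sh_\Sigma(M)$ as $\varphi$ ranges over $W^+(S^*M\setminus\Lambda)$, so the relevant colimit is taken in $\Sh(M)$, where $T_\epsilon(F)$ is not compact. Commuting $\Hom$ past this colimit is exactly the nontrivial content of Lemma~\ref{lem:nearby_cycle}, and the geometric full stop hypothesis provides a flow for a collection of linking spheres $\Sigma$, not for the microsupport $T_{-\epsilon}(\Lambda)\cup T_\epsilon(\Lambda)$ of $w_\Lambda(\mu_i)$; bridging these requires further argument you have not supplied. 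More seriously, the Anno--Logvinenko route for the algebraic case is circular. You have condition~(2) from Proposition~\ref{prop:full-condition2}, but the two-out-of-four theorem requires a second independently verified condition. Observing that Sabloff--Serre relates condition~(4) to condition~(3) does not supply one: if they are equivalent, neither is yet established, and you cannot conclude~(4).

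The paper's argument is direct and avoids both issues. The full stop hypothesis gives $m_\Lambda^l(\mu_i)\in\Sh_\Lambda^c(M)=\Sh_\Lambda^b(M)$, so Sabloff--Serre duality (Proposition~\ref{prop:sab-serre}) applies to $\Hom(T_\epsilon(F),m_\Lambda^l(\mu_i)[1])$, identifying its dual with $\Hom(m_\Lambda^l(\mu_i)[1],F\otimes\omega_M)$. A parallel computation (via Remark~\ref{rem:DFotimesG} and the adjunctions for $p_!$) identifies the dual of $\Hom(T_\epsilon(F),w_\Lambda(\mu_i))$ with $\Hom(w_\Lambda(\mu_i),F\otimes\omega_M)$. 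The key point is that after dualizing, the continuation map $w_\Lambda(\mu_i)\to\wrap_\Lambda^+w_\Lambda(\mu_i)=m_\Lambda^l(\mu_i)[1]$ now sits in the \emph{source} of the $\Hom$; since $F\otimes\omega_M\in\Sh_\Lambda(M)$, the induced map is an isomorphism by the defining property of $\wrap_\Lambda^+$ as left adjoint to the inclusion $\Sh_\Lambda(M)\hookrightarrow\Sh(M)$ (Theorem~\ref{w=ad}). No cofinality argument and no appeal to Anno--Logvinenko are needed.
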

\begin{proof}
    Let $\mu_i \in \msh_\Lambda^c(\Lambda)$ be the corepresentatives of microstalks at the point $p_i$ on the smooth stratum $\Lambda_i \subset \Lambda$, which (split) generate the category $\msh_\Lambda^c(\Lambda)$. By Proposition \ref{prop:natural-trans}, it suffices to show that for any $F \in \Sh_\Lambda^b(M)$, 
    $$\Hom(T_\epsilon(F), w_\Lambda(\mu_i)) \simeq \Hom(T_\epsilon(F), \wrap_\Lambda^+ \circ w_\Lambda(\mu_i)).$$
    Note that $m_\Lambda^l(\mu_i) \in \Sh^b_\Lambda(M)$. By Sabloff-Serr{e} duality Proposition \ref{prop:sab-serre}, we know that the right hand side is
    \begin{align*}
        \Hom(T_\epsilon(F), m_\Lambda^l(\mu_i)[1])^\vee &= \Hom(F, T_{-\epsilon} \circ m_\Lambda^l(\mu_i)[1])^\vee = \Hom(m_\Lambda^l(\mu_i)[1], F \otimes \omega_M).
    \end{align*}
    On the other hand, since $F$ is cohomologically constructible, by Theorem \ref{thm:doubling_ad} and Remark \ref{rem:DFotimesG}, the left hand side is
    \begin{align*}
        \Hom(T_\epsilon(F), w_\Lambda(\mu_i))^\vee &= p_*(\ND{M}F \otimes w_\Lambda(\mu_i))^\vee = \Hom(\ND{M}F \otimes w_\Lambda(\mu_i), p^!1_\cV) \\
        &= \Hom( w_\Lambda(\mu_i), \VD{M} \circ \ND{M} F) = \Hom(w_\Lambda(\mu_i), F \otimes \omega_M).
    \end{align*}
    Moreover, the continuation map is exactly induced by the continuation map $w_\Lambda(\mu_i) \rightarrow \wrap_\Lambda^+ \circ w_\Lambda(\mu_i)$. Therefore, the result immediately follows from Proposition \ref{w=ad}.
\end{proof}

By Proposition \ref{prop:full-condition2} and \ref{prop:full-condition4}, we can immediately finish the proof of Theorem \ref{thm:main-fun} and hence the full stop part in Theorem \ref{thm:main}.

\subsubsection{Spherical adjunction from swappable stops}
    Next, we define the notion of a swappable stop, which is introduced by Sylvan \cite{SylvanOrlov}, but is a priori weaker than his terminology.

\begin{definition}\label{def:swappable}
    Let $\Lambda \subset T^{*,\infty}M$ be a compact subanalytic Legendrian. Then $\Lambda$ is called a swappable Legendrian stop if there exists a positive wrapping fixing $\Lambda$ that sends $T_\epsilon(\Lambda)$ to an arbitrarily small neighbourhood of $T_{-\epsilon}(\Lambda)$.
\end{definition}
\begin{example}
    The Legendrian stops in Example \ref{ex:Lef-full} are swappable, and we conjecture that Legendrian stops in Example \ref{ex:triangle-full} are swappable as well. More generally, when $F \subset S^{*}M$ is a Weinstein page of a contact open book decomposition for $S^{*}M$ \cite{Giroux,HondaHuang}, then it is a swappable stop.

    However, swappable stops are not necessarily full stops. For instance, for the Landau-Ginzburg model $\pi: T^*M \rightarrow \mathbb{C}$ that are not Lefschetz fibrations, $F \subset S^{*}M$ will in general not be a full stop (one can consider $\pi: \mathbb{C}^n \rightarrow \mathbb{C}; \;\; \pi = z_1z_2\dots z_n$ \cite{Nadspherical,AbAurouxHMS}). Sylvan \cite{SylvanOrlov}*{Example 1.4} also explained that one can take any monodromy invariant subset for the Lagrangian skeleton of fiber of a Lefschetz fibration and get a swappable stop.
\end{example}

\begin{example}
    There is another way to get new swappable stops from old ones\footnote{The authors would like to thank Emmy Murphy who explains to us this construction.}. Consider $\Lambda \subset \partial_\infty X$ to be a swappable stop in $\partial_\infty X$ the contact boundary of some Liouville domain, and $\partial_\infty X'$ the contact boundary of some other Liouville domain. When we take the Liouville connected sum of $X$ and $X'$ along some subcritical Weinstein hypersurface $F \hookrightarrow \partial_\infty X$ and $F \hookrightarrow \partial_\infty X'$ \cite{Avdek}, as the skeleton $\mathfrak{c}_F$ is of subcritical dimension, the positive loop of $\Lambda$ will generically avoid $\mathfrak{c}_F$. Therefore, $\Lambda$ is also swappable in $\partial_\infty (X \#_F X')$.
    
    In particular, let $X = \mathbb{C}^n$ and $F = \mathbb{C}^{n-1}$, then $X \#_F X'$ is the 1-handle connected sum of $\mathbb{C}^n$ and $X'$, which is Liouville homotopy equivalent to $X'$. In particular, for any swappable stop $\Lambda \subset D^{2n-1} \subset S^{2n-1}$ (for example, the skeleton of any page of contact open book decomposition), putting it in a Darboux ball in $S^{*}M$, we will get a swappable stop in $S^{*}M$.
\end{example}

    When $\Lambda \subset S^{*}M$ is a swappable Legendrian stop, then there exists a cofinal positive (resp.~negative) wrapping that sends $T_{\epsilon}(\Lambda)$ to an arbitrary small neighbourhood of $T_{-\epsilon}(\Lambda)$ (resp.~sends $T_{-\epsilon}(\Lambda)$ to an arbitrary small neighbourhood of $T_{\epsilon}(\Lambda)$). So there exists a (cofinal sequence of) positive contact flow $\varphi_k^t$, $k \in \NN$, supported away from $\Lambda$ such that $\varphi_k^1(T_\epsilon(\Lambda))$ is contained in a small neighbourhood of $ T_{-\epsilon}(\Lambda)$ for $k \gg 0$. We will fix the cofinal sequence of positive flow and check condition (2) and (4) of Definition \ref{def:spherical} by considering this particular wrapping.
    %Then we show the following property.
    
\begin{proposition}\label{prop:swap-condition2}
    Assume $\Lambda$ is swappable. The functors $S_\Lambda^+$ and $S_\Lambda^-$ are equivalences.
\end{proposition}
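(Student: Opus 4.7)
The plan is to first establish that $S_\Lambda^+$ and $S_\Lambda^-$ are themselves adjoint to each other, and then to prove that both the unit and counit of this adjunction are isomorphisms; invertibility will then be formal. For $F, G \in \Sh_\Lambda(M)$ the computation
$$\Hom(S_\Lambda^+ F, G) = \Hom(\wrap_\Lambda^+ T_\epsilon F, G) = \Hom(T_\epsilon F, G) = \Hom(F, T_{-\epsilon} G) = \Hom(F, S_\Lambda^- G),$$
using $\wrap_\Lambda^+ \dashv \iota_{\Lambda *}$ together with the fact that $T_\epsilon$ is an autoequivalence of $\Sh(M)$ with inverse $T_{-\epsilon}$, exhibits the adjunction $S_\Lambda^+ \dashv S_\Lambda^-$ on $\Sh_\Lambda(M)$. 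The two halves of the swappable hypothesis (forward and backward) are symmetric, so it suffices to prove that the unit $\mathrm{id} \to S_\Lambda^- S_\Lambda^+$ is an isomorphism; the counit argument will follow by applying the identical strategy to the reversed swap.

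Unwinding the unit, we reduce to showing that for all $F, G \in \Sh_\Lambda(M)$ the natural map
$$\Hom(F, G) \longrightarrow \Hom(T_\epsilon F, \wrap_\Lambda^+ T_\epsilon G)$$
is an isomorphism. The target colimit $\wrap_\Lambda^+ T_\epsilon G$ can, by Theorem \ref{w=ad}, be computed along any cofinal subsystem of positive wrappings in $W^+(S^*M \setminus \Lambda)$ applied to $T_\epsilon G$. Here the swappable flow $\psi^t$ comes in: rescaling and iterating the given swappable Hamiltonian (together with small positive Reeb pushoffs done before starting the swap, to ensure generic disjointness) one produces a positive isotopy of $S^* M$ that drives $\msif(T_\epsilon G) \subseteq T_\epsilon(\Lambda)$ into arbitrarily small neighborhoods of $\Lambda$ and that, crucially, never re-crosses $T_\epsilon(\Lambda)$ at positive times, because $\psi^t$ is supported away from $\Lambda$ and moves $T_\epsilon(\Lambda)$ monotonically towards $T_{-\epsilon}(\Lambda)$ on the opposite side of $\Lambda$. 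This is exactly the disjointness hypothesis required in Lemma \ref{lem:nearby_cycle}, which will then identify $\Hom(T_\epsilon F, \wrap_\Lambda^+ T_\epsilon G) \simeq \Hom(T_\epsilon F, T_\epsilon G)$. Combined with the perturbation identity $\Hom(T_\epsilon F, T_\epsilon G) = \Hom(F, G)$ (Proposition \ref{prop:hom_w_pm}), this yields the unit isomorphism.

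The main obstacle is the careful construction of the cofinal family and the verification of the disjointness hypothesis of Lemma \ref{lem:nearby_cycle}. In particular, at the initial instant $t = 0$ both $\msif(T_\epsilon F)$ and $\msif(T_\epsilon G)$ lie in $T_\epsilon(\Lambda)$ and typically meet; this is handled by a preliminary arbitrarily small positive perturbation of one of the two sheaves, performed inside the space of positive wrappings and thus invisible in the Hom by the perturbation trick, so that subsequently $\psi^t$ can be applied with clean disjointness for all $t > 0$. Once the unit isomorphism is in place, the counit isomorphism $S_\Lambda^+ S_\Lambda^- \to \mathrm{id}$ follows by the mirrored construction using the backward swappable flow, which takes $T_{-\epsilon}(\Lambda)$ into arbitrarily small neighborhoods of $T_\epsilon(\Lambda)$ and supplies the corresponding cofinal family for $\wrap_\Lambda^-$. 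This establishes that $S_\Lambda^+$ and $S_\Lambda^-$ are mutually inverse autoequivalences.
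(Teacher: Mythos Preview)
Your proposal is correct and follows essentially the same approach as the paper: establish the adjunction $S_\Lambda^+ \dashv S_\Lambda^-$, reduce by symmetry to showing $\Hom(F,G) \to \Hom(T_\epsilon F, \wrap_\Lambda^+ T_\epsilon G)$ is an isomorphism, use a small preliminary Reeb push (the paper writes $T_{\epsilon+\delta}G$) to separate the microsupports at time zero, and then invoke Lemma~\ref{lem:nearby_cycle} with the swappable flow as the cofinal family. One small caution: your justification that the flow ``moves $T_\epsilon(\Lambda)$ monotonically towards $T_{-\epsilon}(\Lambda)$ on the opposite side of $\Lambda$'' is not part of the swappable hypothesis and should not be relied upon; the paper handles the disjointness input to Lemma~\ref{lem:nearby_cycle} purely via the $T_\delta$-perturbation and the fact that the swappable flows drive $T_\epsilon(\Lambda)$ into arbitrarily small neighborhoods of $\Lambda$, which suffices for the nearby-cycle argument cited there.
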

    
\begin{proof}
    It's sufficient to check that they are fully-faithful since $S_\Lambda^+ \vdash S_\Lambda^-$ is an adjunction pair. The computation is symmetric so we check that $\Hom(S_\Lambda^+ F, S_\Lambda^+ G) = \Hom(F,G)$, or equivalently, the canonical map 
    $$ \Hom(F,G) = \Hom(T_\epsilon (F), T_\epsilon (G)) \rightarrow \Hom(T_\epsilon (F), \wrap_\Lambda^+ \circ T_\epsilon(G)) $$
    is an isomorphism. First apply Proposition \ref{prop:hom_w_pm}, so that the map factorizes as
    $$\Hom(T_\epsilon (F), T_\epsilon (G)) \xrightarrow{\sim} \Hom(T_\epsilon (F), T_{\epsilon + \delta} (G)) \rightarrow \Hom(T_\epsilon (F), \wrap_\Lambda^+ \circ T_\epsilon(G)) $$
    for some $0 < \delta \ll \epsilon$. Then by the swappable assumption, for a sequence of descending open neighbourhoods $\{\Omega_k\}_{k \in \NN}$ of $\Lambda \subseteq S^{*}M$ such that $\Omega_{k+1} \subseteq \overline{\Omega_k}$ and
    $\bigcap_{k \in \NN} \Omega_k = \Lambda,$
    there exist (an increasing sequence of) positive Hamiltonian flows $\varphi_k^t$, $k \in \NN$, supported away from $\Omega_k$ such that 
    $$\varphi_k^1(T_\epsilon(\Lambda)) \subseteq T_{-1/k}(\Omega_k)$$
    for $k \gg 0$. Thus we are in the situation of Lemma \ref{lem:nearby_cycle}.
\end{proof}

\begin{proposition}\label{prop:swap-condition4}
When $\Lambda$ is swappable. The canonical map  $m_\Lambda^r \rightarrow m_\Lambda^r \circ m_\Lambda \circ m_\Lambda^l \rightarrow S_\Lambda^- \circ m_\Lambda^l [1]$ is an isomorphism. 
\end{proposition}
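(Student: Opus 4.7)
The plan is to apply Proposition \ref{prop:natural-trans}, reducing the algebraic natural transformation to the geometric continuation map
\[
\Hom(T_\epsilon F, w_\Lambda G) \longrightarrow \Hom(T_\epsilon F, \wrap_\Lambda^+ w_\Lambda G)
\]
and showing it is an isomorphism for every $F \in \Sh_\Lambda(M)$ and $G \in \msh_\Lambda(\Lambda)$. The natural tool is Lemma \ref{lem:nearby_cycle}. However, its disjointness hypothesis is obstructed at time $t=0$, because $\msif(w_\Lambda G) \subseteq T_\epsilon(\Lambda) \cup T_{-\epsilon}(\Lambda)$ shares the component $T_\epsilon(\Lambda)$ with $\msif(T_\epsilon F)$, so direct application as in Proposition \ref{prop:swap-condition2} fails.

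The first step will be to apply the perturbation trick of Proposition \ref{prop:hom_w_pm} to shift $w_\Lambda G$ slightly. Fixing $0 < \delta < 2\epsilon$, which lies below the shortest nontrivial Reeb chord length between the relevant Legendrians since $2\epsilon < c(\Lambda)$ by Theorem \ref{thm:doubling}, the canonical maps
\[
\Hom(T_\epsilon F, w_\Lambda G) \xrightarrow{\sim} \Hom(T_\epsilon F, T_\delta w_\Lambda G), \qquad \wrap_\Lambda^+ w_\Lambda G \xrightarrow{\sim} \wrap_\Lambda^+ T_\delta w_\Lambda G
\]
should both be isomorphisms: the first by direct application of Proposition \ref{prop:hom_w_pm}, and the second by a parallel argument exploiting that an infinitesimal positive Reeb shift is absorbed into the large positive wrapping. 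After this perturbation, $\msif(T_\delta w_\Lambda G) \subseteq T_{\epsilon+\delta}(\Lambda) \cup T_{-\epsilon+\delta}(\Lambda)$ is disjoint from $T_\epsilon(\Lambda)$, so the initial value of the disjointness hypothesis is recovered.

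The second step is to construct a positive isotopy $\varphi_t$ on $S^* M$ such that (i) for any open neighborhood $\Omega$ of $\Lambda$ there is $T_0$ with $\varphi_t\bigl(T_{\epsilon+\delta}(\Lambda) \cup T_{-\epsilon+\delta}(\Lambda)\bigr) \subseteq \Omega$ for all $t \geq T_0$, and (ii) the image remains disjoint from $T_\epsilon(\Lambda)$ for every $t \geq 0$. The swappable hypothesis, together with the same construction used in the proof of Proposition \ref{prop:swap-condition2}, yields positive flows in $W^+(S^* M \setminus \Lambda)$ pushing each of $T_{\pm(\epsilon+\delta)}(\Lambda)$ arbitrarily close to $\Lambda$. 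Condition (ii) is then a genericity statement: both Legendrians have dimension $n-1$ in the $(2n-1)$-dimensional contact manifold $S^* M$, so a small generic perturbation of $\varphi_t$ renders them pointwise disjoint at every time. Assembling these into a single positive isotopy and invoking Lemma \ref{lem:nearby_cycle} then yields the desired isomorphism.

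The main obstacle lies in step two, specifically producing a \emph{positive} flow that drives the negative component $T_{-\epsilon+\delta}(\Lambda)$ close to $\Lambda$. The swappable definition directly sweeps $T_{\epsilon+\delta}(\Lambda)$ across to near $T_{-\epsilon-\delta}(\Lambda)$, and combined with the shrinking-neighborhoods procedure of Proposition \ref{prop:swap-condition2} this handles the positive component. For the negative component, one expects the symmetric backward clause of swappability, combined with an iterated application of the swappable flow, to provide an analogous positive sweep; alternatively, one can try to leverage the algebraic identity $S_\Lambda^+ \circ S_\Lambda^- \simeq \mathrm{id}$ established in Proposition \ref{prop:swap-condition2} to conclude that both components are squeezed in parallel. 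Making either route rigorous, along with verifying the perturbation-absorption claim used in the first step, is the delicate point.
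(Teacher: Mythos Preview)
Your overall architecture matches the paper's proof exactly: reduce via Proposition~\ref{prop:natural-trans} to the continuation map, apply a small positive Reeb shift $T_\delta$ to $w_\Lambda G$ so that the initial disjointness hypothesis of Lemma~\ref{lem:nearby_cycle} is met, then construct a positive isotopy driving both components of $\msif(T_\delta w_\Lambda G)$ into a neighborhood of $\Lambda$ while avoiding $T_\epsilon(\Lambda)$.

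The confusion is in your identification of the obstacle. The paper takes $0 < \delta \ll \epsilon$, so that $-\epsilon + \delta < 0$ and the component $T_{-\epsilon+\delta}(\Lambda)$ sits strictly \emph{below} $\Lambda$ in the Reeb direction. A positive Reeb-like flow, supported in the thin shell $\bigcup_{t \in [-\epsilon, -\epsilon']} T_t(\Lambda)$ for $\epsilon' \to 0^+$, pushes this component arbitrarily close to $\Lambda$ from below while remaining disjoint from $T_\epsilon(\Lambda)$ throughout; no swappability, no backward clause, and no algebraic input from Proposition~\ref{prop:swap-condition2} is needed here. It is only the \emph{positive} component $T_{\epsilon+\delta}(\Lambda)$, sitting above $\Lambda$, that genuinely requires the swappable flow to sweep it around to the negative side---exactly as you already described for that piece. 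Your artificial difficulty comes from allowing $\delta$ all the way up to $2\epsilon$: once you restrict to $\delta < \epsilon$, the ``main obstacle'' evaporates. The paper then simply asserts the existence of the combined flow sending $T_\epsilon(\Lambda) \cup T_{-\epsilon}(\Lambda)$ into shrinking neighborhoods of $\Lambda$ and invokes Lemma~\ref{lem:nearby_cycle}, at the same level of detail as your sketch.
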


\begin{proof}
By Proposition \ref{prop:natural-trans}, it's sufficient to show that the map
$$\Hom(T_\epsilon({F}), w_\Lambda({G})) \rightarrow \Hom(T_{\epsilon}({F}), \wrap_\Lambda^+ \circ w_\Lambda({G}))$$ is an isomorphism.
Since $\msif (w_\Lambda(G)) \subseteq T_{-\epsilon} (\Lambda) \cup T_\epsilon (\Lambda)$, we can again flow it forward by the Reeb flow $T_\delta$ for some $0 < \delta \ll \epsilon$ by Proposition \ref{prop:hom_w_pm} such that the above map factorizes as
$$\Hom(T_\epsilon({F}), w_\Lambda({G})) = \Hom(T_\epsilon({F}), T_\delta \circ w_\Lambda({G})) \rightarrow \Hom(T_{\epsilon}({F}), \wrap_\Lambda^+ \circ w_\Lambda({G})).$$
But then the same proof from the last proposition applies. By the swappable assumption, for a sequence of descending open neighbourhoods $\{\Omega_k\}_{k \in \NN}$ of $\Lambda \subset S^{*}M$ such that $\Omega_{k+1} \subseteq \overline{\Omega_k}$ and
$\bigcap_{k \in \NN} \Omega_k = \Lambda,$
there exist (an increasing sequence of) positive Hamiltonian flows $\varphi_k^t$, $k \in \NN$, supported away from $\Omega_k$ such that 
$$\varphi_k^t(T_\epsilon(\Lambda) \cup T_{-\epsilon}(\Lambda)) \subseteq T_{-1/k}(\Omega_k) \cup T_{-1/2k}(\Lambda)$$
for $k \gg 0$. We can again apply Lemma \ref{lem:nearby_cycle} to conclude that the map is an isomorphism.
\end{proof}

    By Proposition \ref{prop:swap-condition2} and \ref{prop:swap-condition4}, we can immediately finish the proof of Theorem \ref{thm:main-fun} and hence the swappable stop part of Theorem \ref{thm:main}.

%%% A one-dimensional case
\begin{example}
Let $M = \RR^1$ and $\Lambda = 0_{\RR^1} \cup (\cup_{n \in \ZZ} T^*_{n,\leq} \RR^1)$. One can see $\Lambda$ is swappable in this case.
The constant sheaves $\{ 1_{(n,\infty)} \}_{n \in \ZZ}$ form a set of generators in $\Sh_\Lambda(\RR^1)$ and $S_\Lambda^+$ sends $1_{(n,\infty)}$ to $1_{(n+1,\infty)}$ for $n \in \ZZ$.
From the point of view of \cite[Theorem 1.1]{Kuo-Li-duality}, the functor $S_\Lambda^+$ and its inverse are given by sheaf kernels and they are, up to a shift by $[1]$, the constant sheaves supported in the shaded areas as in Figure \ref{fig:wraponce-kernel}. We mention that the case $M = S^1$ and $\Lambda = 0_{S^1} \cup T^*_{0,\leq} S^1$ can be described similarly by lifting to the universal cover.
\begin{figure}[h!]
\includegraphics[width=0.9\textwidth]{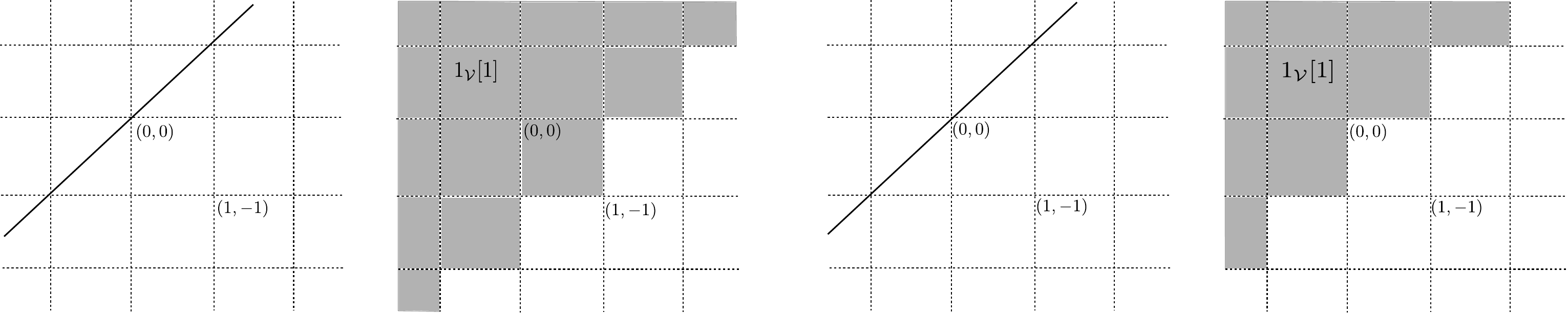}
\caption{The sheaf kernel of the positive wrap-once functor $S_\Lambda^+$ for $\Lambda = 0_{\RR^1} \cup (\cup_{n \in \ZZ} T^*_{n,\leq} \RR^1)$ (left) the kernel of the negative wrap-once functor $S_\Lambda^-$ (right). The stops are the dashed lines where conormal directions are pointing down and right.}\label{fig:wraponce-kernel}
\end{figure}
\end{example}

\subsection{Spherical adjunction on subcategories}\label{sec:prop-cpt}%\sayCK{So if you see my later remark in this section and fine with the suggestion, then I can change the argument to those. We can also keep the discussion about for the proper one but I think we really should try to change the argument for compact objects.}\sayWL{Sure! I think maybe the proper ones can be changed accordingly since they are just proper modules over compact objects.}
%\sayWL{Actually, would you mind also taking a look at Corollary 7.4 in the next section? I wonder if there is a more canonical argument to show that the semi-orthogonal decomposition preserves compact objects.}\sayCK{I just wrote something there.}
    In this section, we will restrict to the subcategories of proper objects and compact objects of sheaves. Over the category of proper objects of sheaves (equivalently, sheaves with perfect stalks), we will show that the microlocalization
    $$m_\Lambda: \Sh^b_\Lambda(M) \rightarrow \msh_\Lambda^b(\Lambda)$$
    is a spherical functor, and over the category of compact objects of sheaves, we will show that the left adjoint of the microlocalization
    $$m_\Lambda^l: \msh^c_\Lambda(M) \rightarrow \Sh_\Lambda^c(M)$$
    is a spherical functor. We know that autoequivalences coming from twists and cotwists immediately restrict to these corresponding subcategories. As a result, once we know the corresponding functors restrict, they will be spherical. 
    %However, it is not entirely clear that the adjoint functors restrict to the subcategories as well. This is therefore our main goal in this section.

    First, we consider the subcategories of compact objects. For the spherical adjunction $m_\Lambda^l \dashv m_\Lambda$. We know that the left adjoint $m_\Lambda^l$ preserves compact objects, i.e.~we have
    $$m_\Lambda^l: \msh_\Lambda^c(\Lambda) \rightarrow \Sh_\Lambda^c(M).$$
    However, it is not clear whether microlocalization $m_\Lambda$ also preserves these objects.

\begin{lemma}
    Let $\Lambda \subset S^{*}M$ be a subanalytic Legendrian stop. When $m_\Lambda^r$ admits a right adjoint, the essential image of the microlocalization functor
    $$m_\Lambda:\, \Sh^c_\Lambda(M) \rightarrow \msh_\Lambda(\Lambda)$$
    is contained in $\msh^c_\Lambda(\Lambda)$
\end{lemma}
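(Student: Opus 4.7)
The plan is to deduce this from a standard compact generation fact: for an adjunction $L \dashv R$ between compactly generated presentable stable categories, $L$ preserves compact objects if and only if $R$ preserves filtered colimits, which is equivalent to $R$ being itself a left adjoint (i.e.~admitting a right adjoint of its own). This is part of the general theory of $\PrLcg$ developed in $\infty$-categorical foundations, and applies in our setting once we recall from Proposition~\ref{prop:stopremoval} (and the general discussion of $\msh_\Lambda$ taking values in $\PrLcs$ in Section~\ref{sec:micro}) that both $\Sh_\Lambda(M)$ and $\msh_\Lambda(\Lambda)$ are compactly generated stable categories.

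First, I would record the adjunction chain $m_\Lambda^l \dashv m_\Lambda \dashv m_\Lambda^r$ and isolate the relevant adjoint pair, namely $(m_\Lambda, m_\Lambda^r)$. Then I would invoke the hypothesis that $m_\Lambda^r$ admits a right adjoint to conclude that $m_\Lambda^r$ is colimit-preserving (in particular, filtered-colimit-preserving). Finally, by the adjunction formula
\[
\Hom_{\msh_\Lambda(\Lambda)}(m_\Lambda(F), -) \;=\; \Hom_{\Sh_\Lambda(M)}(F, m_\Lambda^r(-)),
\]
for any $F \in \Sh_\Lambda^c(M)$, the right-hand side commutes with filtered colimits in its second slot (since $F$ is compact and $m_\Lambda^r$ preserves filtered colimits). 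Hence $m_\Lambda(F)$ is compact in $\msh_\Lambda(\Lambda)$, establishing the claim.

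There is no real obstacle to carrying this out: the entire content of the lemma is the translation between ``left adjoint preserves compacts'' and ``right adjoint preserves filtered colimits'' together with the observation that the existence of a further right adjoint for $m_\Lambda^r$ forces it to be cocontinuous. The only mild subtlety is bookkeeping to ensure one is working in the right size of presentable category so that the standard fact applies, but this is already guaranteed by the compact generation results cited above and requires no further argument.
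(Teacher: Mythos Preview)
Your proposal is correct and follows exactly the same approach as the paper: the paper's proof simply notes that $m_\Lambda^r$ preserves colimits because it admits a right adjoint, and then concludes that $m_\Lambda$ preserves compact objects since its right adjoint is colimit-preserving. Your write-up is a more detailed unpacking of precisely this two-line argument.
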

\begin{proof}
    We know that $m_\Lambda^r$ preserves colimits as it admits a right adjoint. Now since the right adjoint of $m_\Lambda$ preserves colimits, we can conclude that $m_\Lambda$ preserves compact objects.
\end{proof}

    Whenever $m_\Lambda \vdash m_\Lambda^l$ is a spherical adjunction, we know by Remark \ref{rem:sphere-ad} that $m_\Lambda^r$ admits a right adjoint. Therefore spherical adjunction can always be restricted to the subcategories of compact objects, as we have claimed in Corollary \ref{cor:main}.
    
    Then we consider the subcategories of proper objects. We know that the microlocalization functor preserve proper objects (or equivalently objects with perfect stalks), i.e.~we have
    $$m_\Lambda: \Sh_\Lambda^b(M) \rightarrow \msh_\Lambda^b(\Lambda).$$
    However, it is not clear whether the left adjoint $m_\Lambda^l$ and right adjoint $m_\Lambda^r$ preserves these objects.
    
\begin{lemma}\label{lem:sphere-proper}
    Let $\Lambda \subset S^{*}M$ be a subanalytic Legendrian stop. When $m_\Lambda^r$ admits a right adjoint, the essential image of the left adjoint of microlocalization functor
    $$m_\Lambda^r: \msh^b_\Lambda(\Lambda) \rightarrow  \Sh_\Lambda(M)$$
    is also contained in $\Sh^b_\Lambda(M)$.
\end{lemma}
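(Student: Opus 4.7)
The plan is to combine the preceding lemma with the characterization of proper objects via a duality pairing against compact objects, mirroring the structure of the statement immediately above it. Recall from Theorem~\ref{thm:perfcompact} that $\msh_\Lambda^b(\Lambda) = \msh_\Lambda^{pp}(\Lambda) = \mathrm{Fun}^{\mathrm{ex}}(\msh_\Lambda^c(\Lambda)^{op}, \cV_0)$ and similarly $\Sh_\Lambda^b(M) = \Sh_\Lambda^{pp}(M)$, so that testing whether an object is proper reduces to checking that its $\Hom$ pairing with every compact object lands in $\cV_0$. This reduction is what lets the argument proceed by pure adjunction.

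The first step is to invoke the preceding lemma. Under the hypothesis that $m_\Lambda^r$ admits a right adjoint, $m_\Lambda^r$ preserves all small colimits, and by the adjunction $m_\Lambda \dashv m_\Lambda^r$ this forces $m_\Lambda$ to send compact objects to compact objects, i.e.~$m_\Lambda\bigl(\Sh_\Lambda^c(M)\bigr) \subseteq \msh_\Lambda^c(\Lambda)$. This is the only place the hypothesis is used, and it plays exactly the same role as in the preceding lemma about compact objects.

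Next, I would test properness of $m_\Lambda^r(G)$ for an arbitrary $G \in \msh_\Lambda^b(\Lambda)$. For any compact $K \in \Sh_\Lambda^c(M)$, the same adjunction gives
\[
\Hom\bigl(K,\, m_\Lambda^r(G)\bigr) \;=\; \Hom\bigl(m_\Lambda(K),\, G\bigr).
\]
The first step ensures that $m_\Lambda(K) \in \msh_\Lambda^c(\Lambda)$ is compact, and since $G$ is pseudoperfect the right-hand side lies in $\cV_0$. Therefore $m_\Lambda^r(G)$ defines a pseudoperfect module on $\Sh_\Lambda^c(M)$, and by Theorem~\ref{thm:perfcompact} this is the same as the condition $m_\Lambda^r(G) \in \Sh_\Lambda^b(M)$.

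I do not anticipate a serious obstacle: the argument is a purely formal duality chain, and the only point worth double-checking is that the characterization $\Sh_\Lambda^b = \Sh_\Lambda^{pp}$ (and its microsheaf analogue) really applies without further smoothness/compactness hypotheses on $(M,\Lambda)$, which is already built into Theorem~\ref{thm:perfcompact}. Under the spherical setup of Theorem~\ref{thm:main-fun}, the hypothesis ``$m_\Lambda^r$ admits a right adjoint'' is automatic from Remark~\ref{rem:sphere-ad}, so the lemma will apply to full or swappable stops without extra input.
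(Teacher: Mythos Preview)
Your proposal is correct and follows essentially the same approach as the paper: both use Theorem~\ref{thm:perfcompact} to identify proper objects with pseudoperfect modules, invoke the preceding lemma to conclude $m_\Lambda$ preserves compact objects, and then observe that the right adjoint of a compact-preserving functor preserves proper modules. Your version simply spells out the adjunction computation $\Hom(K, m_\Lambda^r(G)) = \Hom(m_\Lambda(K), G) \in \cV_0$ that the paper compresses into the word ``clearly.''
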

\begin{proof}
    We recall Theorem \ref{thm:perfcompact} that 
    $$\Sh^b_\Lambda(M) = \Fun^{ex}(\Sh^c_\Lambda(M)^{op}, \cV_0), \;\; \msh^b_\Lambda(\Lambda) = \Fun^{ex}(\msh^c_\Lambda(\Lambda)^{op}, \cV_0),$$
    where the isomorphism is given by the $\Hom(-, -)$ pairing on $\Sh^c_\Lambda(M)^{op} \times \Sh^b_\Lambda(M)$ and respectively on $\msh^c_\Lambda(\Lambda)^{op} \times \msh^b_\Lambda(\Lambda)$. Then since we know that microlocalization preserves compact objects
    $$m_\Lambda: \Sh^c_\Lambda(M) \rightarrow \msh^c_\Lambda(\Lambda),$$
    the right adjoint $m_\Lambda^r$ clearly preserves proper objects.
\end{proof}

    Whenever $m_\Lambda \vdash m_\Lambda^l$ is a spherical adjunction, we know by Remark \ref{rem:sphere-ad} that $m_\Lambda^r$ admits a right adjoint, so the adjunction $m_\Lambda^r \vdash m_\Lambda$ can be restricted to the subcategories of proper objects.
    
    Therefore by Remark \ref{rem:sphere-ad} the spherical adjunction $m_\Lambda \vdash m_\Lambda^l$ can always be restricted to the subcategories of proper objects, as we have claimed in Corollary \ref{cor:main}.

    As we have seen, the candidate right adjoint of $m_\Lambda^l$ will simply be the microlocalization functor
    $$m_\Lambda:\, \Sh_\Lambda(M) \rightarrow \msh_\Lambda(\Lambda).$$
    The candidate left adjoint of $m_\Lambda^l$, by Remark \ref{rem:sphere-ad}, is the functor
    $$m_\Lambda \circ {S}_\Lambda^-[1]: \, \Sh_\Lambda(M) \rightarrow \msh_\Lambda(\Lambda).$$
    
    The readers may be confused about the non-symmetry as $m_\Lambda^l$ is supposed to be the cup functor on wrapped Fukaya categories and there is no non-symmetry there. We can provide an explanation as follows. Consider the category of wrapped sheaves in \cite{Kuo-wrapped-sheaves}, we have a preferred equivalence $\mathfrak{W}_\Lambda^+: \wsh_\Lambda(M) \rightarrow \Sh^c_\Lambda(M)$. If we try to instead replace the domain $\Sh^c_\Lambda(M)$ by $\wsh_\Lambda(M)$, then $m_\Lambda^l$ can be replaced by the doubling functor $w_\Lambda[-1]$, and one can easily see that
    $$m_\Lambda^r = m_\Lambda \circ \mathfrak{W}_\Lambda^+, \,\,\, m_\Lambda^l = m_\Lambda \circ \mathfrak{W}_\Lambda^-[1].$$
    Then $m_\Lambda^r$ (resp.~$m_\Lambda^l$) is indeed the cap functor by wrapping positively (resp.~negatively) into the Legendrian $\Lambda$ and then take microlocalization, i.e.~the sheaf theoretic restriction.

\subsection{Serre functor on proper subcategory}\label{sec:serre-proper}
    In this section, we finally prove that $S^-_\Lambda$ is in fact the Serr{e} functor on $\Sh^b_\Lambda(M)$, when $\Lambda$ is a full stop or swappable stop. Thus we will prove the folklore conjecture on partially wrapped Fukaya categories associated to Lefschetz fibrations formulated by Seidel \cite{SeidelSH=HH} (who attributes the conjecture to Kontsevich) with partial results in \cite{SeidelFukI,SeidelFukII,SeidelFukIV1/2}, in the case of partially wrapped Fukaya categories on cotangent bundles.

    Let $\sA$ be compactly generated by a small idempotent complete stable category $A$. Recall that $\sA$ is proper category if for any $X, Y \in A$,
    $$\Hom_\sA(X, Y) \in \cV_0.$$
    When $\sA$ is a proper category, by the above lemma, we are always able to define the (right) dualizing bi-module $\sA^\vee$.

\begin{definition}
    For a proper stable category $\sA$, the (right) dualizing bi-module $\sA^\vee$ is defined by
    $$\sA^\vee(X, Y) = \Hom_\sA(X, Y)^\vee = \Hom_\cV(\Hom_\sA(X, Y), 1_\cV).$$
\end{definition}
\begin{definition}
    For a proper stable category $\sA$, a Serr{e} functor $S_\sA$ is the functor that represents the right dualizing bimodule $\sA^\vee$, i.e.
    $$\Hom_\sA(-, -)^\vee \simeq \Hom_\sA(-, S_\sA(-)).$$
\end{definition}

\begin{proposition}\label{prop:serre}
    Let $\Lambda \subset S^{*}M$ be a full or swappable compact subanalytic Legendrian stop. Then $S_\Lambda^- \otimes \omega_M$ is the Serr{e} functor on $\Sh^b_\Lambda(M)_0$ of sheaves with perfect stalks and compact supports. In particular, when $M$ is orientable, $S_\Lambda^-[-n]$ is the Serr{e} functor on $\Sh^b_\Lambda(M)_0$.    
\end{proposition}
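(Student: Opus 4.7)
The plan is to combine Sabloff-Serre duality (Proposition \ref{prop:sab-serre}) with the adjunction description of the wrap-once cotwist (Theorem \ref{w=ad}). For $F, G \in \Sh^b_\Lambda(M)_0$, the goal is to establish a natural isomorphism
$$\Hom(G, F)^\vee \simeq \Hom(F, S_\Lambda^-(G) \otimes \omega_M),$$
which exhibits $S_\Lambda^- \otimes \omega_M$ as the Serre functor. The very statement of Sabloff-Serre duality already produces
$$\Hom(G, F)^\vee \simeq \Hom(F, T_{-\epsilon}(G) \otimes \omega_M),$$
so what remains is to replace $T_{-\epsilon}(G) \otimes \omega_M$ with $\mathfrak{W}_\Lambda^- T_{-\epsilon}(G) \otimes \omega_M = S_\Lambda^-(G) \otimes \omega_M$ inside the Hom on the right.

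The replacement proceeds in two steps. Since $F$ has microsupport in $\Lambda$, the adjunction $\iota_{\Lambda *} \dashv \mathfrak{W}_\Lambda^-$ from Theorem \ref{w=ad} gives $\Hom_{\Sh(M)}(F, H) \simeq \Hom_{\Sh_\Lambda(M)}(F, \mathfrak{W}_\Lambda^- H)$ for any $H \in \Sh(M)$. Applied to $H = T_{-\epsilon}(G) \otimes \omega_M$, this converts the right-hand side to $\Hom(F, \mathfrak{W}_\Lambda^-(T_{-\epsilon}(G) \otimes \omega_M))$. Because $\omega_M$ is invertible with microsupport in the zero section, the functor $(-) \otimes \omega_M$ is an auto-equivalence of $\Sh(M)$ that preserves $\Sh_\Lambda(M)$; a short Yoneda computation using the adjunction above then yields $\mathfrak{W}_\Lambda^-(H \otimes \omega_M) \simeq \mathfrak{W}_\Lambda^-(H) \otimes \omega_M$, which finishes the identification.

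It remains to verify that $S_\Lambda^- \otimes \omega_M$ actually lands in $\Sh^b_\Lambda(M)_0$. Preservation of $\Lambda$ microsupport and tensoring by $\omega_M$ are harmless, so the issue is perfect stalks and compact support. For perfect stalks, in the full stop case this follows immediately from Proposition \ref{prop:full-condition2}, while in the swappable case it follows from the spherical adjunction $m_\Lambda^l \dashv m_\Lambda$ (Theorem \ref{thm:main-fun}) combined with the restriction argument of Section \ref{sec:prop-cpt}, since the cotwist $S_\Lambda^-$ of a spherical adjunction restricts to the proper subcategory. For compact support: in the full stop case $M$ is compact so $\Sh^b_\Lambda(M)_0 = \Sh^b_\Lambda(M)$ automatically; in the swappable case one uses the fiber sequence $S_\Lambda^- \to \id \to m_\Lambda^r \circ m_\Lambda$, noting that $m_\Lambda^r \circ m_\Lambda(G) = \mathfrak{W}_\Lambda^- \circ w_\Lambda \circ m_\Lambda(G)$ has support in a neighborhood of the compact set $\pi(\Lambda)$, and the swappability hypothesis furnishes compactly supported Hamiltonians realizing the cofinal negative wrappings, so compact supports are preserved.

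The main obstacle I expect is the bookkeeping in the last step, specifically ensuring that in the swappable case $\mathfrak{W}_\Lambda^-$ genuinely preserves compact support on sheaves with microsupport in a small neighborhood of $T_{-\epsilon}(\Lambda)$; the key point is that the swap Hamiltonian, being compactly supported on $S^*M \setminus \Lambda$, produces a cofinal family of compactly supported wrappings so the limit is computed inside a uniformly bounded support region. Once this is confirmed, the three displayed isomorphisms compose to give the Serre duality formula, and the orientable special case $S_\Lambda^-[-n]$ follows from $\omega_M \simeq 1_M[n]$.
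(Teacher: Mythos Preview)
Your proposal is correct and follows essentially the same route as the paper: combine Proposition~\ref{prop:sab-serre} with Theorem~\ref{w=ad} to obtain the duality formula, use that $\omega_M$ is a local system so $\wrap_\Lambda^-$ commutes with $(-)\otimes\omega_M$, and then verify that $S_\Lambda^-$ preserves $\Sh^b_\Lambda(M)_0$. The only notable difference is in the compact-support check for the swappable case: the paper argues directly that $S_\Lambda^-(F)=\wrap_\Lambda^- T_{-\epsilon}(F)$ has compact support by choosing the cofinal negative wrappings to be supported in a fixed compact set and then using $\ms^\infty(S_\Lambda^-(F))\subseteq\Lambda$ to propagate vanishing, whereas you route through the fiber sequence $S_\Lambda^-\to\id\to m_\Lambda^r m_\Lambda$ and argue $w_\Lambda m_\Lambda(G)$ is compactly supported near $\pi(\Lambda)$; both reduce to the same geometric input (compactly supported swap Hamiltonians) and are equally valid.
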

\begin{proof}
    First, by Lemma \ref{lem:sphere-proper}, we know that $S_\Lambda^-: \Sh^b_\Lambda(M) \rightarrow \Sh^b_\Lambda(M)$ preserves perfect stalks. Moreover, when $M$ is noncompact and $\Lambda$ is a swappable stop, we argue that $S_\Lambda^-$ also preserves compact supports. By the swappable assumption, for a sequence of descending open neighbourhoods $\{\Omega_k\}_{k \in \NN}$ of $\Lambda \subset S^{*}M$ such that $\Omega_{k+1} \subseteq \overline{\Omega_k}$ and
    $\bigcap_{k \in \NN} \Omega_k = \Lambda,$
    there exist (an increasing sequence of) positive Hamiltonian flows $\varphi_k^t$, $k \in \NN$, supported away from $\Omega_k$ such that 
    $$\varphi_k^{-1}(T_{-\epsilon}(\Lambda)) \subset T_{1/k}(\Omega_k)$$
    for $k \gg 0$. Without loss of generality, we can even assume that $\varphi_k^t$ are all supported in some common compact subset. Since $M$ is noncompact, consider any unbounded region $U$ such that $\Lambda \cap S^*U = \varnothing$. Then there exists an open subset $U' \subseteq U \subseteq M$, $\varphi_k^t(T_\epsilon(\Lambda)) \cap S^*U' = \varnothing$ for $k \gg 0$. Then 
    $$\Gamma(U', S_\Lambda^-(F)) = \Gamma\Big(U', \lmi{k \rightarrow \infty}\,K(\varphi_k^{-1}) \circ T_{-\epsilon}(F)\Big) = 0.$$
    Since $\ms^\infty(S_\Lambda^-(F)) \subseteq \Lambda$, we get $\Gamma(U, S_\Lambda^-(F)) = 0$, which implies that $S^-_\Lambda(F)$ has compact support.
    
    Since we have concluded that $S_\Lambda^-: \Sh^b_\Lambda(M)_0 \rightarrow \Sh^b_\Lambda(M)_0$ preserves perfect stalks and compact supports, the proposition then immediately follows from Theorem \ref{w=ad} and Proposition \ref{prop:sab-serre}
    and the fact that $\wrap_\Lambda^-(- \otimes \omega_M) = (\wrap_\Lambda^-(-))\otimes \omega_M$ since $\omega_M$ is a local system.
    %\sayCK{Colered since I'm not sure if this fact is mentioned before. (Or you think the commutativity is clear?)}\sayWL{You are right.}
\end{proof}

    We should remark that, even though Proposition \ref{prop:sab-serre} is true in general, the above statement is not without the assumption on $\Lambda \subseteq S^*M$. For example, in Section \ref{sec:example} we will see an example where $S_\Lambda^-$ fails to be an equivalence on $\Sh_\Lambda^b(M)$.

    Finally, we explain the implication of the above result in partially wrapped Fukaya categories. Ganatra-Pardon-Shende \cite[Proposition 7.24]{Ganatra-Pardon-Shende3} have proved that there is a commuative diagram intertwining the cup functor and the left adjoint of microlocalization functor
    \[\xymatrix{
    \mathcal{W}(F) \ar[r]^{\sim\hspace{10pt}} \ar[d]_{\cup_F} & \mu Sh^c_{\mathfrak{c}_F}(\mathfrak{c}_F) \ar[d]^{m_{\mathfrak{c}_F}^*}\\
    \mathcal{W}(T^*M, F) \ar[r]^{\sim} & Sh^c_{\mathfrak{c}_F}(M).
    }\]
    Sylvan has shown that the spherical twist associated to the cup functor is the wrap-once functor \cite{SylvanOrlov}, and hence it intertwines with the wrap-once functor in sheaf categories. Consequently, we have proven that the negative wrap-once functor
    $$\cS_\Lambda^-: \mathrm{Prop}\,\mathcal{W}(T^*M, F) \rightarrow \mathrm{Prop}\,\mathcal{W}(T^*M, F)$$
    is indeed the Serr{e} functor on $\mathrm{Prop}\,\mathcal{W}(T^*M, F)$. 

    In particular, let $\pi: T^*M \rightarrow \mathbb{C}$ be a symplectic Lefschetz fibration and $F = \pi^{-1}(\infty)$ be the Weinstein fiber. Let $\mathfrak{c}_F$ be the Lagrangian skeleton of $F$. Then by Ganatra-Pardon-Shende \cite{Ganatra-Pardon-Shende2,Ganatra-Pardon-Shende3} we know that 
    $$\mathrm{Perf}\,\mathcal{W}(T^*M, F) = \mathrm{Prop}\,\mathcal{W}(T^*M, F)$$
    is a proper subcategory. Therefore, $S_\Lambda^-$ is the Serr{e} functor on partially wrapped Fukaya category associated to Lefschetz fibrations. 
    
\begin{remark}\label{rem:relativeCY}
    Finally, we remark that according to the result of Katzarkov-Pandit-Spaide \cite{KPSsphericalCY}, existence of spherical adjunction in the next section together with a compatible Serr{e} functor will imply existence of the weak relative proper Calabi-Yau structure introduced in \cite{relativeCY} of the pair
    $$m_\Lambda: \Sh^b_\Lambda(M) \rightarrow \msh_\Lambda^b(\Lambda)$$
    (even though we do not explicitly show compatibility of the Serr{e} functor, we believe that it is basically clear from the definition in \cite{KPSsphericalCY}).
    
    However, we will see in the last section that spherical adjunction does not hold in all these pairs that are expected to be relative Calabi-Yau (on the contrary, as explained in Sylvan \cite{SylvanOrlov} or Remark \ref{rem:multi-component-ad} and \ref{rem:multi-component-double}, when we consider microlocalization along a single component of a Legendrian stop with multiple components
    $$m_{\Lambda_i}: \Sh_\Lambda(M) \rightarrow \msh_{\Lambda}(\Lambda) \rightarrow \msh_{\Lambda_i}(\Lambda_i)$$
    we will still get spherical adjunctions, but the pair is unlikely to be relative Calabi-Yau). We will investigate relative Calabi-Yau structures separately (and hopefully, in full generality) in future works.
\end{remark}

\section{Spherical pairs and perverse sch\"{o}bers}

    In this section, we provide immediate corollaries of the main theorem and discuss how they give rise to spherical pairs and semi-orthogonal decompositions, and prove Proposition \ref{prop:sphere-pair-tri} and Theorem \ref{thm:sphere-pair-var}. Using Proposition \ref{prop:sphere-pair-tri}, we will also give an explicit characterization of the spherical twists and dual twists.

    The description of spherical pairs comes from the relation between spherical functors and perverse sheaves of categories (called perverse sch\"{o}bers) on a disk with one singularity \cite{KapraSchober}. For a perverse sch\"{o}ber on $\mathbb{D}^2$ with singularity at $0$ associated to the spherical functor
    $$F: {\sA} \rightarrow {\sB}$$
    consider a single cut $[0, 1] \subset \mathbb{D}^2$. Then the nearby category at $0$ is ${\sA}$ while the vanishing category on $(0, 1]$ is ${\sB}$, and the spherical twist is determined by monodromy around $\mathbb{D}^2 \backslash \{0\}$. Kapranov-Schechtman realized a symmetric description of the perverse sch\"{o}ber determined by the diagram
    $${\sB}_- \xleftarrow{F_-} {\sC} \xrightarrow{F_+} {\sB}_+,$$
    by considering a double cut on the disk $[-1, 1] \subset \mathbb{D}^2$. The nearby category at $0$ is ${\sC}$ while the vanishing category on $[-1, 0)$ (resp.~$(0, 1]$) is ${\sB}_-$ (resp.~${\sB}_+$). The nearby category ${\sC}$ will carry a 4-periodic semi-orthogonal decomposition. Such a viewpoint will provide new information of the microlocal sheaf categories.

    We will provide precise definitions of the terminologies in this section and then show the results in the introduction. Note that none of the arguments in this section essentially relies on microlocal sheaf theory, and therefore they can all be rewritten using Lagrangian Floer theory.

\subsection{Semi-orthogonal decomposition}\label{sec:semi-ortho}
    Firstly, we explain how spherical adjunctions give rise to 4-periodic semi-orthogonal decompositions and spherical twists are given by iterated mutations Halpern--Laistner-Shipman \cite{SphericalGIT} in the case of dg categories and Dykerhoff-Kapranov-Schechtman-Soibelman \cite{SphericalInfty} in general (this is how Sylvan proved that the Orlov cup functor is spherical \cite{SylvanOrlov}).

\begin{theorem}[Halpern--Laistner-Shipman \cite{SphericalGIT}, \cite{SphericalInfty}]\label{thm:4periodic}
    Let $F: {\sA} \rightarrow {\sB}$ be an $\infty$-functor, and ${\sC}$ be the semi-orthogonal gluing of ${\sA}$ and ${\sB}$ along the graphical bi-module $\Gamma(F)$. Then $F$ is spherical if and only if ${\sC}$ fits into a 4-periodic semi-orthogonal decomposition such that ${\sA}^{\perp\perp\perp\perp} = {\sA}$. The dual twist is the iterated mutation $T_{\sA} = R_{\sA} \circ R_{{\sA}^{\perp\perp}}$, and the dual cotwist is $S_{\sB} = L_{\sA} \circ L_{{\sA}^{\perp\perp}}$.
\end{theorem}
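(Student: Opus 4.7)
The plan is to realize the semi-orthogonal gluing $\sC$ concretely and to translate the spherical conditions on $F: \sA \to \sB$ into statements about the admissibility and $4$-periodicity of $\sA \subset \sC$. First I would construct $\sC$ so that $\sA$ and $\sB$ sit as full subcategories with $\Hom_\sC(\sB, \sA) = 0$ and $\Hom_\sC(X, Y) = \Hom_\sB(F(X), Y)$ for $X \in \sA$, $Y \in \sB$; this is the standard gluing construction along the graphical bimodule $\Gamma(F)$ and tautologically produces a semi-orthogonal decomposition of $\sC$ with gluing bimodule $\Gamma(F)$. Existence of $F^l$ and $F^r$ then equips the inclusions $i_\sA, i_\sB \hookrightarrow \sC$ with both adjoints, whose formulas are computable directly from $F, F^l, F^r$ on generators coming from $\sA$ and $\sB$.

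Next I would compute the iterated orthogonal complements by iterating mutations. By construction $\sA^\perp \simeq \sB$. The next step $\sA^{\perp\perp}$ is the image of $\sB$ under left mutation past $\sA$, whose objects are mapping cones encoding the counit of $F \dashv F^r$ (or the unit of $F^l \dashv F$, depending on the direction of mutation); this identifies the mutation functor $\sB \to \sA^{\perp\perp}$ with the dual twist $T'$ of Definition~\ref{def:spherical}. Continuing, $\sA^{\perp\perp\perp}$ is a further copy of $\sA$ obtained by mutating through the dual cotwist $S'$, and $\sA^{\perp\perp\perp\perp}$ lands back in a copy of $\sB$ twisted by the composition of $T$ and $T'$. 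The condition $\sA^{\perp\perp\perp\perp} = \sA$ (as subcategories of $\sC$, compared via the composition of four mutations) is then equivalent to requiring this composition to be an autoequivalence naturally isomorphic to the identity, which unpacks precisely into $T, T', S, S'$ being autoequivalences satisfying the compatibilities of Definition~\ref{def:spherical}.

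The identifications $T_\sA = R_\sA \circ R_{\sA^{\perp\perp}}$ and $S_\sB = L_\sA \circ L_{\sA^{\perp\perp}}$ then fall out by splicing together the fiber sequences defining the individual mutations with those defining $T, T', S, S'$. The main obstacle I anticipate is bookkeeping: systematically tracking left versus right mutations, orientations of arrows in the gluing, and the exact correspondence between $F^l, F^r$ and the adjoints of the inclusions $i_\sA, i_\sB$. Once this dictionary is in place, both directions of the biconditional follow by inspecting the same family of commuting diagrams---spherical $\Rightarrow$ 4-periodic by constructing the required equivalences directly from the defining fiber sequences, and 4-periodic $\Rightarrow$ spherical by reverse-engineering those same fiber sequences out of the admissibility data on the four iterated orthogonals.
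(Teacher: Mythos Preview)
The paper does not provide a proof of this theorem; it is stated with attribution to Halpern-Leistner--Shipman \cite{SphericalGIT} and Dyckerhoff--Kapranov--Schechtman--Soibelman \cite{SphericalInfty} and then used as a black box. So there is no proof in this paper to compare your proposal against.

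That said, your outline is essentially the strategy of the cited references: build $\sC$ as the Grothendieck construction along $\Gamma(F)$, identify the adjoints of the inclusions $i_\sA, i_\sB$ in terms of $F, F^l, F^r$, and then track the iterated mutations. One small bookkeeping slip: you write that $\sA^{\perp\perp\perp\perp}$ ``lands back in a copy of $\sB$,'' but for the statement $\sA^{\perp\perp\perp\perp} = \sA$ to be meaningful this iterated orthogonal must be compared to $\sA$, not $\sB$; the sequence of orthogonals alternates between copies of $\sB$ and $\sA$, so the fourth iterate is indeed a twisted copy of $\sA$. Beyond that, the main work in the cited proofs is exactly what you anticipate as the ``obstacle'': carefully matching the mutation fiber sequences with the defining fiber sequences for $T, T', S, S'$, and verifying that the $4$-periodicity condition (that the fourth mutation returns $\sA$ to itself as a subcategory, not merely up to abstract equivalence) is equivalent to the invertibility of the twist and cotwist together with the adjoint identifications in Definition~\ref{def:spherical}.
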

\begin{remark}
    Given a pair of semi-orthogonal decompositions ${\sC} = \left<{\sA}_+, {\sB} \right> \simeq \left<{\sB}, {\sA}_-\right>$, the right mutation functor is the equivalence $R_{{\sA}}: {\sA}_+ \rightarrow {\sA}_-$ defined by the composition of embedding and projection.%\sayWL{Write down how this characterizes the twist and dual twist.}\sayCK{By the way, is $\sA^{\perp \perp}$ the same as $\sA^-$? Also, it's probably standard but I don't really know if $\sA^\perp$ means in terms of mapping out versus mapping into.}\sayCK{Last note, except the comments here, I believe the paper is ready to be uploaded. I will do it this evening once this comment is sorted out.}\sayWL{I think you are right, $\sA^{\perp\perp}$ is the same as $\sA_-$. The reason I used $\sA^{\perp\perp}$ is because there is a third (and fourth) semi-orthogonal decomposition $\left<\sA_-, \sB'\right>$ (and $\left<\sB', \sA_+ \right>$). $R_{\sA^{\perp\perp}}$ does not make sense if there is only a pair of decompositions. $\sA^\perp$ probably means the corresponding quotient, so it's mapping out I guess? P.S. Fell free to upload. I think I have finished reading.}
\end{remark}

    Therefore, restricting to our setting of sheaf categories, our main theorem is equivalent to the following statement.

\begin{proposition}\label{prop:semi-ortho}
    Let $\Lambda \subseteq S^{*}M$ be a compact subanalytic Legendrian stop. Then under the fully faithful embedding $w_\Lambda: \msh_\Lambda(\Lambda) \rightarrow \Sh_{T_{-\epsilon}(\Lambda) \cup T_\epsilon(\Lambda)}(M)$, there is a semi-orthogonal decomposition
    $$\Sh_{T_{-\epsilon}(\Lambda) \cup T_\epsilon(\Lambda)}(M) \simeq \langle  \msh_\Lambda(\Lambda), \Sh_{T_{\epsilon}(\Lambda)}(M) \rangle.$$
\end{proposition}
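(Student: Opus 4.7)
The strategy is to realize the claimed decomposition as a reflective semi-orthogonal decomposition arising from a Verdier localization. By Theorem \ref{thm:doubling}, $w_\Lambda: \msh_\Lambda(\Lambda) \hookrightarrow \Sh_{T_{-\epsilon}(\Lambda) \cup T_\epsilon(\Lambda)}(M)$ is fully faithful, and by Remark \ref{rem:stoprem-cpt} the tautological inclusion $\iota: \Sh_{T_\epsilon(\Lambda)}(M) \hookrightarrow \Sh_{T_{-\epsilon}(\Lambda) \cup T_\epsilon(\Lambda)}(M)$ is fully faithful with a left adjoint $\iota^*$ (the stop removal). Standard reflective-subcategory theory then yields
$$\Sh_{T_{-\epsilon}(\Lambda) \cup T_\epsilon(\Lambda)}(M) \simeq \langle \ker(\iota^*),\, \iota(\Sh_{T_\epsilon(\Lambda)}(M)) \rangle,$$
so the proposition reduces to identifying $\ker(\iota^*)$ with the essential image of $w_\Lambda$.

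For the inclusion $w_\Lambda(\msh_\Lambda(\Lambda)) \subseteq \ker(\iota^*)$, equivalently $\iota^* \circ w_\Lambda \simeq 0$, I would pick a lift $G' \in \Sh_\Lambda(M)$ of $G \in \msh_\Lambda(\Lambda)$ under $m_\Lambda$. Corollary \ref{cor:exact-tri} provides the fiber sequence $T_{-\epsilon}(G') \rightarrow T_\epsilon(G') \rightarrow w_\Lambda(G)$, and by Theorem \ref{w=ad}, $\iota^*$ is computed as a colimit of positive wrappings supported in $S^{*}M \setminus T_\epsilon(\Lambda)$. The Reeb flow of time $2\epsilon$ furnishes a cofinal such wrapping sending $T_{-\epsilon}(G')$ directly to $T_\epsilon(G')$ and coincides with the continuation appearing in the fiber sequence; hence $\iota^*$ takes this continuation to an isomorphism, so $\iota^*(w_\Lambda G) \simeq 0$.

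For the reverse inclusion, consider the single-component microlocalization
$m_{T_{-\epsilon}(\Lambda)}: \Sh_{T_{-\epsilon}(\Lambda) \cup T_\epsilon(\Lambda)}(M) \rightarrow \msh_{T_{-\epsilon}(\Lambda)}(T_{-\epsilon}(\Lambda)) \simeq \msh_\Lambda(\Lambda)$,
where the last equivalence is the contact transformation of Corollary \ref{cor:cont-trans}. Tautologically $\ker m_{T_{-\epsilon}(\Lambda)} = \iota(\Sh_{T_\epsilon(\Lambda)}(M))$, and since $m_{T_{-\epsilon}(\Lambda)}$ is a Verdier localization (restriction of a sheaf of categories to a clopen subset of its support), general theory produces a canonical equivalence $\ker(\iota^*) \simeq \msh_\Lambda(\Lambda)$ implemented by the restriction $m_{T_{-\epsilon}(\Lambda)}|_{\ker(\iota^*)}$. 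Applying Corollary \ref{cor:exact-tri} once more and using that $T_\epsilon(G')$ is invisible to $m_{T_{-\epsilon}(\Lambda)}$, we compute
$$m_{T_{-\epsilon}(\Lambda)}(w_\Lambda G) \simeq m_{T_{-\epsilon}(\Lambda)}(T_{-\epsilon}(G'))[1] \simeq G[1]$$
via the Reeb identification; thus $m_{T_{-\epsilon}(\Lambda)} \circ w_\Lambda \simeq [1]$. Combining full faithfulness of $w_\Lambda$ (Theorem \ref{thm:doubling}) with this shift identity and the equivalence $m_{T_{-\epsilon}(\Lambda)}|_{\ker(\iota^*)}$ forces $w_\Lambda: \msh_\Lambda(\Lambda) \rightarrow \ker(\iota^*)$ to be essentially surjective, hence an equivalence.

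The main technical subtlety lies in correctly combining the two adjunction pairs $\iota^* \dashv \iota$ and $m_{T_{-\epsilon}(\Lambda)}^l \dashv m_{T_{-\epsilon}(\Lambda)}$, and leveraging the Verdier localization structure to pin down the canonical identification $\ker(\iota^*) \simeq \msh_\Lambda(\Lambda)$; once this identification is fixed, compatibility with the doubling $w_\Lambda$ up to a shift follows directly from the explicit fiber sequence of Corollary \ref{cor:exact-tri}.
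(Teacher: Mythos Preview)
Your argument has two genuine gaps that prevent it from going through as written.

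First, in the forward inclusion you invoke Corollary~\ref{cor:exact-tri}, but that fiber sequence is $T_{-\epsilon} \rightarrow T_\epsilon \rightarrow w_\Lambda \circ m_\Lambda$ as endofunctors on $\Sh_\Lambda(M)$; it only exhibits $w_\Lambda(m_\Lambda G')$ for $G' \in \Sh_\Lambda(M)$, not $w_\Lambda(G)$ for an arbitrary $G \in \msh_\Lambda(\Lambda)$. Since $m_\Lambda$ is not known to be essentially surjective (indeed, the doubling construction exists precisely to bypass the lack of global lifts), you cannot ``pick a lift $G'$.'' Separately, the Reeb flow of time $2\epsilon$ is not supported away from $T_\epsilon(\Lambda)$, so it is not an admissible wrapping for the stop removal $\iota^*$; your cofinality claim needs a different isotopy and a separate argument that the continuation map becomes an isomorphism.

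Second, and more seriously, your reverse inclusion assumes that the single\-component microlocalization $m_{T_{-\epsilon}(\Lambda)}: \Sh_{T_{-\epsilon}(\Lambda) \cup T_\epsilon(\Lambda)}(M) \rightarrow \msh_{T_{-\epsilon}(\Lambda)}(T_{-\epsilon}(\Lambda))$ is a Verdier localization. Your justification---``restriction of a sheaf of categories to a clopen subset of its support''---applies only to the projection $\msh_{\Lambda'}(\Lambda') \rightarrow \msh_{\Lambda'}(T_{-\epsilon}(\Lambda))$ with $\Lambda' = T_{-\epsilon}(\Lambda) \cup T_\epsilon(\Lambda)$, not to the full map from $\Sh_{\Lambda'}(M) = \msh_{\Lambda'}(T^*M)$: the support of $\msh_{\Lambda'}$ on $T^*M$ contains the zero section $0_M$ and is connected, so the clopen argument fails. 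Knowing only $\ker m_{T_{-\epsilon}(\Lambda)} = \Sh_{T_\epsilon(\Lambda)}(M)$ and $m_{T_{-\epsilon}(\Lambda)} \circ w_\Lambda \simeq [1]$ does not force $w_\Lambda$ to be essentially surjective onto $\ker(\iota^*)$; you would still need $m_{T_{-\epsilon}(\Lambda)}|_{\ker(\iota^*)}$ to be fully faithful, which is exactly the content of the proposition. The paper avoids both pitfalls: orthogonality is obtained by the direct $\Hom$ computation $\Hom(w_\Lambda(F), T_\epsilon(G)) \simeq 0$ from Theorem~\ref{thm:doubling_leftad} and Remarks~\ref{rem:doubling_rightad}--\ref{rem:doubling_leftad}, and generation comes from the fiber sequence $m_{T_{-\epsilon}(\Lambda)}^l m_{T_{-\epsilon}(\Lambda)}(F) \rightarrow F \rightarrow \wrap_{T_\epsilon(\Lambda)}^+(F)$ (no lift needed) together with the identification $m_{T_{-\epsilon}(\Lambda)}^l = w_\Lambda$ via Remark~\ref{rem:multi-component-double}.
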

\begin{proof}
    We can check that $\Sh_{T_{-\epsilon}(\Lambda) \cup T_\epsilon(\Lambda)}(M)$ is the semi-orthogonal gluing (i.e.~Grothendieck construction) of $\msh_\Lambda(\Lambda)$ and $\Sh_{\Lambda}(M)$ along the graphical bi-module $\Gamma(m_\Lambda)$. First, we show that
    $$\langle  \msh_\Lambda(\Lambda), \Sh_{T_{\epsilon}(\Lambda)}(M) \rangle \hookrightarrow \Sh_{T_{-\epsilon}(\Lambda) \cup T_\epsilon(\Lambda)}(M), $$
    By Theorem \ref{thm:doubling_ad} and Remark \ref{rem:doubling_ad}, we know that
    $$\Hom(w_\Lambda({F}), T_{\epsilon}({G})) \simeq 0, \;\; \Hom(T_{\epsilon}({G}),  w_\Lambda({F})) \simeq \Hom(m_\Lambda(G), {F} ).$$
    This proves full faithfulness. 
    
    For essential surjectivity, consider ${F} \in \Sh_{T_{-\epsilon}(\Lambda) \cup T_\epsilon(\Lambda)}(M)$. Then Theorem \ref{thm:doubling_ad} and Remark \ref{rem:multi-component-double} implies the following fiber sequence
    $$\wrap_{T_{-\epsilon}(\Lambda) \cup T_\epsilon(\Lambda)}^+ w_{T_{-\epsilon}(\Lambda)}  m_{T_{\epsilon}(\Lambda)}({F})  \rightarrow {F} \rightarrow \wrap_{T_{\epsilon}(\Lambda)}^+(F) .$$
    Since $\wrap_{T_{-\epsilon}(\Lambda) \cup T_\epsilon(\Lambda)}^+ w_{T_{-\epsilon}(\Lambda)}  m_{T_{-\epsilon}(\Lambda)}({F}) = w_{\Lambda}  m_{T_{-\epsilon}(\Lambda)}({F})$, we can conclude that
    $$w_{\Lambda}  m_{T_{-\epsilon}(\Lambda)}({F})  \rightarrow {F} \rightarrow \wrap_{T_{\epsilon}(\Lambda)}^+(F) ,$$
    where $\wrap_{T_{-\epsilon}(\Lambda)}^+(F) \in \Sh_{T_{\epsilon}(\Lambda)}(M)$ and $m_{T_{-\epsilon}(\Lambda)}({F}) \in \msh_\Lambda(\Lambda)$. This shows the essential surjectivity and thus shows the semi-orthogonal decomposition.
\end{proof}

To see that the semi-orthogonal decomposition restricts to compact objects, we need the following lemma.

\begin{lemma}[{\cite[Corollary 4.23]{Ganatra-Pardon-Shende3}}]\label{prop:stopremoval}
Let $\Lambda$ be a subanalytic (singular) isotropic in $S^* M$.
The category $\Sh_\Lambda(M)$ is compactly generated.
If $\Lambda \subseteq \Lambda^\prime$ is an inclusion of subanalytic isotropics,
then the left adjoint of $\Sh_\Lambda(M) \hookrightarrow \Sh_{\Lambda^\prime}(M)$ sends compact objects to compact objects, i.e.,
$\Sh_{\Lambda^\prime}^c(M) \twoheadrightarrow \Sh_\Lambda^c(M)$.
\end{lemma}

\begin{corollary}\label{cor:semi-ortho}
    The semi-orthogonal decomposition can be restricted to compact objects, namely there is a semi-orthogonal decomposition
    $$\Sh_{T_{-\epsilon}(\Lambda) \cup T_\epsilon(\Lambda)}^c(M) \simeq \langle \Sh_{T_{\epsilon}(\Lambda)}^c(M), \msh_\Lambda^c(\Lambda) \rangle.$$
\end{corollary}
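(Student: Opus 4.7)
The plan is to restrict the fiber sequence from the proof of Proposition \ref{prop:semi-ortho} (or its symmetric analog) to compact objects. First, I observe that by applying the argument of Proposition \ref{prop:semi-ortho} with the Reeb direction reversed, we obtain a symmetric semi-orthogonal decomposition
$$\Sh_{T_{-\epsilon}(\Lambda) \cup T_\epsilon(\Lambda)}(M) \simeq \langle \Sh_{T_{-\epsilon}(\Lambda)}(M),\, \msh_\Lambda(\Lambda) \rangle,$$
with fiber sequence $\wrap^-_{T_{-\epsilon}(\Lambda)}(F) \to F \to w_\Lambda \circ m_{T_\epsilon(\Lambda)}(F)$. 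The relevant orthogonality $\Hom(F, w_\Lambda G) = 0$ for $F \in \Sh_{T_{-\epsilon}(\Lambda)}(M)$ is a direct computation using the local doubling formula $w_\Lambda G = \mathrm{Cofib}(T_{-\epsilon}G_0 \to T_\epsilon G_0)$ together with the perturbation trick of Proposition \ref{prop:hom_w_pm}. Via the Reeb equivalence of Corollary \ref{cor:GKS}, the left piece $\Sh_{T_{-\epsilon}(\Lambda)}(M)$ is identified abstractly with $\Sh_{T_\epsilon(\Lambda)}(M)$, matching the formulation of the corollary.

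Next I would verify that both embeddings preserve compact objects. For the doubling $w_\Lambda$, this follows from Proposition \ref{wdstopr}: the compact generators of $\msh_\Lambda^c(\Lambda)$ are corepresentatives of microstalks at smooth points of $\Lambda$, and their images under $w_\Lambda$ corepresent microstalks at the shifted smooth points in $T_{-\epsilon}(\Lambda) \cup T_\epsilon(\Lambda)$, which are again compact objects of $\Sh^c_{T_{-\epsilon}(\Lambda) \cup T_\epsilon(\Lambda)}(M)$ by the same proposition. For the tautological inclusion $\Sh_{T_{-\epsilon}(\Lambda)}(M) \hookrightarrow \Sh_{T_{-\epsilon}(\Lambda) \cup T_\epsilon(\Lambda)}(M)$, I combine Proposition \ref{prop:stopremoval} (surjectivity of the stop removal on compact objects) with Proposition \ref{wdstopr} (identification of the fiber of the quotient as the subcategory generated by $w_\Lambda$ of microstalk corepresentatives) to obtain a short exact sequence of small stable categories
$$\msh_\Lambda^c(\Lambda) \xrightarrow{w_\Lambda} \Sh^c_{T_{-\epsilon}(\Lambda) \cup T_\epsilon(\Lambda)}(M) \twoheadrightarrow \Sh^c_{T_{-\epsilon}(\Lambda)}(M),$$
whose splitting, provided by the symmetric large-scale SOD, is the tautological inclusion restricted to compacts.

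With both embeddings preserving compact objects, the symmetric fiber sequence restricts for $F \in \Sh^c_{T_{-\epsilon}(\Lambda) \cup T_\epsilon(\Lambda)}(M)$: the cofiber is $w_\Lambda \circ m_{T_\epsilon(\Lambda)}(F)$, lying in the image of compact microsheaves by full faithfulness of $w_\Lambda$ (Theorem \ref{thm:doubling}), and the fiber is compact as the cone of compact objects and has microsupport in $T_{-\epsilon}(\Lambda)$. Combined with semi-orthogonality transferred from the large-scale version, this yields the desired restricted SOD.

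The main difficulty is compact-object preservation for the tautological inclusion: a naive adjunction argument would require the right adjoint $\wrap^-$ to preserve filtered colimits, which is not obvious. The approach above sidesteps this by directly exhibiting compact generators via microstalk corepresentatives and using the splitting of the short exact sequence on small stable categories.
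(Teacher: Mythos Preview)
Your overall strategy of using the symmetric semi-orthogonal decomposition is reasonable, but the short exact sequence you write down is incorrect, and this is where the argument breaks.

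You claim a Verdier sequence
\[
\msh_\Lambda^c(\Lambda) \xrightarrow{\,w_\Lambda\,} \Sh^c_{T_{-\epsilon}(\Lambda)\cup T_\epsilon(\Lambda)}(M) \twoheadrightarrow \Sh^c_{T_{-\epsilon}(\Lambda)}(M),
\]
where the quotient is the stop removal $\wrap^+_{T_{-\epsilon}(\Lambda)}$ killing $T_\epsilon(\Lambda)$. By Proposition~\ref{wdstopr}, the fiber of this quotient is generated by microstalk corepresentatives at smooth points of $T_\epsilon(\Lambda)$, i.e.\ by $m^l_{T_\epsilon(\Lambda)}(\mu_i)$. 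These are \emph{not} $w_\Lambda(\mu_i)$ in general: the identity $m^l_{T_{-\epsilon}(\Lambda)}(\mu_i)=w_\Lambda(\mu_i)$ used in the paper works because $\wrap^+_{T_{-\epsilon}(\Lambda)\cup T_\epsilon(\Lambda)}\circ w_{T_{-\epsilon}(\Lambda)}$ is just a small positive Reeb push of $T_{-2\epsilon}(\Lambda)\cup\Lambda$ into $T_{-\epsilon}(\Lambda)\cup T_\epsilon(\Lambda)$. For $m^l_{T_\epsilon(\Lambda)}$ you would instead need to push $T_{2\epsilon}(\Lambda)$ positively all the way around to $T_{-\epsilon}(\Lambda)$, which is exactly a swappability hypothesis you do not have. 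Concretely, already for $M=\RR$ and $\Lambda=\{(0,1)\}$ one sees $w_\Lambda(1)=1_{(-\epsilon,\epsilon]}$ is not killed by $\wrap^+_{T_{-\epsilon}(\Lambda)}$, so $w_\Lambda(\msh_\Lambda^c)$ is not the kernel of your quotient.

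The paper avoids this by using the \emph{other} stop removal $\wrap^+_{T_\epsilon(\Lambda)}$ (killing $T_{-\epsilon}(\Lambda)$), whose fiber is genuinely $w_\Lambda(\msh_\Lambda^c)$ via the identity $m^l_{T_{-\epsilon}(\Lambda)}(\mu_i)=w_\Lambda(\mu_i)$. Your symmetric SOD and the observation that $w_\Lambda$ preserves compacts are fine; the problem is that the ``splitting'' you invoke and the localization sequence you write do not match up---the symmetric SOD has $\Sh_{T_{-\epsilon}(\Lambda)}(M)$ as the \emph{fiber} piece, whereas your localization places it as the \emph{quotient}. Relatedly, your claim that $w_\Lambda(\mu_i)$ corepresents microstalks ``at the shifted smooth points in $T_{-\epsilon}(\Lambda)\cup T_\epsilon(\Lambda)$'' is only half true: it corepresents microstalks at $T_{-\epsilon}(\Lambda)$, not at $T_\epsilon(\Lambda)$.
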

\begin{proof}
    Consider the fiber sequence of categories
    $$\msh_\Lambda(\Lambda) \hookrightarrow \Sh_{T_{-\epsilon}(\Lambda) \cup T_\epsilon(\Lambda)}(M) \twoheadrightarrow \Sh_{T_\epsilon(\Lambda)}(M).$$
    For $F \in \Sh_{T_{-\epsilon}(\Lambda) \cup T_\epsilon(\Lambda)}^c(M)$, we know that $\wrap_{T_\epsilon(\Lambda)}^+(F) \in \Sh_{T_\epsilon(\Lambda)}^c(M)$ since by $\wrap_{T_\epsilon(\Lambda)}^+$ is the stop removal functor as explained in Proposition \ref{prop:stopremoval} and Remark \ref{rem:stoprem-cpt}. 
    
    Moreover, by Proposition \ref{prop:stopremoval} we know that the fiber of the stop removal functor is compactly generated by the corepresentatives of microstalks at $T_{-\epsilon}(\Lambda)$ in the category $\Sh_{T_{-\epsilon}(\Lambda) \cup T_\epsilon(\Lambda)}(M)$, which by definition are
    $$m_{T_{-\epsilon}(\Lambda)}^l(\mu_i) = \wrap_{T_{-\epsilon}(\Lambda) \cup T_\epsilon(\Lambda)}^+ w_{T_{-\epsilon}(\Lambda)}(\mu_i) = w_\Lambda(\mu_i),$$ 
    where $\mu_i \in \msh_\Lambda(\Lambda)$ are corepresentatives of the microstalks in the category $\msh_\Lambda(\Lambda)$. Then when restricting to compact objects, the fiber is $\msh_\Lambda^c(\Lambda)$ (split) generated by $\mu_i \in \msh_\Lambda^c(\Lambda)$.
\end{proof}

\begin{remark}
    We explain how this is related to Sylvan's proof of the spherical adjunction \cite{SylvanOrlov}*{Section 4}. Sylvan considered a sectorial gluing of the original Weinstein sector $(X, F)$ and the $A_2$-sector $F\langle 2\rangle = (\mathbb{C}, \{e^{2\pi \sqrt{-1} j/3}\infty\}_{0\leq j\leq 2}) \times F$, and showed semi-orthogonal decompositions for the ambient sector $(X, F) \cup_F F\langle 2 \rangle$. However, the sector $(X, F) \cup_F F\langle 2 \rangle$ is exactly $(X, T_{-\epsilon}(F) \cup T_\epsilon(F))$.
\end{remark}

    Going back to semi-orthogonal decompositions and spherical adjunctions, combining Theorem \ref{thm:4periodic} and Proposition \ref{prop:semi-ortho}, we immediately get the following corollary from the spherical adjunction we have proved.

\begin{corollary}\label{cor:4-periodic-shv}
    Let $\Lambda \subseteq S^{*}M$ be a swappable Legendrian stop or full Legendrian stop. Then under the fully faithful embedding $w_\Lambda: \msh_\Lambda(\Lambda) \rightarrow \Sh_{T_{-\epsilon}(\Lambda) \cup T_\epsilon(\Lambda)}(M)$, there are semi-orthogonal decompositions
    $$\Sh_{T_{-\epsilon}(\Lambda) \cup T_\epsilon(\Lambda)}(M) \simeq \langle \msh_\Lambda(\Lambda), \Sh_{T_{\epsilon}(\Lambda)}(M)\rangle \simeq \langle \Sh_{T_{-\epsilon}(\Lambda)}(M), \msh_\Lambda(\Lambda) \rangle.$$
    %where the dual cotwist is iterated mutations ${S}_\Lambda^+ = R_{\Sh_{T_\epsilon(\Lambda)(M)}(M)} \circ R_{\Sh_{T_\epsilon(\Lambda)(M)}(M)}$.
\end{corollary}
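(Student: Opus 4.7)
The plan is to deduce the second semi-orthogonal decomposition from the first, already established in Proposition \ref{prop:semi-ortho}, by invoking the sphericality of $m_\Lambda$ proved in Theorem \ref{thm:main-fun} together with the equivalence between sphericality and 4-periodicity of semi-orthogonal decompositions recorded in Theorem \ref{thm:4periodic}. Concretely, Proposition \ref{prop:semi-ortho} realizes $\sC \coloneqq \Sh_{T_{-\epsilon}(\Lambda) \cup T_\epsilon(\Lambda)}(M)$ as the semi-orthogonal gluing $\langle \msh_\Lambda(\Lambda), \Sh_{T_\epsilon(\Lambda)}(M)\rangle$, where by the Reeb-flow quantization (Corollary \ref{cor:GKS}) we identify $\Sh_{T_\epsilon(\Lambda)}(M) \simeq \Sh_\Lambda(M)$ so that the gluing is exactly along the graphical bimodule of $m_\Lambda : \Sh_\Lambda(M) \to \msh_\Lambda(\Lambda)$.

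Next, under the swappable or full stop hypothesis, Theorem \ref{thm:main-fun} guarantees that $m_\Lambda$ is spherical, so by Theorem \ref{thm:4periodic} the semi-orthogonal decomposition $\langle \sA_0, \sA_1\rangle$ with $\sA_0 = \msh_\Lambda(\Lambda)$, $\sA_1 = \Sh_{T_\epsilon(\Lambda)}(M)$ extends to a $4$-periodic tower $\sC \simeq \langle \sA_i, \sA_{i+1}\rangle$ with $\sA_{i+4}\simeq\sA_i$. Rotating three positions, this produces $\sC \simeq \langle \sA_3, \sA_0\rangle = \langle \sA_3, \msh_\Lambda(\Lambda)\rangle$, where $\sA_3$ is equivalent to $\sA_1$ via the dual twist/cotwist autoequivalence of the spherical package.

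What remains is to recognize $\sA_3$ as the tautologically embedded subcategory $\Sh_{T_{-\epsilon}(\Lambda)}(M) \hookrightarrow \sC$. Abstractly $\Sh_{T_{-\epsilon}(\Lambda)}(M) \simeq \Sh_{T_\epsilon(\Lambda)}(M)$ through the Reeb flow $T_{-2\epsilon}$ by Corollary \ref{cor:GKS}, so both $\sA_3$ and $\Sh_{T_{-\epsilon}(\Lambda)}(M)$ are autoequivalent copies of $\Sh_\Lambda(M)$. To pin down their equality as subcategories of $\sC$, we verify the left orthogonality $\Hom(\Sh_{T_{-\epsilon}(\Lambda)}(M), w_\Lambda(\msh_\Lambda(\Lambda))) = 0$ directly: writing $H = T_{-\epsilon}\widetilde{H}$ with $\widetilde{H} \in \Sh_\Lambda(M)$ and $\widetilde{F} = m_\Lambda^l F \in \Sh_\Lambda(M)$, Corollary \ref{cor:exact-tri} identifies $w_\Lambda F = w_\Lambda m_\Lambda\widetilde{F} = \mathrm{Cofib}(T_{-\epsilon}\widetilde{F} \to T_\epsilon \widetilde{F})$, and applying $\Hom(T_{-\epsilon}\widetilde H,-)$ expresses the result as $\mathrm{Cofib}(\Hom(\widetilde H,\widetilde F) \to \Hom(\widetilde H, T_{2\epsilon}\widetilde F))$, which vanishes by the perturbation trick of Proposition \ref{prop:hom_w_pm} provided $2\epsilon < c(\Lambda)$. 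Combining this orthogonality with the essential surjectivity given by the mutated fiber sequence identifies $\sA_3$ with $\Sh_{T_{-\epsilon}(\Lambda)}(M)$.

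The main obstacle is the last identification step: the $4$-periodicity alone guarantees only that $\sA_3$ is some autoequivalent copy of $\Sh_\Lambda(M)$ inside $\sC$, and it must be matched with the specific tautological embedding $\Sh_{T_{-\epsilon}(\Lambda)}(M) \hookrightarrow \sC$. This matching is made possible because the spherical (dual) cotwist is already known to be the wrap-once functor (Definition \ref{def:wrap_once_functors} and Proposition \ref{prop:wrap-local-to-global}), so the abstract autoequivalence from $4$-periodicity is implemented by the Reeb wrapping, which is precisely what moves $T_\epsilon(\Lambda)$ to $T_{-\epsilon}(\Lambda)$. Alternatively, one may bypass the $4$-periodic machinery and prove the second semi-orthogonal decomposition directly, dually to Proposition \ref{prop:semi-ortho}: build the triangle $\wrap_{T_{-\epsilon}(\Lambda)}^-(F) \to F \to w_\Lambda m_{T_\epsilon(\Lambda)}(F)$ using Theorem \ref{thm:doubling_rightad} ($m_\Lambda^r = \wrap_\Lambda^- \circ w_\Lambda$), combined with the orthogonality computation above; the sphericality assumption then enters through the identification $\wrap_{T_{-\epsilon}(\Lambda) \cup T_\epsilon(\Lambda)}^- w_{T_\epsilon(\Lambda)} = w_\Lambda$ and the recognition that the resulting cofiber lies in $w_\Lambda(\msh_\Lambda(\Lambda))$.
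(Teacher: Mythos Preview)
Your overall strategy matches the paper's: the corollary is stated as an immediate consequence of combining Proposition~\ref{prop:semi-ortho}, Theorem~\ref{thm:main-fun}, and Theorem~\ref{thm:4periodic}. The paper does not spell out the identification of the mutated subcategory with $\Sh_{T_{-\epsilon}(\Lambda)}(M)$ at this point, but handles it explicitly in the proof of Corollary~\ref{cor:twist}, where the second semi-orthogonal decomposition is realized directly via the fiber sequence $\wrap_{T_{-\epsilon}(\Lambda)}^- F \to F \to m^r_{T_\epsilon(\Lambda)} m_{T_\epsilon(\Lambda)}(F)$ together with the identification $\wrap_{T_{-\epsilon}(\Lambda)\cup T_\epsilon(\Lambda)}^- w_{T_\epsilon(\Lambda)} = w_\Lambda$. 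This is exactly your ``alternative'' approach in the last paragraph, so the two routes coincide.

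There is one genuine gap in your explicit orthogonality computation. You set $\widetilde{F}=m_\Lambda^l F$ and then invoke Corollary~\ref{cor:exact-tri} to write $w_\Lambda F = w_\Lambda m_\Lambda \widetilde{F}$. Since $w_\Lambda$ is fully faithful, this forces $F = m_\Lambda m_\Lambda^l F$, i.e.\ that $m_\Lambda^l$ is fully faithful. But in a spherical adjunction the dual twist $T' = \mathrm{Fib}(\id_{\msh_\Lambda(\Lambda)} \to m_\Lambda m_\Lambda^l)$ is an \emph{autoequivalence}, hence nonzero whenever $\msh_\Lambda(\Lambda)\neq 0$; so $m_\Lambda m_\Lambda^l \neq \id$ in general and your appeal to Corollary~\ref{cor:exact-tri} does not cover all $F\in\msh_\Lambda(\Lambda)$. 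The fix is easy: use the local \emph{definition} of $w_\Lambda(F)$ from Section~\ref{sec:doubling-local} (valid for arbitrary $F$, not only those in the image of $m_\Lambda$) as $\mathrm{Cofib}\big(T_{-\epsilon}(j_{\alpha*}F_\alpha)|_{V_\alpha}\to T_\epsilon(j_{\alpha*}F_\alpha)|_{V_\alpha}\big)$, and then run exactly your perturbation-trick computation chart-by-chart. Alternatively, abandon the orthogonality check entirely and go straight to the direct dual argument you sketch at the end, which is what the paper does.
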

\begin{comment}
\begin{conjecture}
    When $\Lambda \subseteq S^{*}M$ be a swappable Legendrian stop or full Legendrian stop, we expect that the simplicial $\infty$-category
    $$S_n(m_\Lambda^l) = \Sh_{\bigcup_{j=0}^{n}T_{j\epsilon}(\Lambda)}(M)$$
    can be lifted to a paracyclic $\infty$-category.
\end{conjecture}
\end{comment}

    Consider $\msh_{\Lambda}(\Lambda)$ and $\Sh_{\widehat{\Lambda}_{\cup, \epsilon}}(M)$ where 
    $$\widehat{\Lambda}_{\cup, \epsilon} = (\Lambda \times \bR_+) \cup (\Lambda_\epsilon \times \bR_+) \cup \bigcup\nolimits_{0 \leq s \leq \epsilon} \pi(\Lambda_s).$$
    Consider the fully faithful embedding given by the doubling functor. We can give an explicit characterization of its essential image.

\begin{proposition}\label{prop:microsheaf-sheaf}
    Let $\Lambda \subseteq S^*M$ be a compact subanalytic Legendrian. Then for $\epsilon > 0$ sufficiently small, the doubling functor gives an equivalence 
    $$w_\Lambda: \msh_{\Lambda}(\Lambda) \xrightarrow{\sim} \Sh_{\widehat\Lambda_{\cup,\epsilon}}(M).$$
\end{proposition}
\begin{proof}
    Consider the complement of the front projection $M \setminus \pi(\Lambda)$. We fix a collection of points $\{x_j \mid j \in \pi_0(M \setminus \pi(\Lambda))\}$ for all the connected component. Then for $\epsilon > 0$ sufficiently small, 
    $$\{x_j \mid j \in \pi_0(M \setminus \pi(\Lambda))\} \cap \bigcup\nolimits_{0 \leq s\leq \epsilon}\pi(\Lambda_s) = \varnothing.$$
    Note that there exists a deformation retract from $\bigcup_{0 \leq s\leq \epsilon}\pi(\Lambda_s)$ to $\pi(\Lambda)$. Thus, for $F \in \Sh_{\Lambda \cup \Lambda_\epsilon}(M)$, when $F_{x_j} = 0$, we can conclude that $F \in \Sh_{\widehat\Lambda_{\cup,\epsilon}}(M)$.
    
    Consider the functor which induces the semi-orthogonal decomposition $\wrap_\Lambda^-: \Sh_{\Lambda \cup \Lambda_\epsilon}(M) \rightarrow \Sh_\Lambda(M)$. The functor can be realized by a cofinal sequence of negative wrappings. Consider a family of cut-off functions $H_k$ that are equal to $1$ on a neighbourhood $\bigcup_{\epsilon/k \leq s \leq \epsilon}\pi(\Lambda_s)$ and $0$ outside a neighbourhood of $\bigcup_{\epsilon/2k \leq s \leq \epsilon}\pi(\Lambda_s)$. Let $\varphi_{k}: S^*M \rightarrow S^*M$ be the decreasing sequence of negative Hamiltonian flow such that
    $$\varphi_{k}(\Lambda) = \Lambda, \; \varphi_{k}(\Lambda_\epsilon) = \Lambda_{\epsilon/k}, \; \varphi_{k}(S^*_{x_j} M) = S^*_{x_j}M.$$
    Then $\wrap_\Lambda^-(F)$ can be realized by the nearby cycle of $F$ with respect to the decreasing family of Hamiltonian flows. By the locality property of the Hamiltonian action, we know that 
    $$\wrap_\Lambda^-(F)_{x_j} = \lim_{k \to \infty} \varphi_k(F)_{x_j} = F_{x_j} = F_{x_j}.$$
    When $\wrap_\Lambda^-(F)_{x_j} = 0$, we know $F_{x_j} = 0$ as well, which implies that $F \in \Sh_{\widehat\Lambda_{\cup,\epsilon}}(M)$. Therefore, the kernel of $\wrap_\Lambda^-: \Sh_{\Lambda \cup \Lambda_\epsilon}(M) \to \Sh_\Lambda(M)$ is given by $\Sh_{\widehat\Lambda_{\cup,\epsilon}}(M)$.
\end{proof}

    In fact, the semi-orthogonal decompositions
    $$\Sh_{T_{-\epsilon}(\Lambda) \cup T_\epsilon(\Lambda)}(M) = \langle \msh_\Lambda(\Lambda), \Sh_{T_\epsilon(\Lambda)}(M) \rangle = \langle \Sh_{T_{-\epsilon}(\Lambda)}(M), \msh_\Lambda(\Lambda) \rangle$$ also provide (trivial) examples of spherical pairs, which we now introduce.

    For a diagram of $\infty$-functors over stable $\infty$-categories
    $${\sB}_- \xleftarrow{F_-} {\sC} \xrightarrow{F_+} {\sB}_+$$
    where $F_\pm$ admit fully faithful left and adjoints $F_\pm^{l,r}$, we can write
    $${\sA}_- = \ker(F_+) = ^\perp(F_+^r{\sB}_+) = (F_+^l{\sB}_+)^\perp, \;\; {\sA}_+ = \ker(F_-) = ^\perp(F_-^r{\sB}_-) = (F_-^l{\sB}_-)^\perp,$$
    and write $\iota_\pm: {\sA}_\pm \rightarrow {\sC}$ (which admits left and right adjoints $\iota_\pm^*$ and $\iota_\pm^!$). The following definition is essentially a reinterpretation of the conditions in Theorem \ref{thm:4periodic}.

\begin{definition}
    A diagram of $\infty$-functors over stable $\infty$-categories
    $${\sB}_- \xleftarrow{F_-} {\sC} \xrightarrow{F_+} {\sB}_+$$
    is called a spherical pair if $F_\pm$ admit fully faithful left and right adjoints $F_\pm^{l,r}$ such that
    \begin{enumerate}
      \item the compositions $F^l_+ \circ F_-: {\sB}_+ \rightarrow {\sB}_-, \; F^l_- \circ F_+: {\sB}_- \rightarrow {\sB}_+$ are equivalences;
      \item the compositions $\iota^!_+ \circ \iota_-: {\sA}_+ \rightarrow {\sA}_-, \; \iota^!_- \circ \iota_+: {\sA}_- \rightarrow {\sA}_+$ are equivalences.
    \end{enumerate}
\end{definition}

    Then Corollary \ref{cor:4-periodic-shv} immediately implies the following proposition.
    
\begin{proposition}\label{prop:sphere-pair-tri}
    Let $\Lambda \subset S^{*}M$ be a swappable Legendrian stop or full Legendrian stop. Then there exists spherical pairs of the form
    $$\msh_{T_{-\epsilon}(\Lambda)}(T_{-\epsilon}(\Lambda)) \leftarrow \Sh_{T_{-\epsilon}(\Lambda) \cup T_\epsilon(\Lambda)}(M) \rightarrow \msh_{T_\epsilon(\Lambda)}(T_{-\epsilon}(\Lambda)).$$
    Meanwhile, there is also a spherical pair
    $$\Sh_{T_{-\epsilon}(\Lambda)}(M) \rightarrow \Sh_{T_{-\epsilon}(\Lambda) \cup T_\epsilon(\Lambda)}(M) \leftarrow \Sh_{T_\epsilon(\Lambda)}(M).$$
\end{proposition}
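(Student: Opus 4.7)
The plan is to reinterpret the 4-periodic semi-orthogonal decomposition of Corollary~\ref{cor:4-periodic-shv} as the data of a spherical pair. Set $\sC \coloneqq \Sh_{T_{-\epsilon}(\Lambda) \cup T_\epsilon(\Lambda)}(M)$ and take $F_\pm \coloneqq m_{T_{\pm\epsilon}(\Lambda)} : \sC \to \msh_{T_{\pm\epsilon}(\Lambda)}(T_{\pm\epsilon}(\Lambda))$ to be the microlocalizations at a single component of the doubled Legendrian, so that the kernels $\sA_\pm \coloneqq \ker F_\mp = \Sh_{T_{\pm\epsilon}(\Lambda)}(M)$ match the $\sA$-categories appearing in the second diagram of the statement.

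First I would verify that each $F_\pm$ admits fully faithful left and right adjoints. The two decompositions $\sC \simeq \langle \msh_\Lambda(\Lambda), \Sh_{T_\epsilon(\Lambda)}(M) \rangle \simeq \langle \Sh_{T_{-\epsilon}(\Lambda)}(M), \msh_\Lambda(\Lambda) \rangle$ from Corollary~\ref{cor:4-periodic-shv} identify $w_\Lambda$, combined with the Reeb identification $\msh_\Lambda(\Lambda) \simeq \msh_{T_{\pm\epsilon}(\Lambda)}(T_{\pm\epsilon}(\Lambda))$, as a fully faithful left adjoint of $F_-$ (first decomposition) and as a fully faithful right adjoint of $F_+$ (second). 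The remaining two adjoints come from right and left mutation inside the 4-periodic chain produced by Theorem~\ref{thm:4periodic}: they are fully faithful embeddings of the same $\msh$-categories into $\sC$, differing from $w_\Lambda$ by the spherical (co)twists $S_\Lambda^\pm$ established in Section~\ref{sec:sphere-crit}.

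Next I would check the four equivalence conditions. By construction, each composition $F_\mp \circ F_\pm^l : \sB_\pm \to \sB_\mp$ becomes, after the Reeb identification $\sB_\pm \simeq \msh_\Lambda(\Lambda)$, either the identity (whenever the two adjoined functors come from the same semi-orthogonal decomposition in Corollary~\ref{cor:4-periodic-shv}, so that one is the right inverse of the other by fully faithfulness of the embedding) or the spherical twist/cotwist $S_\Lambda^\pm$ coming from the mutation, which is an autoequivalence by Propositions~\ref{prop:full-condition2} and~\ref{prop:swap-condition2}. The analogous compositions $\iota_\pm^! \circ \iota_\mp : \sA_\mp \to \sA_\pm$ reduce under the Reeb identification $\Sh_{T_{\pm\epsilon}(\Lambda)}(M) \simeq \Sh_\Lambda(M)$ to the same wrap-once functors, so are equivalences for the same reason.

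Finally, the second diagram $\Sh_{T_{-\epsilon}(\Lambda)}(M) \to \sC \leftarrow \Sh_{T_\epsilon(\Lambda)}(M)$ simply records the fully faithful inclusions $\iota_\pm$, which is the dual $\sA$-perspective on the same spherical pair; in the Kapranov--Schechtman picture of perverse sch\"{o}bers \cite{KapraSchober}, the data $(\sB_\pm, F_\pm)$ and $(\sA_\pm, \iota_\pm)$ encode the same perverse sheaf on the disk with a double cut, so the second description follows automatically from the first. The hardest part will be producing the two mutation semi-orthogonal decompositions beyond Corollary~\ref{cor:4-periodic-shv} and identifying the mutated embeddings of $\msh_\Lambda(\Lambda)$ concretely enough to recognize the resulting autoequivalences as the geometric wrap-once functors; this is where the full strength of sphericality, rather than just the existence of a 4-periodic chain, is essential.
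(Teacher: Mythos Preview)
Your proposal is correct and follows the same route as the paper. The paper's proof is a single sentence: Corollary~\ref{cor:4-periodic-shv} ``immediately implies'' the proposition, because the definition of spherical pair is explicitly introduced as ``essentially a reinterpretation of the conditions in Theorem~\ref{thm:4periodic}'', so that a 4-periodic semi-orthogonal decomposition is tautologically a spherical pair. Your argument unpacks exactly this equivalence in the specific case at hand, and the concrete identification of the mutation autoequivalences with the wrap-once functors that you flag as ``the hardest part'' is in fact carried out separately in the paper as Corollary~\ref{cor:twist}, not as part of the proof of the proposition itself --- for the statement you only need that the relevant compositions are equivalences, which is already guaranteed abstractly by the 4-periodicity.
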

\begin{remark}
    We can restrict the spherical pairs to the subcategories of compact or proper objects as explained in Section \ref{sec:prop-cpt} and Corollary \ref{cor:semi-ortho}.
\end{remark}

    Moreover, from the description in Theorem \ref{thm:4periodic}, we can show that the spherical twists (resp.~dual twists) are simply the positive (resp.~negative) monodromy functor, under the inclusion by the doubling functor.
    
\begin{corollary}\label{cor:twist}
    Let $\Lambda \subseteq S^{*}M$ be a swappable Legendrian stop or full Legendrian stop. Under the inclusion by the doubling functor $w_\Lambda: \msh_\Lambda(\Lambda) \hookrightarrow \Sh_{T_{-\epsilon}(\Lambda) \cup T_\epsilon(\Lambda)}(M)$, the spherical dual twist is computed by
    $$S_{T_{-\epsilon}(\Lambda) \cup T_\epsilon(\Lambda)}^- \circ S_{T_{-\epsilon}(\Lambda) \cup T_\epsilon(\Lambda)}^-|_{w_\Lambda(\msh_\Lambda(\Lambda))}[2].$$
    Similarly, the spherical dual cotwist is computed by $$S_{T_{-\epsilon}(\Lambda) \cup T_\epsilon(\Lambda)}^+ \circ S_{T_{-\epsilon}(\Lambda) \cup T_\epsilon(\Lambda)}^+|_{\Sh_{T_\epsilon(\Lambda)}(M)} = S_{T_\epsilon(\Lambda)}^+.$$
\end{corollary}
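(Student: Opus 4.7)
The plan is to deduce this corollary by combining the 4-periodic semi-orthogonal decomposition of Theorem \ref{thm:4periodic} with an explicit identification of the mutation functors as wrap-once functors on the ambient category $\sC := \Sh_{T_{-\epsilon}(\Lambda) \cup T_\epsilon(\Lambda)}(M)$. Since $m_\Lambda$ is spherical by Theorem \ref{thm:main-fun}, and $\sC$ is the semi-orthogonal gluing $\langle \msh_\Lambda(\Lambda), \Sh_{T_\epsilon(\Lambda)}(M) \rangle$ by Proposition \ref{prop:semi-ortho}, Theorem \ref{thm:4periodic} endows $\sC$ with a 4-periodic SOD $\sC = \langle \sA_i, \sA_{i+1} \rangle$, $i \in \Z/4$, with $\sA_0 := w_\Lambda(\msh_\Lambda(\Lambda))$, $\sA_1 := \Sh_{T_\epsilon(\Lambda)}(M)$, $\sA_3 := \Sh_{T_{-\epsilon}(\Lambda)}(M)$ (Corollary \ref{cor:4-periodic-shv}), and $\sA_2 := \sA_1^\perp$ a second embedding of $\msh_\Lambda(\Lambda)$. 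The dual twist on $\sA_0$ (resp.~the dual cotwist on $\sA_1$) is then the iterated right (resp.~left) mutation by that theorem.

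The key step is to identify one mutation step with one application of $S_\sC^\pm$ restricted to the appropriate SOD factor. This uses the Sato--Sabloff fiber sequences (Corollary \ref{cor:sato-sab}) applied to the ambient Legendrian $T_{-\epsilon}(\Lambda) \cup T_\epsilon(\Lambda)$:
\[m_\sC^l m_\sC \to \id_\sC \to S_\sC^+, \qquad S_\sC^- \to \id_\sC \to m_\sC^r m_\sC.\]
These are precisely the truncation triangles for the two SODs in Corollary \ref{cor:4-periodic-shv}, and restricting to the relevant factor identifies $S_\sC^-|_{\sA_0}$ (resp.~$S_\sC^+|_{\sA_1}$) with a single mutation through $\sA_0$ (resp.~$\sA_1$). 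Composing twice then yields the dual twist (resp.~dual cotwist) by Theorem \ref{thm:4periodic}, and the total cohomological shift $[2]$ in the dual twist formula reflects the two successive rotational mutations in the 4-periodic structure.

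For the sharper identity $S_\sC^+ \circ S_\sC^+|_{\Sh_{T_\epsilon(\Lambda)}(M)} = S_{T_\epsilon(\Lambda)}^+$ I would also give a direct geometric argument. For $F \in \Sh_{T_\epsilon(\Lambda)}(M)$, the first application of $S_\sC^+$ uses a cofinal positive wrapping supported away from $T_{-\epsilon}(\Lambda) \cup T_\epsilon(\Lambda)$, pushing microsupport past the intermediate copy $T_{-\epsilon}(\Lambda)$ into $\sA_3$. The second application continues the wrapping, and by Lemma \ref{lem:continuation} the two consecutive cofinal wrappings concatenate into a single cofinal positive wrapping supported only away from $T_\epsilon(\Lambda)$; the composite then computes $S_{T_\epsilon(\Lambda)}^+$ after absorbing the extra Reeb shift via the perturbation trick of Proposition \ref{prop:hom_w_pm}.

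The main obstacle will be matching the geometric wrap-once functors $S_\sC^\pm$ with the algebraic mutations of Theorem \ref{thm:4periodic}, together with careful bookkeeping of the shifts: one must verify that the two fiber sequences above align with the mutation triangles determined by the two SODs of Corollary \ref{cor:4-periodic-shv}, which then forces the cyclic identification of the four SOD factors and the shift $[2]$ produced by two iterations. Once this identification is in place, the corollary follows formally by composing two mutation steps.
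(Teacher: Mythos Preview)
Your overall strategy---reduce to Theorem \ref{thm:4periodic} and identify each mutation step with a restricted wrap-once functor $S_\sC^\pm$---is exactly what the paper does. However, the central identification step contains a genuine gap.

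You invoke the Sato--Sabloff fiber sequences for $m_\sC$, microlocalization along the \emph{entire} doubled Legendrian $T_{-\epsilon}(\Lambda)\cup T_\epsilon(\Lambda)$, and assert that
\[
m_\sC^l m_\sC \to \id_\sC \to S_\sC^+,\qquad S_\sC^- \to \id_\sC \to m_\sC^r m_\sC
\]
are the truncation triangles for the two SODs of Corollary \ref{cor:4-periodic-shv}. They are not. The functor $m_\sC$ lands in $\msh_{T_{-\epsilon}(\Lambda)\cup T_\epsilon(\Lambda)}(T_{-\epsilon}(\Lambda)\cup T_\epsilon(\Lambda))\simeq \msh_\Lambda(\Lambda)\oplus\msh_\Lambda(\Lambda)$, so the image of $m_\sC^l m_\sC$ in $\sC$ is not the single factor $w_\Lambda(\msh_\Lambda(\Lambda))=\sA_0$; consequently $S_\sC^+$ is not the cofiber of the projection onto $\sA_0$. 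What the paper actually uses (via Remarks \ref{rem:multi-component-sato}, \ref{rem:multi-component-ad}, \ref{rem:multi-component-double}) is microlocalization along the \emph{single} component $T_{-\epsilon}(\Lambda)$: the fiber sequences
\[
m_{T_{-\epsilon}(\Lambda)}^l m_{T_{-\epsilon}(\Lambda)}(F)\to F\to \wrap_{T_\epsilon(\Lambda)}^+F,\qquad
\wrap_{T_{-\epsilon}(\Lambda)}^-F\to F\to m_{T_{-\epsilon}(\Lambda)}^r m_{T_{-\epsilon}(\Lambda)}(F)
\]
\emph{are} the truncation triangles for the two SODs (this is how Proposition \ref{prop:semi-ortho} is proved). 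The right mutation on $\sA_0$ is then computed, using Theorem \ref{thm:doubling_rightad} and Remark \ref{rem:multi-component-double}, as
\[
R_{\msh_\Lambda(\Lambda)}\bigl(w_\Lambda m_{T_{-\epsilon}(\Lambda)}(F)\bigr)
= m_{T_{-\epsilon}(\Lambda)}^r m_{T_{-\epsilon}(\Lambda)}(F)[1]
= \wrap_\sC^- w_{T_{-\epsilon}(\Lambda)} m_{T_{-\epsilon}(\Lambda)}(F)[1]
= S_\sC^-\, w_\Lambda m_{T_{-\epsilon}(\Lambda)}(F)[1],
\]
which is where the shift $[1]$ per mutation step (hence $[2]$ after two steps) actually comes from. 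Your sketch produces the shift only by fiat.

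Your separate geometric argument for the identity $S_\sC^+\circ S_\sC^+|_{\Sh_{T_\epsilon(\Lambda)}(M)}=S_{T_\epsilon(\Lambda)}^+$ is closer in spirit to the paper's final paragraph, but the appeal to Lemma \ref{lem:continuation} is misplaced: that lemma concerns a pushout square of GKS kernels for $\min/\max$ of two Hamiltonians, not concatenation of cofinal wrappings. The paper instead identifies the left mutation on each factor with the stop-removal functor $\wrap_{T_{\mp\epsilon}(\Lambda)}^+$ (again using the single-component truncation triangles above), so that the composite becomes $\wrap_{T_{-\epsilon}(\Lambda)}^+\circ\wrap_{T_\epsilon(\Lambda)}^+$, which is then recognized directly as $S_{T_\epsilon(\Lambda)}^+$.
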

\begin{proof}
    By Proposition \ref{prop:sphere-pair-tri} and Theorem \ref{thm:4periodic}, it suffices to show that the right mutation functor
    $$R_{\msh_\Lambda(\Lambda)}: w_\Lambda(\msh_\Lambda(\Lambda)) \hookrightarrow \Sh_{T_{-\epsilon}(\Lambda) \cup T_\epsilon(\Lambda)}(M) \rightarrow w_\Lambda(\msh_\Lambda(\Lambda))$$
    is the functor $S_{T_{-\epsilon}(\Lambda) \cup T_\epsilon(\Lambda)}^-|_{w_\Lambda(\msh_\Lambda(\Lambda))}[1]$. Consider the pair of semi-orthogonal decompositions
    $$\Sh_{T_{-\epsilon}(\Lambda) \cup T_\epsilon(\Lambda)}(M) = \left< \msh_\Lambda(\Lambda), \Sh_{T_\epsilon(\Lambda)}(M) \right> = \left< \Sh_{T_{-\epsilon}(\Lambda)}(M), \msh_\Lambda(\Lambda) \right>.$$
    The first semi-orthogonal decomposition is realized in Proposition \ref{prop:semi-ortho} by $m_{T_{-\epsilon}(\Lambda)}^l m_{T_{-\epsilon}(\Lambda)}(F) \rightarrow F \rightarrow \mathfrak{W}_{T_\epsilon(\Lambda)}^+ F$. Following a similar argument, the second semi-orthogonal decomposition is realized by the fiber sequence
    $$\wrap_{T_{-\epsilon}(\Lambda)}^- F \rightarrow F \rightarrow m_{T_{-\epsilon}(\Lambda)}^r m_{T_{-\epsilon}(\Lambda)}(F).$$
    Therefore, one can show that the right mutation functor associated to the pair of semi-orthogonal decompositions using Theorem \ref{thm:doubling_ad} and Remark \ref{rem:multi-component-double}
    \begin{align*}
        R_{\msh_\Lambda(\Lambda)}&(w_{\Lambda} m_{T_{-\epsilon}(\Lambda)}(F)) = m_{T_{-\epsilon}(\Lambda)}^r m_{T_{-\epsilon}(\Lambda)} \circ m_{T_{-\epsilon}(\Lambda)}^l m_{T_{-\epsilon}(\Lambda)}(F)[1] \\
        &= m_{T_{-\epsilon}(\Lambda)}^r m_{T_{-\epsilon}(\Lambda)} \circ m_{T_{-\epsilon}(\Lambda)}^l m_{T_{-\epsilon}(\Lambda)}(F)[1] = m_{T_{-\epsilon}(\Lambda)}^r m_{T_{-\epsilon}(\Lambda)}(F)[1] \\
        &= \wrap_{T_{-\epsilon}(\Lambda) \cup T_\epsilon(\Lambda)}^- w_{T_{-\epsilon}(\Lambda)} m_{T_{-\epsilon}(\Lambda)}(F)[1] = S_{T_{-\epsilon}(\Lambda) \cup T_\epsilon(\Lambda)}^- w_{\Lambda} m_{T_{-\epsilon}(\Lambda)}(F)[1].
    \end{align*}
    One can also compute $R_{\msh_\Lambda(\Lambda)^{\perp\perp}}$ in the same way, which implies the result on spherical twists.
    
    For spherical cotwists, it suffices to show that the left mutation functor is $S_{T_{-\epsilon}(\Lambda) \cup T_\epsilon(\Lambda)}^+|_{\Sh_{T_\epsilon(\Lambda)}(M)} $. Then using the above semi-orthogonal decompositions, we have
    \begin{align*}
        L_{\Sh_{T_\epsilon(\Lambda)}(M)}(\wrap_{T_\epsilon(\Lambda)}^-F) = \wrap_{T_{-\epsilon}(\Lambda)}^+ \circ \wrap_{T_\epsilon(\Lambda)}^-F = S_{T_{-\epsilon}(\Lambda) \cup T_\epsilon(\Lambda)}^+ \circ \wrap_{T_\epsilon(\Lambda)}^-F.
    \end{align*}
    One also can compute $L_{\Sh_{T_{-\epsilon}(\Lambda)}(\Lambda)}$ in the same way. This implies the result on spherical dual cotwists. Finally, we note that from the computation
    $$S_{T_{-\epsilon}(\Lambda) \cup T_\epsilon(\Lambda)}^+ \circ S_{T_{-\epsilon}(\Lambda) \cup T_\epsilon(\Lambda)}^+(\wrap_{T_\epsilon(\Lambda)}^-F) = \wrap_{T_{-\epsilon}(\Lambda)}^+ \circ \wrap_{T_\epsilon(\Lambda)}^+(\wrap_{T_\epsilon(\Lambda)}^-F) = S_{T_\epsilon(\Lambda)}^+(\wrap_{T_\epsilon(\Lambda)}^-F),$$
    which confirms the last assertion.
\end{proof}
\begin{remark}
    Note that the functor $S_{T_{-\epsilon}(\Lambda) \cup T_\epsilon(\Lambda)}^+$ intertwines $T_{-\epsilon}(\Lambda)$ and $T_\epsilon(\Lambda)$ by a positive isotopy. Hence applying the functor twice has the effect of the monodromy functor.
\end{remark}
    
    These seem to be trivial examples of spherical pairs. In the following section, we will provide some examples that are less trivial, namely pairs of different subanalytic Legendrians.

\subsection{Spherical pairs from variation of skeleta}
    In general, there are subanalytic Legendrian stops that give rise to spherical pairs which are not necessarily homeomorphic. For example, Donovan-Kuwagaki \cite{DonovanKuwa} have considered two specific examples from homological mirror symmetry of toric stacks. They presented equivalences between sheaf categories
    $$\Sh^c_{\Lambda_+}(T^n) \xrightarrow{\sim} \Sh^c_{\Lambda_-}(T^n)$$
    that are mirror to certain flop-flop equivalences of the mirror toric stacks defined by GIT quotients \cite{Flopsphere} (more generally, it is discussed in \cite{ZhouVarI} how different GIT quotients are related by semi-orthogonal decompositions).

\begin{remark}
    Unlike in algebraic geometry, where the equivalence is between derived categories of varieties related by flops that are only birational equivalent, in symplectic geometry, we expect that the Weinstein sectors $(T^*T^n, \Lambda_-)$ and $(T^*T^n, \Lambda_+)$ are Weinstein homotopic (see \cite{CE,WeinRevisit} for the definition), even though the Lagrangian skeleta and associated Landau-Ginzburg potentials \cite{RSTZSkel,ZhouSkel,GammageShende} are different. This reflects the flexibility on the symplectic side.
\end{remark}

    Here we provide a general criterion for this type of equivalences between microlocal sheaf categories. In known examples of such equivalences between microlocal sheaf categories, the Legendrian stops are required to be related by non-characteristic deformations \cite{DonovanKuwa,ZhouVarI}. However, we provide a criterion that does not a priori require existence of non-characteristic deformations thanks to the equivalences from spherical adjunctions (though it often turns out that the Legendrian stops are related by such deformation).

    Note that any two Legendrian stops are generically disjoint after a small contact perturbation.

\begin{definition}
    Let $\Lambda_\pm \subseteq S^{*}M$ be two disjoint closed subanalytic Legendrian stops. Suppose there exists both a positive and a negative Hamiltonian flow that sends $\Lambda_+$ to an arbitrary small neighbourhood of $\Lambda_-$, and there also exists a positive and a negative Hamiltonian flow that sends $\Lambda_-$ to an arbitrary small neighbourhood of $\Lambda_+$. Then $(\Lambda_-, \Lambda_+)$ is called a swappable pair.
\end{definition}

    Figure \ref{fig:swap-pair} explains the example of a swappable pair $\Lambda_\pm \subseteq S^*T^2$ which is considered in \cite{DonovanKuwa,ZhouVarI}, though from a different perspective.

\begin{figure}[h!]
    \centering
    \includegraphics[width=0.7\textwidth]{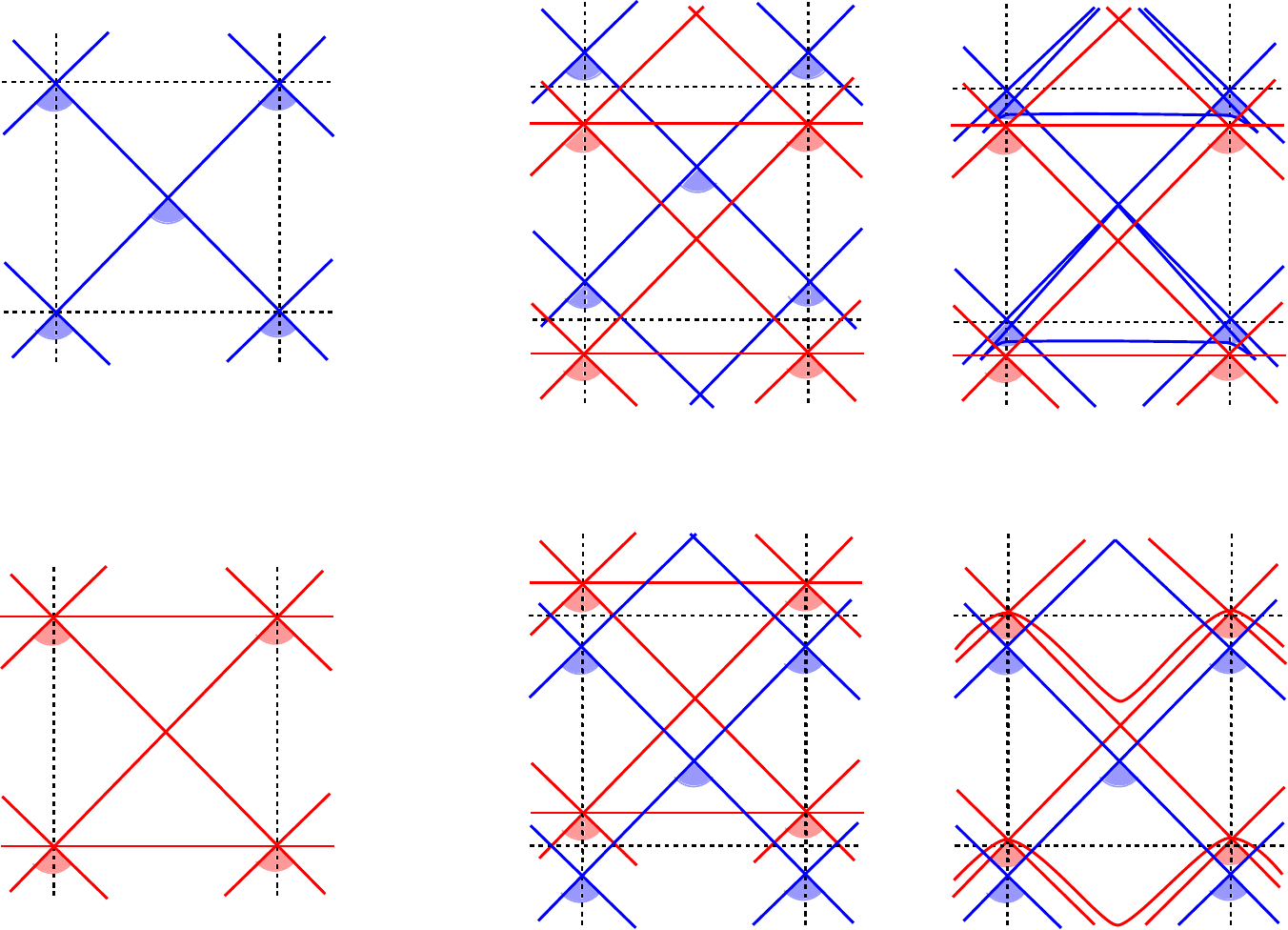}
    \caption{The figure on the left illustrates the swappable pair $\Lambda_\pm \subset S^*T^2$ mirror to the flops associated to $X_0 = \mathbb{C}^2/\mathbb{Z}_2$, where all the covectors are pointing downward. The figure on the right illustrates a cofinal wrapping that sends $T_{-\epsilon}(\Lambda)$ to a neighbourhood of $T_\epsilon(\Lambda)$ and one that sends $T_\epsilon(\Lambda_+)$ to a neighbourhood of $T_{-\epsilon}(\Lambda)$.}\label{fig:swap-pair}
\end{figure}

\begin{comment}
\begin{example}\label{ex:pair}
    We can consider the mirror to the flops associated to $X_0 = \mathbb{C}^2/\mathbb{Z}_2$. On the one hand, consider the Deligne-Mumford quotient stack
    $$X_- = [\mathbb{C}^2/\mathbb{Z}_2].$$
    On the other hand, consider the minimal resolution
    $$X_+ = \widetilde{\mathbb{C}^2 / \mathbb{Z}_2} = \mathrm{Tot}(\mathcal{O}_{\mathbb{CP}^1}(-2)).$$
    Under homological mirror symmetry (or coherent-constructible correspondence) of toric stacks \cite{KuwaCCC}, the mirrors are Weinstein sectors $(T^*T^2, \Lambda_\pm)$ as shown in Figure \ref{fig:swap-pair}. One can show that after a small Reeb perturbation, $(T_{-\epsilon}(\Lambda_-), T_\epsilon(\Lambda_+))$ and $(T_\epsilon(\Lambda_-), T_{-\epsilon}(\Lambda_+))$ are both swappable pairs of Legendrian stops, as shown in Figure \ref{fig:swap-pair}.
    
    We can also consider the mirror to the Atiyah flops associated to $X_0 = \{(x, y, z, w) | zy - zw = 0\} \subset \mathbb{C}^4$. There are two crepant resolutions along two exceptional rational curves
    $$X_\pm = \mathrm{Tot}(\mathcal{O}_{E_\pm}(-1)^{\oplus 2}).$$
    Under homological mirror symmetry (or coherent-constructible correspondence) of toric stacks \cite{KuwaCCC}, the mirrors are Weinstein sectors $(T^*T^3, \Lambda_\pm)$. One can similarly show that $(T_{-\epsilon}(\Lambda_-), T_\epsilon(\Lambda_+))$ and $(T_\epsilon(\Lambda_-), T_{-\epsilon}(\Lambda_+))$ are both swappable pairs of Legendrian stops.
\end{example}
\end{comment}

The main theorem of the section is the following statement, that swappable pairs of Legendrian stops induce spherical pairs of sheaf categories.

\begin{theorem}\label{thm:sphere-pair-var}
    Let $\Lambda_\pm \subseteq S^{*}M$ be a swappable pair of closed Legendrian stops. Then $\Sh_{\Lambda_-}(M) \simeq \Sh_{\Lambda_+}(M), \msh_{\Lambda_-}(\Lambda_-) \simeq \msh_{\Lambda_+}(\Lambda_+)$, and there is a spherical pair
    $$\Sh_{\Lambda_-}(M) \leftarrow \Sh_{\Lambda_+ \cup \Lambda_-}(M) \rightarrow \Sh_{\Lambda_+}(M).$$
\end{theorem}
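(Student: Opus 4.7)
The plan is to adapt the semi-orthogonal decomposition machinery of Proposition \ref{prop:sphere-pair-tri} to the swappable-pair setting. Three ingredients will be assembled in sequence: (a) the equivalences $\Sh_{\Lambda_-}(M) \simeq \Sh_{\Lambda_+}(M)$ and $\msh_{\Lambda_-}(\Lambda_-) \simeq \msh_{\Lambda_+}(\Lambda_+)$ induced by the swappable-pair flows; (b) the sphericality of the single-component microlocalizations $m_{\Lambda_\pm}: \Sh_{\Lambda_+ \cup \Lambda_-}(M) \to \msh_{\Lambda_\pm}(\Lambda_\pm)$; and (c) the compatibility that combines (a) and (b) into a 4-periodic semi-orthogonal decomposition, and hence a spherical pair, via Theorem \ref{thm:4periodic}.

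\textbf{Equivalences.}
For (a), the candidate equivalence is $\Psi := \pi_+ \circ \iota_- : \Sh_{\Lambda_-}(M) \to \Sh_{\Lambda_+}(M)$, where $\iota_-$ is the tautological inclusion and $\pi_+ = \wrap^+_{\Lambda_+}$ is the stop removal of Theorem \ref{w=ad}. Full-faithfulness follows by imitating Proposition \ref{prop:swap-condition2}: the positive flows supplied by the swappable pair furnish a cofinal Cauchy sequence $\varphi_k^t$ with $\varphi_k^1(\Lambda_-)$ eventually inside any prescribed neighbourhood of $\Lambda_+$, and Lemma \ref{lem:nearby_cycle} yields $\Hom(F, G) \xrightarrow{\sim} \Hom(F, \Psi G)$ for $F, G \in \Sh_{\Lambda_-}(M)$; the symmetric argument with negative flows furnishes the inverse $\pi_- \circ \iota_+$. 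The microsheaf equivalence is more delicate: I would apply Corollary \ref{cor:cont-trans} along a cofinal sequence of near-contactomorphisms produced by the swappable pair and pass to the limit using the compactness of $\Lambda_\pm$.

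\textbf{Assembly and main obstacle.}
For (b), the proof of Theorem \ref{thm:main-fun}, specifically the wrapping arguments of Propositions \ref{prop:swap-condition2} and \ref{prop:swap-condition4}, must be reworked in the single-component setting via Remarks \ref{rem:multi-component-sato} and \ref{rem:multi-component-ad}; the swappable-pair flows play the role of the positive loop that proved invertibility of the wrap-once functor for a swappable stop. With (b) in place, the two spherical adjunctions $m_{\Lambda_\pm}$ together with the microsheaf equivalence yield the required 4-periodic SOD of $\Sh_{\Lambda_+ \cup \Lambda_-}(M)$, with the four sides all identified with $\Sh_{\Lambda_\pm}(M)$ via the doubling construction of Section \ref{sec:doubling-local} and the equivalence $\Sh_{\Lambda_-} \simeq \Sh_{\Lambda_+}$ from (a); Theorem \ref{thm:4periodic} then produces the spherical pair. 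The main obstacle I anticipate is part (b), namely verifying sphericality of $m_{\Lambda_\pm}$ in the swappable-pair setting rather than the stand-alone swappable-stop setting: the Cauchy-sequence arguments of Section \ref{sec:sphere-crit} relied on positive loops displacing $\Lambda$ into itself, whereas here the flows interchange two distinct Legendrians, so the key isomorphism of Proposition \ref{prop:swap-condition4} must be re-established by controlling the motion of microsupport between $\Lambda_+$ and $\Lambda_-$ rather than from $\Lambda$ back to itself.
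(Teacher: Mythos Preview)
Your proposal has the right ambient shape but misses the key simplifying observation, and as a result you identify as the ``main obstacle'' precisely the step that the paper avoids altogether.

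The paper's first move is to notice that a swappable pair $(\Lambda_-,\Lambda_+)$ forces each $\Lambda_\pm$ to be \emph{individually} swappable: compose the positive flow sending $\Lambda_-$ near $\Lambda_+$ with the positive flow sending a neighbourhood of $\Lambda_+$ back near $\Lambda_-$, and you have realised $\Lambda_-$ as a swappable stop in its own right. This immediately invokes Theorem~\ref{thm:main-fun} for the \emph{single-stop} categories $\Sh_{\Lambda_\pm}(M)$, so the cotwists $S^+_{\Lambda_\pm}$ are equivalences. The swapping functors $R^+_{\Lambda_-,\Lambda_+}=\wrap^+_{\Lambda_+}$ and $R^+_{\Lambda_+,\Lambda_-}=\wrap^+_{\Lambda_-}$ then factor these equivalences via Corollary~\ref{cor:twist}, and the two-out-of-three argument gives both $\Sh_{\Lambda_-}(M)\simeq\Sh_{\Lambda_+}(M)$ and, by the dual argument with the spherical twists, $\msh_{\Lambda_-}(\Lambda_-)\simeq\msh_{\Lambda_+}(\Lambda_+)$. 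Your direct attack on the microsheaf equivalence via ``limits of near-contactomorphisms'' is not justified: Corollary~\ref{cor:cont-trans} applies to honest contact isotopies, and nothing guarantees that a limiting procedure produces an equivalence.

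For the spherical pair itself, your plan (b) is to prove sphericality of the single-component microlocalization $m_{\Lambda_\pm}:\Sh_{\Lambda_+\cup\Lambda_-}(M)\to\msh_{\Lambda_\pm}(\Lambda_\pm)$ from scratch. The paper instead observes that $\Lambda_-\cup T_\epsilon(\Lambda_-)$ and $\Lambda_-\cup\Lambda_+$, viewed as single Legendrians, themselves form a swappable pair (fix $\Lambda_-$ and wrap $T_\epsilon(\Lambda_-)$ into $\Lambda_+$, and conversely). The already-proven equivalence argument then gives $\Sh_{\Lambda_-\cup T_\epsilon(\Lambda_-)}(M)\simeq\Sh_{\Lambda_-\cup\Lambda_+}(M)$, and the required 4-periodic SOD is transported from the ``trivial'' spherical pair of Proposition~\ref{prop:sphere-pair-tri}. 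Your plan (c) also has a structural gap: the doubling functor lands in $\Sh_{T_{-\epsilon}(\Lambda_-)\cup T_\epsilon(\Lambda_-)}(M)$, not in $\Sh_{\Lambda_-\cup\Lambda_+}(M)$, so you cannot directly identify the semi-orthogonal pieces without this transport step.
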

\begin{proof}
    We notice that since $(\Lambda_-, \Lambda_+)$ is a swappable pair, this implies that $\Lambda_\pm$ are independently swappable in $S^{*}M$: in fact, we can wrap $\Lambda_-$ into a small neighbourhood of $\Lambda_+$, and then follow the wrapping which sends the neighbourhood of $\Lambda_+$ back into a small neighbourhood of $\Lambda_-$. Therefore, the left adjoints of microlocalization
    $$m_{\Lambda_\pm}^l: \msh_{\Lambda_\pm}(\Lambda_\pm) \rightarrow \Sh_{\Lambda_\pm}(M)$$
    are spherical functors whose spherical twists are $S_{\Lambda_\pm}^+$. On the other hand, for any ${F} \in \Sh_{\Lambda_-}(M)$ and ${G} \in \Sh_{\Lambda_+}(M)$, we can define the swapping functors
    $${R}_{\Lambda_-,\Lambda_+}^+({F}) = S_{\Lambda_- \cup \Lambda_+}^+ F = \mathfrak{W}_{\Lambda_+}^+ {F}, \;\; {R}_{\Lambda_+,\Lambda_-}^+({G}) = S_{\Lambda_- \cup \Lambda_+}^+ G = \mathfrak{W}_{\Lambda_-}^+ {G}.$$
    From Corollary \ref{cor:twist}, the compositions of swapping functor give the spherical twists
    $${S}_{\Lambda_-}^+ = {R}_{\Lambda_+,\Lambda_-}^+ \circ {R}_{\Lambda_-,\Lambda_+}^+, \; {S}_{\Lambda_+}^+ = {R}_{\Lambda_-,\Lambda_+}^+ \circ {R}_{\Lambda_+,\Lambda_-}^+.$$
    As a result, we know that ${R}_{\Lambda_-,\Lambda_+}^+, {R}_{\Lambda_+,\Lambda_-}^+$ are equivalences. Similarly we can also consider spherical cotwists and show that the corresponding co-swapping functors are equivalences. This implies that
    $$\Sh_{\Lambda_-}(M) \simeq \Sh_{\Lambda_+}(M), \;\; \msh_{\Lambda_-}(\Lambda_-) \simeq \msh_{\Lambda_+}(\Lambda_+).$$

    Then consider two new pairs of Legendrian stops $(\Lambda_-, T_\epsilon(\Lambda_-))$ and $(T_{-\epsilon}(\Lambda_+), \Lambda_+)$, obtained by a sufficiently small Reeb push-off. We show that there are equivalences
    $$\Sh_{\Lambda_- \cup T_\epsilon(\Lambda_-)}(M) \simeq \Sh_{\Lambda_- \cup \Lambda_+}(M) \simeq \Sh_{T_{-\epsilon}(\Lambda_+) \cup \Lambda_+}(M),$$
    such that the corresponding restrictions are the swapping functor
    $${R}_{\Lambda_-,\Lambda_+}^+: \, \Sh_{\Lambda_-}(M) \rightarrow \Sh_{T_{-\epsilon}(\Lambda_+)}(M), \;\; \Sh_{T_\epsilon(\Lambda_-)}(M) \rightarrow \Sh_{\Lambda_+}(M)$$
    Then since $\Sh_{\Lambda_- \cup T_\epsilon(\Lambda_-)}(M)$ and $\Sh_{T_{-\epsilon}(\Lambda_+) \cup \Lambda_+}(M)$ are both endowed with semi-orthogonal decompositions by Proposition \ref{prop:semi-ortho}, this will complete the proof.
    
    Consider the non-nagative wrapping that fixes $\Lambda_-$ while sending $T_\epsilon(\Lambda_-)$ into $\Lambda_+$, and another non-negative wrapping that fixes $\Lambda_-$ while sending $\Lambda_+$ into $T_{\epsilon}(\Lambda_-)$. Viewing $\Lambda_- \cup T_\epsilon(\Lambda_-)$ and $\Lambda_- \cup \Lambda_+$ independently as two Legendrian stops, the above observation shows that they form a swappable pair as well. Hence we have
    \begin{align*}
    {R}_{\Lambda_- \cup T_\epsilon(\Lambda_-), \Lambda_- \cup \Lambda_+}^+ : \Sh_{\Lambda_- \cup T_\epsilon(\Lambda_-)}(M) \rightarrow \Sh_{\Lambda_- \cup \Lambda_+}(M),\\
    {R}_{\Lambda_- \cup \Lambda_+, \Lambda_- \cup T_\epsilon(\Lambda_-)}^- :  \Sh_{\Lambda_- \cup \Lambda_+}(M) \rightarrow \Sh_{\Lambda_- \cup T_\epsilon(\Lambda_-)}(M),
    \end{align*}
    where ${R}_{\Lambda_- \cup T_\epsilon(\Lambda_-), \Lambda_- \cup \Lambda_+}^+$ and ${R}_{\Lambda_- \cup \Lambda_+, \Lambda_- \cup T_\epsilon(\Lambda_-)}^-$ are inverse equivalences by the definition above. Hence we have shown the equivalence
    $$\Sh_{\Lambda_- \cup \Lambda_+}(M) \simeq \Sh_{\Lambda_- \cup T_\epsilon(\Lambda_-)}(M)$$
    which realizes $\Sh_{\Lambda_- \cup \Lambda_+}(M)$ as semi-orthogonal decompositions. 
    
    Finally, to identify the projection functors with the stop removal functors, i.e.~positive wrapping functors, we only need to notice that the following diagram commutes
    \[\xymatrix{
    \Sh_{\Lambda_-}(M) \ar@{=}[d] & \Sh_{\Lambda_- \cup \Lambda_+}(M) \ar[l]_{\wrap_{\Lambda_-}^+\hspace{10pt}}  \ar@{=}[r] & \Sh_{\Lambda_- \cup \Lambda_+}(M) \ar[r]^{\hspace{10pt}\wrap_{\Lambda_+}^+} \ar[d]_{\rotatebox{90}{$\sim$}}^{\wrap^+_{T_{-\epsilon}(\Lambda_+) \cup \Lambda_+}} & \Sh_{\Lambda_+}(M) \ar@{=}[d] \\
    \Sh_{\Lambda_-}(M) & \Sh_{\Lambda_- \cup T_\epsilon(\Lambda_-)}(M) \ar[u]_{\rotatebox{90}{$\sim$}}^{\wrap^+_{\Lambda_- \cup \Lambda_+}} \ar[l]^{\wrap_{\Lambda_-}^+\hspace{10pt}} \ar[r]^{\sim} & \Sh_{T_{-\epsilon}(\Lambda_+) \cup \Lambda_+}(M) \ar[r]_{\hspace{14pt}\wrap_{\Lambda_+}^+} & \Sh_{\Lambda_+}(M).
    }\]
    This completes the proof of the theorem.
\end{proof}

\section{Example that wrap-once is not equivalence}\label{sec:example}

    When introducing the notion of a swappable stop, Sylvan has already noticed the strong constraint that swappability puts on the stop \cite{SylvanOrlov}. Now we have proved that full stops also implies sphericality. However, it is not know whether being a full or swappable stop is a necessary condition, or even whether any condition is really needed to show spherical adjunction.
    
    Here we provide an example where $S_\Lambda^+: \Sh_\Lambda^c(M) \rightarrow \Sh_\Lambda^c(M)$ is not an equivalence. Moreover, our computation implies that in this case, $m_\Lambda$ does not preserve compact objects and $m_\Lambda^l$ does not preserve proper modules (or equivalently, sheaves with perfect stalks). 

    %In order to compute the wrap-once functor, we need the following geometric criterion for cofinal wrappings.
    
\begin{lemma}[{\cite[Lemma 3.29]{Ganatra-Pardon-Shende1}}]
    Let $\Sigma \subseteq S^*M \backslash \Lambda$ be a subanalytic Legendrian, and $\varphi_k$ be an increasing sequence of contact flows on $S^*M \backslash \Lambda$. Suppose there exists a contact form $\alpha$ on $S^*M \backslash \Lambda$ such that
    $$\lim_{k \rightarrow \infty} \int_0^1 \min_{\varphi_k^t(\Sigma)} \alpha \big(\partial_t \varphi_k|_{\varphi_k^t(\Sigma)}\big)\, dt = \infty.$$
    Then $\{\varphi_k\}_{k \in \NN}$ is a cofinal sequence of wrappings in the category of positive wrappings of $\Sigma$ in $S^*M \backslash \Lambda$.
\end{lemma}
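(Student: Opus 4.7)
The plan is to verify cofinality directly from its defining property: for every positive wrapping $\psi \in W^+(S^*M \setminus \Lambda)$ of $\Sigma$, I need to produce $k$ and a further positive isotopy, compactly supported in $S^*M \setminus \Lambda$, carrying $\psi^1(\Sigma)$ to $\varphi_k^1(\Sigma)$.

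First I would introduce the $\alpha$-length function
$$\ell_\phi(x) := \int_0^1 \alpha\big(\partial_t \phi^t|_{\phi^t(x)}\big)\, dt, \quad x \in \Sigma,$$
which records the Reeb-direction displacement of each point of $\Sigma$ under a positive isotopy $\phi^t$. Since $\Sigma$ is compact subanalytic and $\psi$ is a single positive wrapping with bounded contact Hamiltonian, $\ell_\psi$ is bounded above on $\Sigma$. By the hypothesis, $\min_{x \in \Sigma} \ell_{\varphi_k}(x) \to \infty$, so for all $k$ beyond some $K$ the strict pointwise inequality $\ell_{\varphi_k}(x) > \ell_\psi(x)$ holds for every $x \in \Sigma$.

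Second, I would translate this pointwise Reeb-displacement comparison into an actual positive isotopy. Using that the $\varphi_k$ are increasing, one can work near $\psi^1(\Sigma)$ and attempt to factor $\varphi_k^1 \circ (\psi^1)^{-1}$ as the time-one map of a compactly supported non-negative contact Hamiltonian $H_k$, supported in $S^*M \setminus \Lambda$. Locally such an $H_k$ exists because the $\alpha$-length gap $\ell_{\varphi_k} - \ell_\psi$ provides the required non-negative infinitesimal Reeb displacement in each Darboux chart; patching these local Hamiltonians via a partition of unity supported away from $\Lambda$ produces a global positive Hamiltonian whose flow gives the desired morphism $\psi \to \varphi_k$ in $W^+(S^*M \setminus \Lambda)$.

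The main obstacle will be the second step: producing the \emph{global} positive Hamiltonian from the pointwise $\alpha$-length inequality while respecting compact support in $S^*M \setminus \Lambda$. The positivity condition is an open convex condition on Hamiltonians, so a partition-of-unity patching preserves it; the difficulty is ensuring that the time-one map of the patched Hamiltonian actually sends $\psi^1(\Sigma)$ onto $\varphi_k^1(\Sigma)$ rather than merely into a positive pushoff of it. This can be addressed either by composing with an additional small positive wrapping after the patching (which is harmless for cofinality, since cofinality only requires a morphism $\psi \to \varphi_{k'}$ for \emph{some} $k' \geq k$ rather than an isomorphism), or by a more refined construction using the Reeb flow to explicitly realize the required displacement on $\Sigma$ before extending to a neighbourhood.
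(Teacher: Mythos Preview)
The paper does not give its own proof of this lemma: it is stated with a citation to \cite[Lemma 3.29]{Ganatra-Pardon-Shende1} and used as a black box in the subsequent proposition. So there is no argument in the paper to compare your proposal against.

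On the substance of your sketch: the first step (bounding $\ell_\psi$ above and using the hypothesis to find $k$ with $\min_\Sigma \ell_{\varphi_k} > \max_\Sigma \ell_\psi$) is the correct opening move, and matches the argument in the cited reference. Your second step, however, is vaguer than it needs to be. The partition-of-unity patching you describe does preserve positivity, but as you yourself note it does not obviously produce the correct time-one map, and the fix you propose (``compose with an additional small positive wrapping'') does not close the gap, since you would still need to land exactly on $\varphi_{k'}^1(\Sigma)$ for some $k'$. The cleaner route, which is essentially what is done in the reference, is to exhibit the required positive isotopy directly: concatenate $\psi^{-1}$ with $\varphi_k$ to get a path of Legendrians from $\psi^1(\Sigma)$ to $\varphi_k^1(\Sigma)$, and observe that the $\alpha$-length inequality lets you reparametrize (or homotope through isotopies of $\Sigma$) so that the resulting path is everywhere non-negative. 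This avoids the patching issue entirely and gives the morphism $\psi \to \varphi_k$ on the nose.
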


\begin{proposition}\label{prop:T2example}
    Let $M = T^n = \RR^n/\ZZ^n$, $\Lambda = S^{*}_{T^m}T^n \subseteq S^{*}T^n\,(n - m \geq 2)$, and $\ol{B_\epsilon(0)}$ be a closed ball of radius $\epsilon$ around $0$. Then $S^-_\Lambda(1_0) = \wrap_\Lambda^- \circ 1_{\ol{B_\epsilon(0)}} \notin \Sh^b_\Lambda(T^n)$. In particular, $S_\Lambda^-$ does not induce an equivalence on the proper subcategory $\Sh^b_\Lambda(T^n)$.
\end{proposition}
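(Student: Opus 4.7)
The strategy is to probe $W \coloneqq S_\Lambda^-(1_0)$ with rank-one local systems on $T^n$ and use the recollement at $0$ to show its restriction to $T^n \setminus \{0\}$ cannot be finite rank. First, by Example~\ref{GKS_standard_Reeb} together with orientability of $T^n$ and $\epsilon$ less than the injectivity radius, one identifies $T_{-\epsilon}(1_0) \simeq 1_{\overline{B_\epsilon(0)}}$, so that $W = \wrap_\Lambda^-\bigl(1_{\overline{B_\epsilon(0)}}\bigr)$. For each character $\chi$ of $\pi_1(T^n) = \ZZ^n$, let $L_\chi$ denote the corresponding rank-one local system on $T^n$; since $\ms(L_\chi) = 0_{T^n} \subseteq \widehat{\Lambda}$, one has $L_\chi \in \Sh_\Lambda^c(T^n)$. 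The adjunction of Theorem~\ref{w=ad} combined with the contractibility of the closed ball yields
\[
\Hom(L_\chi, W) \;=\; \Hom(L_\chi, 1_{\overline{B_\epsilon(0)}}) \;=\; \Gamma\!\bigl(\overline{B_\epsilon(0)}, L_\chi^\vee\bigr) \;=\; 1_\cV
\]
for every character $\chi$, the answer being independent of $\chi$.

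Assume for contradiction that $W \in \Sh_\Lambda^b(T^n)$. Writing $j \colon T^n \setminus \{0\} \hookrightarrow T^n$ for the open inclusion, $M \coloneqq j^* W$ would then be a local system on $T^n \setminus \{0\}$ whose stalks are perfect, hence of finite rank. The recollement triangle $i_{0*} i_0^! W \to W \to j_* M$ combined with the costalk computation
$i_0^! W = \Hom(1_0, W) = \Hom(1_0, 1_{\overline{B_\epsilon(0)}}) = 1_\cV[-n]$
gives, after applying $\Hom(L_\chi, -)$, a fiber sequence
\[
1_\cV[-n] \;\longrightarrow\; 1_\cV \;\longrightarrow\; \Gamma\!\bigl(T^n\setminus\{0\},\ (j^*L_\chi)^\vee \otimes M\bigr).
\]
Since $n \geq 2$ the left-hand map vanishes, forcing $H^0$ of the right-hand side to equal $1_\cV$ for every $\chi$.

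Finally, I would identify this $H^0$ as the $\chi$-isotypic component of $M$ under the action of $\pi_1(T^n \setminus \{0\})$, which maps onto $\pi_1(T^n) = \ZZ^n$ (an isomorphism for $n \geq 3$, and the abelianization $F_2 \twoheadrightarrow \ZZ^2$ for $n = 2$; characters factor through the abelianization in either case). Having a one-dimensional $\chi$-isotypic component for every character of $\ZZ^n$ forces $M$ to contain infinitely many pairwise non-isomorphic rank-one summands, contradicting finite rank, so $W \notin \Sh_\Lambda^b(T^n)$. The main technical point is the eigenspace identification, which rests on the standard fact that $H^0$ of a local system on a connected space equals its $\pi_1$-invariants, together with the mild hypothesis that $\cV$ carries infinitely many inequivalent rank-one local systems on $T^n$ (which is the case for all standard coefficient categories, e.g.\ modules over a field with infinitely many units).
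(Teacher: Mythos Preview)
Your approach is quite different from the paper's (which builds an explicit cofinal family of cut-off geodesic flows and tracks sectors on the universal cover), and the algebraic strategy via $\Hom(L_\chi,W)$ is attractive. However, there is a genuine gap at the step you yourself flag as ``the main technical point.''

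The issue is that $M=j^*W$ is not an ordinary local system but a bounded complex of local systems, so $H^0\!\bigl(\Gamma(U,L_\chi^\vee\otimes M)\bigr)$ is \emph{not} the $\chi$-eigenspace of a single $\pi_1$-representation. The hypercohomology spectral sequence $E_2^{p,q}=H^p\!\bigl(U,L_\chi^\vee\otimes H^q(M)\bigr)\Rightarrow H^{p+q}$ contributes all terms with $p+q=0$, and the higher terms can be nonzero for \emph{every} $\chi$. Concretely, take $n=2$, so $U\simeq S^1\vee S^1$ and $\mathrm{cd}(U)=1$. For any nontrivial rank-one local system $L$ one has $H^0(U,L)=0$ and $H^1(U,L)\cong k$ by Euler characteristic. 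Thus if $M=L_\mu[1]$ for some character $\mu$, then for all $\chi\neq\mu$ one computes $H^0\!\bigl(\Gamma(U,L_\chi^\vee\otimes M)\bigr)=H^1(U,L_{\chi^{-1}\mu})=k$. So a rank-one $M$ already produces a one-dimensional $H^0$ for infinitely many $\chi$; your inference ``nonzero for all $\chi$ forces infinitely many summands'' fails.

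The argument can in fact be repaired for $n=2$: since $\mathrm{cd}(U)=1$ the spectral sequence degenerates, and $\dim H^0(\Gamma)=\dim(L_\chi^\vee\otimes V_0)^{\pi_1}+\dim H^1(U,L_\chi^\vee\otimes V_{-1})$ where $V_q=H^q(M)$. For $\chi$ avoiding the finitely many eigenvalues of $V_0$ and $V_{-1}$ the first term vanishes and the second equals $\dim V_{-1}$, forcing $V_{-1}=L_\mu$ to have rank one; then evaluating at $\chi=\mu$ gives $H^1(U,1_U)=k^2$, so $\dim H^0(\Gamma)\geq 2$, contradicting $=1$. For $n\geq 3$ the spectral sequence no longer degenerates at $E_2$, and one must control possible differentials into $E_r^{\,n-1,\,-(n-1)}$; this requires substantially more work than what you have written. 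As it stands, the proof is incomplete.
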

\begin{proof}
    Let $\epsilon > 0$ be a small positive number and $\eta_k: T^n \rightarrow [0, 1]$ be a smooth cut-off function such that for small neighbourhoods $\overline{B_{\epsilon/2k^2}(0)} \subset \overline{B_{\epsilon/k^2}(0)}$ around $0$ we have
    $$\eta_k|_{\overline{B_{\epsilon/2k^2}(0)}} = 0, \;\; \eta|_{T^n \backslash \overline{B_{\epsilon/k^2}(0)}} = 1.$$
    Let the decreasing sequence of negative Hamiltonians be $H_k(x, \xi) = -k\,\eta_k(x)|\xi|^2$ and the contact flow be $\varphi_k$. Rescale the contact form on $S^*T^n \backslash S^*_0T^n$ by a function $\delta(x) \rightarrow \infty, \, x \rightarrow 0$. One can easily check by the above lemma that $\varphi_k$ is a cofinal sequence of positive wrappings on $S^{*}T^n \backslash \Lambda$. We will show that $\operatorname{lim}_{k \rightarrow \infty} K(\varphi_{k}) \circ 1_{\overline{B_\epsilon(0)}}$ does not have perfect stalk at $T^n \backslash \{0\}$.
    
    For simplicity, we now assume that $n = 2$. Consider the universal cover $\pi: \RR^n \rightarrow \RR^n /\ZZ^n \cong T^n$. Let the lifting of the Hamiltonian be $\ol{H}_k(x, \xi) = -k\,\eta_k(\pi(x))|\xi|^2$ and the lifting of the flow be $\ol{\varphi}_k$. Then
    $$K({\varphi}_{k}) \circ 1_{\overline{B_\epsilon(0)}} = \pi_*\big(K(\ol{\varphi}_{k}) \circ 1_{\overline{B_\epsilon(0)}}\big).$$
    We then show that in each region of the form $\square_m = [m+\epsilon, m+1-\epsilon] \times [\epsilon, 1-\epsilon]$ where $m \geq 0$, when $k \geq m+1$ we have
    $$1_\cV \hookrightarrow \Gamma\big(\square_m, K(\varphi_{k}) \circ 1_{\overline{B_\epsilon(0)}}\big).$$
    In fact, for the outward unit conormal bundle of $\overline{B_\epsilon(0)}$, under the Hamiltonian $H_k$, the boundary arc of the sector $S_k$ in between the rays $\theta = \arcsin(\epsilon/k^2)$ and $\theta = \arctan(1/k) - \arcsin(\epsilon/k^2(1 + k^2)^{1/2})$ will follow the inverse geodesic flow $\overline{H}_k = -k|\xi|^2$ determined by $\partial_t \varphi_k = k\partial/\partial r$. Therefore there is an injection 
    $$1_\cV \hookrightarrow \Gamma\big(S_k, K(\varphi_{k}) \circ 1_{\overline{B_\epsilon(0)}}\big) \hookrightarrow \Gamma\big(\square_m, K(\ol{\varphi}_{k}) \circ 1_{\overline{B_\epsilon(0)}}\big).$$
    Then under the projection map $\pi: \RR^n \rightarrow \RR^n /\ZZ^n \cong T^n$, write $\square =  [\epsilon, 1-\epsilon] \times [\epsilon, 1-\epsilon] \subset T^n$. We know that when $k \geq m+1$,
    $$1_\cV^{\oplus m} \hookrightarrow \Gamma\big(\square, K(\varphi_{k}) \circ 1_{\overline{B_\epsilon(0)}}\big).$$
    Therefore, $\Gamma\big(\square, \lmi{k \rightarrow \infty} K(\varphi_{k}) \circ 1_{\overline{B_\epsilon(0)}}\big) \notin \cV_0$. Since $\lmi{k \rightarrow \infty} K(\varphi_{k}) \circ 1_{\overline{B_\epsilon(0)}} \in \Sh_{S_0^*T^n}(T^n)$ is constructible, we can conclude that the stalk at $x' \neq x$ is isomorphic to the sections on $\square$, and hence is not perfect.
\end{proof}

\begin{figure}
    \centering
    \includegraphics[width=1.0\textwidth]{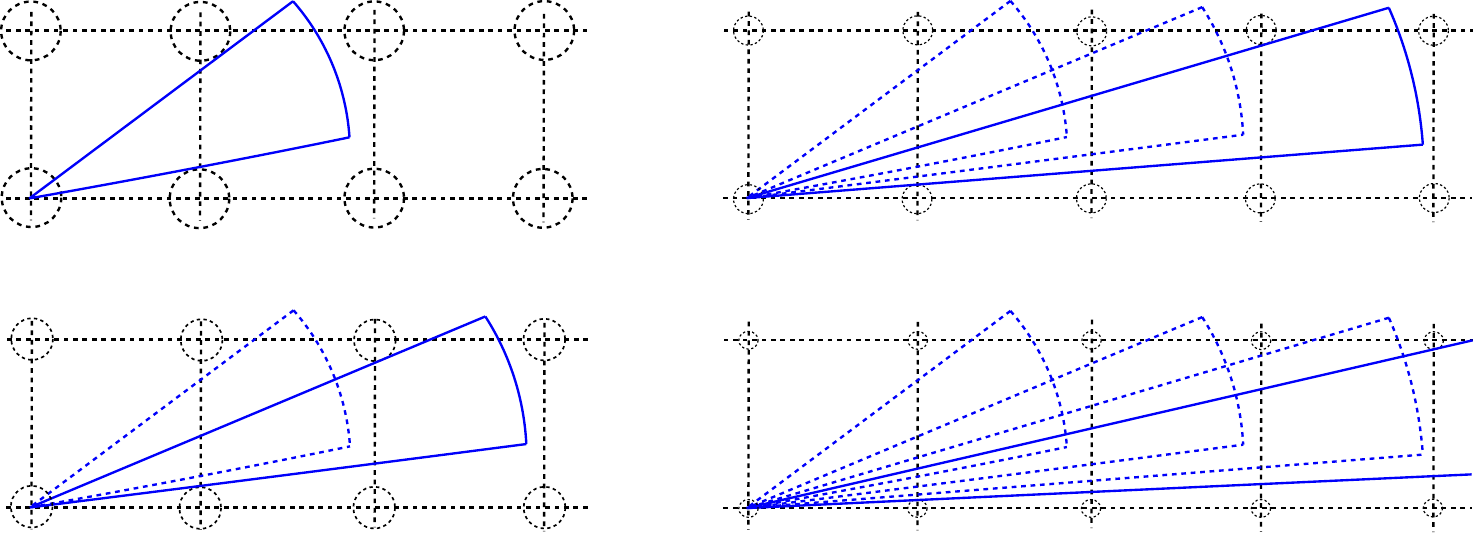}
    \caption{The black circles are the boundary of the regions where the Hamiltonian $H_k$ are cut off by the function $\eta_k$. The blue sectors are the sectors which completely follow the inverse geodesic flow as they do not intersect the black circles. Since radii of the the black circles decreases (and converges to 0), the slope of the lower edge of the sectors that follow the geodesic flow also decreases (and converges to 0). Even though the slope of the upper edge of the sectors are decreasing as more and more black circles appear on the top right pat of the plane, the sequence of sectors will not shrink to nothing and can go arbitrary far away.}\label{fig:my_label}
\end{figure}

\begin{corollary}
    Let $M = T^n$ and $\Lambda = S^{*}_{T^m}T^n \,(n - m \geq 2)$. $S_\Lambda^+$ is not an equivalence on $\Sh^c_\Lambda(T^n)$.
\end{corollary}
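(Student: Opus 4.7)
The plan is to combine adjunction yoga with a perfect-stalk characterization of $\Sh_\Lambda^c(T^n)$ to contradict Proposition \ref{prop:T2example}. First, assuming for contradiction that $S_\Lambda^+$ is an equivalence on $\Sh_\Lambda^c(T^n)$, I would Ind-complete to get an equivalence $S_\Lambda^+: \Sh_\Lambda(T^n) \xrightarrow{\sim} \Sh_\Lambda(T^n)$. The perturbation trick of Proposition \ref{prop:hom_w_pm} together with Theorem \ref{w=ad} yields $S_\Lambda^+ \dashv S_\Lambda^-$, and since the inverse of an equivalence automatically coincides with its right adjoint, $S_\Lambda^-$ must be the inverse; in particular it preserves compact objects. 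By Corollary \ref{cor:prop-in-perf}, $1_0 \in \Sh_\Lambda^b(T^n) \subseteq \Sh_\Lambda^c(T^n)$, and hence $S_\Lambda^-(1_0) \in \Sh_\Lambda^c(T^n)$ under the contrary hypothesis.

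Next, I would argue that every compact object of $\Sh_\Lambda^c(T^n)$ has perfect stalks at every $x \in T^n \setminus \{0\}$. This is where the contradiction will land: the proof of Proposition \ref{prop:T2example} exhibits the stalks of $S_\Lambda^-(1_0)$ away from the origin as infinite direct sums $1_\cV^{\oplus m}$ for every $m$, so they do not lie in $\cV_0$. To establish the perfect-stalk property, I would use the recollement determined by $i: \{0\} \hookrightarrow T^n$ and $j: T^n \setminus \{0\} \hookrightarrow T^n$. A microsupport estimation via Proposition \ref{prop:noncharacteristic-ms-es} shows $j_! L \in \Sh_\Lambda(T^n)$ for every $L \in \Loc(T^n \setminus \{0\}) = \Sh(T^n \setminus \{0\})$, and $i_*(1_\cV) = 1_{\{0\}} \in \Sh_\Lambda(T^n)$; both $j_!$ and $i_*$ are left adjoints, so they preserve compact objects, and standard recollement theory ensures that their images split-generate $\Sh_\Lambda^c(T^n)$.

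The main obstacle is the perfect-stalk claim itself, which reduces to showing that compact objects of $\Loc(T^n \setminus \{0\})$ have perfect stalks. This holds because $\pi_1(T^n \setminus \{0\})$ is of finite type---a free group of rank two for $n = 2$, and $\ZZ^n$ for $n \geq 3$---so compact $\pi_1$-representations are perfect over $\cV_0$. Since $j_!$ preserves stalks at points $x \in T^n \setminus \{0\}$, while $i_*$ contributes the zero stalk there, and since perfection of stalks is preserved under finite colimits and retracts in $\cV$, every compact object of $\Sh_\Lambda^c(T^n)$ has perfect stalks at every $x \neq 0$. This delivers the desired contradiction and concludes that $S_\Lambda^+$ cannot be an equivalence on $\Sh_\Lambda^c(T^n)$.
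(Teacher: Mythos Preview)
Your reduction to showing that $S_\Lambda^-$ preserves $\Sh_\Lambda^c(T^n)$ via the adjunction $S_\Lambda^+ \dashv S_\Lambda^-$ is fine, but the second half of the argument fails: it is \emph{not} true that compact objects of $\Sh_\Lambda^c(T^n)$ have perfect stalks away from $0$. The mistake is the assertion that ``compact $\pi_1$-representations are perfect over $\cV_0$'' when $\pi_1$ is merely of finite type. For an infinite group $G$ the free rank-one module $1_\cV[G]$ is compact in $\Mod_{1_\cV[G]}(\cV)$ (it corepresents the forgetful functor, which preserves colimits), yet its underlying object $\bigoplus_{g \in G} 1_\cV$ is not in $\cV_0$. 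Concretely, let $p:\widetilde{X}\to X=T^n\setminus\{0\}$ be the universal cover; then $p_!1_{\widetilde{X}}\in\Loc(X)$ is compact, $j_!p_!1_{\widetilde{X}}\in\Sh_\Lambda(T^n)$ is compact (your own argument shows $j_!$ preserves compacts), but its stalk at any $x\neq 0$ is $\bigoplus_{\pi_1(X)}1_\cV$, which is not perfect since $\pi_1(X)$ is $F_2$ or $\ZZ^n$. So the inclusion $\Sh_\Lambda^c(T^n)\subseteq\Sh_\Lambda^b(T^n)$ you are implicitly using is false, and the contradiction does not close.

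The paper avoids this trap by working on the dual side. Instead of trying to prove that compact implies perfect-stalk, it invokes Theorem~\ref{thm:perfcompact}, which identifies $\Sh_\Lambda^b(T^n)$ with $\Fun^{ex}(\Sh_\Lambda^c(T^n)^{op},\cV_0)$ via the $\Hom$-pairing. If $S_\Lambda^+$ were an equivalence on $\Sh_\Lambda^c(T^n)$, then precomposition with $S_\Lambda^+$ would be an equivalence on this functor category; but the adjunction identity $\Hom(S_\Lambda^+F,G)=\Hom(F,S_\Lambda^-G)$ shows this precomposition is exactly $G\mapsto S_\Lambda^-G$. Hence $S_\Lambda^-$ would be an equivalence on $\Sh_\Lambda^b(T^n)$, contradicting Proposition~\ref{prop:T2example}. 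The point is that the passage from $\Sh_\Lambda^c$ to $\Sh_\Lambda^b$ goes through the \emph{proper-module} description, not through any stalkwise criterion for compactness.
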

\begin{proof}
    Assume that $S_\Lambda^+$ is an equivalence on $\Sh^c_\Lambda(T^n)$. Consider the equivalence $\Sh^b_\Lambda(T^n) \simeq \mathrm{Fun}^{ex}(\Sh^c_\Lambda(T^n)^{op}, \cV_0)$ given by the homomorphism pairing as stated in Theorem \ref{thm:perfcompact}. For $F \in \Sh_\Lambda^c(T^n), G \in \Sh_\Lambda^b(T^n)$, since
    $$Hom(S_\Lambda^+(F), G) = Hom(F, S_\Lambda^-(G)),$$
    we know that $S_\Lambda^-$ has to be an equivalence on $\Sh^b_\Lambda(T^n)$. This contradicts the proposition.
\end{proof}
\begin{corollary}
    Let $M = T^n$ and $\Lambda = S^{*}_{T^m}T^n\,(n - m \geq 2)$. Then
    $m_\Lambda$ does not preserve compact objects.
\end{corollary}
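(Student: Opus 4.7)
The plan is to argue by contradiction: if $m_\Lambda$ did preserve compact objects, then the wrap-once functor $S_\Lambda^-$ would be forced to preserve sheaves with perfect stalks, directly contradicting Proposition \ref{prop:T2example}.

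First, I would combine the hypothesis with the fact that $m_\Lambda^l$ always preserves compact objects (Remark \ref{rem:cores-cpt}) to conclude that $m_\Lambda^l \circ m_\Lambda$ preserves compacts. Then, applying the fiber sequence $m_\Lambda^l m_\Lambda \to \mathrm{id} \to S_\Lambda^+$ of endofunctors on $\Sh_\Lambda(T^n)$ established after Corollary \ref{cor:sato-sab}, the cofiber $S_\Lambda^+$ must also preserve compact objects, since compact objects form a thick subcategory of the stable category $\Sh_\Lambda(T^n)$ and are therefore closed under finite colimits. Thus, under the contradiction hypothesis, $S_\Lambda^+$ restricts to a functor $\Sh_\Lambda^c(T^n) \to \Sh_\Lambda^c(T^n)$.

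Next, I would use the adjunction $S_\Lambda^+ \dashv S_\Lambda^-$ (which follows from composing the adjunctions $\mathfrak{W}_\Lambda^+ \dashv \iota \dashv \mathfrak{W}_\Lambda^-$ of Theorem \ref{w=ad} with the autoequivalence $T_\epsilon \dashv T_{-\epsilon}$ on $\Sh(T^n)$) to transfer compactness preservation through to preservation of proper objects by $S_\Lambda^-$. Concretely, for any $G \in \Sh_\Lambda^b(T^n)$ and any compact $F$, adjunction gives
$$\Hom(F, S_\Lambda^-(G)) \simeq \Hom(S_\Lambda^+ F, G) \in \cV_0,$$
since $S_\Lambda^+ F$ is compact and $G$ is proper. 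This shows $S_\Lambda^-(G)$ is a proper module, hence lies in $\Sh_\Lambda^b(T^n)$ by Theorem \ref{thm:perfcompact}.

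Specializing to $G = 1_0 \in \Sh_\Lambda^b(T^n)$ (which is evidently a compactly supported sheaf with perfect stalks) then produces the desired contradiction with Proposition \ref{prop:T2example}, which exhibits $S_\Lambda^-(1_0) = \mathfrak{W}_\Lambda^- \circ 1_{\overline{B_\epsilon(0)}}$ as a constructible sheaf whose stalks fail to be perfect. The only substantive step is the compactness-transfer via the fiber sequence; this is immediate once one has the presentation of $S_\Lambda^+$ as the cofiber of $m_\Lambda^l m_\Lambda \to \mathrm{id}$, so no further obstacle arises beyond quoting Proposition \ref{prop:T2example}.
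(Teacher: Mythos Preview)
Your proof is correct and is essentially a spelled-out version of the paper's one-line argument, which simply cites the fiber sequence $m_\Lambda^l m_\Lambda \to \mathrm{id} \to S_\Lambda^+$ and leaves the reader to supply the remaining steps. The only minor difference is that you appeal directly to Proposition \ref{prop:T2example} via the adjunction $S_\Lambda^+ \dashv S_\Lambda^-$, whereas the paper implicitly routes through the immediately preceding corollary (whose proof already contains that same adjunction argument); substantively these are the same.
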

\begin{proof}
    This follows immediately from the fiber sequence $m^l_\Lambda \circ m_\Lambda \rightarrow \mathrm{id}_{\Sh_\Lambda(T^n)} \rightarrow S^+_\Lambda$.
\end{proof}
\begin{remark}
    We believe one can also show that $m_\Lambda^l$ does not perserve proper modules (or objects with perfect stalks) using a similar argument.
\end{remark}

    We can then deduce the following geometric result which shows that the Weinstein stop is not a swappable stop.

\begin{corollary}
    Let $M = T^n$ and $\Lambda = S^{*}_{T^k}T^n\,(n-k \geq 2)$. Then the Weinstein hypersurface $F_\Lambda$ defined as the ribbon of $\Lambda \subseteq S^{*}T^n$ is not a swappable hypersurface.
\end{corollary}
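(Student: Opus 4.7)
The plan is to argue by contradiction, using the previous corollary together with Theorem \ref{thm:main-fun}. Suppose, for contradiction, that the ribbon $F_\Lambda$ is a swappable Weinstein hypersurface in $S^{*}T^n$. By the very definition of swappability for a Weinstein hypersurface, there exists a compactly supported positive contact Hamiltonian on $S^{*}T^n \setminus \Lambda$ whose time-one map sends the positive Reeb pushoff $T_\epsilon(F_\Lambda)$ into an arbitrarily small neighborhood of the negative pushoff $T_{-\epsilon}(F_\Lambda)$, and vice versa. Restricting this statement from the Weinstein hypersurface to its Lagrangian skeleton $\mathfrak{c}_{F_\Lambda} = \Lambda = S^*_0T^n$, we obtain exactly the condition of Definition \ref{def:swappable}; that is, $\Lambda$ would be a swappable Legendrian stop.

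Given that, Theorem \ref{thm:main-fun} would then apply and show that the microlocalization functor
\[
m_\Lambda: \Sh_\Lambda(T^n) \longrightarrow \msh_\Lambda(\Lambda)
\]
is spherical. As established in Section \ref{sec:spherical}, a spherical adjunction forces the cotwist $S_\Lambda^-$ and dual cotwist $S_\Lambda^+$ to be mutually inverse autoequivalences of $\Sh_\Lambda(T^n)$; by Remark \ref{rem:sphere-ad} and Section \ref{sec:prop-cpt} this restricts to an equivalence on the subcategory $\Sh^c_\Lambda(T^n)$ of compact objects.

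However, the preceding corollary shows precisely the opposite: for $\Lambda = S^*_0T^n$ with $n \geq 2$, the functor $S^+_\Lambda$ is not an equivalence on $\Sh^c_\Lambda(T^n)$, since the explicit computation in Proposition \ref{prop:T2example} produces a compact generator $1_0 \in \Sh^c_\Lambda(T^n)$ whose image $S^-_\Lambda(1_0) = \wrap_\Lambda^- \circ 1_{\overline{B_\epsilon(0)}}$ has non-perfect stalks. This directly contradicts the equivalence forced by sphericality. Therefore $\Lambda$ cannot be a swappable Legendrian stop, and consequently $F_\Lambda$ cannot be a swappable Weinstein hypersurface.

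The only genuinely nontrivial step in this argument is the implication that swappability of the Weinstein hypersurface $F_\Lambda$ entails swappability of its skeleton $\Lambda$ in the sense of Definition \ref{def:swappable}; the rest of the proof is a straightforward application of previously established results. This geometric step is essentially tautological once one unpacks the two definitions, since the Reeb pushoff of a Weinstein hypersurface retracts onto the Reeb pushoff of its skeleton, and a compactly supported Hamiltonian swapping the pushoffs of $F_\Lambda$ also swaps the pushoffs of $\Lambda$ (possibly after shrinking the target neighborhood). I will record this implication as a short lemma before presenting the contradiction argument.
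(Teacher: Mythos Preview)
Your proposal is correct and matches the paper's intended argument. The paper states this corollary without an explicit proof, since it is the evident contrapositive of the main theorem: if $\Lambda$ were swappable, Proposition~\ref{prop:swap-condition2} (or Theorem~\ref{thm:main-fun}) would force $S_\Lambda^\pm$ to be autoequivalences of $\Sh_\Lambda(T^n)$, hence of $\Sh_\Lambda^c(T^n)$, contradicting the preceding corollary. Your added discussion distinguishing swappability of the hypersurface $F_\Lambda$ from swappability of its skeleton $\Lambda$ is a reasonable clarification, though in the paper's conventions this distinction is essentially nominal---the paper only ever defines swappability for the Legendrian (Definition~\ref{def:swappable}), and the corollary's phrasing in terms of the ribbon $F_\Lambda$ is meant to connect to Sylvan's hypersurface language rather than to introduce a genuinely new condition.
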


    We compare our result with the result of Dahinden \cite{BS-LegIsotopy}. Dahinden's theorem states that in this setting, there does not exist a positive Legendrian loop $\Lambda_t \subseteq S^*T^n$ such that $\Lambda_0 = \Lambda_1 = S_0^*T^n$ and $\Lambda_t \cap S_0^*T^n = \varnothing$ for $0 \leq t \leq 1$. However, it does not imply that the Weinstein ribbon $F_\Lambda$ of $\Lambda \subseteq S^{*}M$ is a swappable hypersurface, as it is in general unknown whether the exact symplectomorphism $F_\Lambda$ defined by the positive loop sends the zero section to itself. Therefore, our corollary is at least a priori stronger than the theorem in the case of $M = T^n$.

%\begin{question}
%    Can we generalize the above corollary to more general manifolds?
%\end{question}

\bibliographystyle{amsplain}
\bibliography{ref_KuoLi}
%\bibliography{sphericalsheaf}

%%% Alternative reference format
%\bibliographystyle{plain}
%\bibliography{Notes_REF}

\end{document}